\title[life-span results]{New life-span results for the nonlinear heat equation}
\author[S. Tayachi and F. B. Weissler] {Slim Tayachi and Fred B. Weissler}
\address{ \vspace{0cm}\newline Slim Tayachi\newline Universit\'e de Tunis El Manar, Facult\'e des Sciences de Tunis, D\'epartement de
Math\'ematiques, Laboratoire  \'Equations aux D\'eriv\'ees
Partielles LR03ES04,  2092 Tunis,
Tunisia
\newline e-mail: {\tt slim.tayachi@fst.rnu.tn} \vspace{1cm}\newline
Fred B. Weissler\newline Universit\'e Sorbonne Paris Nord, CNRS UMR 7539 LAGA,
 99, Avenue Jean-Baptiste Cl\'ement 93430
Villetaneuse, France \newline e-mail: {\tt
weissler@math.univ-paris13.fr}}
\subjclass[2010]{Primary 35K55; Secondary 35B30, 35B44.} \keywords{Nonlinear heat equation, Hardy-H\'enon parabolic equations, Local well-posedness, Blowup, Life-span.}
\font\TenEns=msbm10 \font\SevenEns=msbm7 \font\FiveEns=msbm5
\def\R {\mathbb R}
\def\Rd {{\mathbb R}^N}
\def\Dd {{\mathcal D'}(\Omega)}
\def\Sd {{\mathcal S'}(\Rd)}
\def\F{{\mathcal F}}
\def\m{\mathfrak {m}}
\def\M{{\mathcal M}}
\newcommand\RR{\mathbb{R}}
\newtheorem{To}{Theorem}[section]
\newtheorem{prop}[To]{Proposition}
\newtheorem{cor}[To]{Corollary}
\newtheorem{example}[To]{Example}
\newtheorem{rem}{Remark}
\theoremstyle{definition}
\date{\today}
\begin{document}
\begin{abstract}
We obtain new estimates for the  existence time of the maximal solutions to the nonlinear heat equation $\partial_tu-\Delta u=|u|^\alpha u,\;\alpha>0$ with initial values in Lebesgue, weighted Lebesgue spaces  or measures.   Non-regular, sign-changing,  as well as non polynomial decaying  initial data are considered.   The proofs of the  lower-bound estimates of life-span are based on the local construction of solutions.  The proofs of the upper-bounds  exploit a well-known necessary condition for the existence of  nonnegative solutions. In addition, we establish  new results  for life-span using  dilation methods and we give  new life-span estimates for Hardy-H\'enon parabolic equations.
\end{abstract}

\maketitle

\medskip
\section{Introduction and statement of the results}
\setcounter{equation}{0}
In this paper, we consider the nonlinear heat equation
\begin{equation}
\label{NLH}
\partial_t u=\Delta u+|u|^\alpha u,
\end{equation}
where  $u=u(t,x)\in  \R,\;\; t > 0,\, x\in \Omega,$ a domain of $\R^N$ not necessarily bounded, $ N\geq 1$ and $\alpha>0.$
In the case where the boundary $\partial\Omega\not= \emptyset,$  we suppose $\partial\Omega$ sufficiently smooth and we impose Dirichlet conditions on the boundary:
$$u(t,x)=0,\; t > 0,\;  x\in \partial\Omega.$$
If $\Omega$ is not bounded, we impose Dirichlet conditions at infinity:
$$\lim_{|x| \to \infty,\, x \in \Omega}u(t,x)=0,\; t> 0,$$
or perhaps other convenient formulation (see for example \cite[Definition 15.1, p. 75]{QS3}).  We usually consider  the equation \eqref{NLH}   with the initial value
\begin{equation}
\label{NLHini}u(0,\cdot)= u_0.
\end{equation}

The Cauchy problem \eqref{NLH}-\eqref{NLHini}  is locally well-posed in
{various} Banach spaces.  In other words, each element or initial value $u_0$ in that space gives rise to a trajectory $u(t)=u(t,\cdot)$
which is a solution in some appropriate sense to the given equation, { here equation \eqref{NLH},} and such that $u(0) = u_0$.  In many cases, this trajectory cannot exist for all time $t$, and we denote by $T_{\max}(u_0)$ the maximal possible existence time of such a trajectory.
The term life-span refers to the study of the maximal existence time of solutions with initial { data} of the form $u_0 = \lambda\varphi$ for some fixed element $\varphi$ in the considered Banach space
and  all $\lambda > 0$. Our aim is to establish lower and upper bounds of the life-span  for a large class of initial { data} $\varphi$ in terms of $\lambda$ and  study the asymptotic behavior of  $T_{\max}(\lambda\varphi)$, either as $\lambda\to \infty$ or as $\lambda\to 0$.

It is well known that {  if $u_0\in C_b(\R^N),$ the Banach space of  continuous bounded functions on $\R^N,$ there exists $T_{\max}(u_0)> 0$ such that \eqref{NLH}-\eqref{NLHini} has a unique classical solution $u \in C^{1,2}((0, T_{\max}(u_0) )\times\R^N) \cap C([0, T_{\max}(u_0) )\times\R^N)$ which is bounded in $[0,T]\times \R^N$ for all $T<T_{\max}(u_0),$ and $\|u(t)\|_{L^\infty(\R^N)}\to \infty$ as $t\to T_{\max}(u_0),$ if $T_{\max}(u_0)<\infty.$}   It is proved in \cite{F} that if $\alpha<2/N$ and $\varphi\in C_b(\R^N)$ with $\varphi\geq 0,\; \varphi\not\equiv 0,$ then $T_{\max}(\lambda \varphi)<\infty$ for any $\lambda>0.$  { For all $\alpha>0,$} if $\varphi\in C_b(\R^N),$  $\varphi\geq 0$ and $\liminf_{|x|\to \infty} |x|^{\gamma}\varphi(x)>0$ with $\gamma<2/\alpha,$ then $T_{\max}(\lambda \varphi)<\infty,$ { for all $\lambda>0$}, as shown in \cite{LeeNI}. This last result has been improved in many papers, see  \cite{TW2} for instance and some references therein. If we do not impose the positivity of the initial data, it has been proven  in \cite{L} that {  for a given $\varphi$  sufficiently regular (i.e. with  finite energy), $\varphi\not\equiv 0,$ }and $\lambda>0$ is sufficiently large then  $T_{\max}(\lambda \varphi)<\infty.$  If $\alpha<2/N$ and $\varphi$ not necessarily positive but $\varphi\in L^1(\R^N)\cap C_0(\R^N)$ and $\int_{\R^N}\varphi\not=0$ then it is proved, in \cite{D}, that $T_{\max}(\lambda \varphi)<\infty$ for $\lambda>0$ sufficiently small. Other  blow-up results  for $\lambda$ small are proved in \cite{G, TW1,TW2}. The above mentioned results show in particular  the interest of studying  the behavior of $T_{\max}(\lambda\varphi)$  for  any value of $\lambda$ and with or without any sign restriction on the initial data.

 For example, { it is  proved in \cite{LeeNI} } that given any nontrivial nonnegative initial data $\varphi\in C_b(\R^N)$  then   $T_{\max}(\lambda\varphi)\sim \lambda^{-\alpha},$ as $\lambda \to \infty$ and  $T_{\max}(\lambda\varphi)\sim \lambda^{-\alpha},$ as $\lambda\to 0$ provided that $\varphi_\infty=\liminf_{|x|\to \infty}\varphi(x)>0.$ {  Shortly thereafter the exact limits were given in \cite{GW},}   that is $\lim_{\lambda\to \infty}\lambda^{\alpha}T_{\max}(\lambda\varphi)={1\over \alpha}\|\varphi\|_\infty^{-\alpha}$ and $\lim_{\lambda\to 0}\lambda^{\alpha}T_{\max}(\lambda\varphi)={1\over \alpha}\varphi_\infty^{-\alpha}.$

 An other example is the study of the asymptotic behavior of the life-span $T_{\max}(\lambda \varphi)$ when  $\varphi\in C_b(\R^N)$ is nonnegative nontrivial  and having also a polynomial decay at infinity, that is \begin{equation}
\label{boundfmzero}
0 < \liminf_{|x|\to \infty}|x|^{\gamma}\varphi(x) \le \limsup_{|x|\to \infty}|x|^{\gamma}\varphi(x) < \infty,
\end{equation}
$0 < \gamma < N$. This   is studied in \cite{LeeNI}  for small $\lambda>0$.  It is shown in \cite[Theorem 3.15 (ii), p. 375]{LeeNI} and \cite[Theorem 3.21 (ii), p. 376]{LeeNI}  that if $\alpha < 2/N$  then
\begin{equation*}
0 < \liminf_{\lambda\to 0}\lambda^{[({1\over \alpha}-{\gamma\over 2})^{-1}]}T_{\max}(\lambda \varphi) \le \limsup_{\lambda\to 0}\lambda^{[({1\over \alpha}-{\gamma\over 2})^{-1}]}T_{\max}(\lambda \varphi) < \infty.
\end{equation*}
These results have been generalized recently in \cite{HisaIshige} replacing  $\varphi\in C_b(\R^N)$ by $\varphi\in L^\infty(\R^N).$ See \cite[Theorem 5.1, p. 128]{HisaIshige} and \cite[Theorem 5.2, p. 130]{HisaIshige}. We notice that refined asymptotic is given in \cite{MY1} for large $\lambda$ and for $\varphi$   not necessarily positive but still continuous and bounded. Other estimates are obtained for the life-span for regular and slowly decaying initial data in \cite{MY,D,TW2}.

A class of initial { data} $\varphi$ where $\varphi$ is { not necessarily  in  $C_b(\R^N)$ or in $L^\infty(\R^N)$ and satisfying either \eqref{boundfmzero} or } \begin{equation}
\label{boundfminfty}
0 < \liminf_{|x|\to 0}|x|^{\gamma}\varphi(x) \le \limsup_{|x|\to 0}|x|^{\gamma}\varphi(x) < \infty,
\end{equation}
has been considered in \cite{TW2}. In fact, the  asymptotic behavior of the life-span for initial data $\varphi\in L^1_{{\rm loc}}(\R^N\setminus\{0\}),$ $|\varphi(x)|\leq c|x|^{-\gamma},$ where $0<\gamma<N$ and $c>0$ a constant,  is studied in \cite{TW2}. It is shown there that for some initial data $\varphi$ singular at the origin or satisfying $\liminf_{|x|\to \infty}\varphi(x):=\varphi_\infty=0$ the situation  is quite different from previously known life-span results. In particular, unlike \cite{GW}, the  limits of $\lambda^{[({1\over \alpha}-{\gamma\over 2})^{-1}]}T_{\max}(\lambda \varphi)$ as $\lambda \to 0$ or as $\lambda\to \infty $ may not exist. Also,   if $\varphi$ satisfies \eqref{boundfminfty}, {  so that} $\varphi$ is singular at the origin, then $T_{\max}(\lambda\varphi)\sim \lambda^{-\left({1\over \alpha}-{\gamma\over 2}\right)^{-1}},$ as $\lambda \to \infty$ instead of $\lambda^{-\alpha}$ if $\varphi$ is regular. See \cite[Corollary 1.13 and Proposition 4.6]{TW2}. It is also proved that if $\varphi(x)=|x|^{-\gamma},\;|x|\leq 1,\; \varphi(x)=0,\; |x|> 1,\; 0<\gamma<N,\; 0<\alpha<2/\gamma,\; (N-2)\alpha<4$ then $\lim_{\lambda\to\infty}\lambda^{\left({1\over \alpha}-{\gamma\over 2}\right)^{-1}}T_{\max}(\lambda \varphi)=C>0.$ This last behavior shows  the impact of the singularity of the initial data on the behavior of the life-span for large $\lambda.$

 The goal of this paper is to   improve and extend the above mentioned results by considering a large class of initial data, including singular, sign changing,  not necessarily polynomially decaying initial data. To carry out this goal, we use three different methods. The first is based on the contraction mapping argument used to prove local existence.   We recently introduced and used this method in \cite{TW2}. Here, we apply it to the nonlinear heat equation and nonlinear Hardy-H\'enon parabolic equations. In a forthcoming paper,   it will be applied  to a variety of evolution equations in order to exhibit the generality of this method (\cite{Slim}).  We know of some cases where the idea behind this method was previously used in other papers (see for example \cite{II})  but to our knowledge, this method has never been presented as such or exploited in a systematic way.
The second method is based on a necessary condition for local existence of non-negative solutions. The third method is based on scaling properties of the equation. Details are given below later in the introduction.

{ We begin with the first method and we consider  the case} where $u_0$ belongs to a Lebesgue space, where we can use the contraction mapping argument  done  in \cite{W2, W3}.  It is well known that the problem \eqref{NLH}-\eqref{NLHini} is locally well-posed
in $L^q(\Omega)$ whenever  $q\geq 1,\; q>q_c$
 where
\begin{equation}
\label{qcritique}
q_c = \frac{N\alpha}{2}.
\end{equation}
See \cite{W2, W3, QS3} and references therein.
For any
$u_0 \in L^q(\Omega)$, we denote by   $T_{\max}(u_0)$ { the  existence time of the maximal (regular) solution}
to \eqref{NLH}-\eqref{NLHini} in $L^q(\Omega)$. {  Our first result on lower bound of the life-span is derived from \cite{W2, W3} using an argument from \cite{TW2}.}
\begin{To}[Initial data in Lebesgue spaces]
\label{ThNLH}
Let $N\geq 1,$ $\alpha>0$  and $q_c$ be given by \eqref{qcritique}. Let $\varphi\in L^q(\Omega)$ with $1 \le q \le \infty,$ $q>q_c$ or $\varphi\in C_0(\Omega).$ Let $u\in C\left([0,T_{\max}(\lambda \varphi)); L^q(\Omega)\right)$ be the maximal classical solution of \eqref{NLH}-\eqref{NLHini} with initial data $u_0=\lambda \varphi,$ $\lambda>0$ (we replace $[0,T_{\max}(\lambda \varphi))$ by $(0,T_{\max}(\lambda \varphi))$ if $q=\infty$).   Then there exists a constant $C= C(\alpha,q)>0$ such that
\begin{equation}
\label{lowerNLH}
T_{\rm{max}}(\lambda\varphi){ \geq}  \frac{C}{(\lambda\|\varphi\|_q)^{(\frac{1}{\alpha} - \frac{N}{2q})^{-1}}},
\end{equation}
for all $\lambda > 0$.
\end{To}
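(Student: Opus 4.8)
The plan is to revisit the contraction-mapping construction of the maximal $L^q$-solution of \cite{W2,W3} and to keep explicit track, as in \cite{TW2}, of how the length of the existence interval depends on $\|u_0\|_q$. I write the problem as the fixed-point equation $u=\Phi(u)$ with
$$\Phi(u)(t)=e^{t\Delta}u_0+\int_0^t e^{(t-s)\Delta}\bigl(|u(s)|^{\alpha}u(s)\bigr)\,ds,\qquad u_0=\lambda\varphi,$$
$e^{t\Delta}$ being the Dirichlet heat semigroup on $\Omega$. Its kernel is dominated by the Gaussian kernel on $\R^N$, so the smoothing inequalities $\|e^{t\Delta}f\|_{L^r(\Omega)}\le C\,t^{-\frac N2(\frac1p-\frac1r)}\|f\|_{L^p(\Omega)}$ for $1\le p\le r\le\infty$ hold on $\Omega$ exactly as on $\R^N$, and these are all the construction uses.

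First I treat $q<\infty$. Fix the auxiliary exponent $r=q(\alpha+1)$ and the weight $\gamma=\frac N2\bigl(\frac1q-\frac1r\bigr)=\frac{N\alpha}{2q(\alpha+1)}\ge0$, and run the fixed point in
$$\mathcal K_T=\Bigl\{\,u:\ \|u\|_{\mathcal K_T}:=\sup_{0<t<T}t^{\gamma}\|u(t)\|_{L^r(\Omega)}+\sup_{0<t<T}\|u(t)\|_{L^q(\Omega)}<\infty\,\Bigr\}.$$
The linear part obeys $\|e^{t\Delta}u_0\|_{\mathcal K_T}\le C\|u_0\|_q$. For the Duhamel term I combine the smoothing inequality out of $L^{r/(\alpha+1)}$ (into $L^r$, respectively into $L^q$) with the identity $\bigl\||u|^{\alpha}u\bigr\|_{r/(\alpha+1)}=\|u\|_r^{\alpha+1}$; with this choice of $r$ one has $\frac{\alpha+1}{r}=\frac1q$, so the $L^q$-component costs no power of $t-s$ while the $L^r$-component costs $(t-s)^{-N\alpha/(2r)}$. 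Since $\frac{N\alpha}{2r}<1$ and $(\alpha+1)\gamma=\frac{N\alpha}{2q}=q_c/q<1$ — both by $q>q_c$ — the Beta integrals converge and
$$\Bigl\|\int_0^{\cdot}e^{(\cdot-s)\Delta}\bigl(|u|^{\alpha}u\bigr)(s)\,ds\Bigr\|_{\mathcal K_T}\le C\,T^{\,1-\frac{N\alpha}{2q}}\,\|u\|_{\mathcal K_T}^{\alpha+1},$$
because the time exponent works out to $\gamma+1-\frac{N\alpha}{2r}-(\alpha+1)\gamma=1-\frac{N\alpha}{2q}$. The matching Lipschitz bound is obtained the same way from $\bigl||u|^{\alpha}u-|v|^{\alpha}v\bigr|\le C(|u|^{\alpha}+|v|^{\alpha})|u-v|$.

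Then $\Phi$ maps the ball $\{\|u\|_{\mathcal K_T}\le 2C\|u_0\|_q\}$ into itself and is a strict contraction on it as soon as $T^{\,1-q_c/q}\|u_0\|_q^{\alpha}\le c_0(\alpha,q,N)$, i.e. as soon as $T\le c_1\,\|u_0\|_q^{-\alpha/(1-q_c/q)}$. Strong continuity of $e^{t\Delta}$ in $L^q$ (and in $C_0(\Omega)$ in that case) places the fixed point in $C([0,T];L^q(\Omega))$, and by uniqueness it is the maximal solution of \cite{W2,W3} on $[0,T]$; hence $T_{\max}(u_0)\ge c_1\|u_0\|_q^{-\alpha/(1-q_c/q)}$. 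Since $\frac{\alpha}{1-q_c/q}=\frac{\alpha}{1-N\alpha/(2q)}=\bigl(\frac1\alpha-\frac N{2q}\bigr)^{-1}$ and $\|u_0\|_q=\lambda\|\varphi\|_q$, this is precisely \eqref{lowerNLH}. For $q=\infty$ I argue instead directly in $L^{\infty}((0,T)\times\Omega)$ with no time weight: $\|\Phi(u)\|_{\infty}\le\|u_0\|_{\infty}+T\|u\|_{\infty}^{\alpha+1}$ together with the corresponding Lipschitz estimate, so the contraction runs once $T\|u_0\|_{\infty}^{\alpha}$ is small, which is the case $q_c/q=0$ of the same conclusion.

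The one genuinely delicate point is the dimensional bookkeeping: the admissible $T$ must be governed by the single power $1-q_c/q$, and that rests on the identity $\gamma+1-\frac{N\alpha}{2r}-(\alpha+1)\gamma=1-\frac{N\alpha}{2q}$, which is independent of the auxiliary exponent $r$ and is positive precisely because $q>q_c$; one also has to check the admissibility of $r$, namely $r/(\alpha+1)\ge1$, $\gamma\ge0$, $\frac{N\alpha}{2r}<1$ and $(\alpha+1)\gamma<1$, all of which hold for $r=q(\alpha+1)$ when $q>q_c$. On $\Omega=\R^N$ the same estimate can instead be extracted from the scaling $u(t,x)\mapsto\mu^{2/\alpha}u(\mu^2t,\mu x)$, which multiplies $\|u_0\|_q$ by $\mu^{2/\alpha-N/q}$ and $T_{\max}$ by $\mu^{-2}$: since $2/\alpha-N/q>0$ iff $q>q_c$, combining a fixed $\|\cdot\|_q$-threshold below which $T_{\max}\ge1$ with a suitable choice of $\mu$ gives the same power of $\|u_0\|_q$; but the contraction argument is the one that applies on a general domain $\Omega$.
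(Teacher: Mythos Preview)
Your proof is correct and follows essentially the same approach as the paper: both run the Weissler contraction-mapping argument with the auxiliary exponent $r=q(\alpha+1)$ and time weight $\beta=\frac{N}{2}(\frac1q-\frac1r)$, and read off the lower bound on $T_{\max}$ from the condition $K+CT^{1-N\alpha/(2q)}M^{\alpha+1}\le M$ (the paper phrases it contrapositively---``the condition must fail at $T=T_{\max}$''---while you phrase it directly, but this is the same argument). The only cosmetic differences are that you work in a space tracking both the $L^q$ and weighted $L^r$ norms simultaneously, and you add a scaling remark for $\Omega=\R^N$ that the paper does not include here.
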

Hereafter, $\|\cdot\|_q$ denotes the norm in the Lebesgue space $L^q(\Omega).$

\begin{rem}
\label{remarksLq}
{\rm $\,$
\begin{itemize}
\item[1)] If $T_{\max}(\varphi,L^q)$ denotes { the  existence time of the maximal (regular) solution}
to \eqref{NLH}-\eqref{NLHini} for  $\varphi\in L^q(\Omega),$ it is known ({see for example \cite{W2} and Proposition \ref{C0nlh} below}) that if $\varphi\in L^p(\Omega)\cap L^q(\Omega)$ we have $T_{\max}(\varphi,L^q)=T_{\max}(\varphi,L^p).$ It follows that if $\varphi\in L^{{\underline{q}}}(\Omega)\cap L^{{\overline{q}}}(\Omega) ,\; 1\leq {\underline{q}}\leq {\overline{q}}\leq \infty,\; q_c< {\underline{q}},$ then the estimate \eqref{lowerNLH} reads
    $$T_{\rm{max}}(\lambda\varphi)\geq C \lambda^{-\left({1\over \alpha}-{N\over 2{\underline{q}}}\right)^{-1}},\; \mbox{ if }\; 0<\lambda<1,$$
    $$T_{\rm{max}}(\lambda\varphi)\geq C \lambda^{-\left({1\over \alpha}-{N\over 2{\overline{q}}}\right)^{-1}},\; \mbox{ if }\; \lambda>1.$$
\item[2)] Theorem \ref{ThNLH} includes many known results on lower bound of the life-span.  {  For example, it }is shown in \cite[Theorem 3.21 (ii), p. 376]{LeeNI}
 that if  $\varphi\in C_b(\R^N),$ $\varphi\geq 0$ and $\limsup_{|x|\to \infty} |x|^{\gamma}\varphi(x)<\infty,$ then for  $N<\gamma<2/\alpha,$ (hence $q_c<1$) we have
$T_{\rm{max}}(\lambda\varphi)\geq C \lambda^{-\left({1\over \alpha}-{N\over 2}\right)^{-1}},$ as  $\lambda \to 0.
$
{  This result is a special case of Theorem \ref{ThNLH}. Indeed, it follows that} $\varphi\in C_0(\RR^N)\cap L^q(\R^N)$ with  $q\geq 1>N/\gamma.$  {  Thus, the estimate  of \cite{LeeNI} follows by taking $q=1$ in \eqref{lowerNLH}. Other examples will be given throughout the paper.}
\item[3)] Theorem \ref{ThNLH} is  valid for the equation $\partial_t u=\Delta u+a(x)|u|^\alpha u,$ with $a\in L^\infty(\Omega).$ Under the additional assumption  $a>0,$  it is shown in \cite[Theorem 3 (i), p. 35]{P} that, for $\varphi \in C_b(\R^N),$
$T_{\rm{max}}(\lambda\varphi)\geq C \lambda^{-\alpha},\; \mbox{ as } \lambda \to \infty.$ For such initial data we may take $q=\infty$ and \eqref{lowerNLH} recover the last estimate.
\item[4)] For positive initial data and bounded domain, estimate \eqref{lowerNLH} is established in \cite[Theorem 3.1, p. 2526]{TF} where it is also assumed that $N\geq 3,\; q>\max(1,q_c).$ See also \cite{PS} for other estimates with $N=3.$ 
\item[5)] Let us consider the nonlinear heat equation with  diffusivity:
\begin{equation}
\label{diffusioncoef}
\partial_t u=\mu\Delta u+|u|^\alpha u,\; u(0)=\varphi\in L^q(\Omega),\; q\geq 1,\; q>q_c,
\end{equation}
on { $(0,T^\mu_{\max}(\varphi))\times\Omega,$ where $\;\alpha>0,\;  \mu>0$ and $T^\mu_{\max}(\varphi)$ denotes the existence time of the maximal solution of \eqref{diffusioncoef} with initial data $\varphi$.} We want to find a lower estimate of  $T^\mu_{\max}(\varphi)$ with respect to $\mu.$ Let
$$v(t,x)=\mu^{-1/\alpha}u(t/\mu,x).$$
Then $v$ satisfies the equation $$\partial_t v=\Delta v+|v|^\alpha v,\; v(0)=\mu^{-1/\alpha}\varphi,$$
on $(0,\mu T^\mu_{\max}(\varphi))\times\Omega=\left(0,T_{\max}(\mu^{-1/\alpha}\varphi)\right)\times\Omega,$ where  $T_{\max}(\mu^{-1/\alpha}\varphi)$ is the existence time of  the maximal solution $v.$ Using \eqref{lowerNLH} we get,
$$T^\mu_{\max}(\varphi)=\mu^{-1}T_{\max}(\mu^{-1/\alpha}\varphi)\geq C\mu^{-1}\left(\mu^{-1/\alpha}\|\varphi\|_q\right)^{-\left({1\over \alpha}-{N\over 2q}\right)^{-1}}.$$
That is,
$$T^\mu_{\max}(\varphi)\geq C\mu^{\left({2q\over N\alpha}-1\right)^{-1}}\|\varphi\|_q^{-\left({1\over \alpha}-{N\over 2q}\right)^{-1}}.$$
 For $q=\infty,$  the right-hand side term does not depend on $\mu$ and we have
$$T^\mu_{\max}(\varphi)\geq C\|\varphi\|_\infty^{-\alpha}.$$ See \cite{MY2,FI,FI2} for related estimates. Note that if $q<\infty$ then $\lim_{\mu\nearrow \infty}T^\mu_{\max}(\varphi)=\infty.$
\end{itemize}}
\end{rem}

Using the same method based on the contraction mapping argument as in \cite{TW2}, we also derive from \cite{W3}  the following lower estimate for the life-span   for the case of finite Borel measure. We denote by $\M$
the set of finite Borel measures on $\Omega$.
\begin{To}[Initial data measure]
\label{lowermeas}
Let $N\geq 1,$ $\alpha>0$  and $q_c$ be given by \eqref{qcritique}. If $\m$ is a finite Borel measure on $\Omega$, i.e. $\m  \in \M$, and if $q_c < 1$, i.e. $\alpha < \frac{2}{N}$, then { the existence time of the maximal solution} for  \eqref{NLH}-\eqref{NLHini} with initial data $u_0=\lambda \m$ satisfies
\begin{equation}
\label{bulambdam3}
T_{\max}(\lambda\m) { \geq}  \frac{C}{(\lambda\|\m\|_\M)^{(\frac{1}{\alpha} - \frac{N}{2})^{-1}}}
\end{equation}
for all $\lambda > 0$,
where $C = C(\alpha)>0$ is a constant.
\end{To}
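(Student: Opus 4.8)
The plan is to mimic, for measure data, the contraction--mapping construction that underlies Theorem~\ref{ThNLH}, replacing the $L^q$--$L^r$ smoothing estimates of the heat semigroup by the corresponding $\M$--$L^r$ estimates, and then to read off $T_{\max}$ from the smallness condition that makes the fixed point work. Writing $e^{t\Delta}$ for the Dirichlet heat semigroup on $\Omega$, the starting point is the smoothing estimate
\[
\|e^{t\Delta}\nu\|_{L^r(\Omega)}\le c\,t^{-\frac N2\left(1-\frac1r\right)}\|\nu\|_{\M},\qquad 1\le r\le\infty,\ \nu\in\M,
\]
valid because the Dirichlet heat kernel is dominated by the Gaussian kernel. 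Since $q_c<1$, one may fix $r\in(1,\infty)$ with $q_c<r$ and $\frac N2\!\left(1-\frac1r\right)(\alpha+1)<1$ (any $r$ slightly larger than $1$ works). Put $a:=\frac N2\!\left(1-\frac1r\right)$ and let $X_T$ be the Banach space of $u\in C\big((0,T];L^r(\Omega)\big)$ with $\|u\|_{X_T}:=\sup_{0<t<T}t^{a}\|u(t)\|_{L^r(\Omega)}<\infty$; the fixed point will be sought in the closed ball $B_M$ of radius $M:=2c\,\lambda\|\m\|_{\M}$.

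Next I would study the Duhamel map $\Phi(u)(t)=e^{t\Delta}(\lambda\m)+\int_0^t e^{(t-s)\Delta}\big(|u(s)|^\alpha u(s)\big)\,ds$ on $B_M$. The linear term lies in $B_{M/2}$, the weight $t^a$ being exactly the one produced by the displayed estimate. For the nonlinear term, H\"older's inequality gives $\|\,|u(s)|^\alpha u(s)\,\|_{L^{r/(\alpha+1)}}=\|u(s)\|_{L^r}^{\alpha+1}$, and the $L^{r/(\alpha+1)}$--$L^r$ smoothing bound contributes the kernel $(t-s)^{-\frac{N\alpha}{2r}}$; the resulting time integral is a Beta integral which converges precisely under the two conditions imposed on $r$, and a direct computation shows that, after multiplication by the weight $t^{a}$, the net power of $t$ is $t^{1-q_c}$ with $1-q_c>0$. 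Combining this with the elementary inequality $\big|\,|\xi|^\alpha\xi-|\eta|^\alpha\eta\,\big|\le(\alpha+1)\big(|\xi|^\alpha+|\eta|^\alpha\big)|\xi-\eta|$ yields, on $B_M$, estimates of the form $\|\Phi(u)\|_{X_T}\le \tfrac M2+c_1M^{\alpha+1}T^{1-q_c}$ and $\|\Phi(u)-\Phi(v)\|_{X_T}\le 2c_1M^{\alpha}T^{1-q_c}\|u-v\|_{X_T}$. Hence $\Phi$ is a contraction of $B_M$ into itself as soon as $4c_1M^{\alpha}T^{1-q_c}\le1$, that is, as soon as
\[
T\le C\,(\lambda\|\m\|_{\M})^{-\alpha/(1-q_c)},\qquad C=\big(4c_1(2c)^\alpha\big)^{-1/(1-q_c)}.
\]
Since $q_c=N\alpha/2$ one has $\alpha/(1-q_c)=\left(\tfrac1\alpha-\tfrac N2\right)^{-1}$, which is exactly the exponent appearing in \eqref{bulambdam3}.

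It then remains to identify this fixed point with the maximal solution in the statement. The fixed point $u\in B_M$ solves the integral equation on $[0,T]$; parabolic smoothing (bootstrapping on $L^\rho(\Omega)$ for $t>0$) shows that $u$ is a regular solution of \eqref{NLH} with $u(t)\rightharpoonup\lambda\m$ as $t\to0$, and the uniqueness part of the measure-data local theory of \cite{W3} identifies it, on its interval of existence, with the maximal solution. Therefore $T_{\max}(\lambda\m)\ge T$ for every $T$ in the above range, which is \eqref{bulambdam3}. I expect the only delicate point to be this matching with the framework of \cite{W3} --- the precise class in which uniqueness holds and the sense in which $\lambda\m$ is attained as an initial trace --- whereas the one-parameter bookkeeping of the exponents, exactly as in the proof of Theorem~\ref{ThNLH} and in \cite{TW2}, is routine. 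Note that no sign assumption on $\m$ is used, since everything is expressed through norms of $u$.
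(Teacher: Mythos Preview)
Your approach is essentially the same as the paper's: both arguments set up the contraction mapping in the space $\{u:\sup_{0<t\le T}t^\beta\|u(t)\|_r<\infty\}$ using the $\M\to L^r$ smoothing estimate $\|e^{t\Delta}\m\|_r\le(4\pi t)^{-\frac N2(1-\frac1r)}\|\m\|_\M$, and then read off the lower bound on $T_{\max}$ from the contraction condition. The paper simply invokes its Theorem~\ref{fixpt1} with $q=1$, $r=\alpha+1$ and argues by contraposition (the condition \eqref{fixptcd2} must fail at $T=T_{\max}$), whereas you redo the fixed point directly; the exponent bookkeeping is identical.

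One small slip: your parenthetical remark that ``any $r$ slightly larger than $1$ works'' is not correct. For the $L^{r/(\alpha+1)}\to L^r$ smoothing estimate on the nonlinear term you need $r/(\alpha+1)\ge1$, i.e.\ $r\ge\alpha+1$, a constraint you did not list. The paper's choice $r=\alpha+1$ satisfies all the required conditions simultaneously (then $a(\alpha+1)=\frac{N\alpha}{2}=q_c<1$ and $\frac{N\alpha}{2r}=\frac{N\alpha}{2(\alpha+1)}<1$). With this correction your argument goes through verbatim.
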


 We now  estimate from below the life-span of solutions for the nonlinear heat equation \eqref{NLH} in $\R^N$ with decaying initial data $u_0=\lambda\varphi$, which may be singular, without sign restriction and for any $\lambda>0.$  For $\gamma> 0,$ $1\leq q\leq \infty,$ we consider the weighted Lebesgue space
$$L^q_\gamma(\R^N)=\{f : \R^N\to \R,\mbox{mesurable}, |\cdot|^\gamma f\in L^q(\R^N)\},$$ endowed with the  norm
 $$\|f\|_{L^q_\gamma(\R^N)}:=\||\cdot|^\gamma f\|_{q}.$$ {  In Theorem \ref{th3} below, we give a  well-posedness result in weighted Lebesgue spaces for the nonlinear heat equation.  As a consequence,  we obtain}  the following lower bound estimates of the life-span.
 \begin{To}[Initial data in weighted Lebesgue spaces]
\label{lowerLqgamma}
 Let $N\geq 1$ and  $\alpha>0$. If  $\varphi \in L^q_\gamma(\R^N) $, where $0< \gamma<N,\; \gamma<2/\alpha, \; q\in (1,\infty]$ and 
{  \begin{equation}
\label{qcondition}
{1\over q}+{\gamma\over N}<1,\; {N\alpha\over 2q}+{\alpha\gamma\over 2}<1,
 \end{equation}
  then the existence time of   the maximal  solution of \eqref{NLH}-\eqref{NLHini} in $L^q_\gamma(\R^N)$} with $u_0=\lambda\varphi$ satisfies
\begin{equation}
\label{bulambdagamma3b}
T_{\max}(\lambda\varphi)  { \geq} \frac{C}{\left(\lambda\|\varphi\|_{L^q_{{\gamma}}}\right)^{(\frac{1}{\alpha} - \frac{N}{2q}-{\gamma\over 2})^{-1}}},
\end{equation}
for all $\lambda>0,$ where $C = C(\alpha,q,\gamma,N)>0$ is a constant.
\end{To}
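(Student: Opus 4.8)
\medskip\noindent
The plan is to read off \eqref{bulambdagamma3b} from the fixed-point construction underlying the local well-posedness statement in weighted Lebesgue spaces, Theorem \ref{th3} below, keeping explicit track of how the local existence time depends on $\|u_0\|_{L^q_\gamma(\R^N)}$. One works with the Duhamel formulation
\begin{equation*}
u(t)=e^{t\Delta}u_0+\int_0^t e^{(t-s)\Delta}\bigl(|u(s)|^\alpha u(s)\bigr)\,ds,
\end{equation*}
solved by a contraction in a space of the form
\begin{equation*}
\mathcal{K}_T=\Bigl\{u\in C\bigl((0,T];L^r_\delta(\R^N)\bigr):\ \sup_{0<t\le T}t^{\,a}\|u(t)\|_{L^r_\delta}\le M\Bigr\},
\end{equation*}
with an auxiliary pair $(r,\delta)$, $r\ge q$, $\delta\ge\gamma$, and with $a$ and $M$ forced by the scaling $u_\mu(t,x)=\mu^{2/\alpha}u(\mu^2t,\mu x)$ of \eqref{NLH}: one takes $a=\sigma$, where $\sigma$ is the exponent in the weighted smoothing estimate $t^{\sigma}\|e^{t\Delta}f\|_{L^r_\delta}\le C\|f\|_{L^q_\gamma}$, so that $e^{t\Delta}u_0\in\mathcal{K}_T$ with $\sup_t t^{a}\|e^{t\Delta}u_0\|_{L^r_\delta}\le C\|u_0\|_{L^q_\gamma}$, and then $M=2C\|u_0\|_{L^q_\gamma}$.

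The steps are then: (i) establish the two weighted $L^p$--$L^q$ heat-kernel estimates used, the homogeneous one just quoted and $\|e^{t\Delta}g\|_{L^r_\delta}\le C\,t^{-b}\|g\|_{L^{r/(\alpha+1)}_{\delta(\alpha+1)}}$ for the Duhamel term; (ii) combine them with the pointwise bound $\bigl\||u|^\alpha u\bigr\|_{L^{r/(\alpha+1)}_{\delta(\alpha+1)}}\le\|u\|_{L^r_\delta}^{\alpha+1}$ to get, for $u\in\mathcal{K}_T$ and $\theta:=1-\tfrac{N\alpha}{2q}-\tfrac{\alpha\gamma}{2}$,
\begin{equation*}
\sup_{0<t\le T}t^{\,a}\Bigl\|\int_0^t e^{(t-s)\Delta}\bigl(|u(s)|^\alpha u(s)\bigr)\,ds\Bigr\|_{L^r_\delta}\le C\,M^{\alpha+1}\,T^{\theta},
\end{equation*}
the Beta integral $\int_0^t(t-s)^{-b}s^{-a(\alpha+1)}\,ds$ converging (which requires $b<1$ and $a(\alpha+1)<1$, i.e. constrains the admissible $(r,\delta)$) and producing the power $t^{\theta}$; (iii) note that, since $\theta>0$, the same kind of estimate for differences makes the map a contraction on $\mathcal{K}_T$ as soon as $C M^{\alpha}T^{\theta}\le\tfrac12$, which by standard arguments (uniqueness, regularity, continuation) yields the maximal solution in $L^q_\gamma(\R^N)$; and (iv) because $M\sim\|u_0\|_{L^q_\gamma}$, conclude $T_{\max}(u_0)\ge c\,\|u_0\|_{L^q_\gamma}^{-\alpha/\theta}$ with $\alpha/\theta=\bigl(\tfrac1\alpha-\tfrac{N}{2q}-\tfrac\gamma2\bigr)^{-1}$, so that $u_0=\lambda\varphi$ gives \eqref{bulambdagamma3b}. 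Equivalently, once Theorem \ref{th3} provides a uniform existence time on the unit ball of $L^q_\gamma(\R^N)$, a bare dilation argument suffices: $\|\mu^{2/\alpha}\varphi(\mu\,\cdot)\|_{L^q_\gamma}=\mu^{2/\alpha-\gamma-N/q}\|\varphi\|_{L^q_\gamma}$ and $T_{\max}\bigl(\lambda\mu^{2/\alpha}\varphi(\mu\,\cdot)\bigr)=\mu^{-2}T_{\max}(\lambda\varphi)$, and since $2/\alpha-\gamma-N/q>0$ by \eqref{qcondition} one may choose $\mu$ so that the rescaled initial norm equals $1$; simplification then gives \eqref{bulambdagamma3b}.

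I expect the main obstacle to lie not in the life-span bookkeeping above, which is routine once the exponents are fixed, but in the underlying Theorem \ref{th3}: one must exhibit an admissible pair $(r,\delta)$ for which all the weighted heat-semigroup estimates hold with sub-unit time exponents, and give meaning to the nonlinearity $|u|^\alpha u$ --- with $\alpha$ possibly non-integer and $u$ sign-changing and singular at the origin --- inside $\mathcal{K}_T$. The two inequalities in \eqref{qcondition} are exactly what make this possible: the first, $\tfrac1q+\tfrac\gamma N<1$, restricts the weight so that $|x|^{\gamma}$-weighted Lebesgue spaces behave well under $e^{t\Delta}$ and an admissible $(r,\delta)$ exists (together with $0<\gamma<N$, $\gamma<2/\alpha$), while the second, $\tfrac{N\alpha}{2q}+\tfrac{\alpha\gamma}{2}<1$, is precisely $\theta>0$, the subcriticality that lets the contraction close on an interval shrinking like $\|u_0\|_{L^q_\gamma}^{-\alpha/\theta}$. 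The bulk of the work is therefore the verification of these weighted kernel estimates and of the admissibility of $(r,\delta)$, after which \eqref{bulambdagamma3b} follows as indicated.
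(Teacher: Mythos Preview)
Your proposal is correct and follows essentially the same approach as the paper: derive \eqref{bulambdagamma3b} from the contraction-mapping condition \eqref{equKMT} in the proof of Theorem~\ref{th3}, taking $K=\lambda\|\varphi\|_{L^q_\gamma}$ and $M=2K$, so that failure of the fixed point at $T=T_{\max}$ forces $C T_{\max}^{\theta}(\lambda\|\varphi\|_{L^q_\gamma})^{\alpha}>1$ with $\theta=1-\frac{N\alpha}{2q}-\frac{\alpha\gamma}{2}$. One small caveat: in the auxiliary space the paper actually takes a \emph{smaller} weight exponent, $\nu=\gamma/(\alpha+1)<\gamma$ (with $r=(\alpha+1)q$), not $\delta\ge\gamma$ as you write; this does not affect your argument since you correctly defer that choice to Theorem~\ref{th3}.
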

{ \begin{rem}{\rm Under the conditions $0< \gamma<N,\; \gamma<2/\alpha,\; 1<q\leq \infty,$ \eqref{qcondition} is equivalent to $$q>{N\over N-\gamma}\; \mbox{and}\; q>{N\alpha\over 2-\gamma\alpha}.$$ }
\end{rem}}
Combining the results of Theorems \ref{lowerLqgamma} and \ref{ThNLH}, we get the following estimates of the {    existence time of  the maximal } solution
to \eqref{NLH}-\eqref{NLHini}.
\begin{cor}
\label{lowerLqLqgamma}
  Let $N\geq 1$,  $\alpha>0,$  $\gamma>0,$  $\gamma\not=N$ and $q_c$ be given by \eqref{qcritique}. Assume that
  { $$\min\left[{N\alpha\over 2},{\gamma\alpha\over 2}\right]<1.$$}
   Let $\varphi \in L^q(\R^N)\cap L^q_\gamma(\R^N) $, where $q\in (1,\infty],\; q>q_c.$ If  $\gamma<N$ we assume further {  \eqref{qcondition}.  Then the existence time of   the maximal  solution of \eqref{NLH}-\eqref{NLHini} in $L^q(\R^N)$} with $u_0=\lambda\varphi$ satisfies
\begin{equation}
\label{bulambdagamma3}
T_{\max}(\lambda\varphi) \geq C\lambda^{-\left(\frac{1}{\alpha} - \frac{1}{2}\min\left({N\over q}+\gamma,N\right)\right)^{-1}},
\end{equation}
for all $0<\lambda\leq 1,$ where $C$ is a positive constant, $C = C(\alpha,q,\gamma,N,\|\varphi\|_{L^q_\gamma\cap L^q})$ if $\gamma<N$ and $C = C(\alpha,q,\gamma,N,\|\varphi\|_1)$ if $\gamma>N.$
\end{cor}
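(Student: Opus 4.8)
The plan is to derive Corollary \ref{lowerLqLqgamma} by simply combining the two lower bounds already established, one from Theorem \ref{ThNLH} (valid because $\varphi \in L^q(\R^N)$ with $q > q_c$, or with $q=1$ when $q_c < 1$) and one from Theorem \ref{lowerLqgamma} (valid when $\gamma < N$ and condition \eqref{qcondition} holds), and then keeping whichever is larger for $0 < \lambda \le 1$. The hypothesis $\min[N\alpha/2, \gamma\alpha/2] < 1$ is exactly what guarantees that at least one of the two exponents $(\frac1\alpha - \frac{N}{2q})^{-1}$ and $(\frac1\alpha - \frac{N}{2q} - \frac{\gamma}{2})^{-1}$ is well defined and positive; I will need to check this case-by-case.

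First I would treat the case $\gamma < N$. Here both hypotheses are in force, so Theorem \ref{ThNLH} gives $T_{\max}(\lambda\varphi) \ge C_1 (\lambda\|\varphi\|_q)^{-(\frac1\alpha - \frac{N}{2q})^{-1}}$ and Theorem \ref{lowerLqgamma} gives $T_{\max}(\lambda\varphi) \ge C_2 (\lambda\|\varphi\|_{L^q_\gamma})^{-(\frac1\alpha - \frac{N}{2q} - \frac{\gamma}{2})^{-1}}$. Note the two maximal-solution existence times coincide: the $L^q_\gamma$ solution is also an $L^q$ solution by the embedding and the uniqueness/consistency statements (this is the analogue of Remark \ref{remarksLq}(1), and will be spelled out via Theorem \ref{th3} and Proposition \ref{C0nlh}). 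Since $0 < \gamma$, we have $\frac1\alpha - \frac{N}{2q} - \frac{\gamma}{2} < \frac1\alpha - \frac{N}{2q}$, so comparing exponents: writing $a = (\frac1\alpha - \frac{N}{2q})^{-1}$ and $b = (\frac1\alpha - \frac{N}{2q} - \frac{\gamma}{2})^{-1}$, we have $b > a > 0$ when $\frac1\alpha - \frac{N}{2q} - \frac{\gamma}{2} > 0$, hence for $0 < \lambda \le 1$ the bound $\lambda^{-b}$ dominates $\lambda^{-a}$; and $\frac{N}{q} + \gamma < N$ in this sub-range so $\min(\frac{N}{q}+\gamma, N) = \frac{N}{q} + \gamma$, matching the exponent in \eqref{bulambdagamma3}. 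If instead $\frac1\alpha - \frac{N}{2q} - \frac{\gamma}{2} \le 0$ (so the second exponent is not usable), then necessarily $\frac1\alpha - \frac{N}{2q} > 0$ forces... actually here one must be careful: in that regime $\frac{N}{q} + \gamma \ge \frac{2}{\alpha}$, so $\min(\frac{N}{q}+\gamma, N)$ could equal $N$, and one uses Theorem \ref{ThNLH} together with the hypothesis $\min[N\alpha/2,\gamma\alpha/2] < 1$ to conclude; I will organize this as a short sub-case analysis, absorbing constants (which may depend on $\|\varphi\|_{L^q_\gamma \cap L^q}$) when passing from $\|\varphi\|_q^{-a}$ or $\|\varphi\|_{L^q_\gamma}^{-b}$ to a bare power of $\lambda$, using $0 < \lambda \le 1$.

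Next I would treat $\gamma > N$. Then $\varphi \in L^q_\gamma(\R^N)$ with $\gamma > N$ combined with $\varphi \in L^q$ (or directly) forces $\varphi \in L^1(\R^N)$: indeed $|x|^\gamma\varphi \in L^q$ and $\gamma q > N$ gives integrability near infinity of $|\varphi|$ via Hölder against $|x|^{-\gamma}$ on $|x|\ge 1$, while on $|x| \le 1$ one uses $\varphi \in L^q$. So $\varphi \in L^1(\R^N)$, and since $\min[N\alpha/2,\gamma\alpha/2] < 1$ with $\gamma > N$ forces $N\alpha/2 < 1$, i.e. $q_c < 1$, Theorem \ref{ThNLH} applies with $q = 1$, giving $T_{\max}(\lambda\varphi) \ge C(\lambda\|\varphi\|_1)^{-(\frac1\alpha - \frac{N}{2})^{-1}}$; since $\min(\frac{N}{q}+\gamma, N) = N$ when $\gamma > N$, this is exactly \eqref{bulambdagamma3} with constant depending on $\|\varphi\|_1$, and for $0 < \lambda \le 1$ we absorb $\|\varphi\|_1$.

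The main obstacle is not analytic but bookkeeping: making sure the various maximal existence times referred to in Theorems \ref{ThNLH}, \ref{lowerLqgamma} and \ref{th3} are literally the same number (so that the two lower bounds can be applied to a single quantity $T_{\max}(\lambda\varphi)$), and tracking precisely when each exponent is positive so that the $\min(\frac{N}{q}+\gamma, N)$ in the statement is the correct one in every sub-case — in particular the borderline situation where $\frac1\alpha = \frac{N}{2q} + \frac{\gamma}{2}$ exactly, which must be excluded or handled by falling back on Theorem \ref{ThNLH} alone. Everything else is a one-line comparison of powers of $\lambda \in (0,1]$.
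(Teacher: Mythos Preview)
Your proposal is correct and follows essentially the same route as the paper: split on $\gamma<N$ versus $\gamma>N$, use Theorem~\ref{lowerLqgamma} in the former case and show $\varphi\in L^1(\R^N)$ (then apply Theorem~\ref{ThNLH} with $q=1$) in the latter. The identification of the maximal existence times via Proposition~\ref{C0nlh}/Proposition~\ref{C0} is exactly what the paper invokes as well.

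One simplification you are missing: your ``borderline'' sub-case $\frac{1}{\alpha}-\frac{N}{2q}-\frac{\gamma}{2}\le 0$ in the $\gamma<N$ analysis never occurs. Condition~\eqref{qcondition}, which is assumed whenever $\gamma<N$, says precisely that $\frac{1}{q}+\frac{\gamma}{N}<1$ and $\frac{N\alpha}{2q}+\frac{\gamma\alpha}{2}<1$; the second inequality is equivalent to $\frac{1}{\alpha}-\frac{N}{2q}-\frac{\gamma}{2}>0$, and the first gives $\frac{N}{q}+\gamma<N$, so $\min\bigl(\frac{N}{q}+\gamma,\,N\bigr)=\frac{N}{q}+\gamma$ automatically. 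Hence in the $\gamma<N$ case a single application of Theorem~\ref{lowerLqgamma} already yields~\eqref{bulambdagamma3}; there is no need to invoke Theorem~\ref{ThNLH} at all here, and no sub-case analysis is required. The paper's proof is organized this way (with an additional, purely expository, split on $N\alpha\lessgtr 2$ to check that $\gamma<2/\alpha$ and \eqref{qcondition} are indeed satisfied).
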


\begin{rem}
{\rm $\,$
\begin{itemize}
\item[1)] For the particular case $q=\infty,$ Corollary \ref{lowerLqLqgamma} includes that of \cite[Theorem 3.21, p. 376]{LeeNI} and \cite[Theorem 5.1, p. 128]{HisaIshige}, where  $\varphi$ is continuous, $\varphi\geq 0,$ $\varphi\in L^\infty(\R^N)\cap L^\infty_\gamma(\R^N),\; \gamma>0.$
The novelty of our estimate {is} that it holds  without any condition on the sign of the initial data. Unlike \cite{LeeNI,HisaIshige}, the case $\gamma=N$ is not considered here.
 \item[2)] Corollary \ref{lowerLqLqgamma}  is totally new if $q<\infty.$
 \item[3)] Obviously, if $\lambda>1,$ \eqref{lowerNLH} is better than \eqref{bulambdagamma3}, which itself holds for all $\lambda>0,$ as shown in the proof.
 \end{itemize}}
\end{rem}

The solution of \eqref{NLH}-\eqref{NLHini} constructed with initial data in $L^q_\gamma(\R^N)$ { is in $C_0(\R^N)$ for $t>0$,} by Proposition  \ref{C0} below. This is well-known to hold also for the solution with initial data in $L^p(\R^N), \; p<\infty.$ So the constructed solution for initial data in $L^p(\R^N)\cap L^q_\gamma(\R^N)$ can be extended to a maximal solution of  \eqref{NLH}-\eqref{NLHini}, $u : (0,T_{\max}) \to C_0(\R^N)$. This maximal existence time is {  equal to that} in $L^p(\R^N)$ or in $L^q_\gamma(\R^N),$ as shown in Proposition  \ref{C0} below. In the following result, which extends Corollary \ref{lowerLqLqgamma} for $0<\gamma<N,$ we give {  a  lower bound estimate of the life-span}  for initial data in $L^p(\R^N)\cap L^q_\gamma(\R^N)$.
\begin{cor}
\label{lowerLpLqgamma}
 Let $N\geq 1,$ $\alpha>0$  and $q_c$ be given by \eqref{qcritique}. If $\varphi \in L^p(\R^N)\cap L^q_\gamma(\R^N) $, where $p>q_c,\; 1\leq p\leq \infty$, $0< \gamma<N,\; \gamma<2/\alpha, \; q\in (1,\infty]$ and { satisfies \eqref{qcondition}, then the existence time of  the maximal solution} of \eqref{NLH}-\eqref{NLHini} with $u_0=\lambda\varphi$ satisfies
\begin{equation}
\label{bulambdagamma3pq}
T_{\max}(\lambda\varphi) \geq C \begin{cases}{\lambda^{-\left(\frac{1}{\alpha} -\frac{N}{2} \max\left(\frac{1}{q}+{\gamma\over N},\frac{1}{p}\right)\right)^{-1}}},\; \mbox{if}\quad\, 0<\lambda\leq 1,\\
  {\lambda^{-\left(\frac{1}{\alpha} - \frac{N}{2} \min(\frac{1}{q}+{\gamma\over N},\frac{1}{p})\right)^{-1}}},\; \mbox{if}\quad\, \lambda>1,
  \end{cases}
\end{equation}
where $C = C(\alpha,p,q,\gamma,N,\|\varphi\|_{L^p\cap L^q_{{\gamma}}})>0$ is a constant.
\end{cor}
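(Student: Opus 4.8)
The plan is to deduce \eqref{bulambdagamma3pq} directly from the two lower bounds already established: Theorem \ref{ThNLH} read in the space $L^p(\R^N)$, and Theorem \ref{lowerLqgamma} read in the space $L^q_\gamma(\R^N)$. Both apply to the \emph{same} maximal trajectory issued from $\lambda\varphi$, so for each range of $\lambda$ one simply keeps whichever of the two resulting estimates is the stronger.

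First I would pin down the meaning of $T_{\max}(\lambda\varphi)$ used in the statement. Since $\varphi\in L^p(\R^N)\cap L^q_\gamma(\R^N)$, the maximal solution of \eqref{NLH}-\eqref{NLHini} with $u_0=\lambda\varphi$ can be constructed in $L^p(\R^N)$ --- by the local well-posedness recalled before Theorem \ref{ThNLH} --- and in $L^q_\gamma(\R^N)$ --- by the well-posedness result underlying Theorem \ref{lowerLqgamma}; by Proposition \ref{C0} and the discussion preceding this corollary, both become $C_0(\R^N)$-valued for $t>0$, coincide, and have one and the same maximal existence time, which is $T_{\max}(\lambda\varphi)$. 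I would then apply Theorem \ref{ThNLH} with exponent $p$ to get
$$
T_{\max}(\lambda\varphi)\ \geq\ C_1\,\lambda^{-a},\qquad a:=\Bigl(\tfrac1\alpha-\tfrac{N}{2p}\Bigr)^{-1},\quad C_1:=C(\alpha,p)\,\|\varphi\|_p^{-a},
$$
and Theorem \ref{lowerLqgamma}, whose hypotheses are exactly those assumed here, to get
$$
T_{\max}(\lambda\varphi)\ \geq\ C_2\,\lambda^{-b},\qquad b:=\Bigl(\tfrac1\alpha-\tfrac{N}{2q}-\tfrac\gamma2\Bigr)^{-1},\quad C_2:=C(\alpha,q,\gamma,N)\,\|\varphi\|_{L^q_\gamma}^{-b},
$$
both valid for all $\lambda>0$, with $a,b>0$ thanks to $p>q_c$ and \eqref{qcondition}.

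Then comes the elementary bookkeeping. Using $\tfrac N2\bigl(\tfrac1q+\tfrac\gamma N\bigr)=\tfrac{N}{2q}+\tfrac\gamma2$, one checks that the exponent appearing on the right-hand side of \eqref{bulambdagamma3pq} equals $\max(a,b)$ when $0<\lambda\leq1$ and $\min(a,b)$ when $\lambda>1$. Since $s\mapsto\lambda^{-s}$ is nondecreasing for $0<\lambda\leq1$, one has $\max(\lambda^{-a},\lambda^{-b})=\lambda^{-\max(a,b)}$ there, so combining the two displays gives $T_{\max}(\lambda\varphi)\geq\max(C_1\lambda^{-a},C_2\lambda^{-b})\geq\min(C_1,C_2)\,\lambda^{-\max(a,b)}$; while for $\lambda>1$ the map $s\mapsto\lambda^{-s}$ is nonincreasing, so $\max(\lambda^{-a},\lambda^{-b})=\lambda^{-\min(a,b)}$ and likewise $T_{\max}(\lambda\varphi)\geq\min(C_1,C_2)\,\lambda^{-\min(a,b)}$. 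Setting $C:=\min(C_1,C_2)$, a constant depending only on $\alpha,p,q,\gamma,N$ and $\|\varphi\|_{L^p\cap L^q_\gamma}$, finishes the argument.

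Thus, once Theorems \ref{ThNLH} and \ref{lowerLqgamma} are granted, the corollary costs almost nothing. The one point I would be careful about --- and the only genuinely substantive one --- is the first step: that the maximal existence time is the same whether the solution is viewed in $L^p(\R^N)$, in $L^q_\gamma(\R^N)$, or in $C_0(\R^N)$, so that the two a priori distinct lower bounds really do constrain a single number $T_{\max}(\lambda\varphi)$. This is precisely what Proposition \ref{C0} supplies.
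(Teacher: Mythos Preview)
Your proof is correct and follows essentially the same approach as the paper: apply Theorem~\ref{ThNLH} in $L^p$ and Theorem~\ref{lowerLqgamma} in $L^q_\gamma$ to the same maximal solution (using Proposition~\ref{C0} to identify the maximal existence times), then keep the better of the two bounds depending on whether $\lambda\le 1$ or $\lambda>1$. The paper dresses this up with an interpolation parameter $\theta\in[0,1]$ and the family $L^r_\tau$ with $\tfrac1r=\tfrac\theta p+\tfrac{1-\theta}q$, $\tau=(1-\theta)\gamma$, but in the end only the endpoints $\theta=0,1$ are actually invoked, so the content is identical to yours.
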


We now turn to results based on the second method, which gives  upper-bounds on the life-span  and which  requires positivity. We distinguish the cases when  $\lambda>0$ is large or $\lambda>0$ is small and begin with the case  $\lambda$ is large. By \cite{L}, $T_{\max}(\lambda\varphi) <\infty$ in this case, if $\varphi$ is sufficiently regular. {  By \cite[Theorem 1]{W5}, it follows  that if $\varphi\ge 0$ is either a locally integrable function or a positive Borel measure on $\Omega$,  $\varphi \not\equiv 0$,  then  $T_{\max}(\lambda\varphi) < \infty$ for all sufficiently large $\lambda > 0.$ See section 5. See also \cite[Theorem 2, p. 882]{MW} for $\varphi\in C_0(\Omega)$.}
Our first life-span upper bound is as follows.
\begin{To}
\label{upperLinfty}
Let $N\geq 1$ and $\alpha>0$. Let $\varphi \in L^\infty(\Omega)$, $\varphi \ge 0$ and $\varphi \not\equiv 0$.
It follows that the existence time for the  maximal solution of \eqref{NLH}-\eqref{NLHini} with $u_0=\lambda\varphi$ satisfies { $T_{\max}(\lambda\varphi)<\infty$ for $\lambda>0$ sufficiently large and}
\begin{equation}
\label{upperestcont}
\limsup_{\lambda \to \infty} \lambda^\alpha T_{\max}(\lambda\varphi) \le \frac{1}{\alpha\|\varphi\|_\infty^\alpha}.
\end{equation}
\end{To}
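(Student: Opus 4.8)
The plan is to establish both assertions at once by invoking the ``necessary condition for the existence of nonnegative solutions'' in its eigenfunction (Kaplan) form, applied on a small ball chosen so that a suitable weighted average of $\varphi$ is as close as we wish to $\|\varphi\|_\infty$. Write $M=\|\varphi\|_\infty>0$. Since $\lambda\varphi\ge 0$, the comparison principle gives $u:=u_{\lambda\varphi}\ge 0$ on $[0,T_{\max}(\lambda\varphi))$; moreover $u$ is smooth on $(0,T_{\max}(\lambda\varphi))\times\Omega$, and $u(t)\to\lambda\varphi$ in the weak-$\ast$ topology of $L^\infty(\Omega)$ as $t\to 0^+$ (a property of the mild solution, coming from the integral equation). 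Conceptually --- via the rescaling $v(s,x)=\lambda^{-1}u(\lambda^{-\alpha}s,x)$ of Remark~\ref{remarksLq}(5), which solves $v_s=\lambda^{-\alpha}\Delta v+v^{1+\alpha}$, $v(0)=\varphi$, and satisfies $T_{\max}(\lambda\varphi)=\lambda^{-\alpha}S_{\max}(v)$ --- the statement says precisely that, as the viscosity $\lambda^{-\alpha}$ vanishes, the blow-up time of $v$ does not exceed, in the limit, that of the ODE $y'=y^{1+\alpha}$, $y(0)=\|\varphi\|_\infty$, namely $(\alpha\|\varphi\|_\infty^\alpha)^{-1}$.

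First I would choose the ball. Fix $\varepsilon\in(0,M)$. The set $A:=\{x\in\Omega:\varphi(x)>M-\varepsilon\}$ has positive Lebesgue measure, so by the Lebesgue density theorem there is a point $x_0\in A$ of density one. Let $\psi\ge0$ be the first Dirichlet eigenfunction of $-\Delta$ on the ball $B:=B(x_0,r)$, normalized by $\int_B\psi=1$, with eigenvalue $\mu_r=\mu_1 r^{-2}$, where $\mu_1=\mu_1(B(0,1))$. By scaling, $\|\psi\|_{L^\infty(B)}\le C_N|B|^{-1}$ with $C_N$ depending only on $N$, so, since $0\le\varphi\le M$ a.e. and $\varphi>M-\varepsilon$ on $A$,
\[
\int_B\varphi\,\psi\,dx\ \ge\ (M-\varepsilon)\int_{A\cap B}\psi\,dx\ \ge\ (M-\varepsilon)\Bigl(1-C_N\frac{|B\setminus A|}{|B|}\Bigr)\ \ge\ M-2\varepsilon,
\]
provided $r$ is small enough (using that $|B(x_0,r)\setminus A|/|B(x_0,r)|\to 0$ as $r\to0$); shrinking $r$ further we may assume $\overline{B}\subset\Omega$. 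The eigenvalue $\mu_r$ is now a fixed finite number depending on $\varepsilon$.

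Next comes the necessary condition. Set $F(t)=\int_B u(t,x)\psi(x)\,dx$. For $t\in(0,T_{\max}(\lambda\varphi))$, differentiating under the integral, applying Green's identity on $B$ (the boundary term $-\int_{\partial B}u\,\partial_\nu\psi\,dS$ is $\ge0$ because $\psi$ vanishes on $\partial B$, $\partial_\nu\psi\le0$ and $u\ge0$), using $-\Delta\psi=\mu_r\psi$, and then Jensen's inequality for the probability measure $\psi\,dx$, one obtains $F'(t)\ge -\mu_r F(t)+F(t)^{1+\alpha}$. By the weak-$\ast$ convergence above, $F$ is continuous on $[0,T_{\max}(\lambda\varphi))$ with $F(0)=\lambda\int_B\varphi\,\psi\ge\lambda(M-2\varepsilon)$. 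A standard ODE comparison then gives $F\ge G$, where $G$ solves $G'=-\mu_r G+G^{1+\alpha}$, $G(0)=\lambda(M-2\varepsilon)$, on the common interval of existence. As soon as $\lambda^\alpha(M-2\varepsilon)^\alpha>\mu_r$ the function $G$ is strictly increasing and blows up at the finite time $\int_{\lambda(M-2\varepsilon)}^\infty\frac{dy}{y^{1+\alpha}-\mu_r y}$; since $F$ must stay finite on $[0,T_{\max}(\lambda\varphi))$, this forces both $T_{\max}(\lambda\varphi)<\infty$ and $T_{\max}(\lambda\varphi)\le\int_{\lambda(M-2\varepsilon)}^\infty\frac{dy}{y^{1+\alpha}-\mu_r y}$. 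Substituting $y=\lambda z$ yields
\[
\lambda^\alpha T_{\max}(\lambda\varphi)\ \le\ \int_{M-2\varepsilon}^{\infty}\frac{dz}{z^{1+\alpha}-\lambda^{-\alpha}\mu_r z},
\]
whose right-hand side converges, as $\lambda\to\infty$, to $\int_{M-2\varepsilon}^\infty z^{-1-\alpha}\,dz=(\alpha(M-2\varepsilon)^\alpha)^{-1}$ by dominated convergence. Hence $\limsup_{\lambda\to\infty}\lambda^\alpha T_{\max}(\lambda\varphi)\le(\alpha(M-2\varepsilon)^\alpha)^{-1}$, and letting $\varepsilon\to0$ gives \eqref{upperestcont}.

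The main obstacle is the ball selection: producing a ball $B$ on which the $\psi$-weighted average of $\varphi$ is within $\varepsilon$ of $\|\varphi\|_\infty$, for a function that is merely bounded and possibly nowhere continuous. This is exactly where the Lebesgue density theorem enters, together with the dimensional bound $\|\psi\|_{L^\infty(B)}\lesssim|B|^{-1}$ for the normalized first Dirichlet eigenfunction of a ball; everything else --- the Kaplan differential inequality and the comparison with a one-dimensional ODE --- is routine. A minor technical point is the passage to $t=0$ in the comparison when $\varphi\notin C(\Omega)$, which is handled by the weak-$\ast$ continuity of $t\mapsto u(t)$ and the continuous dependence of the comparison ODE on its initial datum.
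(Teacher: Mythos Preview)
Your proof is correct but follows a genuinely different route from the paper's. The paper does not use Kaplan's eigenfunction method at all; instead it invokes the pointwise necessary condition from \cite{W5} (recorded as Proposition~\ref{lincd}): if a nonnegative solution exists on $(0,T]$ then $\alpha T\|e^{T\Delta}\varphi\|_\infty^\alpha\le\lambda^{-\alpha}$. Since \eqref{neccdlam} already forces $T_{\max}(\lambda\varphi)\to 0$ as $\lambda\to\infty$, the result follows once one checks that $\|e^{t\Delta}\varphi\|_\infty\to\|\varphi\|_\infty$ as $t\to 0$, which the paper proves in two lines via the contraction bound $\|e^{t\Delta}\varphi\|_\infty\le\|\varphi\|_\infty$ together with weak-$\ast$ lower semicontinuity of the norm.

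The trade-off: the paper's argument is much shorter and, more importantly, the same template (Proposition~\ref{lincd} plus a limit computation for $\|e^{t\Delta}\varphi\|_\infty$) is reused verbatim for all the upper-bound theorems in Section~\ref{ubnnsol}, including the singular and decaying cases. Your argument is fully self-contained---it does not rely on the \cite{W5} inequality---and the Lebesgue density step is the right replacement for continuity of $\varphi$, but it is tailored to this particular $L^\infty$ statement and would not extend as cleanly to, say, Theorem~\ref{uppernegpower1} or Theorem~\ref{uppermeas}. Your ball-selection and the dominated-convergence step at the end are both carried out correctly.
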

\begin{rem}
{\rm $\,$
\begin{itemize}
\item[1)] Theorem~\ref{ThNLH} and Theorem~\ref{upperLinfty} together show that
\begin{equation}
T_{\max}(\lambda\varphi) \sim (\lambda\|\varphi\|_\infty)^{-\alpha}, \; \lambda \to \infty
\end{equation}
whenever $\varphi \ge 0$, $\varphi \not\equiv 0$, $\varphi \in L^\infty(\Omega)$. This extends the result of \cite[Theorem 3.2 (ii), p. 372]{LeeNI} to $L^\infty$ initial data.
The lower estimate is valid even if $\varphi$ is not necessarily positive.
\item[2)] With the notation of Part 5) of Remark \ref{remarksLq} and using Theorem~\ref{upperLinfty}, we have that for $\varphi \ge 0$, $\varphi \not\equiv 0$, $\varphi \in L^\infty(\Omega),$ the maximal existence time of \eqref{diffusioncoef} satisfies
$$\limsup_{\mu\searrow 0}T^\mu_{\max}(\varphi)\leq {1\over \alpha}\|\varphi\|_\infty^{-\alpha}$$  and hence combined with  Part 5) of Remark \ref{remarksLq} we have $T^\mu_{\max}(\varphi)\sim {1\over \alpha}\|\varphi\|_\infty^{-\alpha},$ as $\mu \to 0.$ It is shown in \cite[Theorem 1, p. 351]{MY2}  that $\lim_{\mu\searrow 0}T^\mu_{\max}(\varphi)={1\over \alpha}\|\varphi\|_\infty^{-\alpha},$  without sign restriction on $\varphi$ but, unlike our case, only for $\Omega $ a bounded domain and assuming also  $\varphi$ a continuous function on $\overline{\Omega}$.
\item[3)] {  Theorem~\ref{upperLinfty} is known for bounded domain and regular initial data, see \cite{Sperb,Sato}. See also \cite[Remark 17.2(i), p. 92]{QS3} for other estimates in bounded domain.}
\end{itemize}}
\end{rem}

We now consider positive initial data {  which are singular}   near the origin, where we restrict ourselves to  the case $\Omega=\R^N.$ We have obtained the following.
\begin{To}
\label{uppernegpower1}
Let $N\geq 1$ and $\alpha>0$. Let $0 < \gamma < N$, $\gamma < \frac{2}{\alpha}$ and  let $\omega \in L^\infty(\R^N)$
be homogeneous of degree $0$, $\omega \ge 0$ and $\omega \not\equiv 0$. Suppose
that $\varphi \in L^1_{loc}(\R^N)$, $\varphi \ge 0$
is such that $\varphi(x) \ge \omega(x)|x|^{-\gamma}$ for $|x| \le \epsilon$, and some $\epsilon > 0$.
It follows that { $T_{\max}(\lambda\varphi)<\infty$ for $\lambda>0$ sufficiently large and}
\begin{equation}
\limsup_{\lambda \to \infty} \lambda^{(\frac{1}{\alpha} - \frac{\gamma}{2})^{-1}} T_{\max}(\lambda\varphi)
\le \frac{1}{(\alpha^{1/\alpha}\|{\rm e}^{\Delta}(\omega|\cdot|^{-\gamma})\|_\infty)^{(\frac{1}{\alpha} - \frac{\gamma}{2})^{-1}}}.
\end{equation}
\end{To}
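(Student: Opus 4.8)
The plan is to use the second method — a necessary condition for the existence of nonnegative solutions — in a scale-invariant form, combined with the homogeneity of $V:=\omega(\cdot)|\cdot|^{-\gamma}$. The necessary condition I would use is a Kaplan-type lower bound driven by the heat kernel as a moving test function; this is the shape taken here by the condition of \cite{W5}, and it can also be reproved directly. Let $u\ge 0$ be the maximal solution of \eqref{NLH}--\eqref{NLHini} with $u_0=\lambda\varphi$, fix $y\in\R^N$ and $\tau>0$, and set $\rho_t(x)=(4\pi(\tau-t))^{-N/2}{\rm e}^{-|x-y|^2/(4(\tau-t))}$ and $F(t)=\int_{\R^N}u(t,x)\rho_t(x)\,dx$ for $0\le t<\min(\tau,T_{\max}(\lambda\varphi))$. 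Since $\partial_t\rho_t=-\Delta\rho_t$, Green's identity gives $F'(t)=\int_{\R^N}u(t,x)^{\alpha+1}\rho_t(x)\,dx$; as $\int\rho_t=1$ and $u\ge 0$, Jensen's inequality yields $F'\ge F^{\alpha+1}$, while $u(t)\ge {\rm e}^{t\Delta}u_0$ (mild formulation and positivity) gives $F(t)\ge F(0)=\lambda({\rm e}^{\tau\Delta}\varphi)(y)$. Comparing $F$ with the ODE $z'=z^{\alpha+1}$ then shows: if $F(0)>0$ and $\bigl(\alpha F(0)^{\alpha}\bigr)^{-1}<\tau$, then $T_{\max}(\lambda\varphi)\le\bigl(\alpha F(0)^{\alpha}\bigr)^{-1}<\infty$.

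Next I would bring in the homogeneity. Since $\omega$ is homogeneous of degree $0$, $V$ is homogeneous of degree $-\gamma$, and a change of variables gives the scaling identity ${\rm e}^{t\Delta}V(x)=t^{-\gamma/2}({\rm e}^{\Delta}V)(x/\sqrt t)$; moreover $K:=\|{\rm e}^{\Delta}V\|_\infty\in(0,\infty)$ because $\omega\ge 0$, $\omega\not\equiv 0$ and $0<\gamma<N$. Given $\delta>0$, pick $\xi\in\R^N$ with $({\rm e}^{\Delta}V)(\xi)\ge K-\delta$ and take $y=y_\tau:=\sqrt\tau\,\xi$. Using $\varphi\ge V\mathbf 1_{\{|x|\le\epsilon\}}$, the splitting $V=V\mathbf 1_{\{|x|\le\epsilon\}}+V\mathbf 1_{\{|x|>\epsilon\}}$, the bound $0\le{\rm e}^{\tau\Delta}(V\mathbf 1_{\{|x|>\epsilon\}})\le\|\omega\|_\infty\epsilon^{-\gamma}$, and the scaling identity, one obtains, once $\sqrt\tau\,|\xi|<\epsilon$,
\[
({\rm e}^{\tau\Delta}\varphi)(y_\tau)\ \ge\ \tau^{-\gamma/2}(K-\delta)-\|\omega\|_\infty\epsilon^{-\gamma}.
\]

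Finally I would choose $\tau=\beta\,\lambda^{-(\frac1\alpha-\frac\gamma2)^{-1}}$, which tends to $0$ as $\lambda\to\infty$, so that both the cutoff at $|x|=\epsilon$ and the term $\|\omega\|_\infty\epsilon^{-\gamma}$ are of lower order. A short computation then shows that, for $\lambda$ large, $F_0:=\lambda({\rm e}^{\tau\Delta}\varphi)(y_\tau)\ge\beta^{-\gamma/2}(K-2\delta)\,\lambda^{\frac1\alpha(\frac1\alpha-\frac\gamma2)^{-1}}$, and — the point that makes the optimization close — that $\tau\,\alpha F_0^{\alpha}$ is bounded below by the $\lambda$-independent quantity $\alpha\beta^{1-\alpha\gamma/2}(K-2\delta)^{\alpha}$ (here $\gamma<2/\alpha$ is used, so that $1-\alpha\gamma/2>0$). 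Hence the admissibility condition $(\alpha F_0^{\alpha})^{-1}<\tau$ holds for all large $\lambda$ precisely when $\beta>\beta_\ast:=\bigl(\alpha(K-2\delta)^{\alpha}\bigr)^{-2/(2-\alpha\gamma)}$, and for such $\beta$ the condition from the first paragraph gives $T_{\max}(\lambda\varphi)\le(\alpha F_0^{\alpha})^{-1}\le\frac{\beta^{\alpha\gamma/2}}{\alpha(K-2\delta)^{\alpha}}\,\lambda^{-(\frac1\alpha-\frac\gamma2)^{-1}}$, in particular finite for $\lambda$ large. Letting $\beta\downarrow\beta_\ast$ and then $\delta\downarrow 0$ collapses the prefactor to $(\alpha^{1/\alpha}K)^{-(\frac1\alpha-\frac\gamma2)^{-1}}$, which is exactly the claimed bound on $\limsup_{\lambda\to\infty}\lambda^{(\frac1\alpha-\frac\gamma2)^{-1}}T_{\max}(\lambda\varphi)$.

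The step I expect to be the main obstacle is justifying $F'\ge F^{\alpha+1}$ rigorously in the presence of the possibly singular datum $\varphi$: I would run the argument on $[t_0,\min(\tau,T_{\max}(\lambda\varphi)))$ for small $t_0>0$, where $u(t_0)$ is bounded and the solution classical so that Green's identity applies, use $F(t_0)\ge F(0)$, and pass to $t_0\to 0$ only at the end. One must also verify that the window condition $(\alpha F_0^{\alpha})^{-1}<\tau$ persists through this limit, which it does because both quantities are comparable powers of $\lambda$, so everything reduces to the single inequality $\beta>\beta_\ast$. The remainder is routine bookkeeping with the scaling identity for ${\rm e}^{t\Delta}V$ and the elementary ODE comparison.
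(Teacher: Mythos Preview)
Your proof is correct and rests on the same core idea as the paper's: the necessary condition from \cite{W5} (which you re-derive via the backward heat kernel test function and Jensen) together with the scaling identity $e^{t\Delta}V(x)=t^{-\gamma/2}(e^{\Delta}V)(x/\sqrt t)$ for $V=\omega|\cdot|^{-\gamma}$. The computations and the optimization in $\beta$ check out and yield the exact constant.

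The execution differs from the paper's in two respects worth noting. First, the paper quotes the necessary condition in the form of Proposition~\ref{lincd}, namely $\alpha\lambda^{\alpha}T_{\max}\,\|e^{T_{\max}\Delta}\tilde\varphi\|_\infty^{\alpha}\le 1$, and then evaluates it \emph{at} $t=T_{\max}(\lambda\tilde\varphi)$ (after reducing to $\tilde\varphi$ by comparison); since $T_{\max}\to 0$ as $\lambda\to\infty$ by \eqref{limintfytmax}, the result follows immediately from the asymptotic $t^{\gamma/2}\|e^{t\Delta}\tilde\varphi\|_\infty\to\|e^{\Delta}(\omega|\cdot|^{-\gamma})\|_\infty$ as $t\to 0$, which the paper obtains from \cite[Proposition~3.8(i)]{CDW2003}. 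This avoids your parametric choice $\tau=\beta\lambda^{-p}$ and the subsequent optimization in $\beta$ entirely. Second, your pointwise splitting $(e^{\tau\Delta}\varphi)(y_\tau)\ge\tau^{-\gamma/2}(K-\delta)-\|\omega\|_\infty\epsilon^{-\gamma}$ is a direct, self-contained substitute for that convergence lemma; the condition $\sqrt\tau\,|\xi|<\epsilon$ you impose is in fact unnecessary for this estimate. The trade-off is that the paper's argument is shorter and cites two black boxes, while yours is longer but fully self-contained and makes transparent why the constant comes out as it does.
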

\begin{rem}
{\rm
$\,$
\begin{itemize}
\item[1)] If $\varphi$ is as in  { Theorem  \ref{uppernegpower1}} such that  $\varphi \in L^\infty_\gamma(\R^N),$ $0 < \gamma < N$, $\gamma < \frac{2}{\alpha}$ then Theorem~\ref{lowerLqgamma} and Theorem~\ref{uppernegpower1} together show that $ T_{\max}(\lambda\varphi)\sim \lambda^{-(\frac{1}{\alpha} - \frac{\gamma}{2})^{-1}},$ as $\lambda\to\infty.$ This extends the result of \cite[Proposition 4.5]{TW2} by removing the condition $(N-2)\alpha<4$, as well as the condition $\liminf_{|x|\to 0}|x|^\gamma\varphi(x)>0.$
\item[2)] If $N < \gamma < \frac{2}{\alpha}$, then there is no local nonnegative solution to \eqref{NLH} with initial value
$\lambda\tilde\varphi$ \ for all $\lambda > 0$,  where  \begin{equation}
\label{phitilde}
\tilde\varphi(x) = \begin{cases}
\omega(x)|x|^{-\gamma}, &|x| \le \epsilon\\
0, 		&|x| > \epsilon.
\end{cases}
\end{equation}
  See \cite{W5,CDNW}.
\end{itemize}}
\end{rem}

We now turn to upper estimates on $T_{\max}(\lambda\varphi)$ as $\lambda \to 0$.
For this we need to assume that $\Omega$ is not bounded, and for simplicity we
consider $\Omega = \R^N$.  Our first result of this type is for measures. Consider $u_0 = \lambda\m$, where $\lambda > 0$ and $\m \in \M$,
the set of finite Borel measures on $\R^N$. We suppose that $\m$ is a positive measure.
To insure that \eqref{NLH} is locally well-posed on $\M$ we assume
$\alpha < \frac{2}{N}$, and this implies (by Fujita's result) that $T_{\max}(\lambda\m) < \infty$ for all $\lambda > 0$.

\begin{To} Let $N\geq 1$ and $\alpha>0$. Suppose $\alpha < \frac{2}{N}$ and let $\m \in \M$ be a positive finite Borel measure on $\R^N$.
It follows that {   $T_{\max}(\lambda\m) < \infty$
for all $\lambda > 0$ and}
\label{uppermeas}
\begin{equation}
\limsup_{\lambda \to 0} \lambda^{({1\over \alpha}-{N\over 2})^{-1}} T_{\max}(\lambda\m)
\le \frac{1}{\left(\alpha^{1/\alpha}(4\pi)^{-N/2}\|\m\|_\M\right)^{({1\over \alpha}-{N\over 2})^{-1}}}.
\end{equation}
\end{To}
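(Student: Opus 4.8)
The plan is to exploit the classical necessary condition for the existence of a nonnegative local solution, which is the engine behind the ``second method'' advertised in the introduction. Recall that if $u$ is a nonnegative (mild/classical) solution of \eqref{NLH} on $[0,T]$ with $u(0)=u_0\ge 0$, then by Jensen's inequality and the variation-of-constants formula, $u(t)\ge \mathrm{e}^{t\Delta}u_0$, and moreover one has the classical pointwise lower bound coming from iterating Duhamel: for every $t\in(0,T)$,
\[
\mathrm{e}^{t\Delta}u_0 \in L^\infty(\R^N)\quad\text{and}\quad \bigl\|\mathrm{e}^{t\Delta}u_0\bigr\|_\infty^{\alpha}\,\bigl(T-t\bigr)\le \frac1\alpha,
\]
or, in the sharp form used for instance in \cite{W5} and in the proof of Theorem~\ref{upperLinfty} above, $T_{\max}(u_0)\le \tfrac1\alpha\,\bigl(\sup_{0<t<T_{\max}}\|\mathrm{e}^{t\Delta}u_0\|_\infty\bigr)^{-\alpha}$ is \emph{not} quite what we want; rather the right statement is that if a nonnegative solution exists on $[0,\tau]$ then $\sup_{0<t<\tau}t^{1/\alpha}\|\mathrm{e}^{t\Delta}u_0\|_\infty$ cannot be too large. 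Concretely I would use: if there is a nonnegative solution on $[0,\tau]$ then for all $t\in(0,\tau)$,
\[
\|\mathrm{e}^{t\Delta}u_0\|_\infty \le \Bigl(\alpha(\tau-t)\Bigr)^{-1/\alpha}.
\]
Taking $u_0=\lambda\m$, linearity of the heat semigroup gives $\|\mathrm{e}^{t\Delta}(\lambda\m)\|_\infty=\lambda\,\|\mathrm{e}^{t\Delta}\m\|_\infty$, so for every $t\in(0,T_{\max}(\lambda\m))$,
\[
\lambda\,\|\mathrm{e}^{t\Delta}\m\|_\infty\le \bigl(\alpha\,(T_{\max}(\lambda\m)-t)\bigr)^{-1/\alpha}.
\]

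The second step is to estimate $\|\mathrm{e}^{t\Delta}\m\|_\infty$ from below for large $t$. Since $\m$ is a positive finite measure, writing the Gaussian kernel $G_t(x)=(4\pi t)^{-N/2}\mathrm{e}^{-|x|^2/4t}$ we have, for any fixed $x$ (say $x=0$, or better a well-chosen point),
\[
(\mathrm{e}^{t\Delta}\m)(x)=\int_{\R^N} G_t(x-y)\,d\m(y)\ge (4\pi t)^{-N/2}\int_{\R^N}\mathrm{e}^{-|x-y|^2/4t}\,d\m(y).
\]
Fixing $x=0$ and letting $t\to\infty$, dominated convergence gives $\int \mathrm{e}^{-|y|^2/4t}\,d\m(y)\to \|\m\|_\M$, hence $t^{N/2}\|\mathrm{e}^{t\Delta}\m\|_\infty\ge (4\pi)^{-N/2}\int \mathrm{e}^{-|y|^2/4t}\,d\m(y)$, and therefore
\[
\liminf_{t\to\infty} t^{N/2}\,\|\mathrm{e}^{t\Delta}\m\|_\infty\ \ge\ (4\pi)^{-N/2}\,\|\m\|_\M.
\]

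The third step combines the two. Since $\alpha<2/N$, Fujita's theorem (quoted in the excerpt just before the statement) already gives $T_{\max}(\lambda\m)<\infty$ for all $\lambda>0$; moreover $T_{\max}(\lambda\m)\to\infty$ as $\lambda\to 0$ by the lower bound \eqref{bulambdam3} of Theorem~\ref{lowermeas}. So in the displayed inequality $\lambda\|\mathrm{e}^{t\Delta}\m\|_\infty\le(\alpha(T_{\max}(\lambda\m)-t))^{-1/\alpha}$ I am free to choose $t=\theta\,T_{\max}(\lambda\m)$ for a fixed $\theta\in(0,1)$; this $t\to\infty$ as $\lambda\to 0$, so the asymptotic lower bound for $t^{N/2}\|\mathrm{e}^{t\Delta}\m\|_\infty$ applies. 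Substituting and setting $S=S(\lambda)=T_{\max}(\lambda\m)$, one gets, for every $\varepsilon>0$ and all small $\lambda$,
\[
\lambda\,\bigl((1-\varepsilon)(4\pi)^{-N/2}\|\m\|_\M\bigr)\,(\theta S)^{-N/2}\ \le\ \bigl(\alpha(1-\theta)S\bigr)^{-1/\alpha},
\]
which rearranges to a lower bound on a negative power of $S$, i.e.\ an upper bound on $\lambda^{(\frac1\alpha-\frac N2)^{-1}}S$. Explicitly, raising to the power $(\frac1\alpha-\frac N2)^{-1}>0$ and solving,
\[
\lambda^{(\frac1\alpha-\frac N2)^{-1}}\,T_{\max}(\lambda\m)\ \le\ \frac{\theta^{\frac N2(\frac1\alpha-\frac N2)^{-1}}}{(1-\theta)^{\frac1\alpha(\frac1\alpha-\frac N2)^{-1}}}\cdot\frac{1}{\bigl(\alpha^{1/\alpha}(1-\varepsilon)(4\pi)^{-N/2}\|\m\|_\M\bigr)^{(\frac1\alpha-\frac N2)^{-1}}}.
\]
Then let $\varepsilon\to 0$ and optimize over $\theta\in(0,1)$: the prefactor $\theta^{a}/(1-\theta)^{b}$ with $a=\frac N2(\frac1\alpha-\frac N2)^{-1}$, $b=\frac1\alpha(\frac1\alpha-\frac N2)^{-1}$, $a-b=-1$... one checks the infimum of $\theta^{a}(1-\theta)^{-b}$ over $\theta\in(0,1)$ is exactly $1$ — attained in the limit $\theta\to 1$ after accounting for $a-b=-1<0$, or more cleanly by the standard trick of replacing the single cutoff $t=\theta S$ by a supremum and using that $\sup_{0<t<S}\bigl(\text{lower bound}\bigr)$ already beats all $t$ simultaneously. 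This yields precisely
\[
\limsup_{\lambda\to 0}\lambda^{(\frac1\alpha-\frac N2)^{-1}}T_{\max}(\lambda\m)\le\frac{1}{\bigl(\alpha^{1/\alpha}(4\pi)^{-N/2}\|\m\|_\M\bigr)^{(\frac1\alpha-\frac N2)^{-1}}}.
\]

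The main obstacle I anticipate is making the ``necessary condition'' rigorous in exactly the sharp form with the constant $1/\alpha$ — i.e.\ showing that existence of a nonnegative solution on $[0,\tau]$ forces $\|\mathrm{e}^{t\Delta}u_0\|_\infty\le(\alpha(\tau-t))^{-1/\alpha}$ with the \emph{optimal} constant, rather than a non-sharp constant. This is where the comparison with the spatially homogeneous ODE $w'=w^{1+\alpha}$, whose solution blows up in time $(\alpha w_0^\alpha)^{-1}$, must be invoked carefully: one compares $\|u(t)\|_\infty$ (or better, $\inf_x$ of a suitable spatial average) against the ODE, using that $\mathrm{e}^{t\Delta}$ is order-preserving and that $u(t)\ge \mathrm{e}^{(t-s)\Delta}u(s)$ for $s<t$. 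The secondary technical point is justifying the interchange of limit and integral in $\int\mathrm{e}^{-|y|^2/4t}d\m(y)\to\|\m\|_\M$, which is immediate by monotone/dominated convergence since $\m$ is a finite positive measure, and the optimization over the cutoff parameter, which is elementary calculus. Everything else is bookkeeping with the exponent $(\frac1\alpha-\frac N2)^{-1}$.
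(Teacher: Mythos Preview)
Your second step (the lower bound $\liminf_{t\to\infty}t^{N/2}\|e^{t\Delta}\m\|_\infty\ge(4\pi)^{-N/2}\|\m\|_\M$ via dominated convergence) and the overall strategy are correct and match the paper. The gap is in the first step: the necessary condition you invoke, $\|e^{t\Delta}u_0\|_\infty\le(\alpha(\tau-t))^{-1/\alpha}$ for $t<\tau$, is \emph{false} in general. For a counterexample take $u_0=c\,\mathbf{1}_{B_R}$ with $\alpha<2/N$ and $R$ small: then $\|e^{t\Delta}u_0\|_\infty\to c$ as $t\to0^+$, while by Theorem~\ref{ThNLH} one has $T_{\max}(u_0)\ge C(cR^N)^{-(\frac1\alpha-\frac N2)^{-1}}\to\infty$ as $R\to0$; your inequality at small $t$ would force $T_{\max}\le(\alpha c^\alpha)^{-1}$, a contradiction. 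Your $\theta$-optimization confirms something is off: the prefactor $g(\theta)=\theta^{a}(1-\theta)^{-b}$ with $a,b>0$ has logarithmic derivative $a/\theta+b/(1-\theta)>0$, hence is strictly increasing with $\inf_{(0,1)}g=0$ (attained as $\theta\to0$, not $1$), which would give the absurd conclusion $\limsup\le 0$.

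The correct necessary condition---the one the paper uses (Proposition~\ref{lincd}, from \cite{W5})---has the factor $t$, not $\tau-t$:
\[
\alpha\, t\, \|e^{t\Delta}u_0\|_\infty^{\alpha}\le 1\qquad\text{for all }t\in(0,T_{\max}(u_0)].
\]
Apply this directly at $t=T_{\max}(\lambda\m)$ to get $\alpha\lambda^\alpha T_{\max}\,\|e^{T_{\max}\Delta}\m\|_\infty^\alpha\le 1$. Since $T_{\max}(\lambda\m)\to\infty$ as $\lambda\to0$ (Theorem~\ref{lowermeas}), your second step yields $T_{\max}^{N/2}\|e^{T_{\max}\Delta}\m\|_\infty\to(4\pi)^{-N/2}\|\m\|_\M$, and substituting gives $\alpha\lambda^\alpha T_{\max}^{\,1-N\alpha/2}\bigl((4\pi)^{-N/2}\|\m\|_\M\bigr)^\alpha(1+o(1))\le 1$, which rearranges immediately to the claimed estimate---no auxiliary cutoff $\theta$ is needed.
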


\begin{rem}
\label{rem5}
{\rm
Theorem~\ref{uppermeas} includes the case $u_0 = \lambda\varphi$ where $\varphi \in L^1(\R^N)$,
$\varphi \ge 0$, $\varphi \not\equiv 0$ and $\alpha < \frac{2}{N}$.  Indeed, consider the measure $d\m = \varphi dx$
where $dx$ denotes Lebesgue measure. It follows then that Theorem~\ref{ThNLH} and Theorem~\ref{uppermeas} together show that
\begin{equation}
T_{\max}(\lambda\varphi) \sim (\lambda\|\varphi\|_1)^{-({1\over \alpha}-{N\over 2})^{-1}}, \; \lambda \to 0,
\end{equation}
whenever $\varphi \ge 0$, $\varphi \not\equiv 0$, $\varphi \in L^1(\R^N)$.
The lower estimate is valid even if $\varphi$ is not necessarily positive.
}
\end{rem}

We have obtained the following  for positive initial data  having some decay at infinity.
\begin{To}
\label{uppernegpower2}
Let $N\geq 1$ and $\alpha>0$. Let $\varphi \in L^1_{loc}(\R^N)$, $\varphi \ge 0$ and
suppose that
$\varphi(x) \ge \omega(x)|x|^{-\gamma}$ for $|x| \ge R$, for some $R > 0$, where
$\omega \in L^\infty(\R^N)$ is homogeneous of degree $0$, $\omega \ge 0$ and $\omega \not\equiv 0$.
If  $0 < \gamma < N$ and $\gamma < \frac{2}{\alpha}$, then $T_{\max}(\lambda\varphi) < \infty$
for all $\lambda > 0$ and
\begin{equation}
\limsup_{\lambda \to 0} \lambda^{(\frac{1}{\alpha} - \frac{\gamma}{2})^{-1}} T_{\max}(\lambda\varphi)
\le \frac{1}{(\alpha^{1/\alpha}\|{\rm e}^{\Delta}(\omega|\cdot|^{-\gamma})\|_\infty)^{(\frac{1}{\alpha} - \frac{\gamma}{2})^{-1}}}.
\end{equation}
\end{To}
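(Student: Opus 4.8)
This is the $t\to\infty$, $\lambda\to 0$ counterpart of the proof of Theorem~\ref{uppernegpower1}, and it rests on the classical necessary condition for nonnegative solutions of \eqref{NLH}-\eqref{NLHini} underlying Theorems~\ref{upperLinfty}--\ref{uppermeas}: if $u_0\ge 0$ and a nonnegative solution exists on $(0,T)$, then
$$\|e^{t\Delta}u_0\|_{L^\infty(\R^N)}\ \le\ (\alpha t)^{-1/\alpha},\qquad 0<t<T.$$
This is \cite[Theorem 1]{W5}; for self-containedness: fixing $x$ and $t$ and setting $\phi(\tau)=e^{(t-\tau)\Delta}u(\tau)(x)$ for $0\le\tau<t$, one has $\phi(0)=e^{t\Delta}u_0(x)$, $\phi(t)=u(t,x)<\infty$, and $\phi'(\tau)=e^{(t-\tau)\Delta}\big(u(\tau)^{1+\alpha}\big)(x)\ge\phi(\tau)^{1+\alpha}$ by Jensen's inequality, so $\phi$ dominates the solution of the scalar ODE $y'=y^{1+\alpha}$ with datum $\phi(0)$, which blows up at time $(\alpha\,\phi(0)^{\alpha})^{-1}$; finiteness of $\phi(t)$ then forces $t<(\alpha\,\phi(0)^{\alpha})^{-1}$, i.e. $e^{t\Delta}u_0(x)<(\alpha t)^{-1/\alpha}$.

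First I would reduce to the pure power profile. Set $\psi(x)=\omega(x)|x|^{-\gamma}$ and $\tilde\psi=\psi\,\mathbf 1_{\{|x|\ge R\}}$; since $\varphi\ge 0$ everywhere and $\varphi\ge\psi$ on $\{|x|\ge R\}$, we have $\varphi\ge\tilde\psi$, hence $e^{t\Delta}(\lambda\varphi)\ge\lambda\,e^{t\Delta}\tilde\psi$ and $\|e^{t\Delta}(\lambda\varphi)\|_\infty\ge\lambda\|e^{t\Delta}\tilde\psi\|_\infty$. Next I would exploit the homogeneity of $\psi$: one checks directly that $e^{t\Delta}\psi(x)=t^{-\gamma/2}(e^{\Delta}\psi)(x/\sqrt t)$, so $\|e^{t\Delta}\psi\|_\infty=t^{-\gamma/2}\|e^{\Delta}\psi\|_\infty$ with $0<\|e^{\Delta}\psi\|_\infty<\infty$ (because $\gamma<N$ makes $\psi\in L^1_{\mathrm{loc}}(\R^N)$ and $e^{\Delta}\psi$ bounded and tending to $0$ at infinity, while $\psi\ge 0$, $\psi\not\equiv 0$). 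Writing $\psi=\tilde\psi+\psi\,\mathbf 1_{B_R}$ and bounding $e^{t\Delta}(\psi\,\mathbf 1_{B_R})\le\|G_t\|_\infty\|\psi\|_{L^1(B_R)}=C_1 t^{-N/2}$ (finite, again because $\gamma<N$), I obtain
$$\|e^{t\Delta}\tilde\psi\|_\infty\ \ge\ t^{-\gamma/2}\|e^{\Delta}\psi\|_\infty-C_1 t^{-N/2},$$
where the last term is of lower order as $t\to\infty$ precisely because $\gamma<N$.

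Feeding this into the necessary condition applied to $u_0=\lambda\varphi$ on $(0,T_{\max}(\lambda\varphi))$ and multiplying by $t^{\gamma/2}$ gives, with $\beta:=\tfrac1\alpha-\tfrac\gamma2>0$ (positive since $\gamma<2/\alpha$),
$$\alpha^{1/\alpha}\lambda\big(\|e^{\Delta}\psi\|_\infty-C_1 t^{(\gamma-N)/2}\big)\ \le\ t^{-\beta},\qquad 0<t<T_{\max}(\lambda\varphi).$$
Letting $t\to\infty$ shows $T_{\max}(\lambda\varphi)<\infty$ for every $\lambda>0$ (otherwise the left side tends to $\alpha^{1/\alpha}\lambda\|e^{\Delta}\psi\|_\infty>0$ while the right side tends to $0$); then, both sides being continuous in $t$, letting $t\uparrow T:=T_{\max}(\lambda\varphi)$ yields $\alpha^{1/\alpha}\lambda\big(\|e^{\Delta}\psi\|_\infty-C_1 T^{(\gamma-N)/2}\big)\le T^{-\beta}$. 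Finally I would put $x_\lambda:=\lambda^{1/\beta}T_{\max}(\lambda\varphi)$, so that $\lambda^{-1}T^{-\beta}=x_\lambda^{-\beta}$ and $\lambda\,T^{(\gamma-N)/2}=x_\lambda^{(\gamma-N)/2}\lambda^{(N-\gamma)/(2\beta)}$, and divide by $\lambda$ to get
$$\alpha^{1/\alpha}\|e^{\Delta}\psi\|_\infty\ \le\ x_\lambda^{-\beta}+\alpha^{1/\alpha}C_1\,x_\lambda^{(\gamma-N)/2}\,\lambda^{(N-\gamma)/(2\beta)}.$$
Since $(N-\gamma)/(2\beta)>0$ and $(\gamma-N)/2<0$, a short argument by contradiction as $\lambda\to 0$ (first excluding $\limsup_{\lambda\to0}x_\lambda=+\infty$, then taking $\mu:=\limsup_{\lambda\to0}x_\lambda\in[0,\infty)$ and noting the error term vanishes along an optimizing sequence, the case $\mu=0$ being trivial) gives $\mu^{-\beta}\ge\alpha^{1/\alpha}\|e^{\Delta}\psi\|_\infty$, that is $\limsup_{\lambda\to0}\lambda^{(\frac1\alpha-\frac\gamma2)^{-1}}T_{\max}(\lambda\varphi)\le\big(\alpha^{1/\alpha}\|e^{\Delta}(\omega|\cdot|^{-\gamma})\|_\infty\big)^{-(\frac1\alpha-\frac\gamma2)^{-1}}$, which is the assertion.

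The only genuinely delicate point is the bookkeeping of the two competing scales $t^{-\gamma/2}$ (the self-similar term governed by the decay of $\varphi$ at infinity) and $t^{-N/2}$ (the contribution of the excised ball $B_R$): one must check that $\gamma<N$ makes the truncation asymptotically negligible, so that the sharp constant $\|e^{\Delta}(\omega|\cdot|^{-\gamma})\|_\infty$ survives both the passage $t\uparrow T_{\max}(\lambda\varphi)$ and the subsequent limit $\lambda\to0$. Everything else is the soft comparison machinery already used for Theorem~\ref{uppernegpower1}, with the regimes "$t$ small, $\lambda$ large" there replaced by "$t$ large, $\lambda$ small" here.
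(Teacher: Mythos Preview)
Your proof is correct and follows the same overall architecture as the paper's: both arguments rest on the necessary condition $\|e^{t\Delta}u_0\|_\infty\le(\alpha t)^{-1/\alpha}$ from \cite{W5}, reduce by comparison to the truncated profile $\tilde{\tilde\varphi}=\omega|\cdot|^{-\gamma}\mathbf 1_{\{|x|\ge R\}}$, and exploit the scaling of $e^{t\Delta}(\omega|\cdot|^{-\gamma})$ to extract the sharp constant.

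The one genuine difference is in how the large-$t$ behaviour of $\|e^{t\Delta}\tilde{\tilde\varphi}\|_\infty$ is obtained. The paper writes $\|e^{t\Delta}\tilde{\tilde\varphi}\|_\infty=t^{-\gamma/2}\|e^{\Delta}[t^{\gamma/2}D_{\sqrt t}\tilde{\tilde\varphi}]\|_\infty$ and invokes \cite[Proposition~3.8(i)]{CDW2003} to pass to the limit $t^{\gamma/2}D_{\sqrt t}\tilde{\tilde\varphi}\to\omega|\cdot|^{-\gamma}$ directly; it also (implicitly) uses $T_{\max}(\lambda\tilde{\tilde\varphi})\to\infty$ as $\lambda\to 0$, which follows from the lower bounds of Section~2. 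You instead write the elementary splitting $\omega|\cdot|^{-\gamma}=\tilde{\tilde\varphi}+\omega|\cdot|^{-\gamma}\mathbf 1_{B_R}$ and control the excised ball by the crude bound $\|e^{t\Delta}(\psi\mathbf 1_{B_R})\|_\infty\le\|G_t\|_\infty\|\psi\|_{L^1(B_R)}=C_1t^{-N/2}$, yielding a quantitative lower bound $\|e^{t\Delta}\tilde{\tilde\varphi}\|_\infty\ge t^{-\gamma/2}\|e^{\Delta}(\omega|\cdot|^{-\gamma})\|_\infty-C_1t^{-N/2}$; this avoids the external citation and the appeal to $T_{\max}\to\infty$, at the price of carrying the error term through the limit argument via $x_\lambda=\lambda^{1/\beta}T_{\max}(\lambda\varphi)$. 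Your approach is thus slightly more self-contained; the paper's is terser.
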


\begin{rem}
{\rm $\,$
\begin{itemize}
\item[1)] If $\varphi$ is as in { Theorem \ref{uppernegpower2}}  such that  $\varphi \in L^\infty_\gamma(\R^N),$ $0 < \gamma < N$, $\gamma < \frac{2}{\alpha}$ then Theorem~\ref{lowerLqgamma} and Theorem~\ref{uppernegpower2} together show that $ T_{\max}(\lambda\varphi)\sim \lambda^{-(\frac{1}{\alpha} - \frac{\gamma}{2})^{-1}},$ as $\lambda\to 0.$ This extends the result of \cite[Proposition 4.5]{TW2} by removing the condition $(N-2)\alpha<4$, as well as the condition $\liminf_{|x|\to \infty}|x|^\gamma\varphi(x)>0.$
    \item[2)] If $N < \gamma < \frac{2}{\alpha}$, then $\tilde{\tilde\varphi} \in L^1(\R^N)$, where
\begin{equation}
\label{phitildetilde}
\tilde{\tilde\varphi}(x) = \begin{cases}
0, &|x| < R\\
\omega(x)|x|^{-\gamma}, 	&|x| \ge R,
\end{cases}
\end{equation}
for some $R>0,$ and so Theorem~\ref{uppermeas} gives an upper life-span bound as $\lambda \to 0$. So for $\varphi$ as in  { Theorem \ref{uppernegpower2}}  with $\gamma>0,\; \gamma\not=N,$ and by comparison argument, Remark \ref{rem5} and the above one  together show that $ T_{\max}(\lambda\varphi)\sim \lambda^{-(\frac{1}{\alpha} - \frac{1}{2}\min(\gamma,N))^{-1}},$ as $\lambda\to 0.$
\item[3)] In the particular case where $\varphi$ is continuous and bounded such that $\liminf_{|x|\to\infty}|x|^{\gamma}\varphi(x)>0,$ {  a }similar result is obtained in \cite[Theorem 3.15 (ii)]{LeeNI}. If $\varphi\in L^\infty(\R^N)$ and is nonnegative satisfying $\varphi(x)\geq (1+|x|)^{-\gamma}$ for almost all $x\in \R^N$ { a} similar result is { also} obtained in  \cite[Theorem 5.2 (ii)]{HisaIshige}. Here $\varphi$ is only $L^1_{loc}(\R^N)$, {  and so} the condition on lower bound on $\varphi$ is imposed only near infinity and we do not require $\liminf_{|x|\to \infty}|x|^\gamma\varphi(x)>0.$ { In fact, by taking for example $\omega(x)=|x_1|/|x|,$  we have $\liminf_{|x|\to \infty}|x|^\gamma\varphi(x)=0.$} We also give an explicit upper bound.
\end{itemize}}
\end{rem}

We now consider upper-bounds of the life-span for sign changing initial data. We  define the sector \begin{equation} \label{dmn}%
\Omega_m= \biggl\{x=(x_1,\; x_2,\; \cdots,\; x_N) \in
\Rd;\; x_1>0,\; x_2>0,\cdots,\; x_m>0\biggr\},
\end{equation}
where $1\leq m\leq N$ is an integer. For $0<\gamma<N$ and integer $1 \le m \le N$, we let $\psi_0\, :\, \Omega_m \rightarrow\, \R$ be given by
\begin{equation} \label{psi0}%
\psi_0(x) = c_{m, \gamma} x_1 \cdots x_m  \vert x \vert^{-\gamma-2m} ,\, \; x\in \Omega_m,
\end{equation}
where
\begin{equation} \label{cstcmg}%
c_{m, \gamma}=\gamma (\gamma+2) \cdots (\gamma+2m-2).
\end{equation}
In \cite{TW2} local well-posedness for $\varphi \in L^1_{loc}(\R^N\setminus\{0\})$, anti-symmetric with respect to $x_1,\; x_2,\;\cdots,\; x_m,$ and $\varphi_{|\Omega_m}$ is in the Banach space
\begin{equation}
\label{spc}
{\mathcal X}= \left\{ \psi \in L^1_{\rm loc}(\Omega_m) ;\quad
\frac{\psi}{\psi_0} \in L^\infty(\Omega_m)\right\},
\end{equation}
have been shown for $0<\alpha<2/(\gamma+m).$ The solution can be extended to maximal solution
$u : \left(0,T_{\max}(\varphi)\right) \to C_0(\R^N)$. Furthermore, there exits a constant $C>0,$ such that
\begin{equation}
\label{lowerm}
\lambda^{[({1\over \alpha}-{\gamma+m\over 2})^{-1}]}T_{\max}(\lambda \varphi) \geq C,
\end{equation}
for all $\lambda>0.$ We denote by $ {\rm e}^{t\Delta_m}$ the heat semigroup on $\Omega_m.$ We have obtained the following for large $\lambda$.
\begin{To}
\label{uppernegpower1signchanging}
Let the positive integer $m$ and the real numbers
$\alpha,\; \gamma$ be such that
$$
1\leq m \leq N,\; 0<\gamma<N, \; 0<\alpha<{2\over \gamma+m}.
$$ Suppose
that $\varphi \in L^1_{loc}(\R^N\setminus\{0\})$, anti-symmetric with respect to $x_1,\; x_2,\;\cdots,\; x_m,$ $\varphi_{|\Omega_m}\in
{\mathcal X},$ $\varphi \ge 0$ in $\Omega_m$
is such that $\varphi(x) \ge \,\omega(x)\psi_0(x)$ for $ x\in \Omega_m\cap\{|x| \le \epsilon\}$, for some $\epsilon > 0,$  where
$\omega \in L^\infty(\R^N)$ is homogeneous of degree $0$, anti-symmetric with respect to $x_1,\; x_2,\;\cdots,\; x_m,$  $\omega \ge 0$ on $\Omega_m$ and $\omega \not\equiv 0$.
It follows that { $T_{\max}(\lambda\varphi)<\infty$ for $\lambda>0$ sufficiently large and}
\begin{equation*}
\limsup_{\lambda \to \infty} \lambda^{(\frac{1}{\alpha} - \frac{\gamma+m}{2})^{-1}} T_{\max}(\lambda\varphi)
\le \frac{1}{(\alpha^{1/\alpha}\|{\rm e}^{\Delta_m}(\omega\psi_0)\|_\infty)^{(\frac{1}{\alpha} - \frac{\gamma+m}{2})^{-1}}}.
\end{equation*}
\end{To}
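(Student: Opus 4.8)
The plan is to exploit the well-known necessary condition for the existence of nonnegative (here, nonnegative-on-$\Omega_m$) solutions, exactly as in the second method described in the introduction, transferred to the sector $\Omega_m$ via the antisymmetry of the data. By antisymmetry with respect to $x_1,\dots,x_m$, the problem \eqref{NLH}–\eqref{NLHini} with data $\lambda\varphi$ is equivalent to the Dirichlet problem on $\Omega_m$ with data $\lambda\varphi_{|\Omega_m}\ge 0$, and the heat semigroup on $\R^N$ acting on an antisymmetric function restricts to ${\rm e}^{t\Delta_m}$ on $\Omega_m$. The necessary condition reads: if a nonnegative solution exists on $[0,T]$, then ${\rm e}^{t\Delta_m}(\lambda\varphi_{|\Omega_m})$ is finite for $t\in[0,T]$ and, more quantitatively, $T$ cannot be too large relative to the sup of the linear evolution. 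Concretely, one uses that for the solution $u$ of \eqref{NLH}, $u(t)\ge {\rm e}^{t\Delta_m}u_0$ on $\Omega_m$, together with the ODE comparison: if $v'=v^{1+\alpha}$ with $v(0)=a>0$ then $v$ blows up at time $\frac{1}{\alpha a^\alpha}$. Applying this pointwise after a linear-evolution lower bound, blow-up must occur before the scalar ODE started from $\|{\rm e}^{t_0\Delta_m}(\lambda\varphi_{|\Omega_m})\|_\infty$ blows up, for any fixed $t_0\in(0,T_{\max})$.

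First I would record the monotonicity/lower bound: since $\varphi\ge 0$ on $\Omega_m$ and $\varphi\ge \omega\psi_0$ on $\Omega_m\cap\{|x|\le\epsilon\}$, for any fixed small $t_0>0$,
$$
{\rm e}^{t_0\Delta_m}(\lambda\varphi_{|\Omega_m})(x)\ \ge\ \lambda\,{\rm e}^{t_0\Delta_m}\big(\mathbf 1_{\{|x|\le\epsilon\}}\,\omega\psi_0\big)(x).
$$
Next I would pass to the limit in the truncation: because $\omega\psi_0\in L^1_{loc}$ and the singularity $|x|^{-\gamma-m}$-type at the origin is integrable (here $0<\gamma<N$ and the extra $x_1\cdots x_m$ factor in $\psi_0$ compensates, so $\omega\psi_0\in L^1$ near $0$), and $\omega\psi_0$ is homogeneous of degree $-\gamma$, one gets, after rescaling $x\mapsto \sqrt{t_0}\,x$ using the scaling ${\rm e}^{t\Delta_m}f(\cdot)=(\text{dilation})\,{\rm e}^{\Delta_m}(\text{dilation of }f)$ valid because $\omega\psi_0$ is homogeneous of degree $-\gamma$,
$$
\|{\rm e}^{t_0\Delta_m}(\omega\psi_0)\|_\infty\ =\ t_0^{-\gamma/2}\,\|{\rm e}^{\Delta_m}(\omega\psi_0)\|_\infty .
$$
Combining, $\|{\rm e}^{t_0\Delta_m}(\lambda\varphi_{|\Omega_m})\|_\infty\ge \lambda t_0^{-\gamma/2}\|{\rm e}^{\Delta_m}(\omega\psi_0)\|_\infty - o(1)$ as $t_0\to 0$ (the truncation error is lower order).

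Then I would invoke the necessary condition. If $T_{\max}(\lambda\varphi)>t_0$, then from $u(t_0+s)\ge {\rm e}^{s\Delta_m}u(t_0)\ge {\rm e}^{s\Delta_m}\big({\rm e}^{t_0\Delta_m}(\lambda\varphi_{|\Omega_m})\big)$ and the integral equation, a standard iteration (or direct comparison with the scalar ODE $v'=v^{1+\alpha}$) gives
$$
T_{\max}(\lambda\varphi)-t_0\ \le\ \frac{1}{\alpha\,\|{\rm e}^{t_0\Delta_m}(\lambda\varphi_{|\Omega_m})\|_\infty^{\alpha}}\ \le\ \frac{1}{\alpha\,\big(\lambda t_0^{-\gamma/2}\|{\rm e}^{\Delta_m}(\omega\psi_0)\|_\infty-o(1)\big)^{\alpha}} .
$$
Now choose $t_0=t_0(\lambda)=\theta\,\lambda^{-(\frac1\alpha-\frac{\gamma+m}2)^{-1}}$ — wait: the natural scaling exponent here is the one appearing in \eqref{lowerm}, so set $t_0=\theta\,\lambda^{-\beta}$ with $\beta=(\frac1\alpha-\frac{\gamma+m}2)^{-1}$, or more simply optimize the right-hand side over $t_0$. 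Optimizing $t_0\mapsto t_0+\frac{1}{\alpha\lambda^\alpha t_0^{-\alpha\gamma/2}\|{\rm e}^{\Delta_m}(\omega\psi_0)\|_\infty^\alpha}$ balances the two terms when $t_0\sim \lambda^{-\alpha/(1-\alpha\gamma/2)}=\lambda^{-(\frac1\alpha-\frac\gamma2)^{-1}}$, and the resulting bound on $T_{\max}(\lambda\varphi)$ is $O(\lambda^{-(\frac1\alpha-\frac\gamma2)^{-1}})$. To obtain the sharp constant with exponent $(\frac1\alpha-\frac{\gamma+m}2)^{-1}$ as in the statement, I would instead use the full homogeneity: after the dilation $x\mapsto\sqrt{t_0}\,x$ the factor $\psi_0$ (homogeneous of degree $-(\gamma+m)$, since $\psi_0(x)=c_{m,\gamma}x_1\cdots x_m|x|^{-\gamma-2m}$ has degree $m-\gamma-2m=-(\gamma+m)$) scales as $t_0^{-(\gamma+m)/2}$, so $\|{\rm e}^{t_0\Delta_m}(\omega\psi_0)\|_\infty=t_0^{-(\gamma+m)/2}\|{\rm e}^{\Delta_m}(\omega\psi_0)\|_\infty$; redoing the optimization with exponent $(\gamma+m)/2$ in place of $\gamma/2$ yields exactly
$$
\limsup_{\lambda\to\infty}\lambda^{(\frac1\alpha-\frac{\gamma+m}2)^{-1}}T_{\max}(\lambda\varphi)\ \le\ \frac{1}{\big(\alpha^{1/\alpha}\|{\rm e}^{\Delta_m}(\omega\psi_0)\|_\infty\big)^{(\frac1\alpha-\frac{\gamma+m}2)^{-1}}},
$$
where the optimal choice is $t_0=\big(\alpha\,(\lambda\|{\rm e}^{\Delta_m}(\omega\psi_0)\|_\infty)^\alpha\big)^{-(\frac1\alpha-\frac{\gamma+m}2)^{-1}\cdot\frac1{?}}$ — I would pin down the precise multiplicative constant by writing $f(t_0)=t_0+\tfrac1{\alpha A^\alpha}t_0^{\alpha(\gamma+m)/2}$ with $A=\lambda t_0^{-(\gamma+m)/2}\|{\rm e}^{\Delta_m}(\omega\psi_0)\|_\infty$ held in the right form and minimizing; the algebra is routine. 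Finiteness of $T_{\max}(\lambda\varphi)$ for large $\lambda$ follows a fortiori, or directly from \cite{W5} as noted in the introduction.

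The main obstacle I anticipate is the second step: justifying the \emph{quantitative} necessary condition on the sector $\Omega_m$ with Dirichlet boundary conditions and with merely $L^1_{loc}$, sign-changing $\R^N$-data whose restriction to $\Omega_m$ lies in $\mathcal X$ — i.e.\ showing cleanly that $u(t)\ge {\rm e}^{t\Delta_m}u_0$ on $\Omega_m$ and that blow-up of $u$ forces the stated ODE bound even though $u_0$ may be unbounded near the origin. This requires either the comparison principle on $\Omega_m$ applied to the truncated (bounded) data and then a monotone limit, or the integral-inequality bootstrap $u(t)\ge {\rm e}^{t\Delta_m}u_0+\int_0^t {\rm e}^{(t-s)\Delta_m}u(s)^{1+\alpha}\,ds$ iterated to produce a lower solution that blows up at the scalar ODE time; handling the boundary term and the dilation identity for ${\rm e}^{t\Delta_m}$ (which holds because $\Omega_m$ is a cone invariant under dilations) is where care is needed. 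Everything else — antisymmetry reduction, homogeneity scaling of $\psi_0$, and the one-variable optimization giving the sharp constant — is standard and parallels Theorems~\ref{uppernegpower1} and \ref{uppernegpower2}.
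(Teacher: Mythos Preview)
Your overall strategy coincides with the paper's: reduce by antisymmetry to the Dirichlet problem on the cone $\Omega_m$, invoke the necessary condition from \cite{W5} there, and exploit that $\omega\psi_0$ is homogeneous of degree $-(\gamma+m)$ (you correct yourself mid-argument from $-\gamma$ to $-(\gamma+m)$; the latter is right). However, the central inequality you invoke,
\[
T_{\max}(\lambda\varphi)-t_0\ \le\ \frac{1}{\alpha\,\|{\rm e}^{t_0\Delta_m}(\lambda\varphi_{|\Omega_m})\|_\infty^{\alpha}},
\]
is \emph{not} what either ODE comparison or the iteration delivers, and in fact it is false in general. Comparison with the ODE $v'=v^{1+\alpha}$, $v(0)=\|w_0\|_\infty$, gives $T_{\max}(w_0)\ge \tfrac{1}{\alpha\|w_0\|_\infty^\alpha}$ (a \emph{lower} bound), and a large pointwise sup of the data yields no upper bound on the lifespan. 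Your own optimization step is the red flag: with $\|{\rm e}^{t_0\Delta_m}\tilde\varphi\|_\infty\sim t_0^{-(\gamma+m)/2}L$ the map $t_0\mapsto t_0+\dfrac{t_0^{\alpha(\gamma+m)/2}}{\alpha\lambda^\alpha L^\alpha}$ is strictly increasing on $(0,\infty)$ (since $\alpha(\gamma+m)/2<1$), so ``minimizing'' drives the bound to~$0$.

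The paper's route avoids any intermediate $t_0$. It applies the necessary condition of Proposition~\ref{lincd} \emph{at the endpoint} $t=T_{\max}$: with $\tilde\varphi=\mathbf 1_{\{|x|\le\epsilon\}}\,\omega\psi_0$ on $\Omega_m$ one has
\[
\alpha\,\lambda^\alpha\,T_{\max}(\lambda\tilde\varphi)\,\|{\rm e}^{T_{\max}(\lambda\tilde\varphi)\Delta_m}\tilde\varphi\|_\infty^\alpha\ \le\ 1.
\]
Since $T_{\max}(\lambda\varphi)\le T_{\max}(\lambda\tilde\varphi)\to 0$ as $\lambda\to\infty$, and since $t^{(\gamma+m)/2}D_{\sqrt t}\tilde\varphi\to\omega\psi_0$ as $t\to 0$ so that $t^{(\gamma+m)/2}\|{\rm e}^{t\Delta_m}\tilde\varphi\|_\infty\to\|{\rm e}^{\Delta_m}(\omega\psi_0)\|_\infty$ (the paper cites \cite[Proposition~4.1(ii)]{MTW} for this convergence on $\Omega_m$), one gets immediately
\[
\alpha\,\lambda^\alpha\,T_{\max}^{\,1-\alpha(\gamma+m)/2}\big[\|{\rm e}^{\Delta_m}(\omega\psi_0)\|_\infty+o(1)\big]^\alpha\ \le\ 1,
\]
and the stated limsup bound follows by elementary algebra. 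The point is that the necessary condition is \emph{self-referential} in $T_{\max}$; evaluating it at $T_{\max}$ rather than at a free parameter $t_0$ is what produces the correct exponent and sharp constant without any optimization.
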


We have obtained the following for small $\lambda$.
\begin{To}
\label{uppernegpower2signchanging}
Let the positive integer $m$ and the real numbers
$\alpha,\; \gamma$ be such that
$$
1\leq m \leq N,\; 0<\gamma<N, \; 0<\alpha<{2\over \gamma+m}.
$$ Suppose
that $\varphi \in L^1_{loc}(\R^N\setminus\{0\})$, anti-symmetric with respect to $x_1,\; x_2,\;\cdots,\; x_m,$ $\varphi_{|\Omega_m}\in
{\mathcal X},$ $\varphi \ge 0$ in $\Omega_m$
is such that $\varphi(x) \ge \,\omega(x)\psi_0(x)$ for $ x\in \Omega_m\cap\{|x| \geq R\}$, for some $R> 0,$ where
$\omega \in L^\infty(\R^N)$ is homogeneous of degree $0$, anti-symmetric with respect to $x_1,\; x_2,\;\cdots,\; x_m,$  $\omega \ge 0$ on $\Omega_m$ and $\omega \not\equiv 0$.
It follows that $T_{\max}(\lambda\varphi) < \infty$
for all $\lambda > 0$ and
\begin{equation*}
\limsup_{\lambda \to 0} \lambda^{(\frac{1}{\alpha} - \frac{\gamma+m}{2})^{-1}} T_{\max}(\lambda\varphi)
\le \frac{1}{(\alpha^{1/\alpha}\|{\rm e}^{\Delta_m}(\omega\psi_0)\|_\infty)^{(\frac{1}{\alpha} - \frac{\gamma+m}{2})^{-1}}}.
\end{equation*}
\end{To}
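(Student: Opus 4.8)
The plan is to mimic, on the sector $\Omega_m$, the scheme used for Theorem~\ref{uppernegpower2}; the two new features are a reduction to a \emph{nonnegative} Dirichlet problem on $\Omega_m$ via anti-symmetry, and the scaling properties of $e^{t\Delta_m}$ together with those of $\psi_0$. First, since $|u|^\alpha u$ is odd and $\varphi$ is anti-symmetric in $x_1,\dots,x_m$, uniqueness forces the maximal solution $u$ of \eqref{NLH}--\eqref{NLHini} with $u_0=\lambda\varphi$ to be anti-symmetric in $x_1,\dots,x_m$ as well, hence to vanish on $\partial\Omega_m$. Thus $v:=u|_{\Omega_m}$ solves \eqref{NLH} on $\Omega_m$ with homogeneous Dirichlet condition, with initial value $\lambda\varphi|_{\Omega_m}\in{\mathcal X}$, $\lambda\varphi|_{\Omega_m}\ge0$; by the maximum principle $v\ge0$, and the existence time of $v$ is exactly $T_{\max}(\lambda\varphi)$.

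Next I would apply on $\Omega_m$ the necessary condition for local existence of nonnegative solutions used throughout Section~5 (in the spirit of \cite{W5}): fixing $T<T_{\max}(\lambda\varphi)$ and $x_0\in\Omega_m$ and setting $y(t)=\int_{\Omega_m}v(t,x)\,p_{\Omega_m}(T-t,x,x_0)\,dx$, where $p_{\Omega_m}$ is the Dirichlet heat kernel of $\Omega_m$, the test-function identity gives $y'(t)=\int_{\Omega_m}v(t)^{1+\alpha}p_{\Omega_m}(T-t)\ge y(t)^{1+\alpha}$ by Jensen's inequality (using $\int p_{\Omega_m}(s,x,\cdot)\le1$), while $y$ stays finite on $[0,T)$; comparing with $z'=z^{1+\alpha}$ yields $y(0)=\lambda(e^{T\Delta_m}\varphi)(x_0)\le(\alpha T)^{-1/\alpha}$, that is
\[
\alpha\,T\,\bigl(\lambda\,\|e^{T\Delta_m}\varphi\|_{L^\infty(\Omega_m)}\bigr)^\alpha\le1,\qquad 0<T<T_{\max}(\lambda\varphi).
\]

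The third step is a scaling lemma. By \eqref{psi0} the function $\omega\psi_0$ is homogeneous of degree $-(\gamma+m)$, nonnegative and $\not\equiv0$ on $\Omega_m$; since $\Omega_m$ is a cone, $p_{\Omega_m}(T,x,y)=T^{-N/2}p_{\Omega_m}(1,x/\sqrt T,y/\sqrt T)$, so the substitution $y=\sqrt T\,z$ gives
\[
\bigl(e^{T\Delta_m}(\omega\psi_0\,\mathbf 1_{\{|\cdot|\ge R\}})\bigr)(\sqrt T\,\xi)=T^{-(\gamma+m)/2}\bigl(e^{\Delta_m}(\omega\psi_0\,\mathbf 1_{\{|\cdot|\ge R/\sqrt T\}})\bigr)(\xi).
\]
As $T\to\infty$ the truncated data increase pointwise to $\omega\psi_0$, so monotone convergence and positivity of $p_{\Omega_m}$ give $T^{(\gamma+m)/2}\|e^{T\Delta_m}(\omega\psi_0\mathbf 1_{\{|\cdot|\ge R\}})\|_{L^\infty(\Omega_m)}\to A:=\|e^{\Delta_m}(\omega\psi_0)\|_{L^\infty(\Omega_m)}$. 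Using $\varphi\ge\omega\psi_0$ on $\Omega_m\cap\{|x|\ge R\}$ and positivity of $p_{\Omega_m}$, one then has, with $\beta:=\alpha(\gamma+m)/2\in(0,1)$, that for each $\varepsilon\in(0,1)$ and all large $T$, $\|e^{T\Delta_m}\varphi\|_{L^\infty(\Omega_m)}\ge(1-\varepsilon)A\,T^{-(\gamma+m)/2}$. Hence the left-hand side of the displayed necessary condition is $\ge\alpha(1-\varepsilon)^\alpha A^\alpha\lambda^\alpha T^{1-\beta}$, which exceeds $1$ once $T>\bigl(\alpha(1-\varepsilon)^\alpha A^\alpha\bigr)^{-1/(1-\beta)}\lambda^{-\alpha/(1-\beta)}$; for $\lambda$ small this value of $T$ lies in the admissible range, forcing $T_{\max}(\lambda\varphi)\le\bigl(\alpha(1-\varepsilon)^\alpha A^\alpha\bigr)^{-1/(1-\beta)}\lambda^{-\alpha/(1-\beta)}$. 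Since $\alpha/(1-\beta)=(\tfrac1\alpha-\tfrac{\gamma+m}2)^{-1}$, letting $\lambda\to0$ and then $\varepsilon\to0$ yields the asserted $\limsup$ bound; and for arbitrary $\lambda>0$ the fact that $\alpha T\lambda^\alpha\|e^{T\Delta_m}\varphi\|_\infty^\alpha\ge c\,\lambda^\alpha A^\alpha T^{1-\beta}\to\infty$ (because $\beta<1$) already shows $T_{\max}(\lambda\varphi)<\infty$.

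The main obstacle will be the scaling step, specifically proving $A=\|e^{\Delta_m}(\omega\psi_0)\|_{L^\infty(\Omega_m)}\in(0,\infty)$ — equivalently, controlling $e^{t\Delta_m}\psi_0$ near the vertex of the cone $\Omega_m$. Positivity of $A$ is immediate from $\omega\psi_0\ge0$, $\not\equiv0$; finiteness is where the precise choice \eqref{psi0}--\eqref{cstcmg} of $\psi_0$ matters: the factor $x_1\cdots x_m$ makes $\omega\psi_0$ vanish on $\partial\Omega_m$ at the same rate as the Dirichlet kernel, and via the $h$-transform description of $e^{t\Delta_m}$ on functions of the form $x_1\cdots x_m\,f(|x|)$ (which should be available from \cite{TW2}) one reduces to a one-dimensional Bessel-type heat kernel for which the estimate is explicit. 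Once that is secured, the scaling identity, the monotone limit, and the optimization over $T$ are routine, and the argument for small $\lambda$ runs in exact parallel with the one for large $\lambda$ in Theorem~\ref{uppernegpower1signchanging}.
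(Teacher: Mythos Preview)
Your proposal is correct and takes essentially the same approach as the paper: reduce to the nonnegative Dirichlet problem on $\Omega_m$, apply the necessary condition of Proposition~\ref{lincd} there, and use the scaling/homogeneity of $\psi_0$ to control $\|e^{T\Delta_m}\varphi\|_\infty$ as $T\to\infty$. The paper's proof simply cites \cite[Theorem~1.7]{TW2} for finite-time blow-up and \cite[Proposition~4.1(ii)]{MTW} for the convergence step (including the finiteness of $\|e^{\Delta_m}(\omega\psi_0)\|_\infty$ that you correctly flag as the main obstacle), whereas you spell out the monotone-convergence argument directly.
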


\begin{rem}
{\rm $\,$
\begin{itemize}
\item[1)] Theorems   \ref{uppernegpower1signchanging} and \ref{uppernegpower2signchanging} improve the results of \cite[Theorem 1.10, Proposition  4.5]{TW2} by removing the condition $(N-2)\alpha<4.$ Also the conditions on the $\liminf_{|x|\to \infty}{|x|^{\gamma+m}\over x_1x_2\cdots x_m}\varphi(x)>0$ or on the $\liminf_{|x|\to 0}{|x|^{\gamma+m}\over x_1x_2\cdots x_m}\varphi(x)>0$ are not required here.
\item[2)] Theorem  \ref{uppernegpower1signchanging} (respectively Theorem \ref{uppernegpower2signchanging})  together with \eqref{lowerm} show that $$T_{\max}(\lambda \varphi)\sim \lambda^{-({1\over \alpha}-{\gamma+m\over 2})^{-1}},$$
as $\lambda\to \infty$ (respectively as $\lambda\to 0$).
\end{itemize}}
\end{rem}

The proofs of { the known results cited above} are based on  careful constructions of  super and sub-solutions, comparison and Kaplan's arguments. See, for example,  \cite{Sato,Li,YY,Y,OY} and some references therein.   In the case of decaying initial data, the results are derived via  a careful analysis of the asymptotic in the $L^\infty$-norm of the solutions to the linear heat equation on $\R^N$ with initial { data} having specific orders of decay at space infinity as well as  Kaplan's arguments and comparison principles, see \cite{LeeNI}. This method, \cite{LeeNI}, has been used in many papers in the last three decades, see for example \cite{Li,ZW,ZW2,Cao,Cao2} and references therein. Most of the results require that $\lambda$ be either sufficiently large or sufficiently small and initial data are positive and regular.  Also, some scaling arguments are applied to derive life-span estimates, such as in \cite{GW,D}.

It interesting to compare the two methods used to prove our  results above.    The proof of lower bounds as already mentioned,  is based on the contraction mapping argument which gives local well-posedness of solutions (as in \cite{TW2}).   Consequently,  it does not require any positivity condition or maximum principle. To prove the upper estimates, we use a necessary condition  for local existence of non-negative solutions established in \cite{W5} (see Proposition \ref{lincd} below), combined with the maximum principle, continuity properties of the heat semigroup and scaling argument.   For these estimates, positivity is required.
There is a certain unity  in these  two  methods. On the one hand, the contraction mapping argument gives a sufficient
condition on $T > 0$ for the existence of a solution on the interval $[0,T]$ for some initial value $u_0.$ This condition takes the form of an inequality involving both $T$ and $u_0.$ This condition must fail for $T = T_{\max},$ which implies that the opposite inequality must hold.   When this inequality is applied to initial values of the form
$u_0 = \lambda\varphi,$ this results in a lower life-span estimate. On the other hand, inequality in \cite[Theorem 1]{W5} gives a necessary condition on $T > 0$ for the existence of a (positive) solution on the interval $[0,T],$ for some initial value $u_0 \ge 0.$  This condition must hold for all $T < T_{\max}.$ Moreover, this condition is stable under limits, and so must hold in the case $T = T_{\max}.$ When the resulting inequality is applied to initial values of the form
$u_0 = \lambda\varphi,$ an upper life-span estimate is obtained. We note that the lower estimates for $T_{\max}(\lambda\varphi)$ do not in and of themselves prove finite time blowup, while the upper estimates do so.

Our results based on scaling, the third approach in this paper, on the one hand use ideas introduced in \cite{D}, and on the other  hand comparison arguments. In particular, we give life-span estimates for an initial value of the form 
\begin{equation}
\label{phi3}
\Phi(x) = \begin{cases}
\omega(x)|x|^{-\gamma_1}, &|x| \le 1\\
\omega(x)|x|^{-\gamma_2}, &|x| > 1.
\end{cases}
\end{equation}
where $0 < \gamma_1, \gamma_2 < N$ and $\gamma_1, \gamma_2 < \frac{2}{\alpha}$ ($\gamma_1 \neq \gamma_2$)
and $\omega \in L^\infty(\R^N)$ is homogeneous of degree $0$, $\omega \ge 0$, $\omega \not \equiv 0$. See Corollary \ref{resulphi3} below. We show, in particular, the impact of the singularity on the life-span for $\lambda$ large  and the impact of the decay at infinity on the life-span for $\lambda$ small.

The rest of this paper is organized as follows.  In Section 2, we consider the standard nonlinear heat equation and prove Theorems \ref{ThNLH} and \ref{lowermeas}.  In Section 3, we prove new estimates for the heat kernel in weighted Lebesgue spaces, see Proposition \ref{smoothingeffectorl-lebeg} below. Section 4 is devoted  to the case of slowly decaying initial data and the proofs of Theorem \ref{lowerLqgamma} and Corollaries \ref{lowerLqLqgamma} and \ref{lowerLpLqgamma}. The upper estimates, Theorems \ref{upperLinfty}--\ref{uppernegpower2signchanging}, are proved  in Section 5.  In Section 6, we establish life-span estimates via nonlinear scaling.  In the appendix, we give some estimates of the life-span for Hardy-H\'enon parabolic equations.

Throughout  the paper, $C$ will be a positive constant which may vary from line to line. For positive functions $f$ and $g,$  we say that $f(x)\sim g(x)$ as $x\to x_0$ if there exists two positive constants $C_1$ and $C_2$ such that $C_1g(x)\leq f(x)\leq C_2g(x)$ in a neighborhood of $x_0.$

\section{Lower bounds for initial data in Lebesgue spaces}
\setcounter{equation}{0}

We consider the integral equation corresponding to the problem \eqref{NLH}-\eqref{NLHini}
\begin{equation}
\label{NLHint}
u(t) = {\rm e}^{t\Delta}u_0 + \int_0^t {\rm
e}^{(t-\sigma)\Delta} \big[|u(\sigma)|^\alpha u(\sigma)\big]
d\sigma,
\end{equation}
where ${\rm e}^{t\Delta}$ is the heat semigroup on $\Omega$.  It is known that the integral
kernel corresponding to ${\rm e}^{t\Delta}$ is bounded by the Gauss kernel for the
heat semigroup on $\R^N$.  Hence the $L^q - L^r$ smoothing inequalities are
independent of $\Omega$, i.e.
\begin{equation}
\label{heatsmouth}
\|{\rm e}^{t\Delta}u_0\|_{L^r(\Omega)} \le (4\pi t)^{-\frac{N}{2}(\frac{1}{q} - \frac{1}{r})}\|u_0\|_{L^q(\Omega)}
\end{equation}
whenever $1 \le q \le r \le \infty$.

We recall for future use that in the case of $\Omega = \R^N$
\begin{equation}
\label{hsgdil1}
D_\tau{\rm e}^{t\Delta} = {\rm e}^{(t/\tau^2)\Delta}D_\tau
\end{equation}
where $D_\tau$ is the dilation operator $D_\tau f(x) = f(\tau x)$.
In particular
\begin{equation}
\label{hsgdil2}
D_{\sqrt t}{\rm e}^{t\Delta} = {\rm e}^{\Delta}D_{\sqrt t}.
\end{equation}

In this section the goal is to establish lower bounds for the life-span of solutions
as an immediate consequence of the fixed point argument used to prove well-posedness of
\eqref{NLHint} in certain Banach spaces.  While this argument is well-known, in order to show the
applications to life-span, it is more convenient to recall some of the details.

To this end we recall the value
\begin{equation*}
q_c = \frac{N\alpha}{2},
\end{equation*}
and we require that $q$ and $r$ satisfy the following conditions:
\begin{equation}
\label{supcrit}
q > q_c
\end{equation}
and
\begin{equation}
\label{compatible}
1 \le \frac{r}{\alpha+1} \le q \le r.
\end{equation}
Note that given $q > q_c$ and $q \ge 1$, one can always choose $r = q(\alpha + 1)$.
Also, if $q > q_c$ and $q \ge \alpha + 1$, one can choose
$r = q$.  Furthermore, in all cases above, we have $r \ge \alpha + 1$.
Finally, if $q = \infty$, then necessarily $r = \infty$.
We set
\begin{equation}
\label{beta}
\beta = \frac{N}{2}\Big(\frac{1}{q} - \frac{1}{r}\Big).
\end{equation}

We next define the space of curves in which we seek a solution
to \eqref{NLHint}, i.e. the space in which we carry
out the contraction mapping argument.  For a fixed $M > 0$ and $T > 0$, (and $q$, $r$ and $\beta$
as above), we set
\begin{equation}
\label{Yspace}
Y_{M,T}^{q,r} = \{u \in C((0,T]; L^r(\Omega)) :
\sup_{t\in(0,T]} t^\beta\|u(t)\|_r \le M\}.
\end{equation}
With the distance
\begin{equation}
\label{metric}
d(u,v) = d_{M,T}^{q,r}(u,v) = \sup_{t \in (0,T]}t^\beta\|u(t) - v(t)\|_r
\end{equation}
the space $Y_{M,T}^{q,r}$ is a complete metric space.

To carry out the fixed point argument, we let $u_0 \in \Dd$
and suppose that there exists $K > 0$ such that
\begin{equation}
\label{initcd}
\sup_{t \in (0,T]}t^\beta\|e^{t\Delta}u_0\|_r \le K.
\end{equation}
This condition includes implicitly the condition that  $e^{t\Delta}u_0$ be well-defined and
in $L^r(\Omega)$ for $t > 0$. Recall that if $u_0 \ge 0,$ then  $e^{t\Delta}u_0$ indeed is well-defined,
but perhaps infinite.  In order for \eqref{initcd}
to hold, it suffices for example that $\sup_{t \in (0,T]}t^\beta\|e^{t\Delta}|u_0|\|_r \le K$,
since $|e^{t\Delta}u_0| \le e^{t\Delta}|u_0|$.
We define the iterative operator by
\begin{equation}
\label{iteratn}
\F_{u_0} u(t) = {\rm e}^{t\Delta}u_0 + \int_0^t {\rm
e}^{(t-\sigma)\Delta} \big[|u(\sigma)|^\alpha u(\sigma)\big]
d\sigma.
\end{equation}

The following theorem is well-known.  Since we are particularly interested here in the contraction
mapping property, we sketch that part of the proof.

\begin{To}
\label{fixpt1}
Let $N\geq 1,$ $\alpha > 0,$  $1 \le q \le \infty$ and $q > q_c$.
There is a constant $C = C(\alpha,q)>0$ such that if
$K > 0$, $M > 0$ and $T > 0$ satisfy
\begin{equation}
\label{fixptcd}
K + CT^{1 - \frac{N\alpha}{2q}}M^{\alpha + 1} \le M,
\end{equation}
and  if $u_0 \in \Dd$ satisfies \eqref{initcd} for some $r \ge q$ with $1 \le \frac{r}{\alpha+1} \le q \le r$,
then $\F_{u_0}$ is a strict contraction on $Y_{M,T}^{q,r}$ and so has a unique fixed point
$u = \F_{u_0} u \in Y_{M,T}^{q,r}$.  This solution $u$ of \eqref{NLHint} is a classical solution of \eqref{NLH} on $(0,T]$.

Furthermore, if $u_0 \in L^q(\Omega)$ and $q < \infty$, then this fixed point has the property that $u \in C([0,T];L^q(\Omega))$
with $u(0) = u_0$.
\end{To}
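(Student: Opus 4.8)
The plan is to apply the Banach fixed point theorem directly in the complete metric space $(Y_{M,T}^{q,r},d)$ of \eqref{Yspace}--\eqref{metric}: under \eqref{fixptcd} I will show that the map $\F_{u_0}$ from \eqref{iteratn} sends the ball $Y_{M,T}^{q,r}$ into itself and is a strict contraction there, deduce the unique fixed point $u$ of \eqref{NLHint}, and then separately upgrade $u$ to a classical solution of \eqref{NLH} on $(0,T]$ and, when $q<\infty$, establish $u\in C([0,T];L^q(\Omega))$ with $u(0)=u_0$.

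\emph{Self-mapping.} Fix $u\in Y_{M,T}^{q,r}$ and write $\F_{u_0}u(t)=e^{t\Delta}u_0+G(t)$ with $G(t)=\int_0^t e^{(t-\sigma)\Delta}[\,|u(\sigma)|^\alpha u(\sigma)\,]\,d\sigma$. By \eqref{initcd} the linear term contributes at most $K$ after multiplication by $t^\beta$. Since $u(\sigma)\in L^r(\Omega)$ gives $|u(\sigma)|^\alpha u(\sigma)\in L^{r/(\alpha+1)}(\Omega)$ with norm $\|u(\sigma)\|_r^{\alpha+1}$, and \eqref{compatible} forces $1\le r/(\alpha+1)\le r$, the smoothing inequality \eqref{heatsmouth} gives $\|e^{(t-\sigma)\Delta}[\,|u(\sigma)|^\alpha u(\sigma)\,]\|_r\le(4\pi(t-\sigma))^{-N\alpha/(2r)}(M\sigma^{-\beta})^{\alpha+1}$. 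The substitution $\sigma=ts$ turns the $\sigma$-integral into $t^{1-N\alpha/(2r)-(\alpha+1)\beta}B(1-\tfrac{N\alpha}{2r},\,1-(\alpha+1)\beta)$; here the hypothesis $q>q_c$ gives $\tfrac{N\alpha}{2r}\le\tfrac{N\alpha}{2q}<1$ and, using \eqref{compatible}, also $(\alpha+1)\beta\le\tfrac{N\alpha}{2q}<1$, so the Beta function is finite, and the identity $\tfrac{N\alpha}{2r}+(\alpha+1)\beta=\beta+\tfrac{N\alpha}{2q}$ makes the power of $t$ equal to $1-\beta-\tfrac{N\alpha}{2q}$. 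Hence $t^\beta\|G(t)\|_r\le C\,T^{1-N\alpha/(2q)}M^{\alpha+1}$ with $C=C(\alpha,q)$, and together with the linear term and \eqref{fixptcd} this gives $\sup_{t\in(0,T]}t^\beta\|\F_{u_0}u(t)\|_r\le M$; continuity of $t\mapsto\F_{u_0}u(t)$ into $L^r(\Omega)$ on $(0,T]$ is routine, so $\F_{u_0}u\in Y_{M,T}^{q,r}$.

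\emph{Strict contraction.} For $u,v\in Y_{M,T}^{q,r}$ the difference $\F_{u_0}u(t)-\F_{u_0}v(t)$ equals the Duhamel term of $|u|^\alpha u-|v|^\alpha v$. From $\big||a|^\alpha a-|b|^\alpha b\big|\le(\alpha+1)(|a|^\alpha+|b|^\alpha)|a-b|$ and H\"older ($|u|^\alpha\in L^{r/\alpha}$, $|u-v|\in L^r$) one gets $\||u(\sigma)|^\alpha u(\sigma)-|v(\sigma)|^\alpha v(\sigma)\|_{r/(\alpha+1)}\le 2(\alpha+1)M^\alpha\sigma^{-(\alpha+1)\beta}d(u,v)$, and the same smoothing-plus-Beta computation as above yields $d(\F_{u_0}u,\F_{u_0}v)\le C\,T^{1-N\alpha/(2q)}M^\alpha\,d(u,v)$ after enlarging $C=C(\alpha,q)$ if needed. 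Because \eqref{fixptcd} holds with $K>0$, dividing it by $M$ gives $C\,T^{1-N\alpha/(2q)}M^\alpha\le 1-K/M<1$, so $\F_{u_0}$ is a strict contraction; the contraction mapping principle produces the unique fixed point $u=\F_{u_0}u\in Y_{M,T}^{q,r}$ of \eqref{NLHint}.

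\emph{Regularity.} That $u$ is a classical solution of \eqref{NLH} on $(0,T]$ follows from a standard bootstrap: \eqref{heatsmouth} propagates $u(t)\in L^r(\Omega)$ into $u\in L^\infty_{loc}((0,T];L^p(\Omega))$ for all $p\in[r,\infty]$, hence into a function H\"older continuous on compact subsets of $(0,T]\times\Omega$, after which parabolic Schauder theory gives $u\in C^{1,2}$; I will invoke \cite{W2,W3} here. For the last assertion, let $q<\infty$ and $u_0\in L^q(\Omega)$: strong continuity of $e^{t\Delta}$ on $L^q(\Omega)$ gives $e^{t\Delta}u_0\to u_0$ in $L^q(\Omega)$ as $t\downarrow0$, and for $G(t)$ the smoothing inequality from $L^{r/(\alpha+1)}$ to $L^q$ (valid by \eqref{compatible}), combined with $\|u(\sigma)\|_r\le M\sigma^{-\beta}$ and the identity $\tfrac{N}{2}(\tfrac{\alpha+1}{r}-\tfrac1q)+(\alpha+1)\beta=\tfrac{N\alpha}{2q}<1$, bounds $\|G(t)\|_q$ by $C\,t^{1-N\alpha/(2q)}M^{\alpha+1}\to0$; with the routine continuity on $(0,T]$ this gives $u\in C([0,T];L^q(\Omega))$ and $u(0)=u_0$. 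The one point requiring genuine care is the exponent bookkeeping that simultaneously makes each integral $\int_0^t(t-\sigma)^{-a}\sigma^{-b}\,d\sigma$ converge and leaves exactly the factor $T^{1-N\alpha/(2q)}$ after multiplication by $t^\beta$ — this is precisely where the hypothesis $q>q_c$ is used — together with the standard-but-nontrivial bootstrap to classical regularity.
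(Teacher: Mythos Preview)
Your proof is correct and follows essentially the same route as the paper: the $L^{r/(\alpha+1)}\to L^r$ smoothing estimate \eqref{heatsmouth}, the change of variable $\sigma=ts$ producing the Beta integral, the exponent identity $\frac{N\alpha}{2r}+(\alpha+1)\beta=\beta+\frac{N\alpha}{2q}$, and the contraction factor $CT^{1-N\alpha/(2q)}M^\alpha<1$ extracted from \eqref{fixptcd} with $K>0$ are exactly the paper's computation. The one point the paper handles more carefully is the claim $C=C(\alpha,q)$: the constant you obtain, namely $(4\pi)^{-N\alpha/(2r)}B\bigl(1-\tfrac{N\alpha}{2r},\,1-(\alpha+1)\beta\bigr)$ (times $2(\alpha+1)$), a priori depends on $r$ as well, and the paper removes this by replacing it with $\max_{q\le r\le q(\alpha+1)}C(\alpha,q,r)$, which is finite since the Beta function is continuous on the compact range of admissible $r$. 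You assert $C=C(\alpha,q)$ without this step; it is easy to fix, but worth stating.
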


\begin{rem}
{\rm
Of course the sufficient condition \eqref{fixptcd} can be taken as
\begin{equation}
\label{fixptcd2}
\sup_{t \in (0,T]}t^\beta\|e^{t\Delta}u_0\|_r + CT^{1 - \frac{N\alpha}{2q}}M^{\alpha + 1} \le M,
\end{equation}
i.e. taking equality in \eqref{initcd}}
\end{rem}

\begin{proof}
We first consider when the space $Y_{M,T}^{q,r}$ is preserved by the iterative operator $\F_{u_0}$.
Thus we suppose $u \in Y_{M,T}^{q,r}$, and we estimate $\F_{u_0}u(t)$ as follows.
\begin{eqnarray*}
t^\beta \|\F_{u_0}u(t)\|_r &\le& t^\beta \|{\rm e}^{t\Delta}u_0\|_r + t^\beta \int_0^t \|{\rm
e}^{(t-\sigma)\Delta} \big[|u(\sigma)|^\alpha u(\sigma)\big]\|_r d\sigma\\
&\le& K +
 t^\beta \int_0^t [4\pi(t-\sigma)] ^{-\frac{N\alpha}{2r}} \||u(\sigma)|^\alpha u(\sigma)\|_{{r\over \alpha+1}} d\sigma\\
 &=& K +
 (4\pi)^{-\frac{N\alpha}{2r}}t^\beta \int_0^t (t-\sigma)^{-\frac{N\alpha}{2r}} \|u(\sigma)\|_r^{\alpha+1} d\sigma\\
  &\le& K +
(4\pi)^{-\frac{N\alpha}{2r}}t^\beta \left(\int_0^t (t-\sigma)^{-\frac{N\alpha}{2r}} \sigma^{-\beta(\alpha+1)} d\sigma\right) M^{\alpha+1}\\
  &\le& K +
(4\pi)^{-\frac{N\alpha}{2r}}t^{1-\frac{N\alpha}{2q}}\left( \int_0^1(1-\sigma)^{-\frac{N\alpha}{2r}} \sigma^{-\beta(\alpha+1)} d\sigma\right) M^{\alpha+1}
\\
  &\le& K +
(4\pi)^{-\frac{N\alpha}{2r}} \left(\int_0^1(1-\sigma)^{-\frac{N\alpha}{2r}} \sigma^{-\beta(\alpha+1)} d\sigma\right)
T^{1-\frac{N\alpha}{2q}}M^{\alpha+1}\\&\le& K +2(\alpha+1)
(4\pi)^{-\frac{N\alpha}{2r}} \left(\int_0^1(1-\sigma)^{-\frac{N\alpha}{2r}} \sigma^{-\beta(\alpha+1)} d\sigma\right)
T^{1-\frac{N\alpha}{2q}}M^{\alpha+1}.
\end{eqnarray*}
It follows that if \eqref{fixptcd} holds, where
\begin{equation*}
C = C(\alpha, q, r)  = 2(\alpha + 1)(4\pi)^{-\frac{N\alpha}{2r}} \int_0^1(1-\sigma)^{-\frac{N\alpha}{2r}}\sigma^{-\beta(\alpha+1)} d\sigma,
\end{equation*}
then $Y_{M,T}^{q,r}$ is stable by $\F_{u_0}.$

Next we show that $\F_{u_0}$ is a strict contraction on $Y_{M,T}^{q,r}$. We estimate as follows.
\begin{eqnarray*}
t^\beta \|\F_{u_0} u(t) - \F_{u_0} v(t)\|_r &\le&
 t^\beta \int_0^t \|{\rm
e}^{(t-\sigma)\Delta} \big[|u(\sigma)|^\alpha u(\sigma) - |v(\sigma)|^\alpha v(\sigma)\big]\|_r d\sigma\\
&\le&
 (4\pi)^{-\frac{N\alpha}{2r}}t^\beta \int_0^t (t-\sigma) ^{-\frac{N\alpha}{2r}}
  \|\big[|u(\sigma)|^\alpha u(\sigma) - |v(\sigma)|^\alpha v(\sigma)\big]\|_{{r\over \alpha+1}} d\sigma\\
  &\le&
  (\alpha + 1)(4\pi)^{-\frac{N\alpha}{2r}}t^\beta \int_0^t (t-\sigma) ^{-\frac{N\alpha}{2r}}
  \|[u(\sigma) - v(\sigma)]\big[|u(\sigma)|^\alpha +|v(\sigma)|^\alpha\big]\|_{{r\over \alpha+1}} d\sigma\\
 &\le&
 (\alpha + 1)(4\pi)^{-\frac{N\alpha}{2r}}t^\beta \int_0^t (t-\sigma)^{-\frac{N\alpha}{2r}}
 \|u(\sigma) - v(\sigma)\|_r\big[\|u(\sigma)\|_r^\alpha+\|v(\sigma)\|_r^\alpha\big]
d\sigma\\
  &\le&
 2(\alpha + 1)(4\pi)^{-\frac{N\alpha}{2r}}t^\beta \left(\int_0^t (t-\sigma)^{-\frac{N\alpha}{2r}} \sigma^{-\beta(\alpha+1)} d\sigma\right) M^\alpha d_{M,T}^{q,r}(u,v)\\
  &\le&
 2(\alpha + 1)(4\pi)^{-\frac{N\alpha}{2r}}t^{1-\frac{N\alpha}{2q}} \left(\int_0^1(1-\sigma)^{-\frac{N\alpha}{2r}} \sigma^{-\beta(\alpha+1)} d\sigma\right)M^\alpha d_{M,T}^{q,r}(u,v)
\\
  &\le&
2(\alpha + 1)(4\pi)^{-\frac{N\alpha}{2r}} \left(\int_0^1(1-\sigma)^{-\frac{N\alpha}{2r}} \sigma^{-\beta(\alpha+1)} d\sigma\right)
T^{1-\frac{N\alpha}{2q}}M^\alpha d_{M,T}^{q,r}(u,v).
\end{eqnarray*}
Thus
\begin{equation*}
d_{M,T}^{q,r}(\F_{u_0} u,\F_{u_0} v) \le 2(\alpha + 1)(4\pi)^{-\frac{N\alpha}{2r}} \left(\int_0^1(1-\sigma)^{-\frac{N\alpha}{2r}} \sigma^{-\beta(\alpha+1)} d\sigma\right)
T^{1-\frac{N\alpha}{2q}}M^\alpha d_{M,T}^{q,r}(u,v).
\end{equation*}
It follows that if \eqref{fixptcd} holds then $\F_{u_0}$ is a strict contraction on $Y_{M,T}^{q,r}$.

The only difficulty is that $C$ potentially depends on $r$ as well as $q$ and $\alpha$.  To rectify this, one can replace $C(\alpha, q, r)$ by
$C(\alpha, q) = \max_{q \le r \le q(\alpha + 1)}C(\alpha, q, r)$ and the result holds with $C = C(\alpha, q)$.
\end{proof}

As is well-known, Theorem~\ref{fixpt1} is used to show that the integral equation \eqref{NLHint} is locally well-posed
on $L^q(\Omega)$.  In particular, if $u_0 \in L^q(\Omega)$ the resulting solution given by the fixed point
argument can be extended to a unique maximal solution on an interval $[0, T_{\max}(u_0))$.  We will not
belabor this point further.

We have also the following.
\begin{prop}
\label{C0nlh} Let $N\geq 1,$ $\alpha > 0,$  $1 \le q < \infty$ and $q > q_c$. Let  $T_{\max}(\varphi,L^q)$ denotes the existence time of the  maximal solution of \eqref{NLHint} with initial data $\varphi\in L^q(\Omega).$  Then the following hold.
\begin{itemize}
\item[(i)] $u(t)\in C_0(\Omega)$ for $t\in \left(0,T_{\max}(\varphi,L^q)\right).$ 
\item[(ii)] If $\varphi\in L^q(\Omega)\cap C_0(\Omega)$ then  $T_{\max}(\varphi,L^q)=T_{\max}(\varphi,C_0(\Omega)),$ the existence time of the  maximal solution of \eqref{NLHint} with initial data $\varphi\in C_0(\Omega).$
\item[(iii)] If $\varphi\in L^q(\Omega)\cap L^p(\Omega)$ with $q_c< p\leq\infty$ then  $T_{\max}(\varphi,L^q)=T_{\max}(\varphi,L^p),$ the existence time of the  maximal solution of \eqref{NLHint} with initial data $\varphi\in L^p(\Omega).$
\end{itemize}
\end{prop}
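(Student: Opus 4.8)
The plan is to prove (i) by a smoothing-and-bootstrap argument, and then to deduce (ii) and (iii) from (i) together with local uniqueness of solutions and the blow-up alternative in each of the spaces involved.

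\emph{Proof of (i).} Fix $t\in(0,T_{\max}(\varphi,L^q))$. By Theorem~\ref{fixpt1} the maximal $L^q$-solution lies in $C\bigl([0,T_{\max}(\varphi,L^q));L^q(\Omega)\bigr)$. Iterating the smoothing inequality \eqref{heatsmouth} in the Duhamel formula \eqref{NLHint} — the standard bootstrap of \cite{W2,W3}, which closes after finitely many steps precisely because $q>q_c$ forces $\tfrac{N\alpha}{2q}<1$ — raises the integrability of $u(\sigma)$ up to $L^\infty(\Omega)$, giving $\sigma^{N/(2q)}\|u(\sigma)\|_\infty\le C$ for $\sigma$ near $0$, and hence $\sup_{[\delta,t]}\|u(\sigma)\|_\infty<\infty$ for every $\delta\in(0,t)$. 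Now pick $s\in(0,t)$ and write $u(t)=e^{(t-s)\Delta}u(s)+\int_s^t e^{(t-\sigma)\Delta}\bigl[|u(\sigma)|^\alpha u(\sigma)\bigr]\,d\sigma$. Since $u(s)\in L^q(\Omega)$ with $q<\infty$, and since the Dirichlet heat kernel is dominated by the Gaussian and is continuous and vanishing at $\partial\Omega$ and at infinity, $e^{\tau\Delta}$ maps $L^q(\Omega)$ into $C_0(\Omega)$ for every $\tau>0$; thus $e^{(t-s)\Delta}u(s)\in C_0(\Omega)$. For $\sigma\in(s,t)$ one has $u(\sigma)\in L^q(\Omega)\cap L^\infty(\Omega)\subset L^{q(\alpha+1)}(\Omega)$, so $|u(\sigma)|^\alpha u(\sigma)\in L^q(\Omega)$, whence $e^{(t-\sigma)\Delta}\bigl[|u(\sigma)|^\alpha u(\sigma)\bigr]\in C_0(\Omega)$ with $L^\infty$-norm at most $\|u(\sigma)\|_\infty^{\alpha+1}$, bounded on $[s,t]$; the Duhamel integral is therefore the Bochner integral of a bounded measurable $C_0(\Omega)$-valued function on a finite interval, hence lies in $C_0(\Omega)$. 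So $u(t)\in C_0(\Omega)$, and the bounds above are locally uniform in $t\in(0,T_{\max}(\varphi,L^q))$.

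\emph{Proof of (ii) and (iii).} Two facts will be used. Local uniqueness: two solutions of \eqref{NLHint} with the same initial datum — each taken in one of the spaces $L^q(\Omega)$, $L^p(\Omega)$, $C_0(\Omega)$ — coincide on the intersection of their existence intervals (both are fixed points of $\F_{u_0}$ and uniqueness holds in an $L^\infty$-based class for $t>0$; prove it on a short interval, then propagate by connectedness). Blow-up alternative: for $q>q_c$, if $T_{\max}(\varphi,L^q)<\infty$ then $\|u(t)\|_q\to\infty$ as $t\to T_{\max}(\varphi,L^q)$, since the local existence time furnished by Theorem~\ref{fixpt1} depends on the datum only through $\|\cdot\|_q$, and likewise $\|u(t)\|_\infty\to\infty$ for the maximal $C_0(\Omega)$-solution. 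Now suppose, for contradiction, $T_q:=T_{\max}(\varphi,L^q)<T_p:=T_{\max}(\varphi,L^p)$. By local uniqueness $u_q=u_p$ on $[0,T_q)$; by (i) applied to $u_q$ (using $q<\infty$), $u_q=u_p$ is bounded in $L^\infty(\Omega)$ on each $[\delta,T_q]$, while near $0$ the bootstrap of (i) gives $\|u_q(\sigma)\|_\infty\le C\sigma^{-N/(2q)}$; since $q>q_c$, $\int_0^{T_q}\|u_q(\sigma)\|_\infty^\alpha\,d\sigma<\infty$. Using $\|e^{\tau\Delta}\|_{L^q\to L^q}\le1$ and $\||u|^\alpha u\|_q\le\|u\|_\infty^\alpha\|u\|_q$ in \eqref{NLHint} gives $\|u_q(t)\|_q\le\|\varphi\|_q+\int_0^t\|u_q(\sigma)\|_\infty^\alpha\|u_q(\sigma)\|_q\,d\sigma$, and Gronwall yields $\sup_{t<T_q}\|u_q(t)\|_q<\infty$, contradicting the blow-up alternative. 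The reverse strict inequality is excluded symmetrically: if $T_p<T_q$, then $u_p=u_q$ on $[0,T_p)$ is bounded in $L^\infty(\Omega)$ on each $[\delta,T_p]$ because $u_q$ remains continuous into $C_0(\Omega)$ beyond $T_p$ by (i), satisfies $\|u_p(\sigma)\|_\infty\le C\sigma^{-N/(2q)}$ near $0$ (again $u_p=u_q$ there), and $\int_0^{T_p}\|u_p(\sigma)\|_\infty^\alpha\,d\sigma<\infty$ since $p>q_c$, so the same Gronwall estimate with $p$ in place of $q$ contradicts the $L^p$ blow-up alternative. Hence $T_q=T_p$, which is (iii); (ii) follows in exactly the same way with $C_0(\Omega)$ in place of $L^p(\Omega)$, the bound on $\|u(\sigma)\|_\infty$ near $0$ being immediate there since $\varphi\in C_0(\Omega)\subset L^\infty(\Omega)$.

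\emph{Main obstacle.} The delicate points are, in (i), that the smoothing bootstrap genuinely closes within the admissible range of exponents — which is exactly where the strict subcriticality $q>q_c$ (i.e. $\tfrac{N\alpha}{2q}<1$) enters — and that the resulting $L^\infty$ bound upgrades to membership in $C_0(\Omega)$, which requires $e^{\tau\Delta}\colon L^q(\Omega)\to C_0(\Omega)$ for $q<\infty$, a consequence of the Gaussian domination and the regularity and decay of the Dirichlet heat kernel. In (ii)--(iii), the non-obvious direction is the comparison of maximal times, namely that an $L^\infty$ (respectively $L^p$) bound on the solution rules out an earlier blow-up in $L^q$; this is precisely what the Gronwall argument above delivers, once the singularity $\sigma^{-N\alpha/(2q)}$ near $0$ is seen to be integrable.
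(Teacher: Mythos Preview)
Your approach is essentially the paper's: bootstrap to $C_0$ for (i), then a Gronwall argument using $\||u|^\alpha u\|_q \le \|u\|_\infty^\alpha \|u\|_q$ for (ii)--(iii). The paper organizes (ii) and (iii) slightly differently: it proves (ii) first (where $\varphi\in C_0$, so $\|u(\sigma)\|_\infty$ is bounded down to $\sigma=0$ and no integrability check is needed), and then deduces (iii) by picking $\varepsilon\in(0,\min(T_q,T_p))$, noting $u(\varepsilon)\in C_0$ by (i), and applying (ii) to the shifted problem to get $T_q-\varepsilon=T_{\max}(u(\varepsilon),C_0)=T_p-\varepsilon$. Your direct route to (iii), absorbing the singularity $\|u(\sigma)\|_\infty^\alpha\lesssim\sigma^{-N\alpha/(2q)}$ at $\sigma=0$ (integrable since $q>q_c$), is equally valid.

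One small imprecision worth fixing: in the forward contradiction ($T_q<T_p$) you justify the $L^\infty$-bound on $[\delta,T_q]$ by ``(i) applied to $u_q$'', but (i) for $u_q$ only yields locally uniform bounds on compact subsets of $(0,T_q)$, not up to the blow-up time $T_q$ itself. The correct justification --- which you do use in the reverse direction --- is that $u_p$ exists beyond $T_q$, so (i) applied to $u_p$ (or the $L^\infty$ theory if $p=\infty$) gives the bound on $[\delta,T_q]\subset(0,T_p)$, and then $u_q=u_p$ there.
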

\begin{proof}
(i) By iterative argument, as in \cite{BTW}, $u(t)\in L^r(\Omega)$ for $q\leq r\leq \infty.$ It is known that $e^{t\Delta} :  L^q(\Omega)\to C_0(\Omega),$  is bounded for $t>0$ and $1\leq q<\infty.$ See \cite{QS3,Davies}. Hence, by 
\eqref{NLHint}, $u(t)\in C_0(\Omega).$ 

(ii) By (i) we have $T_{\max}(\varphi,L^q)\leq T_{\max}(\varphi,C_0(\Omega)).$  Using \eqref{NLHint} and \eqref{heatsmouth}, we have
 \begin{eqnarray*}
 \|u(t)\|_{q}&\leq& \|e^{t \Delta}\varphi\|_{{q}}+   \displaystyle\int_0^t \||u(\sigma)|^{\alpha}u(\sigma)\|_{q} d\sigma\\
 &\leq& \|\varphi\|_{{q}}+   \displaystyle\int_0^t \|u(\sigma)\|^{\alpha}_\infty \|u(\sigma)\|_{q}d\sigma.
 \end{eqnarray*}
 By Gronwall's inequality, we get
 $$\|u(t)\|_{q}\leq \|\varphi\|_{{q}}e^{ \int_0^t \|u(\sigma)\|^{\alpha}_\infty d\sigma}.$$
Hence $u$ can not blow up in $L^q(\Omega)$ before it blows up in $C_0(\Omega)$. That is  $T_{\max}(\varphi,C_0(\Omega))\leq T_{\max}(\varphi,L^q).$  
 
 (iii) Let $\varepsilon\in (0, \min(T_{\max}(\varphi,L^q), T_{\max}(\varphi,L^p))).$ By (i) we have $u(\varepsilon)\in C_0(\Omega).$ Using (ii) we  have if $p<\infty,$
 $$T_{\max}(u(\varepsilon),L^q)=T_{\max}(u(\varepsilon),C_0(\Omega))=T_{\max}(u(\varepsilon),L^p).$$ That is $T_{\max}(\varphi,L^q)-\varepsilon=T_{\max}(\varphi,L^p)-\varepsilon.$ If $p=\infty,$ then $q<p$ hence (i)-(ii) hold and $T_{\max}(\varphi,L^q)-\varepsilon=T_{\max}(u(\varepsilon),C_0(\Omega))=T_{\max}(u(\varepsilon),L^\infty)=T_{\max}(\varphi,L^\infty)-\varepsilon.$ Hence we get the result.
  \end{proof}

As a first application of Theorem~\ref{fixpt1} to life-span estimates, we prove Theorem \ref{ThNLH}.
\begin{proof}[Proof of Theorem \ref{ThNLH}] Consider  $u_0 = \lambda\varphi$, where $\lambda > 0$ and $\varphi \in L^q(\Omega).$
 The key observation is that
if  $T_{\max}(\lambda\varphi) <\infty$, it is impossible to carry out the fixed point argument
on the interval $[0,T_{\max}(\lambda\varphi)]$ with initial value $u_0 = \lambda\varphi$.
Hence, by \eqref{fixptcd2}
\begin{equation*}
\sup_{t \in (0,T_{\max}(\lambda\varphi)]}t^\beta\|e^{t\Delta}u_0\|_r
+ CT_{\max}(\lambda\varphi)^{1 - \frac{N\alpha}{2q}}M^{\alpha + 1} > M,
\end{equation*}
for all $M > 0$.
 Recall that $(4\pi t)^\beta\|e^{t\Delta}u_0\|_r \le \|u_0\|_q$
by the $L^q-L^r$ smoothing properties of the heat semigroup \eqref{heatsmouth}, so that
\begin{equation*}
(4\pi)^\beta\lambda\|\varphi\|_q + CT_{\max}(\lambda\varphi)^{1 - \frac{N\alpha}{2q}}M^{\alpha + 1} > M,
\end{equation*}
for all $M > 0$.  In particular, if we set $M = 2(4\pi)^\beta\lambda\|\varphi\|_q$, this gives
\begin{equation*}
CT_{\max}(\lambda\varphi)^{1 - \frac{N\alpha}{2q}}[\lambda\|\varphi\|_q]^\alpha > 1.
\end{equation*}
Thus we have proved Theorem \ref{ThNLH}.
\end{proof}

As a second application, consider $u_0 = \lambda\m$, where $\lambda > 0$ and $\m \in \M$,
the set of finite Borel measures on $\Omega$.  For example, $\m$ could be a point mass.
In order to apply Theorem~\ref{fixpt1}, we observe first that

\begin{equation*}
|{\rm e}^{t\Delta}\m| \le (4\pi t)^{-\frac{N}{2}}\int_{\R^N}{\rm e}^{-\frac{|x-y|^2}{4t}}d|\m|(y),
\end{equation*}
where, by abuse of notation, $\m$ denotes both the measure on $\Omega$ and its natural extension
to $\R^N$, and $|\m|$ is the total variation of $\m$. Hence
\begin{eqnarray*}
\|{\rm e}^{t\Delta}\m\|_\infty &\le& (4\pi t)^{-\frac{N}{2}}\|\m\|_\M,\\
\|{\rm e}^{t\Delta}\m\|_1 &\le& \|\m\|_\M ;\\
\end{eqnarray*}
and so by interpolation
\begin{equation}
\label{measLr}
\|{\rm e}^{t\Delta}\m\|_r\le (4\pi t)^{-\frac{N}{2}(1 - \frac{1}{r})}\|\m\|_\M,
\end{equation}
for all $1 \le r \le \infty$.

Theorem~\ref{fixpt1} thus implies that \eqref{NLHint} is locally well-posed on $\M$
if $q_c < 1$.  Simply take $q = 1$ and $r = \alpha + 1$.  (This of course is well-known.)

\begin{proof}[Proof of Theorem \ref{lowermeas}] To obtain a life-span estimate, we again note that if the maximal existence time is finite, i.e. $T_{\max}(\lambda\m) < \infty$,
then \eqref{fixptcd2} can not hold
with $u_0 = \lambda\m$ and $T = T_{\max}(\lambda\m)$.  Hence, also using \eqref{measLr}, for $\beta={N\over 2}(1-{1\over \alpha+1}),$
we must have
\begin{equation*}
(4\pi)^\beta\lambda\|\m\|_\M + CT_{\max}(\lambda\m)^{1 - \frac{N\alpha}{2}}M^{\alpha + 1} > M,
\end{equation*}
for all $M > 0$.  As above, we take $M = 2(4\pi)^\beta\lambda\|\m\|_\M$, which gives the lower estimate
\begin{equation*}
CT_{\max}(\lambda\m)^{\frac{1}{\alpha} - \frac{N}{2}}\lambda\|\m\|_\M> 1.
\end{equation*}
This completes the proof of Theorem \ref{lowermeas}.
\end{proof}

As a third application of Theorem~\ref{fixpt1} to life-span estimates we consider
$u_0 = \lambda\varphi$ where $\lambda>0,$ $\varphi \in L^1_{loc}(\R^N)$ and $|\varphi| \le |\cdot|^{-\gamma}$
for some $0 < \gamma < N$.  We recall that if $\frac{N}{\gamma} < r$, then
\begin{equation}
\|{\rm e}^{t\Delta}|\cdot|^{-\gamma}\|_r
= t^{-\frac{\gamma}{2}+ \frac{N}{2r}}\|{\rm e}^{\Delta}|\cdot|^{-\gamma}\|_r
\end{equation}
for all $t > 0$.
This follows from a scaling argument.
For convenience, we set
\begin{equation}
\label{scest}
L = \|{\rm e}^{\Delta}|\cdot|^{-\gamma}\|_r.
\end{equation}
Hence if $|\varphi| \le |\cdot|^{-\gamma}$, then
\begin{equation}
\|{\rm e}^{t\Delta}(\lambda\varphi)\|_r \le \lambda\|{\rm e}^{t\Delta}|\cdot|^{-\gamma}\|_r
= L\lambda t^{-\frac{\gamma}{2}+ \frac{N}{2r}}.
\end{equation}
We next set $q = \frac{N}{\gamma}$, so that $1 < q < r$, and we may choose $r$ so that \eqref{compatible} holds.
Also, $\beta = \frac{N}{2}(\frac{1}{q} - \frac{1}{r}) = \frac{\gamma}{2}- \frac{N}{2r}$.  Theorem~\ref{fixpt1}
clearly shows that \eqref{NLHint} is locally well-posed for initial values bounded by a multiple of
$|x|^{-\gamma}$ with $0 < \gamma < N$ and $\frac{N}{\gamma} > q_c$, i.e. $\gamma < \frac{2}{\alpha}$.
This of course is known.  See \cite[Theorem 2.8]{D}, and also \cite[Theorem 2.3]{TW2}.

As for a life-span estimate, if $u_0 = \lambda\varphi$ where $|\varphi(x)|  \le |x|^{-\gamma}$,
then the existence time of the solution, $T_{\max}( \lambda\varphi)$, if it is finite, must verify
\begin{equation*}
\lambda L + CT_{\max}(\lambda\varphi)^{1 - \frac{\gamma\alpha}{2}}M^{\alpha + 1} > M
\end{equation*}
for all $M > 0$, where $L$ is given by \eqref{scest}.  For $M = 2\lambda L$, this gives
\begin{equation*}
CT_{\max}(\lambda\varphi)^{1 - \frac{\gamma\alpha}{2}}(\lambda L)^\alpha > 1.
\end{equation*}
In other words, we have the following result.

\begin{cor}
\label{negpower}
Let $0 < \gamma < N$ and $\gamma < \frac{2}{\alpha}$.  Suppose $\varphi \in L^1_{loc}(\R^N)$
is such that $|\varphi(x)| \le |x|^{-\gamma}$.  It follows that
\begin{equation}
\label{estimateseqdif}
T_{\max}(\lambda\varphi) { \geq}
 \frac{C}{(\lambda L)^{(\frac{1}{\alpha} - \frac{\gamma}{2})^{-1}}},\; \lambda>0,
\end{equation}
where $L$ is given by \eqref{scest} and $C$ depends only on $\alpha$ and $\gamma$.
\end{cor}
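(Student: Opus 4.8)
The plan is to obtain Corollary~\ref{negpower} as a direct corollary of the fixed point construction in Theorem~\ref{fixpt1}, following exactly the computation carried out in the paragraphs preceding the statement. First I would reduce to the integral equation \eqref{NLHint} and fix the exponents: take $q = N/\gamma$, which satisfies $q \ge 1$ (because $\gamma < N$) and $q > q_c$ (because $\gamma < 2/\alpha$, recalling \eqref{qcritique}), and then choose $r = q(\alpha+1)$, so that \eqref{compatible} holds and moreover $r > N/\gamma$. With $\beta = \frac{N}{2}\bigl(\frac{1}{q} - \frac{1}{r}\bigr) = \frac{\gamma}{2} - \frac{N}{2r}$, the dilation identity $D_{\sqrt t}\,{\rm e}^{t\Delta} = {\rm e}^{\Delta}D_{\sqrt t}$ from \eqref{hsgdil1}, combined with the homogeneity of $|\cdot|^{-\gamma}$ of degree $-\gamma$ and the scaling $\|D_{\sqrt t}f\|_r = t^{-N/(2r)}\|f\|_r$, gives $\|{\rm e}^{t\Delta}|\cdot|^{-\gamma}\|_r = L\,t^{-\gamma/2 + N/(2r)}$ with $L$ as in \eqref{scest}; here $L < \infty$ precisely because $r > N/\gamma$ (the decay of ${\rm e}^{\Delta}|\cdot|^{-\gamma}$ at infinity is then $r$-integrable) and $\gamma < N$ (the singularity at the origin is locally integrable).

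Next, since $|{\rm e}^{t\Delta}u_0| \le {\rm e}^{t\Delta}|u_0|$ and $|\varphi| \le |\cdot|^{-\gamma}$, we get $\|{\rm e}^{t\Delta}(\lambda\varphi)\|_r \le \lambda L\,t^{-\gamma/2 + N/(2r)}$, hence $\sup_{t\in(0,T]} t^\beta\|{\rm e}^{t\Delta}(\lambda\varphi)\|_r \le \lambda L$ for \emph{every} $T > 0$; in other words condition \eqref{initcd} holds with $K = \lambda L$, uniformly in $T$. If $T_{\max}(\lambda\varphi) = \infty$ there is nothing to prove, so assume it is finite. The key observation is that the fixed point argument of Theorem~\ref{fixpt1} cannot be run on $[0, T_{\max}(\lambda\varphi)]$, for otherwise the solution would extend strictly beyond $T_{\max}(\lambda\varphi)$; therefore the sufficient condition \eqref{fixptcd2} must fail at $T = T_{\max}(\lambda\varphi)$ for every $M > 0$, i.e.
\[
\lambda L + C\,T_{\max}(\lambda\varphi)^{1 - \frac{\gamma\alpha}{2}} M^{\alpha+1} > M \qquad \text{for all } M > 0,
\]
where I have used that the exponent $1 - \frac{N\alpha}{2q}$ of \eqref{fixptcd2} equals $1 - \frac{\gamma\alpha}{2}$ for our choice of $q$, and that this exponent is positive since $\gamma < 2/\alpha$.

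Finally I would specialize to $M = 2\lambda L$, which removes the linear-in-$M$ competition and leaves $C\,T_{\max}(\lambda\varphi)^{1 - \frac{\gamma\alpha}{2}}(\lambda L)^{\alpha} > 1$; writing $1 - \frac{\gamma\alpha}{2} = \alpha\bigl(\frac{1}{\alpha} - \frac{\gamma}{2}\bigr)$ and solving for $T_{\max}(\lambda\varphi)$ yields \eqref{estimateseqdif}, with $C$ depending only on $\alpha$ and $\gamma$ (the $r$-dependence of the constant is absorbed exactly as at the end of the proof of Theorem~\ref{fixpt1}). I do not expect any genuine obstacle here: the only points requiring attention are checking that the admissible exponent range is nonempty — that $q = N/\gamma$ can be paired with some $r$ satisfying both \eqref{compatible} and $r > N/\gamma$, which is where $\gamma < 2/\alpha$ (equivalently $N/\gamma > q_c$) enters — and verifying the scaling identity for $\|{\rm e}^{t\Delta}|\cdot|^{-\gamma}\|_r$, which is the short computation already recorded just above the statement.
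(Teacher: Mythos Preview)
Your proposal is correct and follows essentially the same argument as the paper: set $q=N/\gamma$, pick an admissible $r$ (you take the canonical choice $r=q(\alpha+1)$, which the paper also singles out), use the scaling identity for $\|{\rm e}^{t\Delta}|\cdot|^{-\gamma}\|_r$ to verify \eqref{initcd} with $K=\lambda L$, and then invoke the failure of \eqref{fixptcd2} at $T=T_{\max}(\lambda\varphi)$ with $M=2\lambda L$. The extra care you take in justifying $L<\infty$ is a small bonus not spelled out in the paper.
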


This last result recovers \cite[Theorem 2.6(ii)]{TW2} in the case $m = 0$, by a different but related
method: the contraction mapping argument is formulated differently.  It does not seem possible
that the contraction mapping argument used in the proof of Theorem~\ref{fixpt1} can be used
to recover \cite[Theorems 2.3 and 2.6]{TW2} in the case $1 \le m \le N$.  Indeed, that is the point
of the paper \cite{TW2}. Note also that Theorem \ref{lowerLqgamma} gives also the result but here the constant at the right-hand side is explicit.

\section{Estimates for the heat kernel in weighted spaces}
\setcounter{equation}{0}
 In this section we prove the following heat kernel estimates. For simplicity, the space $L^p(\R^N),$ will be denoted by $L^p.$ We recall that the norm in $L^p(\R^N), \|\cdot\|_{L^p(\R^N)}$ is denoted by $\|\cdot\|_p.$
 \begin{prop}
 \label{smoothingeffectorl-lebeg}
 Let $N\geq 1,$ $0\leq\gamma\leq \mu<N,$  $q_1\in(1,\infty]$ and $q_2\in(1,\infty]$  satisfy $$0\leq\frac{1}{q_2}{<}  \frac{\mu-\gamma}{N}+\frac{1}{q_1} \leq \frac{\mu}{N}+\frac{1}{q_1}<1.$$
Then  there exists a  constant $C>0$ depending on $N,\gamma,\mu,\;q_1$ and $q_2$ such that
\begin{equation}\label{99Lebegq}
\left \||.|^{\gamma}e^{t \Delta}u\right\|_{{q_2}}\leq C t^{-\frac{N}{2}\left(\frac{1}{q_1}-\frac{1}{q_2}\right)-\frac{\mu-\gamma}{2}}
\left\||.|^{\mu}u\right\|_{{q_1}},\; t>0, \; |.|^{\mu}u \in L^{q_1}.
\end{equation}
\end{prop}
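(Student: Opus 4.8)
The plan is to reduce the weighted estimate \eqref{99Lebegq} to the two extreme cases $\gamma=\mu$ and $\gamma=0$, and in each of these to exploit the explicit Gaussian kernel together with scaling and Hölder-type inequalities. First I would handle the diagonal case $\gamma=\mu$, i.e.\ prove
$$\left\||.|^{\mu}e^{t\Delta}u\right\|_{q_2}\le C\,t^{-\frac{N}{2}(\frac1{q_1}-\frac1{q_2})}\left\||.|^{\mu}u\right\|_{q_1}.$$
Writing $v=|.|^{\mu}u$, this is equivalent to the boundedness of the operator $v\mapsto |.|^{\mu}e^{t\Delta}(|.|^{-\mu}v)$ from $L^{q_1}$ to $L^{q_2}$ with the stated norm, whose kernel is $K_t(x,y)=|x|^{\mu}|y|^{-\mu}(4\pi t)^{-N/2}e^{-|x-y|^2/(4t)}$. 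By the parabolic scaling $x\mapsto\sqrt t x$, $y\mapsto \sqrt t y$ the $t$-dependence factors out as $t^{-\frac N2(\frac1{q_1}-\frac1{q_2})}$, so it suffices to treat $t=1$; there one applies a Schur-type test (or Young's inequality in the radial variable), splitting the integral into the regions $|x|\le 2|y|$ and $|x|>2|y|$, using $|x|^{\mu}|y|^{-\mu}\le 2^{\mu}$ on the first region and the Gaussian decay to absorb the polynomial weight on the second. The hypothesis $\mu/N+1/q_1<1$, equivalently $|x|^{-\mu}\in L^{q_1'}_{\rm loc}$-type control, is exactly what keeps the relevant kernel integrals finite; this local-integrability constraint is the crux of the diagonal estimate.

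Next I would treat the pure gain case $\gamma=0$ but with general $\mu$: establish
$$\left\|e^{t\Delta}u\right\|_{q_2}\le C\,t^{-\frac N2(\frac1{q_1}-\frac1{q_2})-\frac{\mu}{2}}\left\||.|^{\mu}u\right\|_{q_1}.$$
Again set $v=|.|^{\mu}u$ and scale to $t=1$; the kernel is now $|y|^{-\mu}(4\pi)^{-N/2}e^{-|x-y|^2/4}$. One writes $|y|^{-\mu}=|y|^{-\mu}\mathbf 1_{|y|\le1}+|y|^{-\mu}\mathbf 1_{|y|>1}$: the second piece is bounded, so that contribution is controlled by the ordinary $L^{q_1}$–$L^{q_2}$ smoothing \eqref{heatsmouth}; the first piece has $|y|^{-\mu}\mathbf 1_{|y|\le1}\in L^{s}$ for every $s<N/\mu$, and since the Gaussian lies in every $L^{p}$, Young's inequality $L^{q_1}*L^{s}\hookrightarrow L^{q_2}$ applies precisely when $1+\tfrac1{q_2}=\tfrac1{q_1}+\tfrac1s$ with $s<N/\mu$, i.e.\ when $\tfrac1{q_2}>\tfrac1{q_1}-1+\tfrac\mu N$, which is implied by the second inequality in the hypothesis $\tfrac1{q_2}<\tfrac{\mu-\gamma}{N}+\tfrac1{q_1}$ combined with $\gamma\ge 0$ after the interpolation step below. (One must be slightly careful that $q_2\ge q_1$ so that Young applies in the right direction; the condition $\tfrac1{q_2}<\tfrac\mu N+\tfrac1{q_1}\le$ something forces this, or one splits off the small-$|x-y|$ part to reduce to it.)

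Finally I would deduce the general statement $0\le\gamma\le\mu$ by \emph{interpolating} the two endpoint estimates. Concretely, write $|x|^{\gamma}=|x|^{\gamma}$ and note $|x|^{\gamma}\,|e^{t\Delta}u(x)|\le \big(|x|^{\mu}|e^{t\Delta}u(x)|\big)^{\theta}\big(|e^{t\Delta}u(x)|\big)^{1-\theta}$ with $\theta=\gamma/\mu\in[0,1]$; applying Hölder with exponents $1/\theta,1/(1-\theta)$ in a suitably chosen pair of Lebesgue exponents, and matching them up via the scaling exponents already computed, yields exactly the power $-\tfrac N2(\tfrac1{q_1}-\tfrac1{q_2})-\tfrac{\mu-\gamma}{2}$. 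Alternatively, and perhaps more cleanly, I would avoid interpolation and run the Schur/Young argument directly on the kernel $|x|^{\gamma}|y|^{-\mu}(4\pi t)^{-N/2}e^{-|x-y|^2/(4t)}$, again after scaling to $t=1$ and splitting into $\{|x|\le 2|y|\}\cup\{|x|>2|y|\}$: on the first region $|x|^{\gamma}|y|^{-\mu}\le 2^{\gamma}|y|^{\gamma-\mu}$ with $\gamma-\mu\le 0$, so one is reduced to a gain-type kernel as in the $\gamma=0$ case but with parameter $\mu-\gamma$; on the second region the Gaussian beats the polynomial. The main obstacle throughout is bookkeeping the exponent conditions so that every kernel/convolution integral converges — the strict inequality $\tfrac1{q_2}<\tfrac{\mu-\gamma}{N}+\tfrac1{q_1}$ and the bound $\tfrac\mu N+\tfrac1{q_1}<1$ are used respectively to make the large-$|x-y|$ (Young) and small-$|y|$ (local singularity) integrals finite, and I would check these carefully at each split rather than the underlying inequalities, which are routine.
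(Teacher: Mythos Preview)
Your approach is plausible but takes a genuinely different route from the paper. The paper does \emph{not} interpolate between the endpoints $\gamma=0$ and $\gamma=\mu$ (those cases are simply cited from \cite{BTW} and \cite{Tsutsui,CIT}); for $0<\gamma<\mu<N$ it proves instead a stronger Lorentz-space estimate (Proposition~\ref{smoothingeffectorl-orl}) and then specializes. The key idea there is the elementary pointwise inequality $|x|^{\gamma}\le C(|y|^{\gamma}+|x-y|^{\gamma})$, which yields the decomposition
\[
|x|^{\gamma}|G_t\ast u|\le C\,G_t\ast(|\cdot|^{\gamma}|u|)+C\,(|\cdot|^{\gamma}G_t)\ast|u|.
\]
Each term is then handled by the generalized Young inequality in Lorentz spaces together with the generalized H\"older inequality, exploiting that $|\cdot|^{-(\mu-\gamma)}\in L^{N/(\mu-\gamma),\infty}$ and $|\cdot|^{-\mu}\in L^{N/\mu,\infty}$. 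This Lorentz-space detour is what allows $q_2<q_1$ without extra work; the homogeneity of the weights is used only through weak-$L^p$ membership.

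Your Schur/region-splitting strategy can be made to work, but the write-up has real gaps. First, in the $\gamma=0$ step your ``Young's inequality $L^{q_1}\ast L^{s}\hookrightarrow L^{q_2}$'' is not the correct structure: the operation is $G_1\ast(wv)$ with $w=|\cdot|^{-\mu}\mathbf 1_{|\cdot|\le1}$, so one needs H\"older on the product $wv$ and \emph{then} Young with $G_1$; the exponent bookkeeping you sketch does not match that. Second, and more importantly, you explicitly worry that ``$q_2\ge q_1$'' may be needed, but one of the points of the proposition (see Remark~\ref{possibleequalityinthemiddle}, part 4) is precisely that $q_2<q_1$ is allowed when $\gamma<\mu$; your interpolation step must therefore produce admissible intermediate exponents $r_1,r_2$ even in that regime, and you have not verified this. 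If you pursue your route, check carefully that for every admissible $(q_1,q_2,\gamma,\mu)$ you can choose $\tfrac{1}{r_1}<\tfrac{1}{q_1}$, $\tfrac{1}{r_2}<\tfrac{\mu}{N}+\tfrac{1}{q_1}$, $\tfrac{1}{r_1},\tfrac{1}{r_2}\in[0,1)$ with $\theta/r_1+(1-\theta)/r_2=1/q_2$; otherwise the direct kernel argument on $|x|^{\gamma}|y|^{-\mu}G_t(x-y)$ with the split $|x|\le 2|y|$ versus $|x|>2|y|$ is cleaner and closer in spirit to the paper's decomposition.
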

\begin{rem}
\label{possibleequalityinthemiddle}
{$\,$\rm
\begin{itemize}
\item[1)]  The estimate \eqref{99Lebegq} is well-known for $\mu=\gamma=0,$ that is \eqref{heatsmouth} (see for example \cite{W2}). For the case  $\gamma=0,$ $0<\mu<N,$ \eqref{99Lebegq} is established in \cite{BTW}.  Estimate \eqref{99Lebegq} is known for $0<\gamma\leq\mu<N,$ $0\leq {1\over q_2}\leq {1\over q_1}<{1\over q_1}+{\mu\over N}<1$ in \cite{Tsutsui,CIT}. See also  \cite{KhaiTri} for the case  $q_1=q_2=\infty.$ It follows by  \cite{Tsutsui,CIT} that \eqref{99Lebegq} holds for $0<\gamma=\mu<N,$ $q_1=q_2=q\in (1,\infty],\; {1\over q}+{\mu\over N}<1$.

    \item[2)] The power of $t$ in \eqref{99Lebegq}  is optimal. This can be shown by scaling argument as in \cite{BTW}. In fact, for $t>0,$ we have
    $$e^\Delta u = D_{\sqrt t} e^{t \Delta} D_\frac{1}{\sqrt t} u$$
     for all $u \in \mathcal S'(\RR^N).$ Also  $$\||\cdot|^\gamma D_{\sqrt t} f\|_{r}=t^{-\frac{N}{2r}-{\gamma \over 2}} \||\cdot|^\gamma f\|_{r}$$
     for all $|\cdot|^\gamma f \in L^r, r\geq 1$. Writing \eqref{99Lebegq} for $t=1$ as follows
    $$  \left \||.|^{\gamma}e^{\Delta}u\right\|_{{q_2}}=\left \||.|^{\gamma}D_{\sqrt t} e^{t \Delta} D_{1/\sqrt t} u\right\|_{{q_2}}\leq C
\left\||.|^{\mu}u\right\|_{{q_1}}.$$
Setting $D_{1/\sqrt t} u=v$ that is $u=D_{\sqrt t}v,$ we get
$$ \left \||.|^{\gamma}D_{\sqrt t} e^{t \Delta} v\right\|_{{q_2}}\leq C
\left\||.|^{\mu}D_{\sqrt t}v\right\|_{{q_1}}.$$
That is
$$t^{-{N\over 2q_2}-{\gamma\over 2}} \left \||.|^{\gamma} e^{t \Delta} v\right\|_{{q_2}}\leq C
t^{-{N\over 2q_1}-{\mu\over 2}} \left\||.|^{\mu}v\right\|_{{q_1}}.$$ This gives \eqref{99Lebegq} for all $t>0.$
    \item[3)] The fact that $\gamma\leq \mu$ is necessary. This follows by translation argument. See also \cite{DenapoliDrelichman}. We take, in \eqref{99Lebegq},
$t=1,\; u=G_1(\cdot-\tau x_0),\; \tau>0,\; x_0\in\R^N,|x_0|=1.$  In fact, we have that
$$e^{\Delta}G_1(\cdot-\tau x_0)=G_1\star G_1(\cdot-\tau x_0)=G_2(\cdot-\tau x_0).$$ On the other hand,
$$\left \||\cdot|^{\gamma}G_2(\cdot-\tau x_0)\right\|_{{q_2}}=\left \||\cdot+\tau x_0|^{\gamma}G_2\right\|_{{q_2}}=\tau^{\gamma}\left \|\left|{\cdot\over \tau}+ x_0\right|^{\gamma}G_2\right\|_{{q_2}}$$ and
$$\left \||\cdot|^{\mu}G_1(\cdot-\tau x_0)\right\|_{{q_1}}=\tau^{\mu}\left \|\left|{\cdot\over \tau}+ x_0\right|^{\mu}G_1\right\|_{{q_1}}.$$ Hence, \eqref{99Lebegq}  reads,
$$\tau^{-(\mu-\gamma)}\left \|\left|{\cdot\over \tau}+ x_0\right|^{\gamma}G_2\right\|_{{q_2}}\leq C \left \|\left|{\cdot\over \tau}+ x_0\right|^{\mu}G_1\right\|_{{q_1}}.$$
Then we let $\tau\to\infty,$ since $\left \|\left|{\cdot\over \tau}+ x_0\right|^{\mu}G_1\right\|_{{q_1}}\to \left \|G_1\right\|_{{q_1}} <\infty$ and $\left \|\left|{\cdot\over \tau}+ x_0\right|^{\gamma}G_2\right\|_{{q_2}}\to \left \|G_2\right\|_{{q_2}}>0,$  to deduce that $\gamma\leq \mu$  if $q_2,\; q_1<\infty.$
\item[4)] Our estimate is different from that of \cite{Tsutsui,CIT} since we do not require $q_1\leq q_2$ if $\gamma<\mu.$ In fact, since the condition $\gamma\leq \mu,$ is necessary by the above remark, all that we require  is that the power of $t$  in  \eqref{99Lebegq} is negative.  \end{itemize}}
\end{rem}
To prove Proposition \ref{smoothingeffectorl-lebeg}, we establish the following estimates for the heat kernel in weighted Lorentz spaces{. Since the cases $0=\gamma<\mu<N,$ (\cite{BTW}) and $0<\gamma=\mu<N$ (\cite{Tsutsui,CIT}) are known, we only give the proof for $0<\gamma<\mu<N.$}
\begin{prop}
 \label{smoothingeffectorl-orl}
 Let $N\geq 1,$ $0{<}\gamma{<} \mu<N,$ $1\leq q\leq \infty,$ $q_1\in(1,\infty]$ and $q_2\in(1,\infty]$  satisfy $$0\leq\frac{1}{q_2}{<}    \frac{\mu-\gamma}{N}+\frac{1}{q_1}{<}  \frac{\mu}{N}+\frac{1}{q_1}<1.$$
Then  there exists a  constant $C>0$ depending on $N,\gamma,\mu,\; q,\;q_1$ and $q_2$ such that
\begin{equation}\label{99Lorentzq}
\left \||.|^{\gamma}e^{t \Delta}u\right\|_{L^{q_2,q}}\leq C t^{-\frac{N}{2}\left(\frac{1}{q_1}-\frac{1}{q_2}\right)-\frac{\mu-\gamma}{2}}  \left\||.|^{\mu}u\right\|_{L^{q_1,\infty}},\; t>0, \; |.|^{\mu}u \in L^{q_1,\infty},
\end{equation}
with if $q_2=\infty$ then $q=\infty.$
\end{prop}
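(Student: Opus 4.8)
The plan is to dominate the heat kernel by a Riesz kernel, reducing \eqref{99Lorentzq} to a weighted fractional‑integration (Stein--Weiss type) estimate, and then to recover the Lorentz fine indices by a dyadic splitting of $\R^N$ together with real interpolation. Write $f=|\cdot|^{\mu}u$, set $\theta_0=\tfrac N2\bigl(\tfrac1{q_1}-\tfrac1{q_2}\bigr)+\tfrac{\mu-\gamma}2$ and $\lambda=N\bigl(1-\tfrac1{q_1}+\tfrac1{q_2}\bigr)+\gamma-\mu$. Using the elementary bound $(4\pi t)^{-N/2}e^{-|z|^2/4t}\le C_\lambda\, t^{(\lambda-N)/2}|z|^{-\lambda}$, valid for every $\lambda\ge0$, one gets the pointwise inequality $|x|^\gamma e^{t\Delta}u(x)\le C\,t^{(\lambda-N)/2}\,|x|^\gamma\int_{\R^N}|x-y|^{-\lambda}|y|^{-\mu}|f(y)|\,dy$. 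Here $\tfrac{\lambda-N}2$ equals precisely $-\theta_0$, so the $t$--power is automatic; moreover the hypotheses of the proposition yield $0<\lambda<N$ (the lower bound from $\tfrac\mu N+\tfrac1{q_1}<1$ together with $\gamma\ge0$, the upper bound from $\tfrac1{q_2}+\tfrac\gamma N<\tfrac\mu N+\tfrac1{q_1}$), together with the admissibility conditions $\mu-\gamma\ge0$, $-\gamma<\tfrac N{q_2}$, $\mu<\tfrac N{q_1'}$ and the dimensional balance that the Stein--Weiss inequality requires. In principle it would then suffice to invoke that inequality in its Lorentz--space form, but I would give a direct argument, both to keep the paper self--contained and to reuse the already known cases.

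For the direct argument I would first prove the weaker estimate with $L^{q_2,\infty}$ in place of $L^{q_2,q}$, by splitting the $y$--integral over the three regions $\{|y|<|x|/2\}$, $\{|y|>2|x|\}$, $\{|x|/2\le|y|\le2|x|\}$. On the diagonal region $|x|\sim|y|$, so $|x|^\gamma|y|^{-\mu}\le C|y|^{-(\mu-\gamma)}$ with $0<\mu-\gamma<N$, and the corresponding piece is bounded by $C\,e^{t\Delta}\bigl(|\cdot|^{-(\mu-\gamma)}|f|\bigr)$; this is exactly the $\gamma=0$ case of the proposition (\cite{BTW}) applied with weight $\mu-\gamma$, whose hypotheses reduce to $0\le\tfrac1{q_2}<\tfrac{\mu-\gamma}N+\tfrac1{q_1}<1$, which we assumed. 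On $\{|y|<|x|/2\}$ one has $|x-y|\ge\tfrac12|x|$, so after the kernel bound the piece is at most $C\,t^{(\lambda-N)/2}|x|^{\gamma-\lambda}\int_{\{|y|<|x|/2\}}|y|^{-\mu}|f(y)|\,dy$, and Hölder's inequality in Lorentz spaces with $\| |\cdot|^{-\mu}\mathbf{1}_{\{|\cdot|<R\}}\|_{L^{q_1',1}}\sim R^{N/q_1'-\mu}$ (finite since $\tfrac\mu N+\tfrac1{q_1}<1$) gives a bound of the form $C\,t^{-\theta_0}\|f\|_{L^{q_1,\infty}}|x|^{-N/q_2}$. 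The region $\{|y|>2|x|\}$ is symmetric, via $|x-y|\ge\tfrac12|y|$ and $\| |\cdot|^{-\mu-\lambda}\mathbf{1}_{\{|\cdot|>R\}}\|_{L^{q_1',1}}\sim R^{N/q_1'-\mu-\lambda}$ (the integrability at infinity holds because $\mu+\lambda>\tfrac N{q_1'}$), again producing $C\,t^{-\theta_0}\|f\|_{L^{q_1,\infty}}|x|^{-N/q_2}$. Since $|\cdot|^{-N/q_2}\in L^{q_2,\infty}(\R^N)$, summing the three contributions gives $\| |\cdot|^\gamma e^{t\Delta}u\|_{L^{q_2,\infty}}\le C\,t^{-\theta_0}\|f\|_{L^{q_1,\infty}}$.

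It then remains to upgrade $L^{q_2,\infty}$ to the target $L^{q_2,q}$. If $q_2=\infty$ then, by hypothesis, $q=\infty$ and $L^{q_2,q}=L^\infty=L^{q_2,\infty}$, so there is nothing to do. If $q_2<\infty$, then $0<\tfrac1{q_2}<\tfrac{\mu-\gamma}N+\tfrac1{q_1}$, so one may choose exponents $q_2^{(0)},q_2^{(1)}$ with $\tfrac1{q_2^{(1)}}<\tfrac1{q_2}<\tfrac1{q_2^{(0)}}<\tfrac{\mu-\gamma}N+\tfrac1{q_1}$ and $\tfrac1{q_2^{(1)}}\ge0$; the previous step applies with $q_2$ replaced by either of them, with the same source space $\{|\cdot|^\mu u\in L^{q_1,\infty}\}$. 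The (quasi--)sublinear map $u\mapsto|\cdot|^\gamma e^{t\Delta}u$ is thus bounded from that fixed space into both $L^{q_2^{(0)},\infty}$ and $L^{q_2^{(1)},\infty}$, hence by real interpolation into $(L^{q_2^{(0)},\infty},L^{q_2^{(1)},\infty})_{\theta,q}=L^{q_2,q}$, where $\tfrac1{q_2}=\tfrac{1-\theta}{q_2^{(0)}}+\tfrac\theta{q_2^{(1)}}$, and the bounds combine so that the $t$--dependence stays $t^{-\theta_0}$. This gives the asserted inequality.

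I expect the bookkeeping in the two off--diagonal regions to be the only genuine obstacle: one must verify that the single choice $\lambda=N(1-\tfrac1{q_1}+\tfrac1{q_2})+\gamma-\mu$ simultaneously yields the correct power of $t$ and exactly the spatial profile $|x|^{-N/q_2}$ — no faster decay, so that the outcome is genuinely only weak $L^{q_2}$ — and that the truncated power weights indeed lie in the Lorentz spaces used, with the distribution--function computations being sharp. The borderline case $q_2=\infty$ is precisely where these estimates are tight, which is the reason for the hypothesis that $q=\infty$ whenever $q_2=\infty$.
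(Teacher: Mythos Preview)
Your argument is correct, but it takes a genuinely different route from the paper's proof. The paper does not dominate the heat kernel by a Riesz kernel, nor does it split into the three regions $\{|y|<|x|/2\}$, $\{|y|>2|x|\}$, $\{|x|\sim|y|\}$. Instead, it uses the elementary inequality $|x|^\gamma\le C(|y|^\gamma+|x-y|^\gamma)$ to write
\[
\bigl||x|^\gamma e^{t\Delta}u\bigr|\le C\,G_t\star(|\cdot|^\gamma|u|)+C\,(|\cdot|^\gamma G_t)\star|u|,
\]
and then estimates each convolution directly in $L^{q_2,1}$ via O'Neil's Young inequality in Lorentz spaces together with the generalized H\"older inequality, inserting the factor $|\cdot|^{-(\mu-\gamma)}\in L^{N/(\mu-\gamma),\infty}$ (respectively $|\cdot|^{-\mu}\in L^{N/\mu,\infty}$) to pass from $|\cdot|^\gamma u$ (respectively $u$) to $|\cdot|^\mu u$. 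This yields the $L^{q_2,1}$ bound---hence $L^{q_2,q}$ for every $q\ge1$---in a single step, with no interpolation at the end.

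Your approach is more geometric and closer in spirit to the classical Stein--Weiss argument; it has the advantage of being modular (the diagonal piece is literally the already-known $\gamma=0$ case), but it is longer, since you first obtain only $L^{q_2,\infty}$ and must then upgrade by real interpolation, varying~$q_2$. The paper's approach is shorter and gives the sharpest second Lorentz index immediately. One small citation issue: the $\gamma=0$ case in the Lorentz setting that you invoke for the diagonal region is in \cite{Tayachi}, not \cite{BTW} (the latter has only the Lebesgue version); the paper's remark following Proposition~\ref{smoothingeffectorl-orl} makes this distinction.
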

\begin{rem}
 {$\,$\rm A Young's inequality is proved in \cite[Theorem 3.1, p. 201]{Kerman} for weighted Lebesgue  spaces where it is assumed also $q_1,q_2<\infty.$  We do not use this here and we provide a simpler proof for our case as a  convolution with a Gaussian. See \cite{Tayachi} for \eqref{99Lorentzq} with $\gamma=0<\mu<N,$  $0\leq\frac{1}{q_2}{<} \frac{\mu}{N}+\frac{1}{q_1}<1.$}
 \end{rem}
\begin{proof}[Proof of Proposition \ref{smoothingeffectorl-orl}] From the embedding $L^{q_2,1} \hookrightarrow L^{q_2,q},\; q\geq 1,\, q_2<\infty,$ it is sufficient to  give the proof for $q=1.$ Since {$\gamma> 0,$} then  by the inequality
$|x|^\gamma\leq C(|y|^\gamma+|x-y|^\gamma),$ we write
\begin{equation}
||.|^{\gamma}e^{t \Delta}u|=||.|^{\gamma}(G_t\star u)| \leq  C G_t\star (|.|^{\gamma}|u|)+C(|.|^{\gamma}G_t)\star |u|,\; t>0\label{keyinequality}.
\end{equation}

Let $\gamma< \mu<N,$  $q_1\in (1,\infty)$ and $q_2\in (1,\infty)$ be such that
$${1\over q_2}<{\mu-\gamma\over N}+{1\over q_1}<{\mu\over N}+{1\over q_1}<1.$$ Set 
$${1\over p_1}=1+{1\over q_2}-{1\over p_2},$$ with $${1\over p_2}= {\mu-\gamma\over N}+{1\over q_1}.$$
Since $\gamma<N,$ then $p_1\in {(1},\infty)$ and satisfies
$${\gamma\over N}<1-\left({\mu-\gamma\over N}+{1\over q_1}\right)<{1\over p_1} {<1}.$$
Let us introduce the numbers ${\tilde{p}}_1,\; {\tilde{p}}_2$ defined by
$${1\over {\tilde{p}}_1}={1\over p_1}-{\gamma\over N},\; {1\over {\tilde{p_2}}}={\mu\over N}+{1\over {q_1}}.$$
We have
$${1\over {\tilde{p}}_1}=1+{1\over q_2}-{1\over {\tilde{p}}_2},$$ 
Since $0<\gamma<\mu,$ and by the conditions on $q_1,\; q_2,$ we have that
$$0<{1\over p_2}<{1\over {\tilde{p}}_2}<1,\; 0<{1\over {\tilde{p}}_1}<1,\; 0<{1\over p_1}+{1\over p_2}-1={1\over p_1}-1+{1\over p_2}{<} {1\over p_2}<1.$$

Using generalized Young's inequality, see \cite[Theorem  2.6, p. 137]{ONeil} or \cite{Grafakos,Lemarie},    we deduce that
\begin{eqnarray}
\||.|^{\gamma}e^{t \Delta}u\|_{L^{q_2,1}} &\leq & C\|G_t\star|.|^{\gamma}|u|\|_{L^{q_2,1}}+C\||.|^{\gamma}G_t\star |u|\|_{L^{q_2,1}}\nonumber\\ &\leq&
C\|G_t\|_{L^{p_1,1}}\||.|^{\gamma}|u|\|_{L^{p_2,\infty}}+C\||.|^{\gamma}G_t\|_{L^{{\tilde{p_1}},1}}\||u|\|_{L^{{\tilde{p_2}},\infty}}\nonumber\\ \label{estimate12} &:=& CI_1+CI_2,\end{eqnarray}
where
$$ 1+ {1\over q_2}={1\over p_1}+{1\over p_2}={1\over {\tilde{p_1}}}+{1\over {\tilde{p_2}}},\; 1<q_2,p_2,{\tilde{p_2}}<\infty,\; 1<p_1,\;{\tilde{p_1}}<\infty.$$

We first estimate $I_1.$ Using the generalized H\"older  inequality, see \cite[Theorem 3.4, p. 141]{ONeil} or \cite{Lemarie,Grafakos}),
we get
$$I_1\leq C\|G_t\|_{L^{p_1,1}}\||.|^{-(\mu-\gamma)}\|_{L^{{N\over \mu-\gamma},\infty}}\||.|^{\mu}u\|_{L^{q_1,\infty}},\;0<\mu-\gamma<N,\; {1\over p_2}={\mu-\gamma\over N}+{1\over q_1}<1.$$ 
Since $G_t(x)=t^{-\frac{N}{2}}G_1(x/\sqrt{t})=t^{-\frac{N}{2}}(4\pi)^{-\frac{N}{2}}e^{-{|x|^2\over 4t}}\in L^{p_1,1},$ we deduce from \cite{Grafakos} that
$$\|G_t\|_{L^{p_1,1}}=t^{-\frac{N}{2}}\|D_{1/\sqrt{t}}G_1\|_{L^{p_1,1}}=t^{-\frac{N}{2}}t^{\frac{N}{2p_1}}\|G_1\|_{L^{p_1,1}}=Ct^{-\frac{N}{2}\left(1-\frac{1}{p_1}
\right)},$$ with
$$1-{1\over p_1}={1\over p_2}-{1\over q_2}={1\over q_1}-{1\over q_2}+{\mu-\gamma\over N}>0.$$
Then, we deduce that
\begin{equation}
\label{estimationI1}
I_1\leq C t^{-{N\over 2}\left({1\over q_1}-{1\over q_2}\right)-{\mu-\gamma\over 2}}\||.|^{\mu}u\|_{L^{q_1,\infty}}.
\end{equation}

We now estimate $I_2.$ Using the generalized H\"older  inequality,
we get
$$I_2\leq C\||.|^{\gamma}G_t\|_{L^{{\tilde{p_1}},1}}\||.|^{-\mu}\|_{L^{{N\over \mu},\infty}}\||.|^{\mu}u\|_{L^{{q_1},\infty}},$$ $$\;0<\mu<N,\; {1\over {\tilde{p_2}}}={\mu\over N}+{1\over {q_1}}<1{\color{blue}.}$$

Since $|x|^\gamma G_t(x)=t^{-\frac{N}{2}+\frac{\gamma}{2}}|x/\sqrt{t}|^\gamma G_1(x/\sqrt{t})=t^{-\frac{N}{2}}(4\pi)^{-\frac{N}{2}}|x|^\gamma e^{-{|x|^2\over 4t}}\in L^{ {\tilde{p_1}},1},$ we deduce from \cite{Grafakos} that
$$\||.|^{\gamma}G_t\|_{L^{{\tilde{p_1}},1}}=t^{-\frac{N}{2}+\frac{\gamma}{2}}\|D_{1/\sqrt{t}}(|.|^{\gamma}G_1)\|_{L^{{\tilde{p_1}},1}}=
t^{-\frac{N}{2}+\frac{\gamma}{2}}t^{\frac{N}{2{\tilde{p_1}}}}\||.|^{\gamma}G_1\|_{L^{{\tilde{p_1}},1}}=Ct^{-\frac{N}{2}\left(1-\frac{1}{p_1}
\right)}.$$
Then, we deduce that
\begin{equation}
\label{estimationI2}
I_2\leq C t^{-{N\over 2}\left({1\over q_1}-{1\over q_2}\right)-{\mu-\gamma\over 2}}\||.|^{\mu}u\|_{L^{q_1,\infty}}.
\end{equation}
Plugging \eqref{estimationI1} and \eqref{estimationI2} in \eqref{estimate12} we get \eqref{99Lorentzq}.

If $q_2\in (1,\infty)$ and $q_1=\infty,$ hence  $q_2>N/(\mu-\gamma)$, { the above calculations for estimating $I_1,\; I_2$ hold using the generalized H\"older  inequality in \cite[Theorem 3.4, p. 141]{ONeil} (see also \cite[Proposition 2.3 a), p. 19]{Lemarie}).} 

If $q_2=q=\infty,$ the proof follows by using the generalized Young inequality, \cite[Theorem 3.6, p. 141]{ONeil} (see also \cite[Proposition 2.4 b), p. 20]{Lemarie}) as follows
$$\||.|^{\gamma}e^{t \Delta}u\|_{\infty} \leq
C\|G_t\|_{L^{p_1,1}}\||.|^{\gamma}|u|\|_{L^{p_2,\infty}}+C\||.|^{\gamma}G_t\|_{L^{{\tilde{p_1}},1}}\||u|\|_{L^{{\tilde{p_2}},\infty}},$$
with $$1-\left({\mu-\gamma\over N}+{1\over q_1}\right)={1\over p_1}\in (\gamma/N,1)$$ and by similar calculations as above. This completes the proof.\end{proof}

We now give the proof of Proposition \ref{smoothingeffectorl-lebeg}.
\begin{proof}[Proof of Proposition \ref{smoothingeffectorl-lebeg}] The proof follows by taking $q=q_2$ in \eqref{99Lorentzq} and using the fact that
$$ \left\||.|^{\mu}u\right\|_{L^{q_1,\infty}}\leq C \left\||.|^{\mu}u\right\|_{L^{q_1,q_1}}=C\left\||.|^{\mu}u\right\|_{L^{q_1}}.$$
\end{proof}
\section{Lower bounds for slowly decaying initial data}
\setcounter{equation}{0}
\label{NLHweight}
In this section we apply Proposition \ref{smoothingeffectorl-lebeg} in order to show local well-posedness in weighted Lebesgue spaces for the nonlinear heat equation \eqref{NLHint}. This allows us to obtain more precise estimates for the lower bound of the life-span in relation with the weight.
For $\gamma\geq 0,$ $1\leq q\leq \infty,$ we consider the weighted Lebesgue space
$$L^q_\gamma(\R^N)=\{f : \R^N\to \R,\mbox{mesurable}, |\cdot|^\gamma f\in L^q(\R^N)\}.$$ Endowed with the  norm
 $$\|f\|_{L^q_\gamma}:=\||\cdot|^\gamma f\|_{L^q},$$ $L^q_\gamma(\R^N)$ is a Banach space. Clearly, if $0<\gamma<N,$ $1\leq q\leq \infty,$ and ${1\over q}+{\gamma\over N}<1,$  using the H\"older inequality, we have $L^q_\gamma(\R^N)\subset \Sd.$ Also, for $u_0\in L^q_\gamma(\R^N),\; 0<\gamma<N,\; {N\over N-\gamma}<q<\infty,$ we have $\lim_{t\to 0}\|e^{t \Delta}u_0-u_0\|_{L^q_\gamma(\R^N)}=0.$ This follows as for the standard $L^q(\R^N)$ case, that is $\gamma=0.$

We are interested in the local well-posedness for the nonlinear heat equation \eqref{NLHint} in $L^q_\gamma(\R^N).$ We consider initial data $u_0\in L^q_\gamma(\R^N)$ where $q,\; \gamma$ satisfy $$0<\gamma<N,\; \gamma<{2\over \alpha}.$$ $${1\over q}+{\gamma\over N}<1,\; {N\alpha\over 2q}+{\alpha\gamma\over 2}<1.$$  The critical exponent in the weighted Lebesgue space $L^q_\gamma(\R^N)$ is given by \begin{equation}
\label{qcritiquegamma}
q_c(\gamma)={N\alpha\over 2-\gamma\alpha}.
\end{equation}
The value of the critical exponent $q_c(\gamma)$ can be explained by scaling argument. In fact, if $u$ is a solution of the equation \eqref{NLH}, with $\Omega=\R^N,$ then for any $\mu>0,$ $u_\mu$ is also a solution of  \eqref{NLH}, where  $$u_{\mu}(t,x)=\mu^{2\over \alpha}u(\mu^2t,\mu x).$$
We have  $\|u_\mu(t)\|_{L^q_\gamma}=\mu^{{2\over \alpha}-{\gamma}-{N\over q}}\|u(t)\|_{L^q_\gamma},$
and on initial data $u(0)=u_0$ we have $$\|\mu^{2\over \alpha}u_0(\mu\cdot)\|_{L^q_\gamma}=\mu^{{2\over \alpha}-{\gamma}
-{N\over q}}\|u_0\|_{L^q_\gamma}.$$ The only weighted Lebesgue exponent (obviously if its exponent is greater than $1$) for which the norm is invariant under these dilations is   $${N\over q_c(\gamma)}={2\over \alpha}-{\gamma}.$$ Hence $q_c(\gamma)$ is given by \eqref{qcritiquegamma}.
We have the following local well-posedness result.
\begin{To}[Local well-posedness in $L^q_\gamma$]
\label{th3} Let $N\geq 1$ be an integer, $\alpha>0$ and $\gamma$
such that
\begin{equation}
\label{e03} 0<\gamma<N,\; \gamma<2/\alpha. \end{equation} Let $q_c(\gamma)$ be given by
\eqref{qcritiquegamma}. Then we have the following.
\begin{itemize}
\item[(i)] If $\gamma(\alpha+1)<N$ and $q$ is such that
$$q >\frac{N(\alpha+1)}{N-\gamma(\alpha+1)}, \quad q> q_c(\gamma)\quad \mbox{and } \quad q\leq\infty,$$
then equation \eqref{NLHint} is locally well-posed in $L^q_\gamma(\RR^N).$ More precisely, given $u_0\in L^q_\gamma(\RR^N)$, then there exist $T>0$ and a unique solution $u\in   C\big([0, T];L^q_\gamma(\mathbb R^N))$ of
\eqref{NLHint} (we replace $[0,T]$ by $(0,T]$ if $q=\infty$ {  and  $u$ satisfies  $\lim_{t\to 0}\|u(t)-e^{t \Delta}u_0\|_{L^\infty_\gamma(\R^N)}=0$}). Moreover,
 $u$ can be extended to a maximal interval $[0, T_{\max})$ such that
either $T_{\max}= \infty$ or  $T_{\max}< \infty$ and
$\displaystyle\lim_{t\to T_{\max}}\|u(t)\|_{L^q_\gamma} = \infty.$
\item[(ii)] Assume that $q>q_c(\gamma)$ with ${N\over N-\gamma} < q\leq\infty$. It follows that equation \eqref{NLHint} is locally well-posed in
$L^q_\gamma(\RR^N)$ as in part (i) except that uniqueness is guaranteed only among  functions $u\in  C\big([0, T];L_\gamma^q(\RR^N)) $ which also verify
 $t^{{N\over 2}({1\over q}-{1\over r})+{\nu\alpha\over 2}}\|u(t)\|_{L^r_\nu}$ is bounded on $(0,T]$, where $r=(\alpha+1)q,\; \nu(\alpha+1)=\gamma,$ (we replace $[0,T]$ by $(0,T]$ if $q=\infty$ {  and  $u$ satisfies  $\lim_{t\to 0}\|u(t)-e^{t \Delta}u_0\|_{L^\infty_\gamma(\R^N)}=0$}).
  Moreover,
 $u$ can be extended to a maximal interval $[0, T_{\max})$ such that
either $T_{\max}= \infty$ or  $T_{\max}< \infty$ and
$\displaystyle\lim_{t\to T_{\max}}\|u(t)\|_{L^q_\gamma}= \infty.$  Furthermore,
\begin{equation}
\label{VitesseInf} \|u(t)\|_{L^q_\gamma}\geq C\left(T_{\max}-t\right)^{{N\over
2q}-{2-\gamma\alpha\over 2\alpha}},\; \forall\; t\in [0, T_{\max}),
\end{equation}
where $C$ is a positive constant.
\end{itemize}
\end{To}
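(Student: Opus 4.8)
The plan is to run the standard contraction-mapping (Kato--Weissler) scheme exactly as in Theorem~\ref{fixpt1}, but with the unweighted smoothing inequality \eqref{heatsmouth} replaced throughout by the weighted estimate \eqref{99Lebegq} of Proposition~\ref{smoothingeffectorl-lebeg}. The arithmetic driving everything is the observation that $|v|^{\alpha}v$ behaves homogeneously: if $|\cdot|^{\delta}v\in L^{\rho}$ then $|\cdot|^{\delta(\alpha+1)}\,|v|^{\alpha}v\in L^{\rho/(\alpha+1)}$ with norm $\||\cdot|^{\delta}v\|_{\rho}^{\alpha+1}$. Feeding this into \eqref{99Lebegq} to come back into the solution space produces a time singularity $(t-\sigma)^{-a}$ whose integrability turns out to be equivalent to $q>q_c(\gamma)$, with $q_c(\gamma)$ as in \eqref{qcritiquegamma}; this is precisely why the scaling-critical exponent governs local well-posedness.

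For part (i) I would carry out the fixed point directly in $X_{M,T}=\{u\in C([0,T];L^q_\gamma(\R^N)) : \sup_{[0,T]}\|u(t)\|_{L^q_\gamma}\le M\}$ (with $(0,T]$ and the relaxed initial condition $\|u(t)-e^{t\Delta}u_0\|_{L^\infty_\gamma}\to 0$ when $q=\infty$), with iteration map $\F_{u_0}$ defined as in \eqref{iteratn}. For $u\in X_{M,T}$ one has $|u(\sigma)|^{\alpha}u(\sigma)\in L^{q/(\alpha+1)}_{\gamma(\alpha+1)}$, and Proposition~\ref{smoothingeffectorl-lebeg}, applied with $\mu=\gamma(\alpha+1)$, output weight $\gamma$, $q_1=q/(\alpha+1)$, $q_2=q$, gives
\[
\bigl\|e^{(t-\sigma)\Delta}[|u(\sigma)|^{\alpha}u(\sigma)]\bigr\|_{L^q_\gamma}\le C\,(t-\sigma)^{-\frac{N\alpha}{2q}-\frac{\gamma\alpha}{2}}\,\|u(\sigma)\|_{L^q_\gamma}^{\alpha+1}.
\]
The hypotheses of Proposition~\ref{smoothingeffectorl-lebeg} collapse here exactly to $\gamma(\alpha+1)<N$ and $q>N(\alpha+1)/(N-\gamma(\alpha+1))$, and the exponent $\frac{N\alpha}{2q}+\frac{\gamma\alpha}{2}$ is $<1$ precisely when $q>q_c(\gamma)$, so the Duhamel term is $\le C\,T^{1-\frac{N\alpha}{2q}-\frac{\gamma\alpha}{2}}M^{\alpha+1}$. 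Combined with $\sup_{[0,T]}\|e^{t\Delta}u_0\|_{L^q_\gamma}\to\|u_0\|_{L^q_\gamma}$ as $T\to 0$ (boundedness of $e^{t\Delta}$ on $L^q_\gamma$, valid since $\frac1q+\frac\gamma N<1$, or directly the continuity property recalled above) and the analogous difference estimate, $\F_{u_0}$ is a strict contraction on $X_{M,T}$ for $M$ of the order of $\|u_0\|_{L^q_\gamma}$ and $T$ small; continuity in $t$ of both terms of \eqref{NLHint} is routine, and uniqueness in the whole of $C([0,T];L^q_\gamma)$ follows from the same difference estimate and a connectedness argument.

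For part (ii) the direct estimate no longer closes, so I would introduce the auxiliary exponents $r=(\alpha+1)q$ and $\nu=\gamma/(\alpha+1)$, set $\beta=\frac N2(\frac1q-\frac1r)+\frac{\nu\alpha}{2}=\frac{\alpha}{2(\alpha+1)}\bigl(\frac Nq+\gamma\bigr)$, and run the contraction in
\[
Y_{M,T}=\Bigl\{u\in C((0,T];L^r_\nu(\R^N)) : \sup_{(0,T]}t^{\beta}\|u(t)\|_{L^r_\nu}\le M\Bigr\}.
\]
Applying \eqref{99Lebegq} to the free term ($\mu=\gamma$, output weight $\nu$, $q_1=q$, $q_2=r$) gives $\|e^{t\Delta}u_0\|_{L^r_\nu}\le C\,t^{-\frac N2(\frac1q-\frac1r)-\frac{\gamma-\nu}{2}}\|u_0\|_{L^q_\gamma}$, and it is exactly the choice $\nu(\alpha+1)=\gamma$ that makes $\frac N2(\frac1q-\frac1r)+\frac{\gamma-\nu}{2}=\beta$, whence $t^{\beta}\|e^{t\Delta}u_0\|_{L^r_\nu}\le C\|u_0\|_{L^q_\gamma}$; the condition needed from Proposition~\ref{smoothingeffectorl-lebeg} is $\frac\gamma N+\frac1q<1$, i.e. $q>N/(N-\gamma)$. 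For the Duhamel term, $|u(\sigma)|^{\alpha}u(\sigma)\in L^{r/(\alpha+1)}_{\gamma}$ with norm $\|u(\sigma)\|_{L^r_\nu}^{\alpha+1}$, and Proposition~\ref{smoothingeffectorl-lebeg} produces the kernel $(t-\sigma)^{-\frac{N\alpha}{2r}-\frac{\gamma-\nu}{2}}\sigma^{-\beta(\alpha+1)}$ under the integral; since one computes $\frac{N\alpha}{2r}+\frac{\gamma-\nu}{2}=\beta$ and $\beta(\alpha+1)=\frac{N\alpha}{2q}+\frac{\gamma\alpha}{2}$, both the admissibility condition $\frac\gamma N+\frac{\alpha+1}{r}<1$ and the convergence of the Beta integral reduce once more to $q>q_c(\gamma)$, giving, after multiplication by $t^\beta$, a bound $C\,T^{1-\beta(\alpha+1)}M^{\alpha+1}$ for the Duhamel term. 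Conditional uniqueness within the class where $t^{\beta}\|u(t)\|_{L^r_\nu}$ is bounded follows from the contraction estimate on short intervals plus connectedness, and membership of the solution in $C([0,T];L^q_\gamma)$ is obtained by applying \eqref{99Lebegq} once more in \eqref{NLHint} now that the $L^r_\nu$-bound is available.

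For both parts the passage to a maximal solution and the blow-up alternative are standard. For the rate \eqref{VitesseInf} I would use that the construction above gives, for data $v_0\in L^q_\gamma(\R^N)$, an existence time of size $\gtrsim\|v_0\|_{L^q_\gamma}^{-\alpha/(1-\beta(\alpha+1))}$ (take $M\sim\|v_0\|_{L^q_\gamma}$ in the closing inequality $K+C\,T^{1-\beta(\alpha+1)}M^{\alpha+1}\le M$); restarting the equation from $u(t)$ and comparing this time with $T_{\max}-t$ yields $\|u(t)\|_{L^q_\gamma}\ge C\,(T_{\max}-t)^{-\frac1\alpha(1-\beta(\alpha+1))}=C\,(T_{\max}-t)^{\frac{N}{2q}-\frac{2-\gamma\alpha}{2\alpha}}$, which is \eqref{VitesseInf}. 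The bulk of the effort is bookkeeping: checking that at each step the chain of inequalities required by Proposition~\ref{smoothingeffectorl-lebeg} collapses exactly to the stated hypotheses (in particular the role of $\nu(\alpha+1)=\gamma$ for the free term and of $\beta(\alpha+1)=\frac{N\alpha}{2q}+\frac{\gamma\alpha}{2}$ for the nonlinearity), together with the relaxed $L^\infty_\gamma$ continuity at $t=0$ when $q=\infty$ and the connectedness argument for conditional uniqueness in part (ii).
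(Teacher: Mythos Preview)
Your proposal is correct and follows essentially the same route as the paper: part~(i) is proved by a direct contraction in $C([0,T];L^q_\gamma)$ using Proposition~\ref{smoothingeffectorl-lebeg} with $\mu=\gamma(\alpha+1)$, $q_1=q/(\alpha+1)$, $q_2=q$ (the paper packages this via Weissler's abstract theorem~\cite{W2}, but the estimate is identical), and part~(ii) introduces the auxiliary pair $r=(\alpha+1)q$, $\nu=\gamma/(\alpha+1)$ and the weighted Kato exponent $\beta(\nu)$ exactly as you do. The only organizational difference is that the paper runs the contraction for~(ii) simultaneously in $L^q_\gamma$ and $L^r_\nu$ rather than first in $L^r_\nu$ and then recovering $L^q_\gamma$; also note that the boundedness of $e^{t\Delta}$ on $L^q_\gamma$ (the diagonal case $\mu=\gamma$, $q_1=q_2$) is not covered by the strict inequality in Proposition~\ref{smoothingeffectorl-lebeg} and is instead drawn from~\cite{CIT}, as recorded in Remark~\ref{possibleequalityinthemiddle}.
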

\begin{proof}
(i) Let us define the maps
\begin{equation*}
    K_t(u)=e^{t \Delta}\left(|u|^\alpha u\right),\; t>0.
\end{equation*}
Using the following inequality, which follows by the H\"older inequality,
\begin{equation}
\label{ineqwithHolder}
\left\||.|^{\nu(\alpha+1)}\big( |u|^ \alpha
 u-|v|^\alpha v\big)\right\|_{\frac{p}{\alpha+1}}\leq C \left(\|u\|^\alpha_{L^p_\nu}+\|v\|^\alpha_{L^p_\nu} \right)  \left\|
 u- v\right\|_{L^{p}_\nu},\; p\geq \alpha+1,\; \nu\geq 0,
\end{equation}
and Proposition \ref{smoothingeffectorl-lebeg} that is $e^{t \Delta}: L^{q\over \alpha+1}_{(\alpha+1)\gamma}\rightarrow L^q_\gamma$ is bounded for each $t>0,$ we have that  $K_t: L^q_\gamma\longrightarrow L^q_\gamma$ is locally Lipschitz with
\begin{eqnarray*}
 \|K_t(u)-K_t(v)\|_{L^q_\gamma} &\leq & Ct^{-{N\over 2}({\alpha+1\over q}-{1\over q})-{\alpha\gamma\over 2}}\left\||u|^\alpha u-|v|^\alpha v\right\|_{L^{q\over \alpha+1}_{(\alpha+1)\gamma}}\\  &\leq & C t^{-\frac{N \alpha}{2 q}-{\alpha\gamma\over 2}} (\|u\|_{L^q_\gamma}^\alpha+\|v\|_{L^q_\gamma}^\alpha)\|u-v\|_{L^q_\gamma} \\
 &\leq & 2 CM^\alpha t^{-\frac{N \alpha}{2 q}-{\alpha\gamma\over 2}} \|u-v\|_{L^q_\gamma},
\end{eqnarray*}
for $\|u\|_{L^q_\gamma}\leq M \mbox{ and } \;\|v\|_{L^q_\gamma}\leq M.$
We have also,  that $t^{-\frac{N \alpha}{2 q}-{\alpha\gamma\over 2}}\in L^1_{loc}(0,\infty),$ since $q>q_c(\gamma).$  Obviously $t\mapsto \|K_t(0)\|_\infty=0
\in L^1_{loc}(0,\infty),$ also $e^{s \Delta}K_t=K_{t+s}$ for $s,\;t>0.$ Then the proof follows by \cite[Theorem 1, p. 279]{W2}.

(ii)  We consider $r$ and $\nu>0$  such that $\nu<\gamma,\; \nu(\alpha+1)<N,$ $r>q.$ Hence we have
  $${1\over r}<{\alpha+1\over r}
  +{\nu\alpha\over N}<{\alpha+1\over r}+ {\nu(\alpha+1)\over N}<1,\; {1\over r}<{1\over q}+{\gamma-\nu\over N}<{1\over q}+{\gamma\over N}<1.
  $$
  The choice of $r,\; \nu$ is to guaranties that the maps $e^{t \Delta}: L^{r\over \alpha+1}_{(\alpha+1)\nu}\rightarrow L^r_\nu$ and 
   $e^{t \Delta}: L^{q}_{\gamma}\rightarrow L^r_\nu$    are bounded  so that we may apply Proposition \ref{smoothingeffectorl-lebeg}. In order that $e^{t \Delta}: L^{r\over \alpha+1}_{(\alpha+1)\nu}\rightarrow L^q_\gamma$ is bounded,  we choose for simplicity,
  $$r=(\alpha+1)q,\; \nu(\alpha+1)=\gamma,$$
 (If $q=\infty$ we have $r=\infty$), and we may apply \cite[Lemma 2.1]{CIT} to get that $e^{t \Delta}: L^{r\over \alpha+1}_{(\alpha+1)\nu}=L^q_\gamma \rightarrow L^q_\gamma$ is bounded. With this choice, the conditions on $r$ and $\nu$ are satisfied, since  $$\frac{1}{q}+{\gamma\over N}<1.$$ Define
 \begin{equation}
 \label{betagamma}
  \beta(\nu) = \frac{N}{2q} - \frac{N}{2r}+{\alpha\nu\over 2}.
    \end{equation}
    We choose $K>0,\; T>0,\; M>0$ such that
\begin{equation}
\label{equKMT}
K+CM^{\alpha+1}T^{1-\frac{N\alpha}{2q}-{\gamma\alpha\over 2}}\leq M,
\end{equation}
where $C$ is a positive constant.  We will show that  there exists a
unique solution $u$ of  (\ref{NLHint}) such that $u\in C\left([0,T];L^q_\gamma(\R^N)\right)\cap C\left((0,T];L^r_\nu(\R^N)\right)$  with $$\|u\|=\max\left[\sup_{t\in[0, T]}\|u(t)\|_{L^q_\gamma},\;\sup_{t\in(0, T]}t^{\beta(\nu)}\|u(t)\|_{L^r_\nu}\right]\leq M.$$

The proof is based on a contraction mapping argument in the set
   $$ Y_{M,T}^{q,\gamma} = \{u \in C\left([0,T];L^q_\gamma(\R^N)\right)\cap C((0,T]; L^r_\nu) :
\|u\|\leq M\}.$$  Endowed with the metric $d(u,v)=\|u-v\|,$  $Y_{M,T}^{q,\gamma}$ is a nonempty complete metric space.
We note that for  $u_0\in L^q_\gamma$ we have
$$ \|{\rm e}^{t\Delta}u_0\|_{L^r_\nu} \le Ct^{-{N\over 2}({1\over q}-{1\over r})-{\gamma-\nu\over 2}}\|u_0\|_{L^q_\gamma}=Ct^{-{N\over 2}({1\over q}-{1\over r})-{\nu\alpha\over 2}}\|u_0\|_{L^q_\gamma}=Ct^{-\beta(\nu)}\|u_0\|_{L^q_\gamma}.$$

We will show that  $\mathcal{F}_{u_0}$ defined in \eqref{iteratn} is a strict contraction on $Y_{M,T}^{q,\gamma}$.  The condition on the initial data $\|u_0\|_{L^q_\gamma}\leq K$
will implies that $t^\beta \|{\rm e}^{t\Delta}u_0\|_{L^r_\nu}\leq K.$ We have
 \begin{eqnarray*}
t^{\beta(\nu)} \|\F_{u_0}u(t)\|_{L^r_\nu} &\le& t^{\beta(\nu)} \|{\rm e}^{t\Delta}u_0\|_{L^r_\nu} + t^{\beta(\nu)} \int_0^t \|{\rm
e}^{(t-\sigma)\Delta} \big[|u(\sigma)|^\alpha u(\sigma)\big]\|_{L^r_\nu} d\sigma\\
&\le& K +C
 t^{\beta(\nu)} \int_0^t (t-\sigma)^{-\frac{N\alpha}{2r}-{\nu(\alpha+1)-\nu\over 2}} \||\cdot|^{\nu(\alpha+1)}|u(\sigma)|^\alpha u(\sigma)\|_{r/(\alpha+1)} d\sigma\\
 &=& K +C
t^{\beta(\nu)} \int_0^t (t-\sigma)^{-\frac{N\alpha}{2r}-{\nu\alpha\over 2}} \|u(\sigma)\|_{L^r_\nu}^{\alpha+1} d\sigma\\
  &\le& K +
CM^{\alpha+1}t^{\beta(\nu)} \int_0^t (t-\sigma)^{-\frac{N\alpha}{2r}-{\nu\alpha\over 2}} \sigma^{-{\beta(\nu)}(\alpha+1)} d\sigma \\
  &\le& K +
CM^{\alpha+1}t^{1-\frac{N\alpha}{2q}-{\gamma\alpha\over 2}} \int_0^1(1-\sigma)^{-\frac{N\alpha}{2r}-{\nu\alpha\over 2}} \sigma^{-{\beta(\nu)}(\alpha+1)} d\sigma
\\
  &\le& K +
CM^{\alpha+1}T^{1-\frac{N\alpha}{2q}-{\gamma\alpha\over 2}}\int_0^1(1-\sigma)^{-\frac{N\alpha}{2r}-{\nu\alpha\over 2}} \sigma^{-{\beta(\nu)}(\alpha+1)} d\sigma.
\end{eqnarray*}
By the hypotheses and the fact that $q<r$ and $\nu<\gamma$ we have
$$\frac{N\alpha}{2r}+{\nu\alpha\over 2}<\frac{N\alpha}{2q}+{\gamma\alpha\over 2}<1,\; \beta(\nu)(\alpha+1)={N\alpha\over 2q}+{\alpha\gamma\over 2}<1.$$

We estimate in $L^q_\gamma$  as follows,
 \begin{eqnarray*}
 \|\F_{u_0}u(t)\|_{L^q_\gamma} &\le&  \|{\rm e}^{t\Delta}u_0\|_{L^q_\gamma} +  \int_0^t \|{\rm
e}^{(t-\sigma)\Delta} \big[|u(\sigma)|^\alpha u(\sigma)\big]\|_{L^q_\gamma} d\sigma\\
&\le& K +C \int_0^t \||\cdot|^{\nu(\alpha+1)}|u(\sigma)|^\alpha u(\sigma)\|_{r/(\alpha+1)} d\sigma\\
 &=& K +C \int_0^t  \|u(\sigma)\|_{L^r_\nu}^{\alpha+1} d\sigma\\
  &\le& K +
CM^{\alpha+1} \int_0^t \sigma^{-\beta(\nu)(\alpha+1)} d\sigma \\
  &\le& K +
CM^{\alpha+1}t^{1-\frac{N\alpha}{2q}-{\gamma\alpha\over 2}} \int_0^1\sigma^{-\beta(\nu)(\alpha+1)} d\sigma
\\
  &\le& K +
CM^{\alpha+1}T^{1-\frac{N\alpha}{2q}-{\gamma\alpha\over 2}}\int_0^1 \sigma^{-\beta(\nu)(\alpha+1)} d\sigma.
\\
\end{eqnarray*}
The condition \eqref{equKMT} implies that the space $Y_{M,T}^{q,r}$ is preserved by the iterative operator $\F_{u_0}$.
We show similarly  the contraction.  The proof of the other parts follows as in \cite{BTW}. So we omit the details. This completes the proof of the theorem.
\end{proof}
We note that uniqueness in Part (ii) of  Theorem \ref{th3} holds in  $u\in  C\big([0, T];L_\gamma^q(\RR^N))\cap C\big((0, T];L_\nu^r(\RR^N)).$
This follows by similar argument  as in \cite{BenSlimene}. We will not belabor this point further.

\begin{proof}[Proof of Theorem \ref{lowerLqgamma}] Consider  $u_0 = \lambda\varphi$, where $\lambda > 0$ and $\varphi \in L^q_\gamma.$
If  $T_{\max}(\lambda\varphi) <\infty$, it is impossible to carry out the fixed point argument
on the interval $[0,T_{\max}(\lambda\varphi)]$ with initial value $u_0 = \lambda\varphi$.
Hence, by \eqref{equKMT}
\begin{equation*}
K
+ CT_{\max}(\lambda\varphi)^{1 - \frac{N\alpha}{2q}-{\gamma\alpha\over 2}}M^{\alpha + 1} > M,
\end{equation*}
for all $M > K$.
 Letting $K=\|u_0\|_{L^q_\gamma}=\lambda\|\varphi\|_{L^q_\gamma}$, so that
\begin{equation*}
\lambda\|\varphi\|_{L^q_\gamma} + CT_{\max}(\lambda\varphi)^{1 - \frac{N\alpha}{2q}-{\gamma\alpha\over 2}}M^{\alpha + 1} > M,
\end{equation*}
for all $M > \lambda\|\varphi\|_{L^q_\gamma}$.  In particular, if we set $M = 2\lambda\|\varphi\|_{L^q_\gamma}$, this gives
\begin{equation*}
CT_{\max}(\lambda\varphi)^{1 - \frac{N\alpha}{2q}-{\gamma\alpha\over 2}}[\lambda\|\varphi\|_{L^q_\gamma}]^\alpha > 1.
\end{equation*}
Thus we have proved Theorem \ref{lowerLqgamma}.
\end{proof}
\begin{rem}{\rm If $u_0\in L^q_\gamma$ with $\gamma>0,\; q\leq\infty$ are as in Theorem \ref{th3} then writing
\begin{eqnarray*}|u_0|&=&|u_0\mathbf{1}_{\{|x|\leq 1\}}+u_0 \mathbf{1}_{\{|x|> 1\}}|\\&\leq&|x|^{-\gamma}\mathbf{1}_{\{|x|\leq 1\}}\left(|x|^{\gamma}|u_0|\mathbf{1}_{\{|x|\leq 1\}}\right)+|x|^{-\gamma}\mathbf{1}_{\{|x|> 1\}}\left(|x|^{\gamma}|u_0|\right)\\&\leq&|x|^{-\gamma}\mathbf{1}_{\{|x|\leq 1\}}\left(|x|^{\gamma}|u_0|\mathbf{1}_{\{|x|\leq 1\}}\right)+|x|^{\gamma}|u_0|,
 \end{eqnarray*}
 where $\mathbf{1}_A$ is the indicator function of a subset $A$ of $\R^N,$ we see by the H\"older inequality   that $u_0\in L^r+L^s$ with ${1\over r}={1\over q}+{\gamma\over N},\; s=q$ if $q<\infty$ and $r\geq 1,\; {N\alpha\over 2}<r<{N\over \gamma}<s<\infty$ if $q=\infty$. Then the local well-posedness is proved in \cite[Theorem 2.8]{D}. The fixed point argument used in \cite{D} seems not to give an explicit lower bound  estimate of life span, as $T<1,$ the minimal local existence time, is required in the proof there and the constants in particular in \cite[Inequality (2.10)]{D}, seems to depend on $T$.}
\end{rem}

The construction of solutions to \eqref{NLHint}  with initial data in the intersection of two metric spaces follows by well-known argument. See also the proof of \cite[Proposition 3.2, p. 126]{BTW}. We have  the following result for the existence time of the  maximal solution.
\begin{prop}
\label{C0} Let $N\geq 1$ be an integer, $\alpha>0$ and 
$ 0<\gamma<N,\; \gamma<2/\alpha.$ Let $q_c(\gamma)$ be given by
\eqref{qcritiquegamma}.  Let $q>q_c(\gamma),$ ${N\over N-\gamma} < q<\infty$ and $T_{\max}(\varphi,L^q_\gamma)$ denotes the existence time of the  maximal solution of \eqref{NLHint} with initial data $\varphi\in L^q_\gamma.$  Then the following hold.
\begin{itemize}
\item[(i)] $u(t)\in C_0(\R^N)\cap L^\infty_\gamma$ for $t\in \left(0,T_{\max}(\varphi,L^q_\gamma)\right).$ 
\item[(ii)] If $\varphi\in L^q_\gamma\cap C_0(\R^N)$ then  $T_{\max}(\varphi,L^q_\gamma)=T_{\max}(\varphi,C_0(\R^N)),$ the existence time of the  maximal solution of \eqref{NLHint} with initial data $\varphi\in C_0(\R^N).$
\item[(iii)] If $\varphi\in L^q_\gamma\cap L^p_\gamma$ with $p>q_c(\gamma),$ ${N\over N-\gamma} < p<\infty$ then  $T_{\max}(\varphi,L^q_\gamma)=T_{\max}(\varphi,L^p_\gamma),$ the existence time of the  maximal solution of \eqref{NLHint} with initial data $\varphi\in L^p_\gamma.$
\item[(iv)] If $\varphi\in L^q_\gamma\cap L^p$ with $q_c<p \leq \infty,$ then  $T_{\max}(\varphi,L^q_\gamma)=T_{\max}(\varphi,L^p),$ the existence time of the  maximal solution of \eqref{NLHint} with initial data $\varphi\in L^p.$

\end{itemize}
\end{prop}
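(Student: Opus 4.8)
The plan is to follow the proof of Proposition~\ref{C0nlh} almost verbatim, with the unweighted smoothing inequality \eqref{heatsmouth} replaced throughout by the weighted one of Proposition~\ref{smoothingeffectorl-lebeg}. Parts (ii), (iii) and (iv) will be deduced from (i) by the standard ``restart at a positive time'' device, so essentially all the work --- and the only genuine difficulty --- is the $C_0$ conclusion in (i).

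For (i) I would first show, by the bootstrap (smoothing) argument of \cite{BTW} that is already used in the proof of Theorem~\ref{th3}(ii), that for every $t\in(0,T_{\max}(\varphi,L^q_\gamma))$ the maximal solution $u$ lies in $L^\infty_\gamma(\R^N)\cap L^\infty(\R^N)$: one starts from the pair $(r,\nu)=\big((\alpha+1)q,\ \gamma/(\alpha+1)\big)$ given by the fixed point and iterates Duhamel's formula \eqref{NLHint} using Proposition~\ref{smoothingeffectorl-lebeg}, reaching in finitely many steps both $(\infty,\gamma)$ and $(\infty,0)$ --- reaching the weight $0$ being legitimate precisely because $\frac1q+\frac\gamma N<1$ makes $e^{t\Delta}:L^q_\gamma(\R^N)\to L^\infty(\R^N)$ bounded. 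Then $u(t)\in L^\infty_\gamma(\R^N)$ forces $|u(t,x)|\le C|x|^{-\gamma}\to0$ as $|x|\to\infty$, $u(t)\in L^\infty(\R^N)$ excludes a singularity at the origin, and continuity in $x$ is automatic from the heat kernel, so $u(t)\in C_0(\R^N)$. A cleaner route to the same conclusion, which I would probably present instead, uses the inclusion $L^q_\gamma(\R^N)\hookrightarrow L^{\rho}(\R^N)+L^q(\R^N)$ with $\frac1\rho=\frac1q+\frac\gamma N\in(0,1)$ recalled just before this Proposition: since $\rho,q<\infty$, $e^{t\Delta}$ maps $L^q_\gamma(\R^N)$ into $C_0(\R^N)$ for $t>0$, and, using the bootstrapped regularity to place $|u(\sigma)|^\alpha u(\sigma)$ in a sum of finite-exponent (possibly weighted) Lebesgue spaces, the Duhamel term in \eqref{NLHint} is a $C_0(\R^N)$-convergent integral.

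For (ii): by (i) the restriction to $t>0$ of the maximal $L^q_\gamma$-solution is a $C_0(\R^N)$-solution, so by uniqueness of the latter $T_{\max}(\varphi,L^q_\gamma)\le T_{\max}(\varphi,C_0(\R^N))$. For the reverse inequality I would take $u$ to be the maximal $C_0$-solution with $\varphi\in L^q_\gamma(\R^N)\cap C_0(\R^N)$ and run a Gronwall argument in $L^q_\gamma$: using that $e^{t\Delta}$ is bounded on $L^q_\gamma(\R^N)$ uniformly in $t>0$ (\cite{Tsutsui,CIT}; cf.\ Remark~\ref{possibleequalityinthemiddle}) and that $\||\cdot|^\gamma|u|^\alpha u\|_q\le\|u\|_\infty^\alpha\|u\|_{L^q_\gamma}$, \eqref{NLHint} gives
\[
\|u(t)\|_{L^q_\gamma}\le C\|\varphi\|_{L^q_\gamma}+C\int_0^t\|u(\sigma)\|_\infty^\alpha\,\|u(\sigma)\|_{L^q_\gamma}\,d\sigma ,
\]
hence $\|u(t)\|_{L^q_\gamma}\le C\|\varphi\|_{L^q_\gamma}\exp\!\big(C\!\int_0^t\|u(\sigma)\|_\infty^\alpha\,d\sigma\big)$, so $u$ cannot blow up in $L^q_\gamma$ before blowing up in $C_0(\R^N)=L^\infty(\R^N)$, i.e.\ $T_{\max}(\varphi,C_0(\R^N))\le T_{\max}(\varphi,L^q_\gamma)$.

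Finally (iii) and (iv) follow from (i)--(ii) by restarting at a time $\varepsilon>0$ smaller than both maximal times involved: by (i), $u(\varepsilon)\in C_0(\R^N)$, and, the solutions in the various spaces agreeing on $(0,\varepsilon]$ by uniqueness of the $C_0$-solution, $u(\varepsilon)$ also belongs to each of the spaces in the statement; applying (ii) of the present Proposition --- together with Proposition~\ref{C0nlh} for (iv), where for $p=\infty$ one moreover uses $T_{\max}(\psi,C_0(\R^N))=T_{\max}(\psi,L^\infty(\R^N))$ for $\psi\in C_0(\R^N)$ --- with initial value $u(\varepsilon)$ identifies all the relevant existence times with $T_{\max}(u(\varepsilon),C_0(\R^N))$, and subtracting $\varepsilon$ gives the stated equalities. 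The one point requiring care, as noted, is the $C_0$ part of (i): the fixed point is built in spaces controlling only $|x|^\gamma u$, so one must check that the bootstrap genuinely removes the weight (equivalently that $e^{t\Delta}$ smooths $L^q_\gamma$ into $C_0$), which is exactly where the hypotheses $q<\infty$ and $\frac1q+\frac\gamma N<1$ are used; everything else is a routine transcription of the $L^q$ arguments in Proposition~\ref{C0nlh}.
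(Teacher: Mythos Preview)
Your proposal is correct and follows essentially the same strategy as the paper: a weighted bootstrap for (i), the Gronwall argument in $L^q_\gamma$ for (ii), and the restart-at-$\varepsilon$ device for (iii)--(iv). The only minor variation is in the final step of (i): the paper bootstraps to $L^\infty_\gamma$, then uses the inclusion $L^\infty_\gamma\subset L^r+L^s$ (with $q_c<r<N/\gamma<s<\infty$) and invokes \cite[Theorem~2.8]{D} to place $u(t)$ in $L^p$ for large $p$ before concluding $u(t)\in C_0(\R^N)$, whereas you propose either to bootstrap directly to $L^\infty\cap L^\infty_\gamma$ or to use the $L^\rho+L^q$ decomposition from the start --- both are valid shortcuts to the same conclusion.
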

\begin{proof}
(i) Let  $\varphi\in L^q_\gamma(\RR^N),$ $q>q_c(\gamma)$ and $ q>{N\over N-\gamma}.$ Let $r=(\alpha+1)q,\; \nu(\alpha+1)=\gamma$ and $\beta(\nu)$ be given by \eqref{betagamma}. Let $p$ be such that $r<p\leq \infty.$ Hence $p>q$ and
$$0\leq {1\over p}<{\alpha+1\over r}<{\gamma\over N}+{\alpha+1\over r}<1,\; {1\over p}<{1\over q}<{\gamma\over N}+{1\over q}<1.$$ For $0<T<T_{\max}(\varphi,L^q_\gamma),$ we have
\begin{eqnarray*}
    \|u(t)\|_{L^p_\gamma}&\leq& \|e^{t \Delta}\varphi\|_{{L^p_\gamma}}+  C \displaystyle\int_0^t (t-\sigma)^{-\frac{N }{2 }(\frac{\alpha+1}{r}-\frac{1}{p})} \|u(\sigma)\|^{\alpha+1}_{L^r_\nu} d\sigma\\
    &\leq&Ct^{-\frac{N}{2}(\frac{1}{q}-\frac{1}{p})}\|\varphi\|_{L^q_\gamma}+  C  t^{1-\frac{N }{2 }(\frac{\alpha+1}{r}-\frac{1}{p})-{\beta(\nu)(\alpha+1)}} \displaystyle\sup_{s\in(0,T]}\left(s^{\beta(\nu)(\alpha+1)}\|u(s)\|_{L^r_\nu}^{\alpha+1}\right)\times
    \\&&\int_0^1 (1-\sigma)^{-\frac{N }{2 }(\frac{\alpha+1}{r}-\frac{1}{p})} \sigma^{-\beta(\nu)(\alpha+1)}d\sigma\\
    &\leq&C t^{-\frac{N}{2}(\frac{1}{q}-\frac{1}{p})}\|\varphi\|_{L^q_\gamma}+   M^{\alpha+1}Ct^{1-\frac{N }{2 }(\frac{\alpha+1}{q}-\frac{1}{p})-{\gamma\alpha\over 2}}\int_0^1 (1-\sigma)^{-\frac{N }{2 }(\frac{\alpha+1}{r}-\frac{1}{p})} \sigma^{-\beta(\nu)(\alpha+1)}d\sigma.
   \end{eqnarray*}
 Since $r>q>q_c(\gamma),$  it follows that if $$\frac{\alpha+1}{r}-\frac{2}{N}<\frac{1}{p}<\frac{1}{r},$$
 then $u(t)$ is in $L^p_\gamma$ for all $t\in \big(0,T_{\max}(\varphi, L^q_\gamma)\big).$
 The result for general $p>q$ follows by iteration. Hence $u(t)$ is in $L^\infty_\gamma,$ for $t\in \big(0,T_{\max}(\varphi, L^q_\gamma)\big).$ Then $u(t)\in L^r+L^s$ for $r\geq 1,\; {N\alpha\over 2}<r<{N\over \gamma}<s<\infty.$ Hence by \cite[Theorem 2.8]{D} { $u(t)\in L^p$ for $s\leq p\leq\infty.$} Then it follows that $u(t)\in C_0(\R^N),$ for $t\in \big(0,T_{\max}(\varphi, L^q_\gamma)\big)$.
 
 (ii) By (i) we have $T_{\max}(\varphi,L^q_\gamma)\leq T_{\max}(\varphi,C_0(\R^N)).$  Using \eqref{NLHint}, we have
 \begin{eqnarray*}
 \|u(t)\|_{L^q_\gamma}&\leq& \|e^{t \Delta}\varphi\|_{{L^q_\gamma}}+  C \displaystyle\int_0^t \||u(\sigma)|^{\alpha}u(\sigma)\|_{L^q_\gamma} d\sigma\\
 &\leq&C \|\varphi\|_{{L^q_\gamma}}+  C \displaystyle\int_0^t \|u(\sigma)\|^{\alpha}_\infty \|u(\sigma)\|_{L^q_\gamma}d\sigma
 \end{eqnarray*}
 By Gronwall's inequality, we get
 $$\|u(t)\|_{L^q_\gamma}\leq C\|\varphi\|_{{L^q_\gamma}}e^{C \int_0^t \|u(\sigma)\|^{\alpha}_\infty d\sigma}.$$
Hence $u$ can not blow up in $L^q_\gamma$ before it blows up in $C_0(\R^N)$. That is  $T_{\max}(\varphi,C_0(\R^N))\leq T_{\max}(\varphi,L^q_\gamma).$  
 
 (iii) Let $\varepsilon\in (0, \min(T_{\max}(\varphi,L^q_\gamma), T_{\max}(\varphi,L^p_\gamma))).$ By (i) we have $u(\varepsilon)\in C_0(\R^N).$ Using (ii) we  have
 $$T_{\max}(u(\varepsilon),L^q_\gamma)=T_{\max}(u(\varepsilon),C_0(\R^N))=T_{\max}(u(\varepsilon),L^p_\gamma).$$ That is $T_{\max}(\varphi,L^q_\gamma)-\varepsilon=T_{\max}(\varphi,L^p_\gamma)-\varepsilon,$ hence we get the result.
 
 (iv) Follows similarly as (iii). This completes the proof of Proposition  \ref{C0}.
 \end{proof}

\begin{proof}[Proof of Corollary \ref{lowerLqLqgamma}]
 Since $T_{\max}(\varphi,L^q) ,$ the maximal existence time  in $L^q,$ is equal to  $T_{\max}(\varphi,L^q\cap L^q_\gamma)$ the maximal existence time  in $L^q\cap L^q_\gamma,$ we deduce that
\begin{itemize}
\item[1)] If  $N\alpha<2$  we discuss the two cases
\begin{enumerate}
\item[(i)] $\gamma<N$ hence $\gamma< 2/\alpha$ and we have
$${1\over q}+{\gamma\over N}<1,\; {N\alpha\over 2q}+{\gamma\alpha\over 2}={N\alpha\over 2}\left({1\over q}+{\gamma\over N}\right)<{N\alpha\over 2}<1,$$ hence, we apply Theorem  \ref{lowerLqgamma} to get $$ T_{\max}(\lambda \varphi)\geq C \lambda^{-\left({1\over \alpha}-{N\over 2q}-{\gamma\over 2}\right)^{-1}}.$$
    \item[(ii)] $\gamma>N$ then $\varphi\in L^1.$ In fact, we write $\varphi=\varphi \mathbf{1}_{\{|x|\leq 1\}}+\varphi \mathbf{1}_{\{|x|> 1\}}.$  On one hand, since $\varphi\in L^q,\; q>1,$ hence $\varphi \mathbf{1}_{\{|x|\leq 1\}}\in L^1.$ On the other hand, by the H\"older inequality,
\begin{eqnarray*}
\|\varphi \mathbf{1}_{\{|x|> 1\}}\|_1& = &\|\varphi |x|^\gamma|x|^{-\gamma}\mathbf{1}_{\{|x|> 1\}}\|_1\\
&\leq  & \|\varphi |x|^\gamma\|_q\|\||x|^{-\gamma}\mathbf{1}_{\{|x|> 1\}}\|_{q'}\\
&= &\|\varphi\|_{L^q_\gamma}\||x|^{-\gamma}\mathbf{1}_{\{|x|> 1\}}\|_{q'}<\infty,
\end{eqnarray*}
 since $\gamma q'\geq \gamma>N$ and since $\varphi\in L^q_\gamma,$ that is $\varphi \mathbf{1}_{\{|x|> 1\}}\in L^1.$ Hence both results give that $\varphi\in L^1.$     We  may then apply Theorem \ref{ThNLH} in $L^1,$ using $1>{N\alpha\over 2}$ to get $T_{\max}(\lambda\varphi)\geq C \lambda^{-\left({1\over \alpha}-{N\over 2}\right)^{-1}}.$
\end{enumerate}
\item[2)] If $N\alpha >2$ then we assume $\gamma<2/ \alpha $ and we have only one case, $\gamma<N.$ Hence since $q>q_c(\gamma)$ we apply Theorem  \ref{lowerLqgamma}  to get $ T_{\max}(\lambda \varphi)\geq C \lambda^{-\left({1\over \alpha}-{N\over 2q}-{\gamma\over 2}\right)^{-1}}.$
    \end{itemize}
    In the all cases we have
    $ T_{\max}(\lambda \varphi)\geq C \lambda^{-\left({1\over \alpha}-{1\over 2}\min({N\over q}+\gamma,N)\right)^{-1}}.$
    This completes the proof of Corollary \ref{lowerLqLqgamma}.
\end{proof}
\begin{proof}[Proof of Corollary \ref{lowerLpLqgamma}]
 Since $\varphi\in L^p\cap L^q_\gamma,$ then the existence time of the maximal solution  is the same to that in $L^p$ and to that in  $L^q_\gamma.$ By the H\"older inequality $\varphi\in L^r_\tau$ for $ {1\over r}={\theta\over p}+{1-\theta\over q},\; \tau=(1-\theta)\gamma,\; \theta \in [0,1].$ The existence time of the maximal solution is also the same in $L^r_\tau.$  The function
$x\in \left(0,2/(N\alpha)\right)\to \lambda^{-\left(\frac{1}{\alpha} - \frac{N}{2}x\right)^{-1}}$ is increasing for $\lambda\in (0,1)$ and decreasing for $\lambda\in (1,\infty).$ Letting $x=x_\theta={1\over r}+{\tau\over N}={\theta}\left({1\over p}-{1\over q}-\gamma\right)+{1\over q}+{\gamma\over N},$ we have that $\max_{\theta\in[0,1]} x_\theta=\max({1\over q}+{\gamma\over N},{1\over p})=\max(x_0,x_1)$ and $\min_{\theta\in[0,1]} x_\theta=\min({1\over q}+{\gamma\over N},{1\over p})=\min(x_0,x_1).$ 

Using Theorem \ref{lowerLqgamma}  if $\theta=0$ or Theorem \ref{ThNLH} if $\theta=1,$ we have that
$$T_{\max}(\lambda\varphi) \geq C\left(\lambda\|\varphi\|_{L^r_{{\tau}}}\right)^{-\left(\frac{1}{\alpha} - \frac{N}{2}\left[{1\over r}+{\tau\over N}\right]\right)^{-1}}\geq C\left(\lambda\|\varphi\|_{L^p\cap L^q_{{\gamma}}}\right)^{-\left(\frac{1}{\alpha} - \frac{N}{2}\left[{1\over r}+{\tau\over N}\right]\right)^{-1}}.$$

The result follows then by taking in the last inequality $\max_{\theta\in[0,1]} x_\theta,$  for $\lambda\in (0,1)$ and $\min_{\theta\in[0,1]} x_\theta$ for $\lambda\in (1,\infty).$ This completes the proof of the Corollary.
\end{proof}
\begin{example}
{\rm Let $0<\gamma<N$ and $\gamma<2/\alpha.$ Let $\tilde\varphi$ be given by \eqref{phitilde}. Then $\tilde\varphi\in L^p\cap L^\infty_\gamma$ with ${N\alpha\over 2}<p<{N\over \gamma}.$ By Corollary \ref{lowerLpLqgamma} we have
$$T_{\max}(\lambda\varphi) \geq C \begin{cases}{\lambda^{-\left(\frac{1}{\alpha} -\frac{N}{2p} \right)^{-1}}},\; \mbox{if}\quad\, 0<\lambda\leq 1,\\
  {\lambda^{-\left(\frac{1}{\alpha} - \frac{\gamma}{2}\right)^{-1}}},\; \mbox{if}\quad\, \lambda>1.
  \end{cases}$$
}
\end{example}

\section{Upper bounds for nonnegative solutions}
\setcounter{equation}{0}
\label{ubnnsol}
In this section we exploit a well-known necessary condition for the existence
of a nonnegative solution to \eqref{NLHint}.
More precisely, if $u$ is a nonnegative solution of the integral equation \eqref{NLHint} on $(0,T) \times \Omega$
then
\begin{equation}
\label{neccd}
\alpha t ({\rm e}^{t\Delta}u_0)^\alpha \le 1,
\end{equation}
for all $t \in (0,T]$, where $u_0 \ge 0$ can be either a locally integrable function or a positive Borel measure on $\Omega$.
See  \cite[Theorem 1]{W5}.

We let $T_{\max}(u_0)$ denote the maximal existence time of a nonnegative solution of \eqref{NLHint}, and so
 $0 \le T_{\max}(u_0) \le \infty$.  Indeed, there are three possibilities, all of which can be realized:
there is no local nonnegative solution with initial value $u_0$, there is at least one local solution
on some interval $(0,T)$, but no global solution, i.e. on $(0,\infty)$, or there is indeed a global solution.
In the case $u_0 = \lambda\varphi$, then \eqref{neccd} becomes
\begin{equation}
\label{neccdlam}
\alpha \lambda^\alpha t ({\rm e}^{t\Delta}\varphi)^\alpha \le 1,
\end{equation}
for all $t \in (0,T]$.  If $\varphi \ge 0$, $\varphi \not\equiv 0$, this implies that $T_{\max}(\lambda\varphi) < \infty$ for all sufficiently large $\lambda > 0$
and that
\begin{equation}
\label{limintfytmax}
\lim_{\lambda \to \infty}T_{\max}(\lambda\varphi) = 0.
\end{equation}
Indeed, given any $t > 0$, \eqref{neccdlam} can not be true for sufficiently large $\lambda > 0$,
and so $t \ge T_{\max}(\lambda\varphi)$ for sufficiently large $\lambda > 0$. This shows the first statements of Theorems \ref{upperLinfty}, \ref{uppernegpower1} and \ref{uppernegpower1signchanging}.

It is important to realize that $T_{\max}(u_0)$ as just defined, i.e. the maximal existence time of
a nonnegative solution, is not necessarily the same as the maximal existence time of a regular nonnegative
solution.  Indeed, in some cases, a nonnegative solution can be continued after blowup. See \cite{BarasC} for example.  However,
any upper bound on $T_{\max}(u_0)$ is also an upper bound on the maximal existence time of a regular nonnegative
solution.

\begin{prop}
\label{lincd} Let $u_0 \ge 0$ be either a locally integrable function  or a positive
Borel measure on $\Omega$, and let $T_{\max}(u_0)$ denote the maximal existence time of a nonnegative solution of \eqref{NLHint}.
If $0 < T_{\max}(u_0) < \infty$, then
\begin{equation}
\label{neccdmax}
\alpha T_{\max}(u_0) ({\rm e}^{T_{\max}(u_0)\Delta}u_0)^\alpha \le 1.
\end{equation}
In particular, if $u_0 = \lambda\varphi$, then
\begin{equation}
\label{neccdmaxlam}
\alpha \lambda^\alpha T_{\max}(\lambda\varphi) \|{\rm e}^{T_{\max}(\lambda\varphi)\Delta}\varphi\|_\infty^\alpha \le 1.
\end{equation}
\end{prop}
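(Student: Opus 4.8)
The plan is to obtain \eqref{neccdmax} by passing to the limit $t\nearrow T_{\max}(u_0)$ in the pointwise necessary condition \eqref{neccd}, the whole difficulty being to justify that passage. First I would record \eqref{neccd} at \emph{every} time before $T_{\max}(u_0)$: by the very definition of $T_{\max}(u_0)$ as the maximal existence time of a nonnegative solution, for each $t\in(0,T_{\max}(u_0))$ there is a nonnegative solution of \eqref{NLHint} on an interval $(0,T)\times\Omega$ with $T>t$, so \eqref{neccd} gives
\begin{equation*}
\alpha t\,\big({\rm e}^{t\Delta}u_0(x)\big)^\alpha \le 1,\qquad x\in\Omega,\ \ 0<t<T_{\max}(u_0).
\end{equation*}
In particular ${\rm e}^{t\Delta}u_0\in L^\infty(\Omega)$ with $\|{\rm e}^{t\Delta}u_0\|_\infty\le(\alpha t)^{-1/\alpha}$ for each such $t$. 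This is the crucial gain: after any positive time, the orbit of $u_0$ under the heat semigroup lies in $L^\infty(\Omega)$.

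Next I would use the semigroup property to obtain continuity up to $T_{\max}(u_0)$. Fix $t_0\in(0,T_{\max}(u_0))$ and set $w={\rm e}^{t_0\Delta}u_0\ge 0$, so $w\in L^\infty(\Omega)$. For $t\in[t_0,T_{\max}(u_0))$ we have ${\rm e}^{t\Delta}u_0={\rm e}^{(t-t_0)\Delta}w$, and since $w$ is bounded the regularizing properties of the heat semigroup (kernel dominated by the Gaussian, ${\rm e}^{s\Delta}w$ smooth in $(s,x)$ for $s>0$) make $s\mapsto{\rm e}^{s\Delta}w(x)$ continuous on $(0,\infty)$ for every $x\in\Omega$. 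Letting $t\nearrow T_{\max}(u_0)$, so that $t-t_0\to T_{\max}(u_0)-t_0>0$, we get
\begin{equation*}
{\rm e}^{t\Delta}u_0(x)\longrightarrow{\rm e}^{(T_{\max}(u_0)-t_0)\Delta}w(x)={\rm e}^{T_{\max}(u_0)\Delta}u_0(x),\qquad x\in\Omega,
\end{equation*}
and passing to the limit in the previous displayed inequality yields \eqref{neccdmax}.

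Finally, for the ``in particular'' statement, when $u_0=\lambda\varphi$ with $\varphi\ge 0$ the linearity of the heat semigroup gives ${\rm e}^{t\Delta}u_0=\lambda\,{\rm e}^{t\Delta}\varphi$; substituting into \eqref{neccdmax} and taking the essential supremum over $x\in\Omega$ produces \eqref{neccdmaxlam}. The only delicate point — and the main obstacle — is the passage to the limit at $t=T_{\max}(u_0)$, since a priori ${\rm e}^{t\Delta}u_0$ could jump there; this is precisely what the $L^\infty$ bound extracted from \eqref{neccd} rules out. If one wishes to bypass the continuity argument altogether, one can instead pass to the limit from below: by the semigroup property together with the contractivity $\|{\rm e}^{s\Delta}g\|_\infty\le\|g\|_\infty$ (the case $q=r=\infty$ of \eqref{heatsmouth}), the function $t\mapsto\|{\rm e}^{t\Delta}\varphi\|_\infty$ is nonincreasing, so $\|{\rm e}^{t\Delta}\varphi\|_\infty\ge\|{\rm e}^{T_{\max}(\lambda\varphi)\Delta}\varphi\|_\infty$ for $t<T_{\max}(\lambda\varphi)$; taking the essential supremum over $x$ in \eqref{neccdlam} and letting $t\nearrow T_{\max}(\lambda\varphi)$ then gives \eqref{neccdmaxlam} directly.
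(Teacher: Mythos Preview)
Your proposal is correct and follows the same approach as the paper: pass to the limit $t\nearrow T_{\max}(u_0)$ in \eqref{neccd}. The paper's proof is a single sentence (``Inequality \eqref{neccd} is true for all $0 < t < T_{\max}(u_0)$. Hence it is true for $t = T_{\max}(u_0)$.''), whereas you supply the continuity justification (via the semigroup property and the $L^\infty$ bound extracted from \eqref{neccd} itself) that the paper leaves implicit; your alternative monotonicity argument for \eqref{neccdmaxlam} is also correct and is a nice, self-contained route to the norm inequality.
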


\begin{proof}
Inequality \eqref{neccd} is true for all $0 < t < T_{\max}(u_0)$.  Hence it is true for $t = T_{\max}(u_0)$.
\end{proof}

We now give the proofs of the upper bounds.
\begin{proof}[Proof of Theorem \ref{upperLinfty}]
Since $T_{\max}(\lambda\varphi) \to 0$ as $\lambda \to \infty$, it suffices by \eqref{neccdmaxlam} to observe
that if $\varphi \in L^\infty(\Omega)$, then $\|{\rm e}^{t\Delta}\varphi\|_\infty \to \|\varphi\|_\infty$ as $t \to 0$.
Indeed, $\|{\rm e}^{t\Delta}\varphi\|_\infty \le \|\varphi\|_\infty$ so
$\limsup_{t\to 0}\|{\rm e}^{t\Delta}\varphi\|_\infty \le \|\varphi\|_\infty$.
On the other hand,  ${\rm e}^{t\Delta}\varphi \to \varphi$ weak* as $t \to 0$, so
$\|\varphi\|_\infty \le \liminf_{t\to 0}\|{\rm e}^{t\Delta}\varphi\|_\infty $.
\end{proof}
\begin{proof}[Proof of Theorem \ref{uppernegpower1}] In order to estimate $T_{\max}(\lambda\varphi)$ from above, it suffices to estimate
$T_{\max}(\lambda\tilde\varphi)$ from above, where $\tilde\varphi$ is defined in \eqref{phitilde}.
Indeed, since $0 \le \tilde\varphi(x) \le \varphi(x)$
it follows that $T_{\max}(\lambda\varphi) \le T_{\max}(\lambda\tilde\varphi)$.

To find an upper estimate on $T_{\max}(\lambda\tilde\varphi)$  as $\lambda \to \infty$, it suffices by
Proposition~\ref{lincd} to determine the behavior of $\|{\rm e}^{t\Delta}\tilde\varphi\|_\infty$
as $t \to 0$.  Let  $D_\tau$ be the dilation operator $D_\tau f(x) = f(\tau x)$.  We have
\begin{equation}
\label{gammascale}
\|{\rm e}^{t\Delta}\tilde\varphi\|_\infty = \|D_{\sqrt t}{\rm e}^{t\Delta}\tilde\varphi\|_\infty
=\|{\rm e}^{\Delta}D_{\sqrt t}\tilde\varphi\|_\infty
= t^{-\frac{\gamma}{2}}\|{\rm e}^{\Delta}[t^{\frac{\gamma}{2}}D_{\sqrt t}\tilde\varphi]\|_\infty.
\end{equation}
Since $t^{\frac{\gamma}{2}}D_{\sqrt t}\tilde\varphi \to \omega|\cdot|^{-\gamma}$ in ${\mathcal D'}(\R^N)$
as $t \to 0$, it follows by \cite[Proposition 3.8 (i), page 1123]{CDW2003} that
\begin{equation*}
t^{\frac{\gamma}{2}}\|{\rm e}^{t\Delta}\tilde\varphi\|_\infty \to \|{\rm e}^{\Delta}(\omega|\cdot|^{-\gamma})\|_\infty,
\end{equation*}
as $t \to 0$.
Since by \eqref{limintfytmax}, $T_{\max}(\lambda\varphi) \to 0$ as $\lambda \to \infty$, this along with \eqref{neccdmaxlam} implies
\begin{equation*}
\limsup_{\lambda \to \infty} \lambda^\alpha T_{\max}(\lambda\varphi)^{1 - \frac{\alpha\gamma}{2}}
\le \frac{1}{\alpha\|{\rm e}^{\Delta}(\omega|\cdot|^{-\gamma})\|_\infty^\alpha},
\end{equation*}
which is the desired result.
\end{proof}
\begin{proof}[Proof of Theorem \ref{uppermeas}] Applying \eqref{neccdmaxlam} we see that
\begin{equation*}
\alpha \lambda^\alpha T_{\max}(\lambda\m) \|{\rm e}^{T_{\max}(\lambda\m)\Delta}\m\|_\infty^\alpha \le 1.
\end{equation*}
Furthermore,
\begin{equation*}
\|{\rm e}^{t\Delta}\m\|_\infty = \|D_{\sqrt t}{\rm e}^{t\Delta}\m\|_\infty
=\|{\rm e}^{\Delta}D_{\sqrt t}\m\|_\infty
= t^{-\frac{N}{2}}\|{\rm e}^{\Delta}[t^{\frac{N}{2}}D_{\sqrt t}\m]\|_\infty
\end{equation*}
so that
\begin{equation}
\label{neccdmaxlamdelta}
\alpha \lambda^\alpha T_{\max}(\lambda\m)^{1 - \frac{N\alpha}{2}}
 \|{\rm e}^{\Delta}[T_{\max}(\lambda\m)^\frac{N}{2}D_{\sqrt {T_{\max}(\lambda\m)}}\m]\|_\infty^\alpha \le 1.
\end{equation}
The result follows since $T_{\max}(\lambda\m) \to \infty$
as $\lambda \to 0$ (by continuous dependence or Theorem \ref{lowermeas}) and
 $t^{\frac{N}{2}}D_{\sqrt t} \m \to \|\m\|_\M \delta$ as $t \to \infty$. In fact,  $\mu_t=t^{N/2}D_{\sqrt t} \m$ is the measure defined by
 $$\int_{\R^N}f(x)d\mu_t(x)dx=\int_{\R^N}D_{1/\sqrt t}f(x)d\m(x),\; f\in C_0(\R^N).$$ We have $D_{1/\sqrt t}f\to f(0)$  as $t\to \infty$ $\m$ a.e. Since $\m$ is finite, then by the dominated convergence theorem $\int_{\R^N}f(x)d\mu_t(x)dx\to \|\m\|_\M \delta$ as $t\to \infty$ for every $f$ in $C_0(\R^N).$ Then $t^{\frac{N}{2}}D_{\sqrt t} \m \to \|\m\|_\M \delta$ as $t \to \infty$ in the dual space $(C_0(\R^N))'$. We know that ${\rm e}^{\Delta} : L^1(\R^N)\to C_0(\R^N)$ is a continuous operator then, by duality,  ${\rm e}^{\Delta} : (C_0(\R^N))'\to (L^1(\R^N))'=L^\infty(\R^N),$ is a continuous operator. Hence $\|{\rm e}^{\Delta}[T_{\max}(\lambda\m)^\frac{N}{2}D_{\sqrt {T_{\max}(\lambda\m)}}\m]\|_\infty$ converges to $\|\m\|_\M\|{\rm e}^{\Delta}\delta\|_\infty=\|\m\|_\M(4\pi)^{-N/2}$ as $\lambda \to 0.$ This along with \eqref{neccdmaxlamdelta} implies
\begin{equation*}
\limsup_{\lambda \to \infty} \lambda^\alpha T_{\max}(\lambda\m)^{1 - \frac{N\alpha}{2}}
\le \frac{1}{\left(\alpha^{1/\alpha}(4\pi)^{-N/2}\|\m\|_\M\right)^\alpha}.
\end{equation*}
This gives the desired result.
\end{proof}
\begin{proof}[Proof of Theorem \ref{uppernegpower2}]
If $\varphi$ is too singular, it may happen that $T_{\max}(\lambda\varphi) = 0$, i.e. there is no local
nonnegative solution with initial value $\lambda\varphi$. This is not a problem, since we will be obtaining upper bounds.

Since $\varphi \ge \tilde{\tilde\varphi}$,  where $\tilde{\tilde\varphi}$ is defined in \eqref{phitildetilde}, it suffices to estimate $T_{\max}(\lambda\tilde{\tilde\varphi})$.
The calculation in \eqref{gammascale} gives
\begin{equation*}
\|{\rm e}^{t\Delta}\tilde{\tilde\varphi}\|_\infty
= t^{-\frac{\gamma}{2}}\|{\rm e}^{\Delta}[t^{\frac{\gamma}{2}}D_{\sqrt t}\tilde{\tilde\varphi}]\|_\infty.
\end{equation*}
Moreover, $t^{\frac{\gamma}{2}}D_{\sqrt t}\tilde{\tilde\varphi} \to \omega|\cdot|^{-\gamma}$
as $t \to \infty$ in ${\mathcal D'}(\R^N).$ It follows, by \cite[Proposition 3.8 (i), page 1123]{CDW2003}  that
\begin{equation*}
t^{\frac{\gamma}{2}}\|{\rm e}^{t\Delta}\tilde{\tilde\varphi}\|_\infty
= \|{\rm e}^{\Delta}[t^{\frac{\gamma}{2}}D_{\sqrt t}\tilde{\tilde\varphi}]\|_\infty
\to \|{\rm e}^{\Delta}[\omega|\cdot|^{-\gamma}]\|_\infty,
\end{equation*}
as $t \to \infty$.
If $0 < t < T_{\max}(\lambda\tilde{\tilde\varphi})$, then by \eqref{neccdlam} we must have
\begin{equation}
\label{gammascale3}
\alpha \lambda^\alpha t^{1 - \frac{\alpha\gamma}{2}} [t^{\frac{\gamma}{2}}\|{\rm e}^{t\Delta}\tilde{\tilde\varphi}\|_\infty ]^\alpha \le 1.
\end{equation}
It follows that if $\gamma < \frac{2}{\alpha}$, then $T_{\max}(\lambda\tilde{\tilde\varphi}) < \infty$
for all $\lambda > 0$. This is of course a consequence of Fujita's result (including the limiting case)
if $\alpha \le \frac{2}{N}$. (See for example \cite{W4}.) If $\alpha > \frac{2}{N}$, i.e. $q_c > 1$, this is a consequence of the
more general result \cite[Theorem 1.7]{TW2} in the case $m = 0$.  (See also \cite[Theorem 3.2(i)]{LeeNI} and
\cite[Theorem 2]{SW}.)
In these cases, putting $t = T_{\max}(\lambda\tilde{\tilde\varphi})$  in \eqref{gammascale3}
and letting $\lambda \to 0$, we obtain
\begin{equation*}
\limsup_{\lambda \to 0} \lambda^{(\frac{1}{\alpha} - \frac{\gamma}{2})^{-1}} T_{\max}(\lambda\tilde{\tilde\varphi})
\le \frac{1}{(\alpha^{1/\alpha}\|{\rm e}^{\Delta}(\omega|\cdot|^{-\gamma})\|_\infty)^{(\frac{1}{\alpha} - \frac{\gamma}{2})^{-1}}}.
\end{equation*}
\end{proof}
\begin{proof}[Proof of Theorem \ref{uppernegpower1signchanging}] The proof follows similarly as that of Theorem \ref{uppernegpower1}, replacing $\R^N$ by $\Omega_m,$ $|x|^{-\gamma}$ by $\psi_0$ hence $\gamma$ by $\gamma+m,$ and using \cite[Proposition 4.1 (ii), p. 359]{MTW}, for the convergence.
\end{proof}

\begin{proof}[Proof of Theorem \ref{uppernegpower2signchanging}]  The proof follows similarly as that of Theorem \ref{uppernegpower2} by replacing $\R^N$ by $\Omega_m,$ $|x|^{-\gamma}$ by $\psi_0$, hence $\gamma$ by $\gamma+m,$  using \cite[Theorem 1.7]{TW2} for the blow up of the solution and \cite[Proposition 4.1 (ii), p. 359]{MTW}, for the convergence.
\end{proof}
\section{Life-span estimates via nonlinear scaling}
\setcounter{equation}{0}
\label{nlsc}
In this section we show how certain scaling arguments can give
upper (and lower) life-span bounds for solutions of \eqref{NLHint} on $\R^N$.  Similar arguments can be used on sectors of $\R^N$.
The previous section likewise used scaling arguments, but only in regard to
properties of ${\rm e}^{t\Delta}\varphi$.  In this section, we use nonlinear
scaling arguments, which can then be adapted to other equations which are
scale invariant.  Some of the results in this section are the
same as in the previous section, but obtained by a different method.

We begin with some observations in a general context.
We consider an evolution partial differential equation defined either on $\R^N$ or on some
domain $\Omega$ which is a cone, i.e. if $x \in \Omega$ then $\mu x \in \Omega$ for all $\mu > 0$.
We also suppose that the set of solutions of the evolution equation is invariant under the transformation
\begin{equation}
\label{eqinvt}
u_\mu(t,x) = \mu^\sigma u(\mu^2 t, \mu x).
\end{equation}
In other words, $u$ is a solution if and only if $u_\mu$ is a solution for all $\mu > 0$.
If $u$ has initial value $u(0,\cdot) = u_0$, then $u_\mu$ has initial value
$u_\mu(0,\cdot) = \mu^\sigma u_0(\mu \cdot) =  \mu^\sigma D_\mu u_0 \equiv u_{\mu,0}.$
It is clear that
\begin{equation*}
T_{\max}(u_{\mu,0}) =  T_{\max}(\mu^\sigma D_\mu u_0) = \frac{1}{\mu^2}T_{\max}(u_0)
\end{equation*}
If $u_0 = \lambda \varphi$,  it follows that $u_{\mu,0} = \lambda\mu^\sigma D_\mu\varphi$,
so that
\begin{equation*}
\mu^{-2}T_{\max}(\lambda \varphi) = T_{\max}(\lambda\mu^\sigma D_\mu\varphi).
\end{equation*}
Now let us suppose that $\varphi$ has certain properties with respect to a scaling
different from that of the equation, for example $\mu^\gamma D_\mu\varphi$
where $\gamma \neq \sigma$.  If so, we  may set
\begin{equation}
\label{lambdamu}
\lambda = \mu^{\gamma - \sigma},
\end{equation}
hence $\mu = \lambda^{\frac{1}{\gamma - \sigma}},
 \quad \mu^{-2} = \lambda^{\frac{2}{\sigma - \gamma}},$ so that
\begin{equation}
\label{lifespanscale}
\lambda^{\frac{2}{\sigma - \gamma}}T_{\max}(\lambda \varphi) = T_{\max}(\mu^\gamma D_\mu\varphi).
\end{equation}

In the simplest case, $\mu^\gamma D_\mu\varphi \equiv \varphi$, i.e. $\varphi$ is  homogeneous
of degree $-\gamma$,  we have therefore the following formal proposition.

\begin{prop}
Let $\Omega \subset \R^N$ be a domain which is also a cone.  Suppose that the solutions of
an evolution equation (the set of trajectories of a dynamical system over $\Omega$) are invariant under the transformation \eqref{eqinvt}.
If $\varphi \in L^1_{loc}(\Omega)$ or $\varphi \in \M(\Omega)$ is homogeneous of degree $-\gamma$,
where $\gamma \neq \sigma$,  then
\begin{equation*}
\lambda^{\frac{2}{\sigma - \gamma}}T_{\max}(\lambda \varphi) = T_{\max}(\varphi)
\end{equation*}
for all $\lambda > 0$.
\end{prop}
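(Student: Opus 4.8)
The plan is to carry out the one-line change of variable already indicated in the paragraphs preceding the statement, the only point requiring care being the scaling identity for the maximal time. First I would record precisely what the hypotheses give. Since $\Omega$ is a cone, the dilation $D_\mu f(x)=f(\mu x)$ maps $L^1_{loc}(\Omega)$ into itself (and likewise finite Borel measures on $\Omega$, via pushforward, up to the normalization absorbed into $\mu^\sigma$), so $u_{\mu,0}=\mu^\sigma D_\mu u_0$ again lies in the admissible class; and ``the set of trajectories is invariant under \eqref{eqinvt}'' means exactly that $u$ is a trajectory on $[0,T)$ with datum $u_0$ if and only if $u_\mu(t,x)=\mu^\sigma u(\mu^2 t,\mu x)$ is a trajectory on $[0,\mu^{-2}T)$ with datum $u_{\mu,0}$.

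Second, I would prove that $T_{\max}(\mu^\sigma D_\mu u_0)=\mu^{-2}T_{\max}(u_0)$ for every $\mu>0$ and every admissible $u_0$ (interpreting this identity in the extended sense, so that one side is infinite iff the other is). Applying the invariance to the maximal trajectory with datum $u_0$ gives a trajectory with datum $\mu^\sigma D_\mu u_0$ on $[0,\mu^{-2}T_{\max}(u_0))$, hence $T_{\max}(\mu^\sigma D_\mu u_0)\ge \mu^{-2}T_{\max}(u_0)$. The reverse inequality follows by symmetry: apply the invariance with $\mu$ replaced by $1/\mu$ to the maximal trajectory with datum $\mu^\sigma D_\mu u_0$, using $D_{1/\mu}D_\mu=\mathrm{id}$, to get $T_{\max}(u_0)\ge \mu^{2}T_{\max}(\mu^\sigma D_\mu u_0)$. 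Here one invokes uniqueness of the maximal trajectory, which is part of the notion of dynamical system.

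Third, I would specialize to $u_0=\lambda\varphi$. By linearity of $D_\mu$, $\mu^\sigma D_\mu(\lambda\varphi)=\lambda\,\mu^\sigma D_\mu\varphi$, and since $\varphi$ is homogeneous of degree $-\gamma$ (i.e.\ $D_\mu\varphi=\mu^{-\gamma}\varphi$) this equals $\lambda\mu^{\sigma-\gamma}\varphi$. Because $\gamma\neq\sigma$, the map $\mu\mapsto\mu^{\gamma-\sigma}$ is a bijection of $(0,\infty)$ onto itself, so given $\lambda>0$ we may set $\mu=\lambda^{1/(\gamma-\sigma)}$, whence $\mu^{\sigma-\gamma}=\lambda^{-1}$ and therefore $\mu^\sigma D_\mu(\lambda\varphi)=\varphi$. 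Substituting into the identity of the previous step,
\begin{equation*}
T_{\max}(\varphi)=T_{\max}\bigl(\mu^\sigma D_\mu(\lambda\varphi)\bigr)=\mu^{-2}T_{\max}(\lambda\varphi)=\lambda^{\frac{2}{\sigma-\gamma}}T_{\max}(\lambda\varphi),
\end{equation*}
using $\mu^{-2}=\lambda^{-2/(\gamma-\sigma)}=\lambda^{2/(\sigma-\gamma)}$. Since $\lambda>0$ was arbitrary, this is the assertion.

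The only genuine obstacle is the second step: making the scaling identity for $T_{\max}$ rigorous forces one to fix what ``maximal solution'' means for the equation in question (function class, notion of solution, uniqueness), so that the forward and backward applications of the invariance really do match maximal trajectories to maximal trajectories. For equation \eqref{NLH} in the spaces considered earlier in the paper this is routine; in the abstract ``dynamical system'' formulation it is built into the hypothesis. Everything else reduces to the substitution $\lambda=\mu^{\gamma-\sigma}$ and bookkeeping of exponents.
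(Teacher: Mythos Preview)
Your proof is correct and follows exactly the approach of the paper: the proposition is stated there as a ``formal proposition'' and is derived immediately from the preceding discussion establishing $T_{\max}(\mu^\sigma D_\mu u_0)=\mu^{-2}T_{\max}(u_0)$ together with the substitution $\lambda=\mu^{\gamma-\sigma}$, equation \eqref{lifespanscale}, and the observation that $\mu^\gamma D_\mu\varphi=\varphi$ when $\varphi$ is homogeneous of degree $-\gamma$. If anything, you supply more care than the paper does in justifying the scaling identity for $T_{\max}$ via the two-sided application of the invariance.
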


In the case of the nonlinear heat equation, $\sigma = \frac{2}{\alpha}$ and so \eqref{lambdamu} and \eqref{lifespanscale} become
\begin{equation}
\label{lifespanscaleheat}
\lambda = \mu^{\gamma - \frac{2}{\alpha}}, \quad
\lambda^{(\frac{1}{\alpha} - \frac{\gamma}{2})^{-1}}T_{\max}(\lambda \varphi) = T_{\max}(\mu^\gamma D_\mu\varphi).
\end{equation}
We immediately deduce the following.
\begin{cor}
\label{invttmax}
Let $T_{\max}(u_0)$ denote the maximal solution to \eqref{NLHint} on $\R^N$ with initial value $u_0$.
\begin{enumerate}
\item[(i)] If $\alpha < \frac{2}{N}$, then
\begin{equation*}
\lambda^{(\frac{1}{\alpha} - \frac{N}{2})^{-1}}T_{\max}(\lambda\delta) = T_{\max}(\delta)
\end{equation*}
for all $\lambda > 0$.
\item[(ii)] If $0 < \gamma < N$ and $\gamma < \frac{2}{\alpha}$, and if $\psi(x) = \omega(x)|x|^{-\gamma}$
where $\omega \in L^\infty(\R^N)$ is homogeneous of degree $0$,
then
\begin{equation*}
\lambda^{(\frac{1}{\alpha} - \frac{\gamma}{2})^{-1}} T_{\max}(\lambda\psi) = T_{\max}(\psi)
\end{equation*}
for all $\lambda > 0$.
\end{enumerate}
\end{cor}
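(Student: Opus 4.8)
The plan is to apply the general scaling identity \eqref{lifespanscaleheat} to the two specific initial data at hand, after checking that each is homogeneous of the relevant degree and that the Cauchy problem is well-posed for it.

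First I would record the homogeneity. For the Dirac mass, a direct computation with the dilation $D_\mu f(x) = f(\mu x)$ shows that $D_\mu\delta = \mu^{-N}\delta$ as elements of $\M$ (test against $f \in C_0(\R^N)$: $\langle D_\mu\delta, f\rangle = \mu^{-N}\langle\delta, D_{1/\mu}f\rangle = \mu^{-N}f(0)$), so $\mu^N D_\mu\delta = \delta$ for all $\mu > 0$; in other words $\delta$ is homogeneous of degree $-N$. For the function $\psi$, since $\omega$ is homogeneous of degree $0$ we have $\psi(\mu x) = \omega(\mu x)|\mu x|^{-\gamma} = \mu^{-\gamma}\omega(x)|x|^{-\gamma} = \mu^{-\gamma}\psi(x)$, i.e. $\mu^\gamma D_\mu\psi = \psi$, so $\psi$ is homogeneous of degree $-\gamma$.

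Next I would invoke local well-posedness so that $T_{\max}$ is meaningful in each case. For part (i), the hypothesis $\alpha < 2/N$, i.e. $q_c < 1$, is exactly the condition under which \eqref{NLHint} is locally well-posed on $\M$ (see the discussion preceding Theorem \ref{lowermeas}), and $\delta \in \M$. For part (ii), since $|\psi(x)| \le \|\omega\|_\infty\, |x|^{-\gamma}$ with $0 < \gamma < N$ and $\gamma < 2/\alpha$, Corollary \ref{negpower} and the local existence discussion preceding it apply, so $T_{\max}(\lambda\psi)$ is well-defined for every $\lambda > 0$. In both cases the set of maximal (regular) solutions is invariant under \eqref{eqinvt} with $\sigma = 2/\alpha$, because the transformation $u \mapsto u_\mu$ preserves the Banach space in which the fixed point is carried out — this is precisely why the restrictions on $\alpha$ and $\gamma$ are imposed — and hence the two endpoints of the maximal interval transform according to $T_{\max}(\mu^\sigma D_\mu u_0) = \mu^{-2}T_{\max}(u_0)$.

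Finally, with these facts the conclusion is immediate from \eqref{lifespanscaleheat}: taking $\varphi = \delta$ and $\gamma = N$ there, and using $\mu^N D_\mu\delta = \delta$, gives $\lambda^{(\frac1\alpha - \frac N2)^{-1}}T_{\max}(\lambda\delta) = T_{\max}(\delta)$; taking $\varphi = \psi$ with its homogeneity exponent $\gamma$, and using $\mu^\gamma D_\mu\psi = \psi$, gives $\lambda^{(\frac1\alpha - \frac\gamma2)^{-1}}T_{\max}(\lambda\psi) = T_{\max}(\psi)$. The only genuine point needing care — the main obstacle — is the justification that the maximal solution map is invariant under the nonlinear scaling \eqref{eqinvt} at the level of the singular data $\delta$ and $\psi$, i.e. that the formally derived identity $T_{\max}(\mu^\sigma D_\mu u_0) = \mu^{-2}T_{\max}(u_0)$ is legitimate there; this reduces to the scale-invariance of the function spaces used in Theorem \ref{lowermeas} and Corollary \ref{negpower}, together with the uniqueness statements already established.
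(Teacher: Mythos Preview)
Your proposal is correct and follows exactly the approach of the paper: the corollary is stated there as an immediate consequence of the general scaling identity \eqref{lifespanscaleheat} applied to homogeneous data, and you have simply made explicit the verification that $\delta$ is homogeneous of degree $-N$, that $\psi$ is homogeneous of degree $-\gamma$, and that the well-posedness hypotheses are met. The paper gives no separate proof beyond ``We immediately deduce the following,'' so your write-up is in fact more detailed than the original while using the identical idea.
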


There are two other possibilities which allow us to obtain life-span estimates.
On the one hand, it could be that $\mu^\gamma D_\mu\varphi$ has a limit as
$\mu \to 0$ or as $\mu \to \infty$, possibly along a subsequence.  If
one can control $T_{\max}(\mu^\gamma D_\mu\varphi)$ as this limit
is attained, one obtains a corresponding life-span estimate from \eqref{lifespanscale}.
This procedure was introduced in the paper \cite{D}. For results of this type, we refer the reader to
\cite[Theorems 1.3, 1.4, 1.5]{D} and \cite[Theorems 1.9, 1.10, 1.12, Corollary 1.13, Propositions 4.5, 4.6]{TW2}.
As these latter results show, one can have different life-span behaviors along different
subsequences, either as $\lambda \to 0$ or as $\lambda \to \infty$.
We recall that all of these results depend on delicate continuity properties of the
blowup time.

The other approach uses comparison.  As a first, and simple, example,
we have the following immediate consequence of Corollary~\ref{invttmax}.

\begin{cor}
If $0 < \gamma < N$ and $\gamma < \frac{2}{\alpha}$, and if $\varphi \in L^1_{loc}(\R^N)$, $|\varphi(x)| \le \omega(x)|x|^{-\gamma}$
where $\omega \in L^\infty(\R^N)$, $\omega \ge 0$ is homogeneous of degree $0$, then
\begin{equation*}
\lambda^{(\frac{1}{\alpha} - \frac{\gamma}{2})^{-1}} T_{\max}(\lambda\varphi) \ge T_{\max}(\omega|\cdot|^{-\gamma})
\end{equation*}
for all $\lambda > 0$.
\end{cor}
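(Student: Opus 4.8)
The plan is to deduce the estimate from the exact scaling identity of Corollary~\ref{invttmax}(ii), applied to $\psi := \omega|\cdot|^{-\gamma}$, combined with a comparison of the solution issued from $\lambda\varphi$ with the nonnegative solution issued from $\lambda\psi$. First I would record the elementary facts about $\psi$: since $\omega \in L^\infty(\R^N)$ and $0 < \gamma < N$, the function $\psi$ is in $L^1_{loc}(\R^N)$, is nonnegative, is homogeneous of degree $-\gamma$, and satisfies $\psi(x) \le \|\omega\|_\infty |x|^{-\gamma}$; together with $\gamma < 2/\alpha$ this places both $\lambda\varphi$ and $\lambda\psi$ in the well-posedness framework of Corollary~\ref{negpower}, so $T_{\max}(\lambda\varphi) > 0$ and $T_{\max}(\lambda\psi) > 0$ for all $\lambda > 0$. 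Corollary~\ref{invttmax}(ii) then gives
$$\lambda^{(\frac{1}{\alpha} - \frac{\gamma}{2})^{-1}} T_{\max}(\lambda\psi) = T_{\max}(\psi), \qquad \lambda > 0 .$$

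Next, fix $\lambda > 0$ and let $u$, $v$ be the maximal solutions of \eqref{NLHint} with $u(0) = \lambda\varphi$ and $v(0) = \lambda\psi$. Since $\lambda\psi \ge 0$ and \eqref{NLHint} preserves nonnegativity, $v \ge 0$, so $v(t) = {\rm e}^{t\Delta}(\lambda\psi) + \int_0^t {\rm e}^{(t-\sigma)\Delta} v(\sigma)^{\alpha+1}\, d\sigma$. On the other hand, applying the triangle inequality, the positivity of ${\rm e}^{\tau\Delta}$, and $\big||u|^\alpha u\big| = |u|^{\alpha+1}$ to \eqref{NLHint} yields, for $0 < t < T_{\max}(\lambda\varphi)$,
$$|u(t)| \le {\rm e}^{t\Delta}|\lambda\varphi| + \int_0^t {\rm e}^{(t-\sigma)\Delta}|u(\sigma)|^{\alpha+1}\, d\sigma \le {\rm e}^{t\Delta}(\lambda\psi) + \int_0^t {\rm e}^{(t-\sigma)\Delta}|u(\sigma)|^{\alpha+1}\, d\sigma ,$$
so $|u|$ is a subsolution of the integral equation solved by $v$, with initial datum dominated by that of $v$. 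By the standard monotone comparison argument for such integral inequalities (as used in \cite{W5} and \cite{TW2}), $|u(t)| \le v(t)$ for $0 \le t < \min\!\big(T_{\max}(\lambda\varphi), T_{\max}(\lambda\psi)\big)$. Hence $u$ cannot blow up before $v$: were $T_{\max}(\lambda\varphi) < T_{\max}(\lambda\psi)$, then $v$, and therefore $|u| \le v$, would stay bounded in the relevant norm near $T_{\max}(\lambda\varphi)$, contradicting the blowup alternative. Thus $T_{\max}(\lambda\varphi) \ge T_{\max}(\lambda\psi)$, and multiplying by $\lambda^{(\frac{1}{\alpha} - \frac{\gamma}{2})^{-1}}$ and using the displayed identity gives the claim.

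The only point requiring care is the comparison step for the possibly singular, sign-changing datum $\lambda\varphi$: the integral inequality above and the passage $|u| \le v$ must be justified within the space in which the solutions are built (data bounded by a multiple of $|x|^{-\gamma}$, solutions continuous into $C_0(\R^N)$ for $t > 0$). I expect this to be the main obstacle, but it is handled exactly as the comparison arguments already invoked in Section~\ref{ubnnsol} and in \cite{TW2}: one works on $(t_0, T)$ with $t_0 > 0$, where both solutions are classical, bounded away from the origin, and have an integrable singularity at the origin, applies the comparison principle there, and lets $t_0 \downarrow 0$. The scaling identity and the exponent bookkeeping are then immediate.
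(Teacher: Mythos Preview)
Your proposal is correct and follows essentially the same approach as the paper: comparison of $|u|$ with the nonnegative solution issued from $\lambda\omega|\cdot|^{-\gamma}$, yielding $T_{\max}(\lambda\varphi)\ge T_{\max}(\lambda\omega|\cdot|^{-\gamma})$, combined with the scaling identity of Corollary~\ref{invttmax}(ii). The paper states this in two sentences; your version simply spells out the comparison step and its justification in more detail.
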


This is essentially the same as Corollary~\ref{negpower}. Theorem \ref{lowerLqgamma} gives also the result but here the constant at the right-hand side is explicit.
\begin{proof}
The absolute value of the solution with initial value $\lambda\varphi$
is bounded above by the solution with initial value $\lambda\omega|\cdot|^{-\gamma}$. We then apply
the second assertion of Corollary~\ref{invttmax}.
\end{proof}

We have the following for the function $\tilde\varphi$  given by \eqref{phitilde}.
\begin{cor}
\label{resulphi1} Let $N\geq 1,\; \alpha>0,$ $0<\gamma<N,\; \gamma<{2\over \alpha},$  $\omega \in L^\infty(\R^N)$ is homogeneous of degree $0$, $\omega \ge 0$, $\omega \not \equiv 0$ and $\tilde\varphi$ be given by \eqref{phitilde}. Then the following hold.
\begin{itemize}
\item[(i)] There exists $T_1\in [T_{\max}(\omega|\cdot|^{-\gamma}),T_{\max}(\tilde\varphi))$ such that $$
\lim_{\lambda \to \infty}\lambda^{(\frac{1}{\alpha} - \frac{\gamma}{2})^{-1}}T_{\max}(\lambda \tilde\varphi)=T_1.$$
\item[(ii)]  $\lim_{\lambda \to 0}\lambda^{\left({1\over \alpha}-{\gamma\over 2}\right)^{-1}}T_{\max}(\lambda \tilde\varphi)=\infty.$
\end{itemize}
\end{cor}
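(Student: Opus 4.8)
The plan is to exploit the nonlinear scaling identity \eqref{lifespanscaleheat}. Write $\varphi_R(x)=\omega(x)|x|^{-\gamma}\mathbf{1}_{\{|x|\le R\}}$ for $R>0$. Since $\omega$ is homogeneous of degree $0$, a one-line computation gives $\mu^\gamma D_\mu\tilde\varphi=\varphi_{\epsilon/\mu}$ for every $\mu>0$. Because $\gamma<2/\alpha$, the substitution $\lambda=\mu^{\gamma-2/\alpha}$ is a decreasing bijection of $(0,\infty)$ onto itself, under which $\lambda\to\infty\iff R:=\epsilon/\mu=\epsilon\lambda^{(2/\alpha-\gamma)^{-1}}\to\infty$ and $\lambda\to0\iff R\to0$, and \eqref{lifespanscaleheat} becomes
\begin{equation*}
\lambda^{(\frac{1}{\alpha}-\frac{\gamma}{2})^{-1}}T_{\max}(\lambda\tilde\varphi)=T_{\max}(\varphi_R).
\end{equation*}
Hence both assertions reduce to the behaviour of the monotone one-parameter family $R\mapsto T_{\max}(\varphi_R)$.

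For (ii), as $R\to 0$ we have $\|\varphi_R\|_{L^q_\gamma}^q=\int_{|x|\le R}|\omega|^q\le\|\omega\|_\infty^q|B_R|$, so $\|\varphi_R\|_{L^q_\gamma}\le C R^{N/q}$ for every finite $q$. Since $0<\gamma<N$ and $\gamma<2/\alpha$, one may fix a finite $q$ large enough that \eqref{qcondition} holds; then $\frac1\alpha-\frac{N}{2q}-\frac\gamma2>0$ and Theorem~\ref{lowerLqgamma} gives
\begin{equation*}
T_{\max}(\varphi_R)\ \ge\ \frac{C}{(\|\varphi_R\|_{L^q_\gamma})^{(\frac1\alpha-\frac{N}{2q}-\frac\gamma2)^{-1}}}\ \ge\ C\,R^{-\frac{N}{q}\left(\frac1\alpha-\frac{N}{2q}-\frac\gamma2\right)^{-1}}.
\end{equation*}
The exponent of $R$ is strictly negative, so $T_{\max}(\varphi_R)\to\infty$ as $R\to0$, which by the displayed scaling identity is exactly assertion (ii).

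For (i), for $R\ge\epsilon$ we have $\tilde\varphi=\varphi_\epsilon\le\varphi_R\le\omega|\cdot|^{-\gamma}$ and $\varphi_R\uparrow\omega|\cdot|^{-\gamma}$ as $R\to\infty$. By the comparison principle $R\mapsto T_{\max}(\varphi_R)$ is non-increasing; it is bounded below by $T_{\max}(\omega|\cdot|^{-\gamma})$, which is positive by local well-posedness (Corollary~\ref{negpower}) and finite by Theorem~\ref{uppernegpower2}, and bounded above by $T_{\max}(\tilde\varphi)$; moreover it is finite for $R$ large, since $T_{\max}(\lambda\tilde\varphi)<\infty$ for $\lambda$ large (the first assertion of Theorem~\ref{uppernegpower1}, from \eqref{neccdlam}) while $R=\epsilon\lambda^{(2/\alpha-\gamma)^{-1}}\to\infty$. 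Therefore $T_1:=\lim_{R\to\infty}T_{\max}(\varphi_R)$ exists, is finite, and $T_{\max}(\omega|\cdot|^{-\gamma})\le T_1\le T_{\max}(\tilde\varphi)$; in fact, by the continuity of the blowup time along the increasing family $\varphi_R$ (obtained from the necessary condition \eqref{neccd}, as in \cite{D,TW2}) one gets $T_1=T_{\max}(\omega|\cdot|^{-\gamma})$. It then remains to improve $T_1\le T_{\max}(\tilde\varphi)$ to $T_1<T_{\max}(\tilde\varphi)$: if $T_{\max}(\tilde\varphi)=\infty$ this is immediate from $T_1<\infty$; if $T_{\max}(\tilde\varphi)<\infty$, then since $\tilde\varphi\lneq\omega|\cdot|^{-\gamma}$ the strong maximum principle gives, for the corresponding solutions, $u_{\omega|\cdot|^{-\gamma}}(\tau)>u_{\tilde\varphi}(\tau)$ for all $\tau>0$, and the strict monotonicity of the (finite) blowup time under strict ordering of nonnegative initial data yields $T_1=T_{\max}(\omega|\cdot|^{-\gamma})<T_{\max}(\tilde\varphi)$.

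The only non-routine point is this last step: the \emph{strict} inequality $T_1<T_{\max}(\tilde\varphi)$ (and, for pinning down $T_1=T_{\max}(\omega|\cdot|^{-\gamma})$ exactly, the continuity of $T_{\max}$ along the increasing family $\varphi_R$). As stressed in Section~\ref{nlsc}, such statements rely on delicate continuity and monotonicity properties of the blowup time; here they follow from the comparison and strong maximum principles together with the necessary condition \eqref{neccd} (which gives the required one-sided continuity), exactly in the spirit of \cite{D,TW2}. Everything else — the scaling identity $\mu^\gamma D_\mu\tilde\varphi=\varphi_{\epsilon/\mu}$, the elementary bound $\|\varphi_R\|_{L^q_\gamma}\le C R^{N/q}$, and the invocation of Theorem~\ref{lowerLqgamma} — is routine.
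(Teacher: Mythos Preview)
Your overall route is the same as the paper's: you use the scaling identity \eqref{lifespanscaleheat}, your reparametrisation $\mu^\gamma D_\mu\tilde\varphi=\varphi_{\epsilon/\mu}$ is exactly the observation that $\mu\mapsto\mu^\gamma D_\mu\tilde\varphi$ is monotone, and the existence of the limit $T_1$ together with the two-sided bound follows by comparison and monotonicity, just as in the paper. For (ii) you invoke Theorem~\ref{lowerLqgamma} via the bound $\|\varphi_R\|_{L^q_\gamma}\le CR^{N/q}$; the paper instead uses Theorem~\ref{ThNLH} (noting $\tilde\varphi\in L^q$ for $q_c<q<N/\gamma$) or, alternatively, continuous dependence in $L^q$. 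Both are valid and equally short.

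There is, however, a genuine gap in your argument for (i). You claim that $T_1=T_{\max}(\omega|\cdot|^{-\gamma})$ ``by the continuity of the blowup time along the increasing family $\varphi_R$ (obtained from the necessary condition \eqref{neccd})''. This does not follow. Condition \eqref{neccd} gives only \emph{upper} bounds on existence times; for the increasing family $\varphi_R\uparrow\omega|\cdot|^{-\gamma}$ what you would need is the upper-semicontinuity direction $\limsup_{R\to\infty}T_{\max}(\varphi_R)\le T_{\max}(\omega|\cdot|^{-\gamma})$, and \eqref{neccd} says nothing about this. Indeed, the paper explicitly records in the remark following the corollary that the equality $T_1=T_{\max}(\omega|\cdot|^{-\gamma})$ is only a \emph{conjecture} in general, known to hold only when $(N-2)\alpha<4$ via genuine continuous dependence of $T_{\max}$. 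Since your proof of the strict inequality $T_1<T_{\max}(\tilde\varphi)$ in the case $T_{\max}(\tilde\varphi)<\infty$ rests on this unjustified equality (and on ``strict monotonicity of the blowup time'', itself a nontrivial assertion you do not prove), that part of your argument does not stand. The paper's own proof, for what it is worth, establishes only $T_1\in[T_{\max}(\omega|\cdot|^{-\gamma}),T_{\max}(\tilde\varphi)]$ and does not spell out the strictness of the upper endpoint.
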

\begin{proof}
(i) The function $\mu \to \mu^{\gamma} D_\mu\tilde\varphi$ is decreasing on $(0, \infty)$, and
\begin{equation*}
\lim_{\mu \to 0}\mu^{\gamma} D_\mu\tilde\varphi = \omega|\cdot|^{-\gamma}, \quad
\lim_{\mu \to \infty}\mu^{\gamma} D_\mu\tilde\varphi = 0,
\end{equation*}
where the first limit is realized in $L^{q_1}(\R^N) + L^{q_2}(\R^N)$ and the second in $L^{q_1}(\R^N)$ whenever
$0 \le \frac{N}{q_2} < \gamma <  \frac{N}{q_1} \le N$.
Consequently
\begin{equation*}
\tilde\varphi \le \mu^{\gamma} D_\mu\tilde\varphi \le  \omega|\cdot|^{-\gamma}, \; \forall \mu \le 1.
\end{equation*}
Applying \eqref{lifespanscaleheat}, and since $\gamma<2/\alpha,$ we conclude that
$\lambda \to \lambda^{(\frac{1}{\alpha} - \frac{\gamma}{2})^{-1}}T_{\max}(\lambda \tilde\varphi) $
is decreasing on $(0,\infty)$ and
\begin{equation*}
T_{\max}(\omega|\cdot|^{-\gamma}) \le
\lambda^{(\frac{1}{\alpha} - \frac{\gamma}{2})^{-1}}T_{\max}(\lambda \tilde\varphi)
\le T_{\max}(\tilde\varphi), \; \forall \lambda \ge 1.
\end{equation*}
The existence of the limit $T_1$ follows by  monotonicity.

(ii) We have that  $\tilde\varphi\in L^q$ for $q\geq 1,$ ${N\alpha\over 2}<q<{N\over \gamma}.$  Hence, by Theorem \ref{ThNLH}
    $$T_{\max}(\lambda \tilde\varphi)\geq C \lambda^{-\left({1\over \alpha}-{N\over 2q}\right)^{-1}}.$$ Then
    $$\lambda^{\left({1\over \alpha}-{\gamma\over 2}\right)^{-1}}T_{\max}(\lambda \tilde\varphi)\geq C\lambda^{\left({1\over \alpha}-{\gamma\over 2}\right)^{-1}-\left({1\over \alpha}-{N\over 2q}\right)^{-1}},$$ that is
    $$\lim_{\lambda \to 0}\lambda^{\left({1\over \alpha}-{\gamma\over 2}\right)^{-1}}T_{\max}(\lambda \tilde\varphi)=\infty.$$

For the second assertion, we may also use the continuous dependence in $L^q(\R^N)$, where $\gamma < \frac{N}{q} < \frac{2}{\alpha}$.
In fact, since $\lim_{\mu \to \infty}\mu^{\gamma} D_\mu\tilde\varphi = 0$, we know that
$T_{\max}(\mu^{\gamma} D_\mu\tilde\varphi) \to \infty$ as $\mu \to \infty$; so that by
\eqref{lifespanscaleheat}  we have
\begin{equation*}
\lambda^{(\frac{1}{\alpha} - \frac{\gamma}{2})^{-1}}T_{\max}(\lambda \tilde\varphi)
\to \infty, {\rm as}\,  \lambda \to 0.
\end{equation*}
\end{proof}
\begin{rem}{\rm $\,$
\begin{itemize}
\item[1)] It is natural to conjection that $\lambda^{(\frac{1}{\alpha} - \frac{\gamma}{2})^{-1}}T_{\max}(\lambda \tilde\varphi)
\to T_{\max}(\omega|\cdot|^{-\gamma}), {\rm as}\,  \lambda \to \infty.$ This holds in particular for $(N-2)\alpha<4,$ by continuous dependence of the maximal time of existence.
\item[2)] We remark that the upper bound on $\lambda^{(\frac{1}{\alpha} - \frac{\gamma}{2})^{-1}}T_{\max}(\lambda \tilde\varphi)$
for large $\lambda > 0$ is of the same order as given in Theorem~\ref{uppernegpower1}.
As for small $\lambda > 0$, if $\alpha < \frac{2}{N}$, then Theorem~\ref{lowermeas} gives the stronger
estimate $\lambda^{(\frac{1}{\alpha} - \frac{N}{2})^{-1}}T_{\max}(\lambda \tilde\varphi) \ge c > 0$
for all $\lambda > 0$, and is  of the same order as given in Theorem~\ref{uppermeas} and Remark \ref{rem5}.  However, Part (ii)
improves the estimate of Corollary~\ref{negpower} in the case $\alpha = \frac{2}{N}$, where
$T_{\max}(\lambda \tilde\varphi) < \infty$ for all $\lambda > 0$. If $\alpha > \frac{2}{N}$,
then $T_{\max}(\lambda \tilde\varphi) = \infty$ for sufficiently small $\lambda > 0,$ since
$\tilde\varphi \in L^{q_c}(\R^N)$, see \cite{W4}.
\end{itemize}}
\end{rem}

For the function $\tilde{\tilde\varphi},$ we have the following.
\begin{cor}
\label{resulphi2}
Let $N\geq 1,$ $\alpha>0, \; 0<\gamma<N,\; \gamma<{2\over \alpha},$  $\omega \in L^\infty(\R^N)$ is homogeneous of degree $0$, $\omega \ge 0$, $\omega \not \equiv 0$ and $\tilde{\tilde\varphi}$ be given by \eqref{phitildetilde}. Then the following hold.
\begin{itemize}
\item[(i)] There exists $T_2\in [T_{\max}(\omega|\cdot|^{-\gamma}),T_{\max}(\tilde{\tilde\varphi}))$ such that $$
\lim_{\lambda \to 0}\lambda^{(\frac{1}{\alpha} - \frac{\gamma}{2})^{-1}}T_{\max}(\lambda \tilde{\tilde\varphi})=T_2.$$
\item[(ii)] $\lim_{\lambda \to \infty}\lambda^{\left({1\over \alpha}-{\gamma\over 2}\right)^{-1}}T_{\max}(\lambda \tilde{\tilde\varphi})=\infty.$
\end{itemize}
\end{cor}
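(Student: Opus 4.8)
The plan is to mirror the proof of Corollary~\ref{resulphi1}, using the scaling identity \eqref{lifespanscaleheat} together with the comparison principle for \eqref{NLHint}. First I would compute the rescaled profile: since $\omega$ is homogeneous of degree $0$, for $\mu>0$ one has
$$\mu^{\gamma}D_\mu\tilde{\tilde\varphi}(x)=\begin{cases}0,&|x|<R/\mu,\\ \omega(x)|x|^{-\gamma},&|x|\ge R/\mu,\end{cases}$$
so $\mu\mapsto\mu^{\gamma}D_\mu\tilde{\tilde\varphi}$ is \emph{increasing} on $(0,\infty)$, with $\mu^{\gamma}D_\mu\tilde{\tilde\varphi}\to0$ as $\mu\to0$ (in $L^q$ for any $q>N/\gamma$) and $\mu^{\gamma}D_\mu\tilde{\tilde\varphi}\to\omega|\cdot|^{-\gamma}$ as $\mu\to\infty$. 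In particular $\tilde{\tilde\varphi}\le\mu^{\gamma}D_\mu\tilde{\tilde\varphi}\le\omega|\cdot|^{-\gamma}$ for every $\mu\ge1$, strictly on $\{R/\mu\le|x|<R,\ \omega>0\}$ when $\mu>1$. All three profiles are dominated by $\|\omega\|_\infty|\cdot|^{-\gamma}$ with $0<\gamma<N$, $\gamma<2/\alpha$, so \eqref{NLHint} is locally well-posed for each, and their blow-up times are finite by Theorem~\ref{uppernegpower2}; hence $0<T_{\max}(\omega|\cdot|^{-\gamma})<T_{\max}(\tilde{\tilde\varphi})<\infty$, the strict inequality again by comparison.

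For part (i): since $\gamma<2/\alpha$, the substitution $\lambda=\mu^{\gamma-2/\alpha}$ of \eqref{lifespanscaleheat} is a decreasing bijection of $[1,\infty)$ onto $(0,1]$, and $\lambda^{(\frac1\alpha-\frac\gamma2)^{-1}}T_{\max}(\lambda\tilde{\tilde\varphi})=T_{\max}(\mu^{\gamma}D_\mu\tilde{\tilde\varphi})$. Because $T_{\max}$ reverses the pointwise order of nonnegative data, the left-hand side is nondecreasing in $\lambda$ on $(0,1]$ and
$$T_{\max}(\omega|\cdot|^{-\gamma})\le\lambda^{(\frac1\alpha-\frac\gamma2)^{-1}}T_{\max}(\lambda\tilde{\tilde\varphi})\le T_{\max}(\tilde{\tilde\varphi}),\qquad 0<\lambda\le1,$$
the right inequality strict for $\lambda<1$ by the strict comparison principle. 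By monotonicity the limit $T_2$ as $\lambda\to0$ exists and lies in $[T_{\max}(\omega|\cdot|^{-\gamma}),T_{\max}(\tilde{\tilde\varphi}))$.

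For part (ii): as in Corollary~\ref{resulphi1}(ii), $\tilde{\tilde\varphi}$ vanishes near the origin and decays like $|x|^{-\gamma}$ at infinity, so $\tilde{\tilde\varphi}\in L^q(\R^N)$ whenever $N/\gamma<q<\infty$, and then $q>q_c$ automatically since $\gamma<2/\alpha$. Theorem~\ref{ThNLH} gives $T_{\max}(\lambda\tilde{\tilde\varphi})\ge C\lambda^{-(\frac1\alpha-\frac{N}{2q})^{-1}}$, hence
$$\lambda^{(\frac1\alpha-\frac\gamma2)^{-1}}T_{\max}(\lambda\tilde{\tilde\varphi})\ge C\,\lambda^{(\frac1\alpha-\frac\gamma2)^{-1}-(\frac1\alpha-\frac{N}{2q})^{-1}},$$
with positive exponent because $N/q<\gamma$; letting $\lambda\to\infty$ gives the claim. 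Equivalently, $\mu^{\gamma}D_\mu\tilde{\tilde\varphi}\to0$ in $L^q$ as $\mu\to0$, so $T_{\max}(\mu^{\gamma}D_\mu\tilde{\tilde\varphi})\to\infty$ by continuous dependence in $L^q$, which is the same statement via \eqref{lifespanscaleheat}.

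The only genuinely delicate step is the \emph{strict} inequality $T_2<T_{\max}(\tilde{\tilde\varphi})$: everything else is bookkeeping with \eqref{lifespanscaleheat} and monotonicity, but strictness needs the strong maximum principle / strict comparison for the blow-up time rather than the non-strict comparison principle. One should also verify that the comparison arguments apply to these merely $L^1_{loc}$, possibly singular, data, which is fine here since all profiles are bounded by a constant times $|\cdot|^{-\gamma}$ with $0<\gamma<N$, $\gamma<2/\alpha$, a regime in which \eqref{NLHint} is well-posed.
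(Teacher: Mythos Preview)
Your proof is correct and follows essentially the same approach as the paper's: the scaling identity \eqref{lifespanscaleheat} plus monotonicity of $\mu\mapsto\mu^\gamma D_\mu\tilde{\tilde\varphi}$ and comparison for (i), and a lower bound via Theorem~\ref{ThNLH} (or equivalently continuous dependence) for (ii). The only minor differences are that the paper takes $q=\infty$ in part (ii) (since $\tilde{\tilde\varphi}\in L^\infty$) rather than a finite $q>N/\gamma$, and your remark that the strict inequality $T_2<T_{\max}(\tilde{\tilde\varphi})$ needs the strong maximum principle is a point the paper leaves implicit.
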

\begin{proof}
 (i) The function $\mu \to \mu^{\gamma} D_\mu\tilde{\tilde\varphi}$ is increasing on $(0, \infty)$, and
\begin{equation*}
\lim_{\mu \to 0}\mu^{\gamma} D_\mu\tilde{\tilde\varphi} = 0, \quad
\lim_{\mu \to \infty}\mu^{\gamma} D_\mu\tilde{\tilde\varphi} = \omega|\cdot|^{-\gamma},
\end{equation*}
where the first limit is realized in $L^{q_2}(\R^N)$ and the second in $L^{q_1}(\R^N) + L^{q_2}(\R^N)$ whenever
$0 \le \frac{N}{q_2} < \gamma <  \frac{N}{q_1} \le N$.
Consequently
\begin{equation*}
\tilde{\tilde\varphi} \le \mu^{\gamma} D_\mu\tilde{\tilde\varphi} \le  \omega|\cdot|^{-\gamma}, \; \forall \mu \ge 1.
\end{equation*}
Applying \eqref{lifespanscaleheat}  we conclude that
$\lambda \to \lambda^{(\frac{1}{\alpha} - \frac{\gamma}{2})^{-1}}T_{\max}(\lambda \tilde{\tilde\varphi}) $
is increasing on $(0,\infty)$ and
\begin{equation*}
T_{\max}(\omega|\cdot|^{-\gamma}) \le
\lambda^{(\frac{1}{\alpha} - \frac{\gamma}{2})^{-1}}T_{\max}(\lambda \tilde{\tilde\varphi})
\le T_{\max}(\tilde{\tilde\varphi}), \; \forall \lambda \le 1.
\end{equation*}

(ii)  By continuous dependence in $L^\infty(\R^N)$, we know that
$T_{\max}(\mu^{\gamma} D_\mu\tilde{\tilde\varphi}) \to \infty$ as $\mu \to 0$; so that by
\eqref{lifespanscaleheat}  we have
\begin{equation*}
\lambda^{(\frac{1}{\alpha} - \frac{\gamma}{2})^{-1}}T_{\max}(\lambda \tilde{\tilde\varphi})
\to \infty, {\rm as}\,  \lambda \to \infty.
\end{equation*}
For the second assertion, we may also use the following argument. We have that  $\tilde{\tilde\varphi}\in  L^\infty_{\gamma}\cap L^\infty .$ That is, by Corollary \ref{lowerLqLqgamma}
    $$T_{\max}(\lambda \tilde{\tilde\varphi})\geq C \lambda^{-\left({1\over \alpha}-{\gamma\over 2}\right)^{-1}}$$ and by Theorem \ref{ThNLH}
    $$T_{\max}(\lambda \tilde{\tilde\varphi})\geq C \lambda^{-\left({1\over \alpha}\right)^{-1}}.$$ Then
    $$\lambda^{\left({1\over \alpha}-{\gamma\over 2}\right)^{-1}}T_{\max}(\lambda \tilde{\tilde\varphi})\geq
    \max\left(C,C\lambda^{\left({1\over \alpha}-{\gamma\over 2}\right)^{-1}-\left({1\over \alpha}\right)^{-1}}\right).$$  Hence
    $$\lim_{\lambda \to\infty}\lambda^{\left({1\over \alpha}-{\gamma\over 2}\right)^{-1}}T_{\max}(\lambda \tilde{\tilde\varphi})=\infty.$$
\end{proof}
\begin{rem}
{\rm $\,$
\begin{itemize}
\item[1)] It is natural to conjection that $\lambda^{(\frac{1}{\alpha} - \frac{\gamma}{2})^{-1}}T_{\max}(\lambda \tilde{\tilde\varphi})
\to T_{\max}(\omega|\cdot|^{-\gamma}), {\rm as}\,  \lambda \to 0.$
\item[2)] We remark that $\tilde{\tilde\varphi} \in L^\infty(\R^N)$, and Theorem \ref{ThNLH} and Theorem \ref{upperLinfty}
 give the precise order of magnitude of $T_{\max}(\lambda \tilde{\tilde\varphi}) $ as $\lambda \to \infty$.
 \end{itemize}}
 \end{rem}

Next, we consider the  function $\Phi$ given by  \eqref{phi3}. We have the following.
\begin{cor}
\label{resulphi3}
Let $N\geq 1,\; \alpha>0,$ $0 < \gamma_1, \gamma_2 < N$ and $\gamma_1, \gamma_2 < \frac{2}{\alpha}$ ($\gamma_1 \neq \gamma_2$).
 Let $\omega \in L^\infty(\R^N)$ is homogeneous of degree $0$, $\omega \ge 0$, $\omega \not \equiv 0$ and let $\Phi$ be defined by \eqref{phi3}. Then we have the following.
\begin{itemize}
\item[(i)] There exists $T_3\in (T_{\max}(\Phi), T_{\max}(\omega|\cdot|^{-\gamma_1})]$ such that $$\lim_{\lambda \to \infty}\lambda^{(\frac{1}{\alpha} - \frac{\gamma_1}{2})^{-1}}T_{\max}(\lambda \Phi)=T_3$$
and if $\alpha < \frac{4}{N-2}$, or $\gamma_1 > \gamma_2$ then $$T_3=T_{\max}(\omega|\cdot|^{-\gamma_1}).$$

\item[(ii)] There exists ${\tilde T}_3\in (T_{\max}(\Phi), T_{\max}(\omega|\cdot|^{-\gamma_2})]$ such that $$\lim_{\lambda \to 0}\lambda^{(\frac{1}{\alpha} - \frac{\gamma_2}{2})^{-1}}T_{\max}(\lambda \Phi)={\tilde T}_3$$
and if $\alpha < \frac{4}{N-2}$, or $\gamma_1 > \gamma_2$ then $${\tilde T}_3=T_{\max}(\omega|\cdot|^{-\gamma_2}).$$
\item[(iii)] If $\gamma_1 < \gamma_2,$ then
$\lim_{\lambda \to 0}\lambda^{(\frac{1}{\alpha} - \frac{\gamma_1}{2})^{-1}}T_{\max}(\lambda \Phi) =\lim_{\lambda \to \infty}\lambda^{(\frac{1}{\alpha} - \frac{\gamma_2}{2})^{-1}}T_{\max}(\lambda \Phi)= \infty.$
\end{itemize}
\end{cor}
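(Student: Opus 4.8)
The plan is to apply the nonlinear scaling identity \eqref{lifespanscaleheat}, $\lambda^{(\frac{1}{\alpha}-\frac{\gamma}{2})^{-1}}T_{\max}(\lambda\varphi)=T_{\max}(\mu^{\gamma}D_\mu\varphi)$ with $\lambda=\mu^{\gamma-\frac{2}{\alpha}}$, to $\varphi=\Phi$, using the exponent $\gamma=\gamma_1$ for part (i) and $\gamma=\gamma_2$ for part (ii). Since $\gamma_i<2/\alpha$, the correspondence $\lambda=\mu^{\gamma_i-2/\alpha}$ sends $\mu\to0$ to $\lambda\to\infty$ and $\mu\to\infty$ to $\lambda\to0$. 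A direct computation using the $0$-homogeneity of $\omega$ gives, for part (i), that $\mu^{\gamma_1}D_\mu\Phi$ equals $\omega(x)|x|^{-\gamma_1}$ on $\{|x|\le 1/\mu\}$ and $\mu^{\gamma_1-\gamma_2}\omega(x)|x|^{-\gamma_2}$ on $\{|x|>1/\mu\}$, and symmetrically for $\mu^{\gamma_2}D_\mu\Phi$ in part (ii). From these formulas one reads off that $\mu\mapsto\mu^{\gamma_1}D_\mu\Phi$ is monotone in $\mu$, that it converges to $\omega|\cdot|^{-\gamma_1}$ as $\mu\to0$ (pointwise and in $L^{q_1}(\R^N)+L^{q_2}(\R^N)$ for suitable $q_1,q_2$, exactly as in the proofs of Corollaries~\ref{resulphi1} and \ref{resulphi2}), and that for $\mu$ on one side of $1$ it is sandwiched between $\Phi$ and $\omega|\cdot|^{-\gamma_1}$.

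Next I would feed this into the comparison principle: since $\Phi\ge 0$ and the map is monotone in $\mu$, the function $\mu\mapsto T_{\max}(\mu^{\gamma_1}D_\mu\Phi)$ is monotone, hence has a limit $T_3$ as $\mu\to0$; by \eqref{lifespanscaleheat} this limit is exactly $\lim_{\lambda\to\infty}\lambda^{(\frac{1}{\alpha}-\frac{\gamma_1}{2})^{-1}}T_{\max}(\lambda\Phi)$. The sandwich places $T_3$ between $T_{\max}(\Phi)$ and $T_{\max}(\omega|\cdot|^{-\gamma_1})$; the strict inequality at the endpoint $T_{\max}(\Phi)$ comes from evaluating the monotone family at some $\mu_0\neq 1$, where the ordering of the initial data is strict on a set of positive measure, together with the strict comparison property of blowup times for \eqref{NLH}. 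To identify $T_3$ with $T_{\max}(\omega|\cdot|^{-\gamma_1})$ I would argue in two ways: when $\gamma_1>\gamma_2$ the approximation $\mu^{\gamma_1}D_\mu\Phi$ is monotone decreasing as $\mu\to0$, so the associated solutions of \eqref{NLHint} increase to a solution with data $\omega|\cdot|^{-\gamma_1}$ (monotone convergence in the integral equation plus uniqueness), which forces $T_3=T_{\max}(\omega|\cdot|^{-\gamma_1})$ with no further hypothesis; when $\alpha<4/(N-2)$ one instead invokes continuous dependence of the maximal existence time on the data in the Sobolev-subcritical range, as used in \cite{TW2,D}. Part (ii) is the mirror image, with exponent $\gamma_2$, $\mu\to\infty$ and $\lambda\to0$, yielding ${\tilde T}_3$.

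Finally, part (iii) follows by combining the precise rates from (i) and (ii). Part (i) gives $T_{\max}(\lambda\Phi)\sim T_3\,\lambda^{-(\frac{1}{\alpha}-\frac{\gamma_1}{2})^{-1}}$ as $\lambda\to\infty$ with $0<T_3<\infty$, so $\lambda^{(\frac{1}{\alpha}-\frac{\gamma_2}{2})^{-1}}T_{\max}(\lambda\Phi)\sim T_3\,\lambda^{(\frac{1}{\alpha}-\frac{\gamma_2}{2})^{-1}-(\frac{1}{\alpha}-\frac{\gamma_1}{2})^{-1}}$; since $\gamma_1<\gamma_2<2/\alpha$ we have $0<\frac{1}{\alpha}-\frac{\gamma_2}{2}<\frac{1}{\alpha}-\frac{\gamma_1}{2}$, so the exponent of $\lambda$ is positive and the expression tends to $\infty$. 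The companion limit is obtained the same way from part (ii), the exponent of $\lambda$ being negative as $\lambda\to0$. I expect the main obstacle to be the identification $T_3=T_{\max}(\omega|\cdot|^{-\gamma_1})$ (and likewise ${\tilde T}_3$): it rests on continuity/semicontinuity of the blowup time, which is precisely why the alternative $\alpha<4/(N-2)$ or the monotone structure $\gamma_1>\gamma_2$ is needed; the strict inequalities at the open endpoints of the intervals, which require the strong maximum principle for the comparison of blowup times, are a secondary delicate point.
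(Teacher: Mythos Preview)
Your proposal is essentially correct and follows the same overall strategy as the paper: compute $\mu^{\gamma_i}D_\mu\Phi$, establish its monotonicity in $\mu$ (by splitting into the two cases $\gamma_1\lessgtr\gamma_2$), sandwich it between $\Phi$ and $\omega|\cdot|^{-\gamma_i}$, apply the scaling identity \eqref{lifespanscaleheat}, and use comparison to obtain a monotone limit $T_3$ (respectively $\tilde T_3$).

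Two small differences are worth noting. For the identification $T_3=T_{\max}(\omega|\cdot|^{-\gamma_1})$ when $\gamma_1>\gamma_2$, the paper does not go through monotone convergence in the integral equation; it uses instead a clean observation: since $\mu^{\gamma_1}D_\mu\Phi\ge \omega|\cdot|^{-\gamma_1}$ for all $\mu$, comparison gives $T_{\max}(\mu^{\gamma_1}D_\mu\Phi)\le T_{\max}(\omega|\cdot|^{-\gamma_1})$, while the general lower semicontinuity of $T_{\max}$ under convergence of data (continuous dependence) gives $\liminf_{\mu\to0} T_{\max}(\mu^{\gamma_1}D_\mu\Phi)\ge T_{\max}(\omega|\cdot|^{-\gamma_1})$; together these force equality. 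Your monotone--convergence--plus--uniqueness route works too, but note that your phrase ``solutions increase'' should read ``solutions decrease'' (the data decrease as $\mu\downarrow0$), and you implicitly need uniqueness for the limiting datum $\omega|\cdot|^{-\gamma_1}$ in a class large enough to contain the monotone limit. For part~(iii), the paper's primary argument is slightly different: when $\gamma_1<\gamma_2$ it shows directly that $\mu^{\gamma_1}D_\mu\Phi\to0$ (in a suitable $L^{q_1}+L^{q_2}$) as $\mu\to\infty$, whence $T_{\max}(\mu^{\gamma_1}D_\mu\Phi)\to\infty$ by continuous dependence, and alternatively observes $\Phi\in L^q$ for $\gamma_1<N/q<\gamma_2$ and invokes Theorem~\ref{ThNLH}. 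Your deduction from the exact rate in (i)--(ii) is also valid once one knows $0<T_3,\tilde T_3<\infty$; the finiteness follows from Theorem~\ref{uppernegpower2} applied to $\Phi$ (since $\gamma_2<2/\alpha$), which you should state explicitly.
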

\begin{rem}
{\rm $\,$ Corollary \ref{resulphi3} shows that the asymptotic behavior of the life-span as $\lambda\to \infty$ is determined by the singularity of the initial data and when $\lambda\to 0$ it is determined by the decay rate at infinity of the initial value.}
\end{rem}
\begin{proof}[Proof of Corollary \ref{resulphi3}]$\,$

{\bf 1) Analysis of $T_{\max}(\lambda \Phi)$ in the case $\gamma_1 < \gamma_2$.}
In this case $\Phi = \omega\min[|\cdot|^{-\gamma_1},|\cdot|^{-\gamma_2}]$,
so $\Phi \le \omega|\cdot|^{-\gamma_1}$ and $\Phi \le \omega|\cdot|^{-\gamma_2}$.
Hence
\begin{equation}
\label{bd1}
\mu^{\gamma_1} D_\mu\Phi \le \omega|\cdot|^{-\gamma_1}
\end{equation}
and
\begin{equation}
\label{bd2}
\mu^{\gamma_2} D_\mu\Phi \le \omega|\cdot|^{-\gamma_2}
\end{equation}
 for all $\mu > 0$.

We claim that
\begin{itemize}
\item The function $\mu \to \mu^{\gamma_1} D_\mu\Phi$ is decreasing on $(0, \infty)$, and
\begin{equation*}
\lim_{\mu \to 0}\mu^{\gamma_1} D_\mu\Phi = \omega|\cdot|^{-\gamma_1}, \quad
\lim_{\mu \to \infty}\mu^{\gamma_1} D_\mu\Phi = 0,
\end{equation*}
where the limits are in $L^{q_1}(\R^N) + L^{q_2}(\R^N)$ whenever
$0 \le \frac{N}{q_2} < \gamma_1 < \frac{N}{q_1} \le N$, by \eqref{bd1}.
\item The function $\mu \to \mu^{\gamma_2} D_\mu\Phi$ is increasing on $(0, \infty)$, and
\begin{equation*}
\lim_{\mu \to 0}\mu^{\gamma_2} D_\mu\Phi = 0, \quad
\lim_{\mu \to \infty}\mu^{\gamma_2} D_\mu\Phi = \omega|\cdot|^{-\gamma_2},
\end{equation*}
where the limits are in $L^{q_1}(\R^N) + L^{q_2}(\R^N)$ whenever
$0 \le \frac{N}{q_2} < \gamma_2 < \frac{N}{q_1} \le N$, by \eqref{bd2}.
\end{itemize}

{\it Proof of the  claim}.
Let $0 < \mu < \nu < \infty$.  In particular, $\mu^{\gamma_1 - \gamma_2} > \nu^{\gamma_1 - \gamma_2}$.
We have
\begin{eqnarray*}
\mu^{\gamma_1}\Phi(\mu x)
&=& \begin{cases}
\omega(x)|x|^{-\gamma_1}, &|x| \le \frac{1}{\nu}\\
\omega(x)|x|^{-\gamma_1}, &\frac{1}{\nu} \le |x| \le \frac{1}{\mu}\\
\mu^{\gamma_1 - \gamma_2}\omega(x)|x|^{-\gamma_2}, &|x| \ge \frac{1}{\mu}.
\end{cases}
\quad =\begin{cases}
\omega(x)|x|^{-\gamma_1}, &|x| \le \frac{1}{\nu}\\
(\frac{1}{|x|})^{\gamma_1 - \gamma_2}\omega(x)|x|^{-\gamma_2}, &\frac{1}{\nu} \le |x| \le \frac{1}{\mu}\\
\mu^{\gamma_1 - \gamma_2}\omega(x)|x|^{-\gamma_2}, &|x| \ge \frac{1}{\mu}.
\end{cases}\\
&\ge& \begin{cases}
\omega(x)|x|^{-\gamma_1}, &|x| \le \frac{1}{\nu}\\
\nu^{\gamma_1 - \gamma_2}\omega(x)|x|^{-\gamma_2}, &\frac{1}{\nu} \le |x| \le \frac{1}{\mu}\\
\nu^{\gamma_1 - \gamma_2}\omega(x)|x|^{-\gamma_2}, &|x| \ge \frac{1}{\mu}.
\end{cases}
\quad = \nu^{\gamma_1}\Phi(\nu x).
\end{eqnarray*}
Also,
\begin{eqnarray*}
\mu^{\gamma_2}\Phi(\mu x)
&=& \begin{cases}
\mu^{\gamma_2 - \gamma_1}\omega(x)|x|^{-\gamma_1}, &|x| \le \frac{1}{\nu}\\
\mu^{\gamma_2 - \gamma_1}\omega(x)|x|^{-\gamma_1}, &\frac{1}{\nu} \le |x| \le \frac{1}{\mu}\\
\omega(x)|x|^{-\gamma_2}, &|x| \ge \frac{1}{\mu}.
\end{cases}
\quad = \begin{cases}
\mu^{\gamma_2 - \gamma_1}\omega(x)|x|^{-\gamma_1}, &|x| \le \frac{1}{\nu}\\
\mu^{\gamma_2 - \gamma_1}|x|^{\gamma_2 - \gamma_1}\omega(x)|x|^{-\gamma_2}, &\frac{1}{\nu} \le |x| \le \frac{1}{\mu}\\
\omega(x)|x|^{-\gamma_2}, &|x| \ge \frac{1}{\mu}.
\end{cases}\\
&\le& \begin{cases}
\nu^{\gamma_2 - \gamma_1}\omega(x)|x|^{-\gamma_1}, &|x| \le \frac{1}{\nu}\\
\omega(x)|x|^{-\gamma_2}, &\frac{1}{\nu} \le |x| \le \frac{1}{\mu}\\
\omega(x)|x|^{-\gamma_2}, &|x| \ge \frac{1}{\mu}.
\end{cases}
\quad = \nu^{\gamma_1}\Phi(\nu x).
\end{eqnarray*}

It follows that
\begin{itemize}
\item The function $\mu \to T_{\max}(\mu^{\gamma_1} D_\mu\Phi)$ is increasing on $(0, \infty)$, and
\begin{equation*}
T_{\max}(\Phi) \ge \lim_{\mu \to 0}T_{\max}(\mu^{\gamma_1} D_\mu\Phi)
 \ge T_{\max}(\omega|\cdot|^{-\gamma_1}),
\end{equation*}
and if $\alpha < \frac{4}{N-2}$, then
\begin{equation*}
\lim_{\mu \to 0}T_{\max}(\mu^{\gamma_1} D_\mu\Phi)
= T_{\max}(\omega|\cdot|^{-\gamma_1}).
\end{equation*}
Also
\begin{equation*}
\lim_{\mu \to \infty}T_{\max}(\mu^{\gamma_1} D_\mu\Phi) = \infty.
\end{equation*}

\item The function $\mu \to T_{\max}(\mu^{\gamma_2} D_\mu\Phi)$ is decreasing on $(0, \infty)$, and
\begin{equation*}
T_{\max}(\Phi) \ge \lim_{\mu \to \infty}T_{\max}(\mu^{\gamma_2} D_\mu\Phi)
\ge  T_{\max}(\omega|\cdot|^{-\gamma_2}),
\end{equation*}
and if $\alpha < \frac{4}{N-2}$, then
\begin{equation*}
\lim_{\mu \to \infty}T_{\max}(\mu^{\gamma_2} D_\mu\Phi)
= T_{\max}(\omega|\cdot|^{-\gamma_2}).
\end{equation*}
Also
\begin{equation*}
\lim_{\mu \to 0}T_{\max}(\mu^{\gamma_2} D_\mu\Phi) = \infty.
\end{equation*}
\end{itemize}

Next, applying \eqref{lifespanscaleheat}, we have
\begin{equation}
\lambda^{(\frac{1}{\alpha} - \frac{\gamma_1}{2})^{-1}}T_{\max}(\lambda \Phi)
=  T_{\max}(\mu^{\gamma_1} D_\mu\Phi ), \quad \lambda = \mu^{\gamma_1 - \frac{2}{\alpha}},
\end{equation}
\begin{equation}
\lambda^{(\frac{1}{\alpha} - \frac{\gamma_2}{2})^{-1}}T_{\max}(\lambda \Phi)
=  T_{\max}(\mu^{\gamma_2} D_\mu\Phi ), \quad \lambda = \mu^{\gamma_2 - \frac{2}{\alpha}}.
\end{equation}
This completes the proof of  (i)-(ii) if $\gamma_1<\gamma_2$ and (iii).

{\bf 2) Analysis of $T_{\max}(\lambda \Phi)$ in the case $\gamma_1 > \gamma_2$.}

In this case $\Phi = \omega\max[|\cdot|^{-\gamma_1},|\cdot|^{-\gamma_2}]$,
so $\Phi \ge \omega|\cdot|^{-\gamma_1}$,  $\Phi \ge \omega|\cdot|^{-\gamma_2}$
and $\Phi \le \omega(|\cdot|^{-\gamma_1} + |\cdot|^{-\gamma_2})$.
Hence
\begin{equation}
\label{bd3}
\omega|\cdot|^{-\gamma_1} \le \mu^{\gamma_1} D_\mu\Phi
 \le \omega(|\cdot|^{-\gamma_1} + \mu^{\gamma_1 - \gamma_2}|\cdot|^{-\gamma_2})
\end{equation}
and
\begin{equation}
\label{bd4}
\omega|\cdot|^{-\gamma_2} \le \mu^{\gamma_2} D_\mu\Phi
 \le \omega(\mu^{\gamma_2 - \gamma_1}|\cdot|^{-\gamma_1} + |\cdot|^{-\gamma_2})
\end{equation}
 for all $\mu > 0$.

 We claim that
\begin{itemize}
\item The function $\mu \to \mu^{\gamma_1} D_\mu\Phi$ is increasing on $(0, \infty)$, and
\begin{equation*}
\lim_{\mu \to 0}\mu^{\gamma_1} D_\mu\Phi = \omega|\cdot|^{-\gamma_1}, \quad
\lim_{\mu \to \infty}\mu^{\gamma_1} D_\mu\Phi = \infty,
\end{equation*}
where the limits are in $L^{q_1}(\R^N) + L^{q_2}(\R^N)$ whenever
$0 \le \frac{N}{q_2} < \gamma_2 < \gamma_1 < \frac{N}{q_1} \le N$, by \eqref{bd3}.

\item The function $\mu \to \mu^{\gamma_2} D_\mu\Phi$ is decreasing on $(0, \infty)$, and
\begin{equation*}
\lim_{\mu \to 0}\mu^{\gamma_2} D_\mu\Phi = \infty, \quad
\lim_{\mu \to \infty}\mu^{\gamma_2} D_\mu\Phi = \omega|\cdot|^{-\gamma_2},
\end{equation*}
where the limits are in $L^{q_1}(\R^N) + L^{q_2}(\R^N)$ whenever
$0 \le \frac{N}{q_2} < \gamma_2 < \gamma_1 < \frac{N}{q_1} \le N$, by \eqref{bd4}.
\end{itemize}

{\it Proof of the  claim}.
Let $0 < \mu < \nu < \infty$, so that $\mu^{\gamma_2 - \gamma_1} > \nu^{\gamma_2 - \gamma_1}$.
We have

\begin{eqnarray*}
\mu^{\gamma_1}\Phi(\mu x)
&=& \begin{cases}
\omega(x)|x|^{-\gamma_1}, &|x| \le \frac{1}{\nu}\\
\omega(x)|x|^{-\gamma_1}, &\frac{1}{\nu} \le |x| \le \frac{1}{\mu}\\
\mu^{\gamma_1 - \gamma_2}\omega(x)|x|^{-\gamma_2}, &|x| \ge \frac{1}{\mu}.
\end{cases}
\quad = \begin{cases}
\omega(x)|x|^{-\gamma_1}, &|x| \le \frac{1}{\nu}\\
(\frac{1}{|x|})^{\gamma_1 - \gamma_2}\omega(x)|x|^{-\gamma_2}, &\frac{1}{\nu} \le |x| \le \frac{1}{\mu}\\
\mu^{\gamma_1 - \gamma_2}\omega(x)|x|^{-\gamma_2}, &|x| \ge \frac{1}{\mu}.
\end{cases}\\
&\le& \begin{cases}
\omega(x)|x|^{-\gamma_1}, &|x| \le \frac{1}{\nu}\\
\nu^{\gamma_1 - \gamma_2}\omega(x)|x|^{-\gamma_2}, &\frac{1}{\nu} \le |x| \le \frac{1}{\mu}\\
\nu^{\gamma_1 - \gamma_2}\omega(x)|x|^{-\gamma_2}, &|x| \ge \frac{1}{\mu}.
\end{cases}
\quad = \nu^{\gamma_1}\Phi(\nu x).
\end{eqnarray*}
Also,
\begin{eqnarray*}
\mu^{\gamma_2}\Phi(\mu x)
&=& \begin{cases}
\mu^{\gamma_2 - \gamma_1}\omega(x)|x|^{-\gamma_1}, &|x| \le \frac{1}{\nu}\\
\mu^{\gamma_2 - \gamma_1}\omega(x)|x|^{-\gamma_1}, &\frac{1}{\nu} \le |x| \le \frac{1}{\mu}\\
\omega(x)|x|^{-\gamma_2}, &|x| \ge \frac{1}{\mu}.
\end{cases}
\quad = \begin{cases}
\mu^{\gamma_2 - \gamma_1}\omega(x)|x|^{-\gamma_1}, &|x| \le \frac{1}{\nu}\\
\mu^{\gamma_2 - \gamma_1}|x|^{\gamma_2 - \gamma_1}\omega(x)|x|^{-\gamma_2}, &\frac{1}{\nu} \le |x| \le \frac{1}{\mu}\\
\omega(x)|x|^{-\gamma_2}, &|x| \ge \frac{1}{\mu}.
\end{cases}\\
&\ge& \begin{cases}
\nu^{\gamma_2 - \gamma_1}\omega(x)|x|^{-\gamma_1}, &|x| \le \frac{1}{\nu}\\
\omega(x)|x|^{-\gamma_2}, &\frac{1}{\nu} \le |x| \le \frac{1}{\mu}\\
\omega(x)|x|^{-\gamma_2}, &|x| \ge \frac{1}{\mu}.
\end{cases}
\quad = \nu^{\gamma_1}\Phi(\nu x).
\end{eqnarray*}
It follows that
\begin{itemize}
\item The function $\mu \to T_{\max}(\mu^{\gamma_1} D_\mu\Phi)$ is decreasing on $(0, \infty)$, and
\begin{equation*}
\lim_{\mu \to 0}T_{\max}(\mu^{\gamma_1} D_\mu\Phi)
= T_{\max}(\omega|\cdot|^{-\gamma_1}).
\end{equation*}
\item The function $\mu \to T_{\max}(\mu^{\gamma_2} D_\mu\Phi)$ is increasing on $(0, \infty)$, and
\begin{equation*}
\lim_{\mu \to \infty}T_{\max}(\mu^{\gamma_2} D_\mu\Phi)
= T_{\max}(\omega|\cdot|^{-\gamma_2}).
\end{equation*}
\end{itemize}
Next, applying \eqref{lifespanscaleheat}, we have
\begin{equation*}
\lambda^{(\frac{1}{\alpha} - \frac{\gamma_1}{2})^{-1}}T_{\max}(\lambda \Phi)
=  T_{\max}(\mu^{\gamma_1} D_\mu\Phi ), \quad \lambda = \mu^{\gamma_1 - \frac{2}{\alpha}},
\end{equation*}
\begin{equation*}
\lambda^{(\frac{1}{\alpha} - \frac{\gamma_2}{2})^{-1}}T_{\max}(\lambda \Phi)
=  T_{\max}(\mu^{\gamma_2} D_\mu\Phi ), \quad \lambda = \mu^{\gamma_2 - \frac{2}{\alpha}}.
\end{equation*}
To show  that we reach the above exact limits we use the following.

{\it Observation.} If $\phi_k \to \phi$ and assume we are in a situation of continuous dependence,
then we know $\liminf_{k \to \infty}T_{\max}(\phi_k) \ge T_{\max}(\phi)$.
Suppose also that $\phi \le \phi_k$ for all $k$, hence $T_{\max}(\phi_k) \le T_{\max}(\phi)$ for all $k$,
so $\limsup_{k \to \infty}T_{\max}(\phi_k) \le T_{\max}(\phi)$.  Hence
$\lim_{k \to \infty}T_{\max}(\phi_k) = T_{\max}(\phi)$.

This allows us to obtain the above exact limits  for the case $\gamma_1>\gamma_2.$
This completes the proof of  (i)-(ii) if $\gamma_1>\gamma_2$.

We may also show  (iii) as follows. The function $\Phi$ verifies:
     if $\gamma_1<\gamma_2$ then $\Phi\in L^q,$ $$ {\gamma_1\over N}<{1\over q}<{\gamma_2\over N}<\min(1,{2\over N\alpha}).$$
    Hence, by Theorem \ref{ThNLH}
    $$T_{\max}(\lambda \Phi)\geq C \lambda^{-\left({1\over \alpha}-{N\over 2q}\right)^{-1}}.$$ Then
    $$\lim_{\lambda\to 0}\lambda^{\left({1\over \alpha}-{\gamma_1\over 2}\right)^{-1}}T_{\max}(\lambda \Phi)= \infty= \lim_{\lambda\to \infty}\lambda^{\left({1\over \alpha}-{\gamma_2\over 2}\right)^{-1}}T_{\max}(\lambda \Phi).$$ This completes the proof of Corollary \ref{resulphi3}.
\end{proof}
\begin{appendix}
\section{Nonlinear Hardy parabolic equations}
\label{hh}
\setcounter{equation}{0}
Our purpose in the appendix is to estimate the life-span of solutions for the nonlinear Hardy-H\'enon  parabolic equations
\begin{equation}
\label{HHNLH}
\partial_t u=\Delta u+|\cdot|^{l}|u|^\alpha u,
\end{equation}
 $u=u(t,x)\in \RR,\; t>0,\; x\in \RR^N,\; N\geq 1,\; \alpha>0,\; -\min(2,N)<l$ and
with initial value
\begin{equation}
\label{HHNLHinitial}u(0)=u_0.
\end{equation}
A mild solution of the problem \eqref{HHNLH}-\eqref{HHNLHinitial} is a solution
of the integral equation
\begin{equation}
\label{intHHNLH}
u(t)= e^{t\Delta}u_0+\displaystyle\int_0^te^{(t-\sigma)\Delta}\left(|\cdot|^{l}|u(\sigma)|^\alpha
u(\sigma)\right)d\sigma,
 \end{equation}
and it is in this form that we consider problem \eqref{HHNLH}-\eqref{HHNLHinitial}.
 
In this first part of the appendix  we consider the case $l<0,$ that is the Hardy case. The problem \eqref{intHHNLH} is well-posed in $C([0,T];L^q(\R^N))\cap C((0,T];L^r(\R^N)),\; T>0,$ for $u_0\in L^q(\R^N),$  $1<q < \infty,$ $q>q_c(l)$ or $u_0\in C_0(\RR^N),$
where \begin{equation}\label{qcritiqueHH}q_c(l)={N\alpha\over
2+l},\end{equation}
and $r>q$ satisfies
\begin{equation}
\label{supercritHH}
  \frac{1}{q(\alpha+1)} + \frac{l}{N(\alpha+1)} < \frac{1}{r} < \frac{N + l}{N(\alpha+1)}.
\end{equation} See \cite[Theorem 1.1, p. 117]{BTW} and \cite{BenSlimene}. This solution can be extended to a maximal solution defined on $[0,T_{\max}(u_0)).$ We have obtained the following.
\begin{To}[The nonlinear Hardy parabolic equations]
\label{ThHH}
Let $N\geq 1,$ $-\min(2,N)<l<0,$ $\alpha>0,$   and $q_c(l)$ be given by \eqref{qcritiqueHH}. Let $\varphi\in L^q(\R^N)$ with $1<q < \infty,$ $q>q_c(l)$ or $\varphi\in C_0(\RR^N)$ and  $r>q$ satisfies \eqref{supercritHH}. Let $u\in C\left([0,T_{\max}(\lambda \varphi)); L^q(\R^N)\right)\cap C((0,T_{\max}(\lambda \varphi));L^r(\R^N))$ be the maximal solution of \eqref{intHHNLH}  with initial data $u_0=\lambda \varphi,$ $\lambda>0.$ Then there exists a constant $C=C(N,l,\alpha,q)>0$ such that
\begin{equation}
\label{lowerHH}
T_{\rm{max}}(\lambda\varphi) { \geq} C\left(\lambda \|\varphi\|_q\right)^{-\left({2+l\over 2\alpha}-{N\over 2q}\right)^{-1}},
\end{equation}
for all $\lambda>0.$
\end{To}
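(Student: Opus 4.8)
The plan is to repeat, essentially verbatim, the strategy used for Theorem~\ref{ThNLH}: the lower bound is nothing but the statement that the contraction mapping argument underlying local well-posedness of \eqref{intHHNLH} in $L^q(\R^N)$ (as in \cite{BTW,BenSlimene}) cannot be carried out on an interval longer than $T_{\max}(\lambda\varphi)$. Concretely, fix $r>q$ satisfying \eqref{supercritHH}, set $\beta=\frac N2\big(\frac1q-\frac1r\big)$, and work in the complete metric space $Y^{q,r}_{M,T}=\{u\in C((0,T];L^r(\R^N)):\sup_{t\in(0,T]}t^\beta\|u(t)\|_r\le M\}$ with the distance $d(u,v)=\sup_{t\in(0,T]}t^\beta\|u(t)-v(t)\|_r$, and with the operator $\F_{u_0}$ from \eqref{iteratn} adapted to the nonlinearity $|\cdot|^l|u|^\alpha u$.

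The only new ingredient compared with Section~2 is the smoothing estimate for the Hardy nonlinearity. Since $l<0$ and $-\min(2,N)<l$, one has $|\cdot|^{l}\in L^{N/|l|,\infty}$, so the generalized H\"older inequality in Lorentz spaces (as in the proof of Proposition~\ref{smoothingeffectorl-orl}, or directly \cite{BTW}) gives $\big\||\cdot|^{l}|w|^\alpha w\big\|_{L^{s,\infty}}\le C\|w\|_r^{\alpha+1}$ with $\frac1s=\frac{\alpha+1}{r}+\frac{|l|}{N}$, hence by \eqref{heatsmouth} (in the Lorentz form) $\big\|e^{(t-\sigma)\Delta}\big(|\cdot|^{l}|w|^\alpha w\big)\big\|_r\le C(t-\sigma)^{-\frac{N\alpha}{2r}+\frac{l}{2}}\|w\|_r^{\alpha+1}$, together with the analogous Lipschitz bound obtained from $\big||u|^\alpha u-|v|^\alpha v\big|\le(\alpha+1)|u-v|\big(|u|^\alpha+|v|^\alpha\big)$. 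Feeding this into the estimate of $t^\beta\|\F_{u_0}u(t)\|_r$ and of $d(\F_{u_0}u,\F_{u_0}v)$ and evaluating the resulting Beta integral $\int_0^1(1-\sigma)^{-\frac{N\alpha}{2r}+\frac{l}{2}}\sigma^{-\beta(\alpha+1)}\,d\sigma$ — finite because $q>q_c(l)$ and \eqref{supercritHH} force $\frac{N\alpha}{2r}-\frac l2<1$ and $\beta(\alpha+1)<1$ — yields, exactly as in Theorem~\ref{fixpt1}, a constant $C=C(N,l,\alpha,q,r)$ such that $\F_{u_0}$ is a strict contraction on $Y^{q,r}_{M,T}$ whenever $K+CT^{\,1-\frac{N\alpha}{2q}+\frac l2}M^{\alpha+1}\le M$, where $K\ge\sup_{t\in(0,T]}t^\beta\|e^{t\Delta}u_0\|_r$; the exponent $1-\frac{N\alpha}{2q}+\frac l2$ is positive precisely because $q>q_c(l)=\frac{N\alpha}{2+l}$. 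As in the proof of Theorem~\ref{fixpt1} one then removes the $r$-dependence of $C$ by taking the maximum over $r$ in the admissible compact range.

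To conclude, use \eqref{heatsmouth} to take $K=(4\pi)^{-\beta}\lambda\|\varphi\|_q$ for $u_0=\lambda\varphi$. If $T_{\max}(\lambda\varphi)<\infty$, the fixed point argument fails on $[0,T_{\max}(\lambda\varphi)]$, so the displayed inequality must be violated for every $M>0$ at $T=T_{\max}(\lambda\varphi)$; choosing $M=2(4\pi)^{-\beta}\lambda\|\varphi\|_q$ gives $C\,T_{\max}(\lambda\varphi)^{\,1-\frac{N\alpha}{2q}+\frac l2}\big(\lambda\|\varphi\|_q\big)^{\alpha}>c>0$, which is \eqref{lowerHH} because $\dfrac{\alpha}{1-\frac{N\alpha}{2q}+\frac l2}=\Big(\dfrac{2+l}{2\alpha}-\dfrac{N}{2q}\Big)^{-1}$; if $T_{\max}(\lambda\varphi)=\infty$ the bound is trivial. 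For $\varphi\in C_0(\R^N)$ one argues identically, running the fixed point in the $C_0$ framework of \cite{BTW} and using $\|e^{t\Delta}\varphi\|_\infty\le\|\varphi\|_\infty$, which is the ``$q=\infty$'' endpoint of the above. I expect no genuine obstacle: the substantive work is the weighted Lorentz-space smoothing estimate and the bookkeeping of the $T$-exponent, both of which are already available through \cite{BTW,BenSlimene}; the delicate point is merely to verify that the constant $C$ in the fixed point inequality is independent of $T$, handled exactly as in Theorem~\ref{fixpt1}.
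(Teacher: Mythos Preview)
Your proposal is correct and follows essentially the same approach as the paper: the paper's proof simply invokes \cite[Inequalities (3.5), (3.6)]{BTW} to obtain the fixed-point condition $K+CT^{1-\frac{N\alpha}{2q}+\frac{l}{2}}M^{\alpha+1}\le M$, then negates it at $T=T_{\max}(\lambda\varphi)$ and sets $M=2\lambda K$ with $K=\|\varphi\|_q$, exactly as you do. Your sketch of the Lorentz-space smoothing estimate is precisely the content of the cited inequalities in \cite{BTW}, so you have merely unpacked what the paper quotes.
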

 \begin{proof}[Proof of Theorem \ref{ThHH}]    For  $q>q_c(l),$ let $r>q$ satisfying \eqref{supercritHH}. Let us define
$$
\beta(l)={N\over 2}\left({1\over q}-{1\over r}\right).$$
We note that $r$ depends on $l,$ hence $\beta$ also. The  well-posedness results for \eqref{HHNLH} has been obtained in \cite{BTW,BenSlimene}.
We now give the proof of \eqref{lowerHH}. Let $-\min(2,N)<l<0,\; \alpha>0,$ $\lambda>0,$ $K>0$ and $\varphi\in C_0(\mathbb R^N),$ $\|\varphi\|_\infty\leq K$ or $\varphi\in L^q,\; q>1,\; q>q_c(l)$   such that $\|\varphi\|_q\leq K.$   Let $u\in C\big([0, T_{\rm{max}}(\lambda\varphi));L^q(\RR^N) \big)\cap C\big((0,T_{\rm{max}}(\lambda\varphi));L^r(\RR^N) \big)$ with  $r>q$ satisfying \eqref{supercritHH}, be the maximal solution of \eqref{HHNLH} on $[0,T_{\rm{max}}(\lambda\varphi))$. It is proved in \cite[Inequalities (3.5), (3.6), p. 124]{BTW} that for $K,\; T,\; M>0$ such that
$$K+CT^{1-{N\alpha\over 2q}+{l\over 2}}M^{\alpha+1}\leq M$$ the solution $u$ of \eqref{HHNLH} is defined on $[0,T]$ and verifies $\max\left[\sup_{t\in (0,T]}t^{\beta(l)}\|u(t)\|_r,\sup_{[0,T]}\|u(t)\|_q\right]\leq M.$ Here $C$ is a positive constant. Then for $T_{\rm{max}}(\varphi)$ we should have
 $$K + C\Big(T_{\rm{max}}(\varphi)\Big)^{1 - \frac{N\alpha}{2q}+{l\over 2}}M^{\alpha + 1} > M,$$
 for all $M>K.$ That is it must be
$$\lambda K + C\Big(T_{\rm{max}}(\lambda\varphi)\Big)^{1 - \frac{N\alpha}{2q}+{l\over 2}}M^{\alpha + 1} > M, $$ for all $M>\lambda K.$ If we set $M=2\lambda K,$ we get
$$ 2^{\alpha+1}K^\alpha CT_{\rm{max}}(\lambda\varphi)^{1 - \frac{N\alpha}{2q}+{l\over 2}}\lambda^{\alpha} >1.$$
Then taking $K=\|\varphi\|_q,$ we get that there exists $C=C(N,\alpha,l,q)>0$ (since $ r$ itself depends on $q$) such that \eqref{lowerHH} holds. This completes the proof of the Theorem.
 \end{proof}

Using similar argument developed to prove  Theorem \ref{ThHH}, we derive the same result for the equation
\begin{equation}
\label{Hardy-Henonax}
\partial_t u=\Delta u+a(x)|u|^\alpha u,
\end{equation}
where $a(x)|\cdot|^{-l}$ is in $L^\infty(\R^N).$ In particular, we may take $a$ regular near the origin. Then we have the following.
\begin{cor}
\label{corHH}
Let $N\geq 1,$ $-\min(2,N)<l<0,$ $\alpha>0,$   and $q_c(l)$ be given by \eqref{qcritiqueHH}. Let $\varphi\in L^q(\R^N)$ with $1< q \leq \infty,$ $q>q_c(l)$ or $\varphi\in C_0(\RR^N)$ and  $r>q$ satisfies \eqref{supercritHH}. Let $u\in C\left([0,T_{\max}(\lambda \varphi)); L^q(\R^N)\right)\cap C((0,T_{\max}(\lambda \varphi));L^r(\R^N))$ be the maximal mild solution of \eqref{Hardy-Henonax} such that  $a(x)|\cdot|^{-l}\in L^\infty(\R^N)$ and with initial data $u_0=\lambda \varphi,$ $\lambda>0,$ constructed by \cite[Theorem 1.1, p. 117 ]{BTW} and \cite[p. 142]{BTW} (we replace $[0,T_{\max}(\lambda \varphi))$ by $(0,T_{\max}(\lambda \varphi))$ if $q=\infty$). Then \eqref{lowerHH} holds for all $\lambda>0.$
\end{cor}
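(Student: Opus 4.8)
The plan is to repeat verbatim the argument used for Theorem \ref{ThHH}, observing that the only property of the nonlinearity $|\cdot|^{l}|u|^\alpha u$ that was used there is the pointwise bound $\||\cdot|^{l}|u|^\alpha u\|_{L^p} \lesssim \|\,|\cdot|^{l/(\alpha+1)} u\,\|_{L^{(\alpha+1)p}}^{\alpha+1}$ and the corresponding Lipschitz estimate, both of which follow from the H\"older inequality together with the weighted smoothing estimates of \cite{BTW}. Since $a(x)|\cdot|^{-l} \in L^\infty(\R^N)$, we have the pointwise domination $|a(x)|u|^\alpha u| \le \|a|\cdot|^{-l}\|_\infty\,|x|^{l}|u|^{\alpha+1}$, so every estimate in \cite[Inequalities (3.5), (3.6), p.~124]{BTW} remains valid, with the constant $C$ replaced by $C\,(1+\|a|\cdot|^{-l}\|_\infty)$. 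In particular, for $q>q_c(l)$, $1<q\le\infty$, and $r>q$ satisfying \eqref{supercritHH}, with $\beta(l)={N\over 2}({1\over q}-{1\over r})$, the same contraction mapping argument produces: if $K,T,M>0$ satisfy
$$K+CT^{1-{N\alpha\over 2q}+{l\over 2}}M^{\alpha+1}\le M,$$
then the solution of \eqref{Hardy-Henonax} with $\|u_0\|_q\le K$ (or $\|u_0\|_\infty\le K$ when $q=\infty$) exists on $[0,T]$ and satisfies $\max[\sup_{(0,T]}t^{\beta(l)}\|u(t)\|_r,\ \sup_{[0,T]}\|u(t)\|_q]\le M$.

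From here the life-span bound is obtained exactly as in the proof of Theorem \ref{ThHH}. If $T_{\max}(\lambda\varphi)<\infty$, then the fixed point argument cannot be carried out on $[0,T_{\max}(\lambda\varphi)]$, so for all $M>\lambda\|\varphi\|_q$ one must have
$$\lambda\|\varphi\|_q + C\,T_{\max}(\lambda\varphi)^{1-{N\alpha\over 2q}+{l\over 2}}M^{\alpha+1}>M.$$
Choosing $M=2\lambda\|\varphi\|_q$ gives
$$2^{\alpha+1}\,C\,T_{\max}(\lambda\varphi)^{1-{N\alpha\over 2q}+{l\over 2}}(\lambda\|\varphi\|_q)^{\alpha}>1,$$
which, since $1-{N\alpha\over 2q}+{l\over 2}=\alpha(\frac{2+l}{2\alpha}-\frac{N}{2q})$, rearranges to \eqref{lowerHH} with a constant $C=C(N,l,\alpha,q)>0$ (recall that the admissible $r$, and hence $\beta(l)$, can be chosen depending only on $N,l,\alpha,q$). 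The case $q=\infty$ is handled identically, using $\|e^{t\Delta}\varphi\|_\infty\le\|\varphi\|_\infty$ and $r=\infty$.

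The only point requiring care --- and the main (mild) obstacle --- is the bookkeeping for the case $q=\infty$, which is included in the statement of Corollary \ref{corHH} but was excluded in Theorem \ref{ThHH}: one must check that \cite[Theorem 1.1]{BTW} together with the remark on \cite[p.~142]{BTW} indeed furnishes the local solution for $\varphi\in C_0(\R^N)$ (equivalently $q=\infty$) and that the inequalities (3.5)--(3.6) of \cite{BTW} degenerate correctly (with $\beta(l)=0$ being replaced by the relevant power of $t$ coming from the weight $|\cdot|^l$) so that the same sufficient condition $K+CT^{1-{N\alpha\over 2q}+{l\over 2}}M^{\alpha+1}\le M$ persists. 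Once this is granted, no new computation is needed, and the proof concludes as above.
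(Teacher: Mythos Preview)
Your proposal is correct and takes essentially the same approach as the paper, which merely states that the corollary follows ``using similar argument developed to prove Theorem~\ref{ThHH}'' without giving further details. You have supplied exactly that argument: the pointwise bound $|a(x)||u|^\alpha|u|\le \|a|\cdot|^{-l}\|_\infty\,|x|^l|u|^{\alpha+1}$ feeds into the contraction estimates of \cite{BTW} unchanged up to a multiplicative constant, and then the life-span lower bound follows by negating the sufficient condition at $T=T_{\max}(\lambda\varphi)$ with $M=2\lambda\|\varphi\|_q$, precisely as in Theorem~\ref{ThHH}. Your flagging of the $q=\infty$ case as the one point requiring care (since Theorem~\ref{ThHH} was stated for $q<\infty$, while the corollary allows $q\le\infty$) is appropriate, and your reference to \cite[p.~142]{BTW} for the requisite local theory in $C_0(\R^N)$ is exactly what the paper itself invokes.
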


 Corollary \ref{corHH} includes many known results. We will compare our results with those of \cite{P}. For this we restrict ourselves to the case where $a$ is positive and H\"older continuous as assumed in \cite{P}. Also,  it is supposed in \cite{P} that $\varphi\in C_b(\R^N),$ $\varphi\geq 0.$ For $\lambda>0$ small,   two classes of initial data are considered in  \cite{P}.

The first class is  for $\varphi$ dominated by a Gaussian. It is shown in \cite[Theorem 1 (i), p. 33]{P} that  if $0<\alpha<(2+l)/N,$ then
$$T_{\rm{max}}(\lambda\varphi)\geq C \lambda^{-\left({1\over \alpha}-{N\over 2}\right)^{-1}},\; \mbox{ as } \lambda \to 0.$$
For this class $\varphi\in L^q(\R^N)$ for all $q\geq 1.$ Since $a\in L^\infty(\R^N)$ because $l<0$ and $a$ is regular, then we may use Theorem \ref{ThNLH} (which is valid for such $a$ as noted before) and apply  \eqref{lowerNLH} with $q=1$ since $q_c<1,$ and then recover the result of \cite{P}.

 The second class considered in \cite{P} is  for $\varphi$ such that there exist constants $c_1,\; c_2>0$ and $c_1\leq \varphi\leq c_2.$ The estimates, as stated in  \cite[Theorem 2 (i)-(a), (ii)-(a), (iii)-(a),  pp. 33-34]{P}, reads
$$T_{\rm{max}}(\lambda\varphi)\geq C \lambda^{-{2\alpha\over 2+l}},\; \mbox{ as } \lambda \to 0.$$
Here $\varphi \in L^\infty(\R^N),$ the previous estimate is the same as \eqref{lowerHH} with $q=\infty.$ We then recover the results of \cite{P}.

\section{Nonlinear H\'enon parabolic equations}
\setcounter{equation}{0}
In this part of the appendix we study a nonlinear heat equation with a spatially growing variable coefficient. We consider the equation \eqref{HHNLH} for $l>0$ and with the initial condition \eqref{HHNLHinitial}. Local well-posedness is known in $C(\R^N)\cap L^\infty\cap L^\infty_{l/\alpha},$  (see \cite{W,Majdoub}). Recently local well-posedness is established   in $L^q_{s}$ for some $q\geq \alpha+1$ and $s$ satisfying some conditions (see \cite{CIT}). Not much is known about this equation in comparison with the standard nonlinear heat equation, that is the case $l=0$. In particular, the life-span is only known for small lambda and rapidly decaying positive initial data, see \cite{P}.  Note that the blowup may hold at the origin as it may also not hold at the origin. See \cite{GSouplet,GLS1,GLS2,GS,FT}. To show lower-bound estimates of the life-span, we establish local well-posedness results. Using Proposition \ref{smoothingeffectorl-lebeg}, we prove local existence for \eqref{HHNLH} in $ L^q_\gamma$  for  $$\gamma={l\over \alpha}<N$$
and  $q$ is such that
\begin{equation}
\label{qvalues13}
 q>q_c={N\alpha\over 2},\quad  \quad {N\over N-\gamma} < q\leq\infty.
\end{equation}
This value of $\gamma$ is inspired by \cite{W}.

We note that for $0<\gamma=l/\alpha<N,$ $$q_c>{N\over N-\gamma}\Leftrightarrow q_F:={N\alpha\over 2+l}>1,$$
where $q_F$ is the Fujita exponent for the equation \eqref{HHNLH} (see \cite{Qi}).
We have the following local well-posedness result.
\begin{To}[Local well-posedness in $L^q_\gamma$]
\label{localNLHGC} Let $N\geq 1$ be an integer, $\alpha>0$ and $l>0$ be
such that
\begin{equation}
\label{e03b} \gamma:={l\over \alpha}<N. \end{equation} Let $q_c$ be given by
\eqref{qvalues13}. Then we have the following.
\begin{itemize}
\item[(i)] If  $q$ is such that
$$q >\frac{N(\alpha+1)}{N-\gamma}, \quad q> q_c\quad \mbox{ and } \quad q\leq\infty,$$
then equation \eqref{intHHNLH} is locally well-posed in $L^q_\gamma(\RR^N).$ More precisely, given $u_0\in L^q_\gamma(\RR^N)$, then there exist $T>0$ and a unique solution of \eqref{intHHNLH} $u\in C\big([0, T];L^q_\gamma(\RR^N) \big)$  if $q<\infty$ and $u\in C\big((0, T];L^\infty_\gamma(\RR^N) \big),$   {  $\lim_{t\to 0}\|u(t)-e^{t \Delta}u_0\|_{L^\infty_\gamma(\R^N)}=0$ if $q=\infty$}. Moreover,
 $u$ can be extended to a maximal interval $[0, T_{\max})$ such that
either $T_{\max}= \infty$ or  $T_{\max}< \infty$ and
$\displaystyle\lim_{t\to  T_{\max}} \|u(t)\|_{L^q_\gamma} = \infty.$
\item[(ii)] Assume that $q>q_c$ with ${N\over N-\gamma} < q\leq\infty$. It follows that equation \eqref{intHHNLH} is locally well-posed in $L^q_\gamma(\RR^N)$ as in part (i) except that uniqueness is guaranteed only among  functions $u\in C\big([0, T];L^q_\gamma(\RR^N) \big)$ which also verify
 $t^{{N\over 2}({1\over q}-{1\over r})}\|u(t)\|_{L^r_\gamma},$  is bounded on $(0,T]$, where $r$ is given below, (we replace $[0,T]$ by $(0,T]$ if $q=\infty$  {  and  $u$ satisfies  $\lim_{t\to 0}\|u(t)-e^{t \Delta}u_0\|_{L^\infty_\gamma(\R^N)}=0$}).  Moreover,
 $u$ can be extended to a maximal interval $[0, T_{\max})$ such that
either $T_{\max}= \infty$ or  $T_{\max}< \infty$ and
$\displaystyle\lim_{t\to T_{\max}}\|u(t)\|_{L^q_\gamma} = \infty.$  Furthermore,
\begin{equation}
\label{VitesseInf3} \|u(t)\|_{L^q_\gamma}\geq C\left(T_{\max}-t\right)^{{N\over
2q}-{1\over \alpha}},\; \forall\; t\in [0, T_{\max}),
\end{equation}
where $C$ is a positive constant.
\end{itemize}
\end{To}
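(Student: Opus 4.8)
The plan is to mimic, almost line for line, the proof of Theorem~\ref{th3}: set up a contraction mapping argument for the integral equation \eqref{intHHNLH} in $L^q_\gamma$ (for part (i)) or in the intersection $C([0,T];L^q_\gamma)\cap C((0,T];L^r_\gamma)$ for a suitable auxiliary exponent $r$ (for part (ii)), using the weighted heat-kernel smoothing of Proposition~\ref{smoothingeffectorl-lebeg}. The one genuinely new observation --- and the reason the critical exponent is the \emph{unshifted} $q_c=N\alpha/2$ --- is that with $\gamma=l/\alpha$ one has $|x|^\gamma\big(|x|^l|u|^\alpha u\big)=|x|^{(\alpha+1)\gamma}|u|^\alpha u$, so by H\"older's inequality exactly as in \eqref{ineqwithHolder} (with $\nu=\gamma$),
\[
\big\||x|^l\big(|u|^\alpha u-|v|^\alpha v\big)\big\|_{L^{p/(\alpha+1)}_\gamma}=\big\||x|^{(\alpha+1)\gamma}\big(|u|^\alpha u-|v|^\alpha v\big)\big\|_{p/(\alpha+1)}\le C\big(\|u\|_{L^p_\gamma}^\alpha+\|v\|_{L^p_\gamma}^\alpha\big)\|u-v\|_{L^p_\gamma};
\]
in other words the nonlinearity sends $L^p_\gamma$ into $L^{p/(\alpha+1)}_\gamma$ with the \emph{same} weight exponent, so I only ever need $e^{t\Delta}$ mapping one weighted space $L^{\cdot}_\gamma$ into another carrying the identical weight $\gamma$ --- i.e. the $\gamma=\mu$ instance of Proposition~\ref{smoothingeffectorl-lebeg} (equivalently \cite[Lemma 2.1]{CIT} together with Remark~\ref{possibleequalityinthemiddle}).

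For part (i) I would note first that $q>\frac{N(\alpha+1)}{N-\gamma}>\alpha+1$, so $q/(\alpha+1)>1$ and $L^{q/(\alpha+1)}_\gamma$ is well defined; then, with $K_t(u)=e^{t\Delta}(|x|^l|u|^\alpha u)$, combining the displayed H\"older estimate (case $p=q$) with Proposition~\ref{smoothingeffectorl-lebeg} applied to $e^{t\Delta}\colon L^{q/(\alpha+1)}_\gamma\to L^q_\gamma$ (whose hypotheses reduce here to $\gamma<N$ and $\frac{\gamma}{N}+\frac{\alpha+1}{q}<1$, i.e. $q>\frac{N(\alpha+1)}{N-\gamma}$) gives
\[
\|K_t(u)-K_t(v)\|_{L^q_\gamma}\le C\,t^{-\frac{N\alpha}{2q}}\big(\|u\|_{L^q_\gamma}^\alpha+\|v\|_{L^q_\gamma}^\alpha\big)\|u-v\|_{L^q_\gamma},
\]
with $t^{-N\alpha/(2q)}\in L^1_{\rm loc}(0,\infty)$ because $q>q_c$. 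Since $K_t(0)=0$ and $e^{s\Delta}K_t=K_{t+s}$, the abstract well-posedness result \cite[Theorem 1, p.~279]{W2} applies verbatim as in the proof of Theorem~\ref{th3}(i), including the $q=\infty$ case, where one uses the $t$-independent bound $e^{t\Delta}\colon L^\infty_\gamma\to L^\infty_\gamma$ of Remark~\ref{possibleequalityinthemiddle} and the formulation $\lim_{t\to 0}\|u(t)-e^{t\Delta}u_0\|_{L^\infty_\gamma}=0$.

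For part (ii) I would fix an auxiliary exponent $r$ with $\max\!\big(q,\tfrac{N(\alpha+1)}{N-\gamma}\big)<r<(\alpha+1)q$ (any $r>\tfrac{N(\alpha+1)}{N-\gamma}$ if $q=\infty$), which exists precisely because $q>\tfrac{N}{N-\gamma}$, chosen close enough to $(\alpha+1)q$ that the time singularities below are integrable. Setting $\beta=\tfrac N2(\tfrac1q-\tfrac1r)$, I would run the contraction for $\F_{u_0}$ (see \eqref{iteratn}) in $Y^{q,\gamma}_{M,T}=\{u\in C([0,T];L^q_\gamma)\cap C((0,T];L^r_\gamma):\ \sup_{[0,T]}\|u\|_{L^q_\gamma}\le M,\ \sup_{(0,T]}t^\beta\|u\|_{L^r_\gamma}\le M\}$, estimating the Duhamel term in $L^r_\gamma$ (via $e^{(t-\sigma)\Delta}\colon L^{r/(\alpha+1)}_\gamma\to L^r_\gamma$) and in $L^q_\gamma$ (via $e^{(t-\sigma)\Delta}\colon L^{r/(\alpha+1)}_\gamma\to L^q_\gamma$), and the linear term via $e^{t\Delta}\colon L^q_\gamma\to L^r_\gamma$ --- all equal-weight estimates whose exponent conditions reduce to $q<r<(\alpha+1)q$, $r>\tfrac{N(\alpha+1)}{N-\gamma}$, $q>\tfrac{N}{N-\gamma}$. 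As in Theorem~\ref{th3}(ii) the overall power of $t$ collapses to $1-\tfrac{N\alpha}{2q}>0$ (the $\gamma$-dependent term present there disappears because $\mu=\gamma$) and the Beta-function integrals converge since $\beta(\alpha+1)<\tfrac{N\alpha}{2q}<1$ and $r$ is close to $(\alpha+1)q$; hence $\F_{u_0}$ is a strict contraction on $Y^{q,\gamma}_{M,T}$ whenever $K+CM^{\alpha+1}T^{1-\frac{N\alpha}{2q}}\le M$ with $K\ge C\|u_0\|_{L^q_\gamma}$. Extension to a maximal solution and the blow-up alternative then follow as in \cite{BTW}, and the rate \eqref{VitesseInf3} follows by the usual argument: choosing $M=2\|u(t_0)\|_{L^q_\gamma}$ and $K=\|u(t_0)\|_{L^q_\gamma}$ in the contraction condition applied from time $t_0$ forces $T_{\max}-t_0\ge c\,\|u(t_0)\|_{L^q_\gamma}^{-\alpha/(1-N\alpha/(2q))}$, and rearranging gives $\|u(t)\|_{L^q_\gamma}\ge C(T_{\max}-t)^{\frac{N}{2q}-\frac1\alpha}$ (negative exponent since $q>q_c$).

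The hard part is not any single estimate --- conceptually everything is downstream of Proposition~\ref{smoothingeffectorl-lebeg} and the template of Theorem~\ref{th3}. The real work is bookkeeping of exponents: verifying that the admissible window $\max(q,\tfrac{N(\alpha+1)}{N-\gamma})<r<(\alpha+1)q$ for the auxiliary exponent in part (ii) is nonempty \emph{exactly} under the stated hypothesis $q>\tfrac N{N-\gamma}$; that $r$ can simultaneously be placed in this window so that both the $(t-\sigma)^{-a}$ singularity in the $L^q_\gamma$-Duhamel estimate and the $\sigma^{-\beta(\alpha+1)}$ singularity are integrable; and that the endpoint $q=\infty$ is covered by the (known) equal-weight bound $e^{t\Delta}\colon L^\infty_\gamma\to L^\infty_\gamma$.
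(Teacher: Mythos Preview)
Your proposal is correct and follows essentially the same approach as the paper's proof: the key observation that $|x|^\gamma(|x|^l|u|^\alpha u)=|x|^{(\alpha+1)\gamma}|u|^\alpha u$ when $\gamma=l/\alpha$, the use of Proposition~\ref{smoothingeffectorl-lebeg} with equal weights together with \cite[Lemma~2.1]{CIT} (or Remark~\ref{possibleequalityinthemiddle}) for the $q_1=q_2$ case, the appeal to \cite[Theorem~1]{W2} in part~(i), and the auxiliary-space contraction in $C([0,T];L^q_\gamma)\cap C((0,T];L^r_\gamma)$ with $\frac{N(\alpha+1)}{N-\gamma}<r<(\alpha+1)q$ in part~(ii) are all exactly what the paper does. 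The only (harmless) over-caution is your phrase ``chosen close enough to $(\alpha+1)q$'': in fact every $r$ in the window $\max\big(q,\tfrac{N(\alpha+1)}{N-\gamma}\big)<r<(\alpha+1)q$ already makes all the Beta-integral singularities integrable, since $\beta(\alpha+1)<\tfrac{N\alpha}{2q}<1$ and $\tfrac{N}{2}(\tfrac{\alpha+1}{r}-\tfrac1q)<\tfrac{N\alpha}{2r}<\tfrac{N\alpha}{2q}<1$ hold throughout.
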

\begin{rem}
\label{ComparisonwithCIT}
{\rm Unlike in \cite{CIT}, here we dot not impose  $q\geq \alpha+1$.   Also, our strategy is different from that of \cite{CIT}. In fact, we use a method of \cite{W2,BTW}. Precisely, to prove the local well-posedness in $L^q_\gamma,$ we use an auxiliary space  $L^r_\gamma$
 for some $r$ as an auxiliary parameter, while in \cite{CIT}  the weight $\gamma=l/\alpha$ is replaced by a real number $s$  that is considered as an auxiliary parameter.}
\end{rem}
\begin{proof}[Proof of Theorem \ref{localNLHGC}]
(i)  Let us define the maps
\begin{equation*}
    K_{t,l}(u)=e^{t \Delta}\left(|\cdot|^l|u|^\alpha u\right),\; t>0.
\end{equation*}
 By the H\"older inequality and Proposition \ref{smoothingeffectorl-lebeg} with $\gamma={l\over \alpha}=\mu,\; \, q_1=q/(\alpha+1),\; q_2=q$, for each $t>0$ and if $q>\frac{N (\alpha+1)}{N-\gamma},$ $q\leq \infty,$ $K_{t,l}: L^q_\gamma\longrightarrow  L^q_\gamma$ is locally Lipschitz with
\begin{eqnarray*}
 \|K_{t,l}(u)-K_{t,l}(v)\|_{L^q_\gamma} &\leq & Ct^{-{N\over 2}({\alpha+1\over q}-{1\over q})}\left\||\cdot|^l(|u|^\alpha u-|v|^\alpha v)\right\|_{L^{q\over \alpha+1}_{\gamma}} \\&= & Ct^{-{N\alpha\over 2q}}\left\||\cdot|^{\alpha\gamma}(|u|^\alpha u-|v|^\alpha v)\right\|_{L^{q\over \alpha+1}_\gamma}
 \\ &\leq & C t^{-\frac{N \alpha}{2 q}} (\|u\|_{L^q_\gamma}^\alpha+\|v\|_{L^q_\gamma}^\alpha)\|u-v\|_{L^q_\gamma} \\
 &\leq & 2 CM^\alpha t^{-\frac{N \alpha}{2 q}} \|u-v\|_{L^q_\gamma},
\end{eqnarray*}
for $\|u\|_{L^q_\gamma}\leq M \mbox{ and } \;\|v\|_{L^q_\gamma}\leq M.$
We have also,  that $t^{-\frac{N \alpha}{2 q}}\in L^1_{loc}(0,\infty),$ since $q>q_c=\frac{N \alpha}{2}.$  Obviously $t\mapsto \|K_{t,l}(0)\|_\infty=0
\in L^1_{loc}(0,\infty),$ also $e^{s \Delta}K_{t,l}=K_{t+s,l}$ for $s,\;t>0.$ Then the proof follows by \cite[Theorem 1, p. 279]{W2}.

(ii) We begin with the observation that, since $q>{N\over N-\gamma},$ there exists $r > q$ satisfying
\begin{equation}
\label{subcrit1}
 {1\over q(\alpha+1)}<\frac{1}{r} < \frac{N - \gamma}{N(\alpha+1)}.
\end{equation}
We then observe that, since $q>q_c,$ we have
$$\frac{1}{q} - \frac{2}{N(\alpha+1)}<\frac{1}{q(\alpha+1)}.$$
Hence any $r>q$ satisfying \eqref{subcrit1} verifies
$$\frac{1}{q} - \frac{2}{N(\alpha+1)}<{1\over r}.$$
This last  inequality
 implies that
 $$
 \beta(\alpha + 1) < 1,
 $$
 where
 \begin{equation}
 \label{beta1}
  \beta = \frac{N}{2q} - \frac{N}{2r}.
    \end{equation}
      We choose $K>0,\; T>0,\; M>0$ such that
\begin{equation}
\label{equKMT5}
K+\mathcal{C}M^{\alpha+1}T^{1-\frac{N\alpha}{2q}}\leq M,
\end{equation}
where $\mathcal{C}$ is a positive constant given below.  We will show that  there exists a
unique solution $u$ of  (\ref{intHHNLH}) such that $u\in C\left([0,T], L^q_\gamma(\R^N)\right)$ and $u \in C\left((0,T], L^r_\gamma(\R^N)\right)$  with $$\|u\|=\max\left[\sup_{t\in[0, T]}\|u(t)\|_{L^q_\gamma},\;\sup_{t\in(0, T]}t^{\beta}\|u(t)\|_{L^r_\gamma}\right]\leq M.$$
The proof is based on a contraction mapping argument in the set
   $$ Y_{M,T}^{q,\gamma} = \{u \in C\left([0,T],L^q_\gamma(\R^N)\right)\cap C((0,T], L^r_\gamma) :
\|u\|\leq M\}.$$  Endowed with the metric $d(u,v)=\|u-v\|,$  $Y_{M,T}^{q,\gamma}$ is a nonempty complete metric space.
We note that for  $u_0\in L^q_\gamma$ we have
$$ \|{\rm e}^{t\Delta}u_0\|_{L^r_\gamma} \le Ct^{-{N\over 2}({1\over q}-{1\over r})}\|u_0\|_{L^q_\gamma}=Ct^{-\beta}\|u_0\|_{L^q_\gamma}.$$
The condition on initial data $\|u_0\|_{L^q_\gamma}\leq K$ implies that $t^\beta \|{\rm e}^{t\Delta}u_0\|_{L^r_\gamma}\leq K.$
We will show that
\begin{equation}
\label{iteratnHH}
\F_{u_0} u(t) = {\rm e}^{t\Delta}u_0 + \int_0^t {\rm
e}^{(t-\sigma)\Delta} \big[|\cdot|^l|u(\sigma)|^\alpha u(\sigma)\big]
d\sigma.
\end{equation}
 is a strict contraction on $Y_{M,T}^{q,\gamma}$.

Using Proposition \ref{smoothingeffectorl-lebeg}, that is the boundedness of the map $e^{t \Delta}: L^{q}_{\gamma}\rightarrow L^r_\gamma,$ for the first term and the boundedness of the map $e^{t \Delta}: L^{r\over \alpha+1}_{\gamma}\rightarrow L^r_\gamma,$  for the second term, we have
\begin{eqnarray*}
t^{\beta}\|\mathcal{F}_\varphi(u)(t)-\mathcal{F}_\psi(v)(t)\|_{L^r_\gamma}
  & \leq & t^{\beta}\|e^{t \Delta}(\varphi-\psi)\|_{L^r_\gamma}+
 \displaystyle t^{\beta} \int_0^t\|e^{(t-s) \Delta}[|.|^{l}\big(|u|^\alpha
 u(s)-|v|^\alpha v(s)\big)]\|_{L^{r}_\gamma} ds\\
 &&    \hspace{-2cm}  \leq \|\varphi-\psi\|_{L^q_\gamma}+
   C \displaystyle t^{\beta}\int_0^t (t-s)^{-\frac{N}{2}(\frac{\alpha+1}{r}-\frac{1}{r})} \left\| |.|^{\gamma\alpha}\big(|u|^ \alpha
 u(s)-|v|^\alpha v(s)\big)\right\|_{L^{{r\over\alpha+1}}_\gamma} ds \\
  &&    \hspace{-2cm}  \leq    \|\varphi-\psi\|_{L^q_\gamma}+\left(2 (\alpha+1) C  M^{\alpha} \displaystyle t^{\beta}\int_0^t \left(t-s\right)^{-\frac{N \alpha}{2 r}} s^{-\beta(\alpha+1)} ds\right) d(u,v)\\
  &&    \hspace{-2cm}  \leq
    \|\varphi-\psi\|_{L^q_\gamma}+ \left(2 (\alpha+1) C  M^{\alpha} t^{1-\frac{N \alpha}{2 q}} \displaystyle \int_0^1 \left(1-\sigma\right)^{-\frac{N \alpha}{2 r}} \sigma^{-\beta(\alpha+1)} d\sigma\right) d(u,v)\\
  &&    \hspace{-2cm}  \leq \|\varphi-\psi\|_{L^q_\gamma}+ C_1 M^\alpha T^{1-\frac{N \alpha}{2 q}} d(u,v),
 \end{eqnarray*}
 where $C_1=2 (\alpha+1) C  \displaystyle \int_0^1 \left(1-\sigma\right)^{-\frac{N \alpha}{2 r}} \sigma^{-\beta(\alpha+1)} d\sigma<\infty.$

Using \cite[Lemma 2.1]{CIT} that is the boundedness of the map $e^{t \Delta}: L^{q}_{\gamma}\rightarrow L^q_\gamma,$ for the first term and Proposition \ref{smoothingeffectorl-lebeg}, the boundedness of the map $e^{t \Delta}: L^{r\over \alpha+1}_{\gamma}\rightarrow L^q_\gamma,$  for the second term, we have
 \begin{eqnarray*}
    \|\mathcal{F}_\varphi(u)(t)-\mathcal{F}_\psi(v)(t)\|_{L^q_\gamma}
   &\leq &
      \|e^{t \Delta}(\varphi-\psi)\|_{L^q_\gamma}+
 \displaystyle \int_0^t\|e^{(t-s) \Delta}[|.|^{l}\big(|u|^\alpha
 u(s)-|v|^\alpha v(s)\big)]\|_{L^{q}_\gamma} ds\\
 &&    \hspace{-2cm}  \leq    \|\varphi-\psi\|_{L^q_\gamma}+
   C \displaystyle \int_0^t (t-s)^{-\frac{N}{2}(\frac{\alpha+1}{r}-\frac{1}{q})} \left\| |.|^{\gamma\alpha}\big(|u|^ \alpha
 u(s)-|v|^\alpha v(s)\big)\right\|_{L^{{r\over \alpha+1}}_\gamma} ds\\
  &&    \hspace{-2cm}  \leq   \|\varphi-\psi\|_{L^q_\gamma}+\left(2 (\alpha+1) C  M^{\alpha} \displaystyle \int_0^t \left(t-s\right)^{-\frac{N}{2}(\frac{\alpha+1}{r}-\frac{1}{q})} s^{-\beta(\alpha+1)} ds\right) d(u,v) \\
   &&    \hspace{-2cm}  \leq  \|\varphi-\psi\|_{q}+ \left(2 (\alpha+1) C  M^{\alpha} t^{1-\frac{N \alpha}{2 q}} \displaystyle \int_0^1 \left(1-\sigma\right)^{-\frac{N}{2}(\frac{\alpha+1}{r}-\frac{1}{q})} \sigma^{-\beta(\alpha+1)} d\sigma\right) d(u,v) \\
  &&    \hspace{-2cm}  \leq \|\varphi-\psi\|_{L^q_\gamma}+ C_2 M^\alpha T^{1-\frac{N \alpha}{2 q}} d(u,v).
 \end{eqnarray*}
 where $C_2=2 (\alpha+1) C  \displaystyle \int_0^1 \left(1-\sigma\right)^{-\frac{N}{2}(\frac{\alpha+1}{r}-\frac{1}{q})} \sigma^{-\beta(\alpha+1)} d\sigma<\infty.$

From the above estimates, it follows  that
 \begin{equation}\label{eq5tris}d(\mathcal F_\varphi(u), \mathcal F_\psi(v))\leq   \|\varphi-\psi\|_{L^q_\gamma}+ \mathcal{C} M^\alpha T^{1-\frac{N \alpha}{2 q}} d(u,v), \end{equation} where $\mathcal{C}=\max(C_1,C_2).$ The rest of the proof follows similarly as that of Theorem \ref{th3} and as in \cite{BTW}. This completes the proof.
\end{proof}

Theorem \ref{localNLHGC} allows us to obtain the following.
\begin{cor}[H\'enon parabolic equations]
\label{lowerLql} Let $N\geq 1,$ $\alpha>0,$ $0<l<N\alpha.$ If $\varphi \in L^q_\gamma(\R^N)$, where $$\gamma=l/\alpha<N,\; q>{N\alpha\over 2}\quad \mbox{ and } \quad {N\over N-\gamma} < q\leq \infty,$$ then the life-span of \eqref{intHHNLH} with initial data $\lambda\varphi$ satisfies
\begin{equation}
\label{bulambda3}
T_{\max}(\lambda\varphi)  { \geq}C(\lambda\|\varphi\|_{L^q_{{\gamma}}})^{-(\frac{1}{\alpha} - \frac{N}{2q})^{-1}},
\end{equation}
for all $\lambda > 0$, where $C = C(\alpha,q,l,N).$
\end{cor}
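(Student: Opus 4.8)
The plan is to mimic exactly the argument used to prove Theorem~\ref{ThHH} (and before that Theorems~\ref{ThNLH} and \ref{lowerLqgamma}): extract from the contraction mapping fixed-point construction a sufficient inequality linking $T$, $M$, and the size of the initial datum, observe that this inequality must fail at $T=T_{\max}$, and then optimize over $M$. The only new ingredient needed is that the local existence for \eqref{intHHNLH} in $L^q_\gamma(\R^N)$ with $\gamma = l/\alpha$ is precisely Theorem~\ref{localNLHGC}, whose contraction argument (part (ii)) already yields a condition of the form \eqref{equKMT5}, namely: if $K>0$, $T>0$, $M>0$ satisfy
\begin{equation*}
K + \mathcal{C} M^{\alpha+1} T^{1-\frac{N\alpha}{2q}} \le M,
\end{equation*}
and if $\|u_0\|_{L^q_\gamma} \le K$, then the solution exists on $[0,T]$ with $\|u\| \le M$. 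Note the exponent on $T$ is $1-\frac{N\alpha}{2q}$, which is positive exactly because $q > q_c = N\alpha/2$; this is the analogue of the exponent $1-\frac{N\alpha}{2q}-\frac{\gamma\alpha}{2}$ in Theorem~\ref{th3}, and here there is no $\gamma$ correction because the weight $l/\alpha$ is exactly tuned to the scaling of the Hénon nonlinearity, so that $\beta(\alpha+1) < 1$ with $\beta$ as in \eqref{beta1}.

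First I would set $u_0 = \lambda\varphi$ with $\varphi \in L^q_\gamma(\R^N)$ and take $K = \|u_0\|_{L^q_\gamma} = \lambda\|\varphi\|_{L^q_\gamma}$. If $T_{\max}(\lambda\varphi) < \infty$, then the fixed-point construction cannot be carried out on the closed interval $[0, T_{\max}(\lambda\varphi)]$, so \eqref{equKMT5} must fail for $T = T_{\max}(\lambda\varphi)$ and every $M > K$; that is,
\begin{equation*}
\lambda\|\varphi\|_{L^q_\gamma} + \mathcal{C}\, T_{\max}(\lambda\varphi)^{1-\frac{N\alpha}{2q}} M^{\alpha+1} > M
\end{equation*}
for all $M > \lambda\|\varphi\|_{L^q_\gamma}$. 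Choosing $M = 2\lambda\|\varphi\|_{L^q_\gamma}$ and rearranging gives
\begin{equation*}
\mathcal{C}\, 2^{\alpha+1} T_{\max}(\lambda\varphi)^{1-\frac{N\alpha}{2q}} \big(\lambda\|\varphi\|_{L^q_\gamma}\big)^{\alpha} > 1,
\end{equation*}
which upon solving for $T_{\max}(\lambda\varphi)$ yields \eqref{bulambda3} with $C = C(\alpha,q,l,N)>0$, using that $\big(1-\frac{N\alpha}{2q}\big)^{-1} = \big(\frac{1}{\alpha}-\frac{N}{2q}\big)^{-1}\cdot\frac{1}{\alpha}$ — more precisely $\frac{\alpha}{1-N\alpha/(2q)} = \big(\frac{1}{\alpha}-\frac{N}{2q}\big)^{-1}$, matching the stated exponent. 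When $q=\infty$ the same argument applies with the auxiliary space $L^r_\gamma$, $r$ finite, as in part (ii) of Theorem~\ref{localNLHGC}, and one replaces $[0,T]$ by $(0,T]$ throughout.

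The argument is essentially routine once Theorem~\ref{localNLHGC} is in hand, so the main ``obstacle'' is purely bookkeeping: one must make sure that the constant $\mathcal{C}$ in \eqref{equKMT5} is genuinely independent of the initial datum and depends only on $N,\alpha,l,q$ (the auxiliary exponent $r$ being itself a function of $q$, as is already noted in the proof of Theorem~\ref{ThHH}), and that the exponent $1-\frac{N\alpha}{2q}$ is strictly positive, which is exactly the hypothesis $q > q_c = N\alpha/2$. One should also double-check that the hypotheses of Corollary~\ref{lowerLql} — $0 < l < N\alpha$, $\gamma = l/\alpha < N$, $q > N\alpha/2$, $\frac{N}{N-\gamma} < q \le \infty$ — are precisely those required by Theorem~\ref{localNLHGC}(ii); note $\gamma<N \Leftrightarrow l < N\alpha$, so the condition $0<l<N\alpha$ is just $0<\gamma<N$, and everything is consistent. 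Hence the proof reduces to citing Theorem~\ref{localNLHGC} and performing the above one-line optimization.

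\begin{proof}[Proof of Corollary \ref{lowerLql}]
Let $\gamma = l/\alpha$, so that $0<\gamma<N$ since $0<l<N\alpha$, and the hypotheses on $q$ are exactly those of Theorem~\ref{localNLHGC}(ii). Consider $u_0 = \lambda\varphi$ with $\lambda>0$ and $\varphi\in L^q_\gamma(\R^N)$. If $T_{\max}(\lambda\varphi) = \infty$ there is nothing to prove, so assume $T_{\max}(\lambda\varphi)<\infty$. By the contraction mapping argument in the proof of Theorem~\ref{localNLHGC}(ii), there is a constant $\mathcal{C} = \mathcal{C}(N,\alpha,l,q)>0$ such that whenever $K,T,M>0$ satisfy
\begin{equation*}
K + \mathcal{C}\, M^{\alpha+1} T^{1-\frac{N\alpha}{2q}} \le M
\end{equation*}
and $\|u_0\|_{L^q_\gamma}\le K$, the solution of \eqref{intHHNLH} exists on $[0,T]$ (on $(0,T]$ if $q=\infty$) with $\sup_{t}\|u(t)\|_{L^q_\gamma}\le M$. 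Since this construction cannot be performed on $[0,T_{\max}(\lambda\varphi)]$, taking $K = \lambda\|\varphi\|_{L^q_\gamma}$ we must have
\begin{equation*}
\lambda\|\varphi\|_{L^q_\gamma} + \mathcal{C}\, T_{\max}(\lambda\varphi)^{1-\frac{N\alpha}{2q}} M^{\alpha+1} > M
\end{equation*}
for every $M > \lambda\|\varphi\|_{L^q_\gamma}$. Setting $M = 2\lambda\|\varphi\|_{L^q_\gamma}$ gives
\begin{equation*}
\mathcal{C}\, 2^{\alpha+1}\, T_{\max}(\lambda\varphi)^{1-\frac{N\alpha}{2q}} \big(\lambda\|\varphi\|_{L^q_\gamma}\big)^{\alpha} > 1.
\end{equation*}
Since $q>q_c = N\alpha/2$, the exponent $1-\frac{N\alpha}{2q}$ is positive, and raising both sides to the power $\big(1-\frac{N\alpha}{2q}\big)^{-1}$ and using $\frac{\alpha}{1-N\alpha/(2q)} = \big(\frac{1}{\alpha}-\frac{N}{2q}\big)^{-1}$ yields
\begin{equation*}
T_{\max}(\lambda\varphi) \ge C\,\big(\lambda\|\varphi\|_{L^q_\gamma}\big)^{-\left(\frac{1}{\alpha}-\frac{N}{2q}\right)^{-1}}
\end{equation*}
for all $\lambda>0$, with $C = C(\alpha,q,l,N)>0$. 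This proves \eqref{bulambda3}.
\end{proof}
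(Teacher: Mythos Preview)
Your proof is correct and follows exactly the approach the paper intends: the paper's own proof is the one-line remark ``The proof follows using \eqref{equKMT5} and is similar to that of Theorem~\ref{ThNLH},'' and you have simply filled in those details faithfully, citing Theorem~\ref{localNLHGC}(ii) for the contraction condition \eqref{equKMT5} and performing the standard $M=2K$ optimization.
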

\begin{rem}{\rm $\,$
\begin{itemize}
\item[1)] We see that $l$ has no effect on the lower bound of the life span.   This is because blow up may not occurs at the origin nor at $|x|$ infinite.
\item[2)] Corollary \ref{lowerLql} is totally new for $q<\infty$.
\end{itemize}}
\end{rem}
\begin{proof}[Proof of Corollary \ref{lowerLql}]
The proof follows using \eqref{equKMT5} and is similar to that of Theorem  \ref{ThNLH}.
\end{proof}

In the case of initial data in $L^q(\RR^N)\cap L^q_\gamma(\RR^N)$ we have the following result which generalizes that of \cite{W} known for $q=\infty$.
\begin{To}[Local well-posedness in $L^q\cap L^q_\gamma$]
\label{localNLHGCb} Let $N\geq 1$ be an integer, $\alpha>0$ and $l>0$ be
such that
\begin{equation*}
 0<\gamma:={l\over \alpha}<N. \end{equation*} Let $q_c$ be given by
\eqref{qvalues13}. Then we have the following.
\begin{itemize}
\item[(i)] Equation \eqref{intHHNLH} is locally well-posed in $L^\infty(\mathbb R^N)\cap L^\infty_\gamma(\mathbb R^N).$ More precisely, given $u_0\in L^\infty(\mathbb R^N)\cap L^\infty_\gamma(\mathbb R^N)$, then there exist $T>0$ and a unique solution $u\in C\big((0, T];L^\infty(\mathbb R^N)\cap L^\infty_\gamma(\mathbb R^N))$ of
\eqref{intHHNLH}  {  and  $u$ satisfies $\lim_{t\to 0}\|u(t)-e^{t \Delta}u_0\|_{L^\infty\cap L^\infty_\gamma(\R^N)}=0$}. Moreover,
 $u$ can be extended to a maximal interval $(0, T_{\max})$ such that
either $T_{\max}= \infty$ or  $T_{\max}< \infty$ and
$\displaystyle\lim_{t\to T_{\max}}(\|u(t)\|_\infty+\|u(t)\|_{L^\infty_\gamma}) = \infty.$
\item[(ii)] If  $q$ is such that
$$q >\frac{N(\alpha+1)}{N-\gamma}, \quad q> q_c\quad \mbox{ and } \quad q<\infty,$$
then equation \eqref{intHHNLH} is locally well-posed in $L^q(\RR^N)\cap L^q_\gamma(\RR^N).$ More precisely, given $u_0\in L^q(\RR^N)\cap L^q_\gamma(\RR^N)$, then there exist $T>0$ and a unique solution $u\in C\big([0, T];L^q(\RR^N)\cap L^q_\gamma(\RR^N) \big)$ of
\eqref{intHHNLH}. Moreover,
 $u$ can be extended to a maximal interval $[0, T_{\max})$ such that
either $T_{\max}= \infty$ or  $T_{\max}< \infty$ and
$\displaystyle\lim_{t\to T_{\max}}(\|u(t)\|_q+\|u(t)\|_{L^q_\gamma}) = \infty.$
\item[(iii)] Assume that $q>q_c$ with ${N\over N-\gamma} < q\leq\infty$. It follows that equation \eqref{intHHNLH} is locally well-posed in $L^q(\RR^N)\cap L^q_\gamma(\RR^N)$ as in part (ii) except that uniqueness is guaranteed only among  functions $u\in C\big([0, T];L^q(\RR^N)\cap L^q_\gamma(\RR^N) \big)$ which also verify
 $t^{{N\over 2}({1\over q}-{1\over r})}\|u(t)\|_{L^r_\gamma},$ $t^{{N\over 2}({1\over q}-{1\over r})}\|u(t)\|_{r}$ are bounded on $(0,T]$, where $r$ is as above (we replace $[0,T]$ by $(0,T]$ if $q=\infty$ {  and  $u$ satisfies   $\lim_{t\to 0}\|u(t)-e^{t \Delta}u_0\|_{L^\infty\cap L^\infty_\gamma(\R^N)}=0$}).  Moreover,
 $u$ can be extended to a maximal interval $[0, T_{\max})$ such that
either $T_{\max}= \infty$ or  $T_{\max}< \infty$ and
$\displaystyle\lim_{t\to T_{\max}}(\|u(t)\|_q+\|u(t)\|_{L^q_\gamma}) = \infty.$  Furthermore,
\begin{equation}
\label{VitesseInf3bis} \|u(t)\|_{L^q\cap L^q_\gamma}\geq C\left(T_{\max}-t\right)^{{N\over
2q}-{1\over \alpha}},\; \forall\; t\in [0, T_{\max}),
\end{equation}
where $C$ is a positive constant.
\end{itemize}
\end{To}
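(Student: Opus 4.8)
The plan is to treat Theorem~\ref{localNLHGCb} as a routine combination of the two single-space local well-posedness results already at hand, namely Theorem~\ref{th3} (with $\gamma$ playing the role of the weight there, but for the Hénon nonlinearity it is Theorem~\ref{localNLHGC}) and the classical $L^q$ theory recalled via Theorem~\ref{fixpt1}. The overall strategy is the standard ``contraction in an intersection of metric spaces'' argument, exactly as in \cite[Proposition 3.2, p.~126]{BTW}, the same scheme already used in the proof of Theorem~\ref{th3}(ii) and Theorem~\ref{localNLHGC}(ii). So I would not redo all the estimates; instead I would set up the combined working space and point to the estimates already carried out.

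First, for part (ii) fix $q$ with $q > \max(q_c, N(\alpha+1)/(N-\gamma))$, $q<\infty$, and choose $r>q$ satisfying \eqref{subcrit1}; set $\beta = \frac{N}{2q}-\frac{N}{2r}$ as in \eqref{beta1}, so that $\beta(\alpha+1)<1$ exactly as established in the proof of Theorem~\ref{localNLHGC}(ii). Define the working space
\[
Y_{M,T} = \Bigl\{ u \in C([0,T]; L^q\cap L^q_\gamma) \cap C((0,T]; L^r\cap L^r_\gamma) : \ \|u\| \le M \Bigr\},
\]
\[
\|u\| = \max\Bigl[ \sup_{[0,T]}\|u(t)\|_{L^q\cap L^q_\gamma},\ \sup_{(0,T]} t^\beta\|u(t)\|_{L^r\cap L^r_\gamma}\Bigr],
\]
with metric $d(u,v)=\|u-v\|$; this is a nonempty complete metric space. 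Then I would run $\F_{u_0}$ from \eqref{iteratnHH} on $Y_{M,T}$. The $L^q_\gamma$ and $L^r_\gamma$ components of the contraction estimate are literally the inequalities displayed in the proof of Theorem~\ref{localNLHGC}(ii) (using Proposition~\ref{smoothingeffectorl-lebeg} and \cite[Lemma 2.1]{CIT} for the weighted smoothing, and \eqref{ineqwithHolder} for the nonlinearity); the plain $L^q$ and $L^r$ components are handled by \eqref{heatsmouth} together with the bound $\||\cdot|^l(|u|^\alpha u - |v|^\alpha v)\|_{r/(\alpha+1)} \le C\||\cdot|^{\gamma}u\|_r^\alpha\|u-v\|_r + \dots$, i.e. one moves the weight $|\cdot|^l = |\cdot|^{\alpha\gamma}$ entirely onto the $u,v$ factors via Hölder, which is exactly why the space $L^q\cap L^q_\gamma$ is the natural one. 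Taking $K = \|u_0\|_{L^q\cap L^q_\gamma}$ and $M = 2K$, the condition \eqref{equKMT5} (with the max of the finitely many constants $C_1, C_2$ and their plain-$L^q$ analogues) can be met by choosing $T$ small, giving existence, uniqueness in $Y_{M,T}$, continuous dependence, and the blowup alternative by the usual continuation argument. Part (i), the case $q=\infty$, is the same with $r=\infty$ and $\beta=0$ replaced by the standard $L^\infty$-type iteration (as in \cite{W2, BTW}); here one only needs $t^{-N\alpha/(2q)}\in L^1_{loc}$, i.e. $q>q_c$, and the $\lim_{t\to0}\|u(t)-e^{t\Delta}u_0\|=0$ statement follows since the Duhamel term is $o(1)$ in $L^\infty\cap L^\infty_\gamma$. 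Part (iii) is identical to part (ii) but relaxing the uniqueness class to include $q=\infty$, tracked by the two auxiliary norms $t^\beta\|u(t)\|_{L^r_\gamma}$ and $t^\beta\|u(t)\|_r$, precisely parallel to Theorem~\ref{th3}(ii).

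For the lower blowup rate \eqref{VitesseInf3bis}, I would argue as for \eqref{VitesseInf} / \eqref{VitesseInf3}: for $t$ close to $T_{\max}$, apply the local existence statement with initial data $u(t)$, whose guaranteed existence time is at least $c\,\|u(t)\|_{L^q\cap L^q_\gamma}^{-(\frac1\alpha-\frac{N}{2q})^{-1}}$ by the scaling of condition \eqref{equKMT5} in $M=2\|u(t)\|_{L^q\cap L^q_\gamma}$; since this existence time cannot exceed $T_{\max}-t$, one gets $\|u(t)\|_{L^q\cap L^q_\gamma} \ge C(T_{\max}-t)^{\frac{N}{2q}-\frac1\alpha}$. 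The one genuinely delicate point — the only place where I would be careful rather than just cite — is checking that the weight-shifting Hölder estimate for the Hénon nonlinearity is uniform in the combined norm, i.e. that $\||\cdot|^l |u|^\alpha u\|_{L^{q/(\alpha+1)}} $ (unweighted) is controlled by $\|u\|_{L^q_\gamma}^\alpha\|u\|_{L^q}$ — this needs $l = \alpha\gamma$ and $\gamma q' $ large enough, which is automatic from $\gamma<N$ and $q>N/(N-\gamma)$; and similarly in the $L^r$ slot one needs \eqref{subcrit1} to place all three exponents correctly. Everything else is a transcription of the already-proven Theorems~\ref{fixpt1}, \ref{th3}, \ref{localNLHGC} and the intersection-space construction of \cite{BTW}, so I would state the proof compactly, flag the points of difference, and ``omit the details'' as the paper does elsewhere.
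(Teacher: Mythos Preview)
Your proposal is correct and follows essentially the same route as the paper: the same four-norm working space $Y_{M,T}$ for part (iii), the same weight-shifting H\"older identity $|\cdot|^l=|\cdot|^{\alpha\gamma}$ to control the unweighted $L^r$ and $L^q$ estimates via $\|u\|_{L^r_\gamma}^\alpha\|u-v\|_r$, and the same continuation argument for \eqref{VitesseInf3bis}. The only organizational wrinkle is that the paper handles parts (i)--(ii) by working \emph{directly} in $L^q\cap L^q_\gamma$ (invoking \cite[Theorem~1]{W2}, no auxiliary $r$), which is precisely what yields unconditional uniqueness there; your $Y_{M,T}$ setup with the $r$-norms is really the part-(iii) argument, so as written you would only obtain the weaker uniqueness of (iii) under the heading (ii) --- but you have all the ingredients and this is a minor relabeling.
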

\begin{proof}[Proof of Theorem \ref{localNLHGCb}]
We will just give the new elements of the proof.

(i)-(ii) By the H\"older inequality and Proposition \ref{smoothingeffectorl-lebeg} with $\gamma={l\over \alpha}=\mu,\; \, q_1=q/(\alpha+1),\; q_2=q$, for each $t>0$ and if $q>\frac{N (\alpha+1)}{N-\gamma},$ $q\leq \infty,$ $K_{t,l}: L^q\cap L^q_\gamma\longrightarrow L^q\cap L^q_\gamma$ is locally Lipschitz
and, since $q>\frac{N (\alpha+1)}{N-\gamma}>\alpha+1,$
\begin{eqnarray*}
 \|K_{t,l}(u)-K_{t,l}(v)\|_q &\leq & Ct^{-{N\over 2}({\alpha+1\over q}-{1\over q})}\left\||\cdot|^{l}(|u|^\alpha u-|v|^\alpha v)\right\|_{q/(\alpha+1)}\\ &\leq & C t^{-\frac{N \alpha}{2 q}} (\|u\|_{L^q_\gamma}^\alpha+\|v\|_{L^q_\gamma}^\alpha)\|u-v\|_{q} \\
 &\leq & 2 CM^\alpha t^{-\frac{N \alpha}{2 q}} \|u-v\|_{q},
\end{eqnarray*}
for $\max(\|u\|_q,\|u\|_{L^q_\gamma})\leq M \mbox{ and } \;\max(\|v\|_q,\|v\|_{L^q_\gamma})\leq M.$
We have also,  that $t^{-\frac{N \alpha}{2 q}}\in L^1_{loc}(0,\infty),$ since $q>q_c=\frac{N \alpha}{2}.$   Then the proof follows by \cite[Theorem 1, p. 279]{W2}.

(iii) We choose $r>q$ satisfying \eqref{subcrit1}. We consider $K>0,\; T>0,\; M>0$ such that
\begin{equation}
\label{equKMT5b}
K+\mathcal{C}M^{\alpha+1}T^{1-\frac{N\alpha}{2q}}\leq M,
\end{equation}
where $\mathcal{C}$ is a positive constant given below.  We will show that  there exists a
unique solution $u$ of  (\ref{intHHNLH}) such that $u\in C\left([0,T];L^q(\R^N)\cap L^q_\gamma(\R^N)\right)$ and $u \in C\left((0,T];L^{r}(\R^N)\cap L^r_\gamma(\R^N)\right)$  with $$\|u\|=\max\left[\sup_{t\in[0, T]}\|u(t)\|_q,\; \sup_{t\in[0, T]}\|u(t)\|_{L^q_\gamma},\;\sup_{t\in(0, T]}t^{\beta}\|u(t)\|_{L^r_\gamma},  \sup_{t\in(0, T]}t^{\beta}\|u(t)\|_{r}\right]\leq M.$$
The proof is based on a contraction mapping argument in the set
   $$ Y_{M,T}^{q,\gamma} = \{u \in C\left([0,T];L^q(\R^N)\cap L^q_\gamma(\R^N)\right)\cap C((0,T]; L^r\cap L^r_\gamma) :
\|u\|\leq M\}.$$  Endowed with the metric $d(u,v)=\|u-v\|,$  $Y_{M,T}^{q,\gamma}$ is a nonempty complete metric space.
We note that for   $u_0\in L^q,$
$$ \|{\rm e}^{t\Delta}u_0\|_{r} \le Ct^{-{N\over 2}({1\over q}-{1\over r})}\|u_0\|_{q}=Ct^{-\beta}\|u_0\|_{q}.$$
The condition on initial data $\max(\|u_0\|_{q},\|u_0\|_{L^q_\gamma})\leq K$ implies that $t^\beta \|{\rm e}^{t\Delta}u_0\|_{L^r_\gamma}\leq K,\; t^{\beta} \|{\rm e}^{t\Delta}u_0\|_{L^r}\leq K .$
We will show that  $\mathcal{F}_{u_0}$ defined in \eqref{iteratn} is a strict contraction on $Y_{M,T}^{q,\gamma}$.
Using Proposition \ref{smoothingeffectorl-lebeg}, that is $e^{t \Delta}: L^{q}\rightarrow L^r,$ for the first term and $e^{t \Delta}: L^{r\over \alpha+1}\rightarrow L^r,$  for the second term, we have
\begin{eqnarray*}
    t^{\beta} \|\mathcal{F}_\varphi(u)(t)-\mathcal{F}_\psi(v)(t)\|_{r}
  & \leq &
      t^{\beta}\|e^{t \Delta}(\varphi-\psi)\|_{r}+
 \displaystyle t^{\beta}\int_0^t\|e^{(t-s) \Delta}[|.|^{l}\big(|u|^\alpha
 u(s)-|v|^\alpha v(s)\big)]\|_{r} ds\\
&&    \hspace{-2cm}  \leq   \|\varphi-\psi\|_{q}+
   C \displaystyle  t^{\beta}\int_0^t (t-s)^{-\frac{N}{2}(\frac{\alpha+1}{r}-\frac{1}{r})} \left\| |.|^{\alpha\gamma}\big(|u|^ \alpha
 u(s)-|v|^\alpha v(s)\big)\right\|_{{r\over\alpha+1}} ds \\
  &&    \hspace{-2cm}  \leq     \|\varphi-\psi\|_{q}+\left(2 (\alpha+1) C  M^{\alpha} \displaystyle  t^{\beta}\int_0^t \left(t-s\right)^{-\frac{N \alpha}{2 r}} s^{-\beta(\alpha+1)} ds\right) d(u,v) \\
 &&    \hspace{-2cm}  \leq
    \|\varphi-\psi\|_{q}+ \left(2 (\alpha+1) C  M^{\alpha} t^{1-\frac{N \alpha}{2 q}} \displaystyle \int_0^1 \left(1-\sigma\right)^{-\frac{N \alpha}{2 r}} \sigma^{-\beta(\alpha+1)} d\sigma\right) d(u,v)\\
 &&    \hspace{-2cm}  \leq  \|\varphi-\psi\|_{L^q}+ C_3 M^\alpha T^{1-\frac{N \alpha}{2 q}} d(u,v),
 \end{eqnarray*}
where $C_3=2 (\alpha+1) C  \displaystyle \int_0^1 \left(1-\sigma\right)^{-\frac{N \alpha}{2 r}} \sigma^{-\beta(\alpha+1)} d\sigma<\infty.$

Using Proposition \ref{smoothingeffectorl-lebeg}, that is $e^{t \Delta}: L^{q}\rightarrow L^q,$ for the first term and $e^{t \Delta}: L^{r\over \alpha+1}\rightarrow L^q,$  for the second term, we have
\begin{eqnarray*}
\|\mathcal{F}_\varphi(u)(t)-\mathcal{F}_\psi(v)(t)\|_{q}
  & \leq &
      \|e^{t \Delta}(\varphi-\psi)\|_{q}+
 \displaystyle \int_0^t\|e^{(t-s) \Delta}[|.|^{l}\big(|u|^\alpha
 u(s)-|v|^\alpha v(s)\big)]\|_{q} ds\\
 &&    \hspace{-2cm}  \leq   \|\varphi-\psi\|_{q}+
   C \displaystyle  \int_0^t (t-s)^{-\frac{N}{2}(\frac{\alpha+1}{r}-\frac{1}{q})} \left\| |.|^{\alpha\gamma}\big(|u|^ \alpha
 u(s)-|v|^\alpha v(s)\big)\right\|_{{r\over\alpha+1}} ds\\
 &&    \hspace{-2cm}  \leq     \|\varphi-\psi\|_{q}+\left(2(\alpha+1) C  M^{\alpha} \displaystyle   \int_0^t \left(t-s\right)^{-\frac{N}{2}(\frac{\alpha+1}{r}-\frac{1}{q})} s^{-\beta(\alpha+1)} ds\right) d(u,v) \\
 &&    \hspace{-2cm}  \leq   \|\varphi-\psi\|_{q}+ \left(2 (\alpha+1) C  M^{\alpha} t^{1-\frac{N \alpha}{2 q}} \displaystyle \int_0^1 \left(1-\sigma\right)^{-\frac{N}{2}(\frac{\alpha+1}{r}-\frac{1}{q})} \sigma^{-\beta(\alpha+1)} d\sigma\right) d(u,v) \\
  &&    \hspace{-2cm}  \leq  \|\varphi-\psi\|_{L^q}+ C_4 M^\alpha T^{1-\frac{N \alpha}{2 q}} d(u,v).
 \end{eqnarray*}
where $C_4=2 (\alpha+1) C  \displaystyle \int_0^1 \left(1-\sigma\right)^{-\frac{N}{2}(\frac{\alpha+1}{r}-\frac{1}{q})} \sigma^{-\beta(\alpha+1)} d\sigma<\infty.$
From the above estimates, it follows  that
 \begin{equation}\label{eq5trisbis}d(\mathcal F_\varphi(u), \mathcal F_\psi(v))\leq   \|\varphi-\psi\|_{L^q\cap L^q_\gamma}+ \mathcal{C} M^\alpha T^{1-\frac{N \alpha}{2 q}} d(u,v), \end{equation} where $\mathcal{C}=\max(C_1,C_2,C_3,C_4).$ The rest of the proof follows similarly as above and as in \cite{BTW}.
\end{proof}
We have also the following result.
\begin{prop}
\label{th2} Let  $\alpha>0$ and  let $0<\gamma:=l/\alpha<N$. Assume the hypotheses of Theorem \ref{localNLHGCb}. Let $T_{\max}(\varphi,L^q\cap L^q_\gamma)$ denotes the existence time of the maximal solution of
\eqref{intHHNLH} with initial data $\varphi\in L^q\cap L^q_\gamma.$
Then we have the following.
\begin{itemize}
\item[(i)] If $\varphi \in L^q\cap L^q_\gamma,$ then for $t\in \left(0,T_{\max}(\varphi,q)\right),$ $u(t)\in L^\infty\cap L^\infty_\gamma.$
\item[(ii)] If $\varphi \in L^p\cap L^p_\gamma\cap L^q\cap L^q_\gamma$,  ${N\over N-\gamma}< q<p\leq\infty$  and $q>q_c.$ Then $T_{\max}(\varphi,L^p\cap L^p_\gamma)= T_{\max}(\varphi,L^q\cap L^q_\gamma).$
\end{itemize}
\end{prop}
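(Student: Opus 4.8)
The plan is to follow closely the arguments used for Propositions~\ref{C0nlh} and \ref{C0}. The one new feature here is that, since $l=\alpha\gamma$, the H\'enon nonlinearity factorizes as $|\cdot|^{l}|u|^{\alpha}u=(|\cdot|^{\gamma}|u|)^{\alpha}u$, so that it is controlled in the weighted spaces by the H\"older bound $\bigl\||\cdot|^{l}|u|^{\alpha}u\bigr\|_{L^{s/(\alpha+1)}_{\gamma}}\le\|u\|_{L^{s}_{\gamma}}^{\alpha}\,\|u\|_{L^{s}_{\gamma}}$ (and similarly $\bigl\||\cdot|^{l}|u|^{\alpha}u\bigr\|_{s}\le\|u\|_{L^{\infty}_{\gamma}}^{\alpha}\|u\|_{s}$, since $|\cdot|^{l}$ is locally bounded), and the smoothing is provided by Proposition~\ref{smoothingeffectorl-lebeg} with $\mu=\gamma$ in place of the plain estimate \eqref{heatsmouth}.

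For part (i) I would bootstrap along the integral equation \eqref{intHHNLH}. Fixing $0<t_{0}<t<T_{\max}(\varphi,L^{q}\cap L^{q}_{\gamma})$ and writing $u(t)=e^{(t-t_{0})\Delta}u(t_{0})+\int_{t_{0}}^{t}e^{(t-\sigma)\Delta}\bigl(|\cdot|^{l}|u(\sigma)|^{\alpha}u(\sigma)\bigr)\,d\sigma$, I note that by Theorem~\ref{localNLHGCb}(iii) the datum $u(t_{0})$ lies in $L^{r}\cap L^{r}_{\gamma}$ (with $r>q$ the auxiliary exponent) and that $u(\sigma)$ stays bounded in $L^{r}\cap L^{r}_{\gamma}$ for $\sigma\in[t_{0},t]$, so the Duhamel integral carries no interior-time singularity once one starts at the positive time $t_{0}$. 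Estimating the linear term by \eqref{heatsmouth} and \cite[Lemma 2.1]{CIT}, and the Duhamel term by Proposition~\ref{smoothingeffectorl-lebeg} with input exponent $s/(\alpha+1)$ and output exponent $p$, I am left with a time integral $\int_{t_{0}}^{t}(t-\sigma)^{-\frac{N}{2}(\frac{\alpha+1}{s}-\frac1p)}\,d\sigma$; exactly as in the proof of Proposition~\ref{C0}(i), the hypotheses $q>q_{c}$ and $q>N/(N-\gamma)$ permit choosing, at each step, $p>s$ with $\frac{\alpha+1}{s}-\frac2N<\frac1p<\frac1s$ so that this integral converges. A finite iteration from $s=r$ then gives $u(t)\in L^{p}\cap L^{p}_{\gamma}$ for every finite $p\ge q$, and a last step, using $\|e^{(t-t_{0})\Delta}u(t_{0})\|_{\infty}\le C(t-t_{0})^{-N/2p}\|u(t_{0})\|_{p}$ for large $p$ together with the bound above for some $s>N/2$ (and its weighted analogue), upgrades this to $u(t)\in L^{\infty}\cap L^{\infty}_{\gamma}$ for all $t\in\bigl(0,T_{\max}(\varphi,L^{q}\cap L^{q}_{\gamma})\bigr)$.

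For part (ii) I would pass through an intermediate positive time, as in Proposition~\ref{C0}(iii). Choose $\varepsilon\in\bigl(0,\min(T_{\max}(\varphi,L^{q}\cap L^{q}_{\gamma}),T_{\max}(\varphi,L^{p}\cap L^{p}_{\gamma}))\bigr)$; by uniqueness the $L^{q}\cap L^{q}_{\gamma}$ and the $L^{p}\cap L^{p}_{\gamma}$ solutions coincide on $(0,\varepsilon]$, and by part (i) their common value $\psi:=u(\varepsilon)$ belongs to $L^{q}\cap L^{q}_{\gamma}\cap L^{\infty}\cap L^{\infty}_{\gamma}$, hence, by interpolation, to $L^{s}\cap L^{s}_{\gamma}$ for every $s\in[q,\infty]$. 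It then suffices to prove $T_{\max}(\psi,L^{s}\cap L^{s}_{\gamma})=T_{\max}(\psi,L^{\infty}\cap L^{\infty}_{\gamma})$ for $s=q$ and $s=p$. The inequality ``$\le$'' follows from part (i): the $L^{s}\cap L^{s}_{\gamma}$ solution is $L^{\infty}\cap L^{\infty}_{\gamma}$-valued for positive times and has datum $\psi\in L^{\infty}\cap L^{\infty}_{\gamma}$, so by the uniqueness and continuity statements of Theorem~\ref{localNLHGCb}(i) it is the $L^{\infty}\cap L^{\infty}_{\gamma}$ solution. The inequality ``$\ge$'' is the Gronwall argument of Propositions~\ref{C0nlh}(ii) and \ref{C0}(ii): from \eqref{intHHNLH}, \eqref{heatsmouth}, \cite[Lemma 2.1]{CIT} and Proposition~\ref{smoothingeffectorl-lebeg} one gets $\|u(t)\|_{L^{s}\cap L^{s}_{\gamma}}\le C\|\psi\|_{L^{s}\cap L^{s}_{\gamma}}+C\int_{0}^{t}\|u(\sigma)\|_{L^{\infty}_{\gamma}}^{\alpha}\,\|u(\sigma)\|_{L^{s}\cap L^{s}_{\gamma}}\,d\sigma$, whence $\|u(t)\|_{L^{s}\cap L^{s}_{\gamma}}\le C\|\psi\|_{L^{s}\cap L^{s}_{\gamma}}\exp\bigl(C\int_{0}^{t}\|u(\sigma)\|_{L^{\infty}_{\gamma}}^{\alpha}\,d\sigma\bigr)$, so the $L^{s}\cap L^{s}_{\gamma}$ norm cannot blow up before the $L^{\infty}\cap L^{\infty}_{\gamma}$ norm does. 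Combining the two equalities gives $T_{\max}(\varphi,L^{q}\cap L^{q}_{\gamma})-\varepsilon=T_{\max}(\varphi,L^{p}\cap L^{p}_{\gamma})-\varepsilon$, which is the claim.

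The step I expect to be most delicate is the admissibility bookkeeping for Proposition~\ref{smoothingeffectorl-lebeg} in the bootstrap of part~(i): at each step one must verify that the chosen exponents obey its strict inequalities and that the induced singularity $(t-\sigma)^{-\frac{N}{2}(\frac{\alpha+1}{s}-\frac1p)}$ is integrable near $\sigma=t$, which is exactly where $q>q_{c}$ and $q>N/(N-\gamma)$ enter. A secondary, more bureaucratic difficulty, already present in Proposition~\ref{C0}, is that none of the spaces $L^{q}\cap L^{q}_{\gamma}$, $L^{p}\cap L^{p}_{\gamma}$, $L^{\infty}\cap L^{\infty}_{\gamma}$ is contained in another, which forces the comparison of maximal times in part~(ii) to be routed through the positive time $\varepsilon$, where part~(i) places the solution in all of them at once.
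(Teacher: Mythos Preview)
Your proposal is correct and follows essentially the same approach as the paper. For (i) the paper runs the same bootstrap $\tfrac{\alpha+1}{s}-\tfrac{2}{N}<\tfrac{1}{p}<\tfrac{1}{s}$ directly from time $0$ via the $t^{\beta}$-weighted $L^{r}\cap L^{r}_{\gamma}$ bounds of Theorem~\ref{localNLHGCb}(iii) rather than shifting to a positive time $t_{0}$, and for (ii) it simply writes ``Follows as in Proposition~\ref{C0}'', which is exactly the $\varepsilon$-shift plus Gronwall argument you spell out with $L^{\infty}\cap L^{\infty}_{\gamma}$ playing the role of $C_{0}$.
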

\begin{proof}
(i) Let  $\varphi\in L^q(\RR^N),$ $q>q_c$ and $ q>{N\over N-\gamma}.$ Let $r$ and $\beta$ be as above and \eqref{beta1}. Let $p$ be such that $r<p\leq \infty.$ Hence $p>q,$
$$0\leq {1\over p}<{\gamma\over N}+{\alpha+1\over r}<1,\; {1\over p}<{\gamma\over N}+{1\over q}<1,$$ and  for $0<T<T_{\max}(\varphi,q),$ we have
\begin{eqnarray*}
    \|u(t)\|_{p}&\leq& \|e^{t \Delta}\varphi\|_{p}+  C \displaystyle\int_0^t (t-\sigma)^{-\frac{N }{2 }(\frac{\alpha+1}{r}-\frac{1}{p})} \|u(\sigma)\|^{\alpha}_{L^r_\gamma} \|u(\sigma)\|_{r}d\sigma\\
    &\leq&(4 \pi t)^{-\frac{N}{2}(\frac{1}{q}-\frac{1}{p})}\|\varphi\|_{q}+  C  t^{1-\frac{N }{2 }(\frac{\alpha+1}{r}-\frac{1}{p})-{\beta(\alpha+1)}} \displaystyle\sup_{s\in(0,T]}\left(s^{\beta\alpha}\|u(s)\|_{L^r_\gamma}^{\alpha}\right)\times
    \\&& \displaystyle\sup_{s\in(0,T]}\left(s^{\beta}\|u(s)\|_{r}\right)\int_0^1 (1-\sigma)^{-\frac{N }{2 }(\frac{\alpha+1}{r}-\frac{1}{p})} \sigma^{-\beta(\alpha+1)}d\sigma\\
    &\leq&(4 \pi t)^{-\frac{N}{2}(\frac{1}{q}-\frac{1}{p})}\|\varphi\|_{q}+   M^{\alpha+1}Ct^{1-\frac{N }{2 }(\frac{\alpha+1}{q}-\frac{1}{p})}\int_0^1 (1-\sigma)^{-\frac{N }{2 }(\frac{\alpha+1}{r}-\frac{1}{p})} \sigma^{-\beta(\alpha+1)}d\sigma.
   \end{eqnarray*}
   Also,
   \begin{eqnarray*}
    \|u(t)\|_{L^p_\gamma}&\leq& \|e^{t \Delta}\varphi\|_{L^p_\gamma}+  C \displaystyle\int_0^t (t-\sigma)^{-\frac{N }{2 }(\frac{\alpha+1}{r}-\frac{1}{p})} \|u(\sigma)\|^{\alpha+1}_{L^r_\gamma} d\sigma\\
    &\leq&Ct^{-\frac{N}{2}(\frac{1}{q}-\frac{1}{p})}\|\varphi\|_{L^q_\gamma}+  C  t^{1-\frac{N }{2 }(\frac{\alpha+1}{r}-\frac{1}{p})-{\beta(\alpha+1)}} \displaystyle\sup_{s\in(0,T]}\left(s^{\beta(\alpha+1)}\|u(s)\|_{L^r_\gamma}^{\alpha+1}\right)\times
    \\&& \displaystyle\int_0^1 (1-\sigma)^{-\frac{N }{2 }(\frac{\alpha+1}{r}-\frac{1}{p})} \sigma^{-\beta(\alpha+1)}d\sigma\\
    &\leq&C t^{-\frac{N}{2}(\frac{1}{q}-\frac{1}{p})}\|\varphi\|_{{L^q_\gamma}}+   M^{\alpha+1}Ct^{1-\frac{N }{2 }(\frac{\alpha+1}{q}-\frac{1}{p})}\int_0^1 (1-\sigma)^{-\frac{N }{2 }(\frac{\alpha+1}{r}-\frac{1}{p})} \sigma^{-\beta(\alpha+1)}d\sigma.
   \end{eqnarray*}
 Since $r>q>q_c,$  it follows that if $$\frac{\alpha+1}{r}-\frac{2}{N}<\frac{1}{p}<\frac{1}{r},$$
 then $u(t)$ is in $L^p\cap L^p_\gamma$ for all $t\in \big(0,T_{\max}(\varphi, q)\big).$
 The result for general $p>q$ follows by iteration. Hence $u(t)$ is in $L^\infty\cap L^\infty_\gamma,$ for $t\in \big(0,T_{\max}(\varphi, q)\big).$

 (ii) Follows as in Proposition \ref{C0}. This finishes the proof of the proposition.
\end{proof}

Theorem \ref{localNLHGCb} and inequality  \eqref{equKMT5b} allow us to obtain that under the same hypotheses of Corollary \ref{lowerLql} if $\varphi \in L^q(\R^N)\cap L^q_\gamma(\R^N)$ then the life-span of \eqref{HHNLH} with initial data $\lambda\varphi$ satisfies \begin{equation}
\label{bulambda3bis}
T_{\max}(\lambda\varphi)  { \geq}  C\left(\lambda\|\varphi\|_{L^q_{{\gamma}}\cap L^q}\right)^{-(\frac{1}{\alpha} - \frac{N}{2q})^{-1}},
\end{equation}
for all $\lambda > 0$, which gives a power of $\lambda$ not depending on $l,$ unlike the case $l<0.$
\section{The Hardy-H\'enon equations with decaying initial data}
\setcounter{equation}{0}
In this part of the  appendix, we investigate  lower bound estimates for life-span for the solutions of the equation \eqref{HHNLH} with initial data having some decay. As in Section \ref{NLHweight}, we  work in  $L^q_\gamma$  with $\gamma>0,\; \gamma>l/\alpha.$ This allows us to obtain a lower bound of the life span for initial data having more decay than $l/\alpha,$ if $l>0.$
 We consider the Duhamel formulation of \eqref{HHNLH}-\eqref{HHNLHinitial}, that is  the equation \eqref{intHHNLH} and suppose that
\begin{equation}
\label{conditionsParametersHH}
N\geq 1,\; \alpha>0,\; -\min(2,N)< l<N\alpha.
\end{equation} Let  $\gamma$ be such that
\begin{equation}
\label{intHHNLHdecay}
0< \gamma<N,\; {l\over \alpha}<\gamma<{2+l\over \alpha}
\end{equation} and
$q$ satisfying
\begin{equation}
\label{intHHNLHdecay1}
{N\over N-\gamma}<q\leq \infty, \; q>{N\alpha\over 2+l-\gamma\alpha}=:q_c(\gamma,l).
\end{equation}
$q_c(\gamma,l)$ is the critical exponent of \eqref{HHNLH} for initial data in $L^q_\gamma.$
The condition \eqref{intHHNLHdecay1} can be reformulated as follows:
$${\gamma\over N}\leq {1\over q}+{\gamma\over N}<1,\; {N\alpha\over 2q}+{\gamma\alpha\over 2}-{l\over 2}<1.$$
Let $0<\nu<\gamma$ be such that
$${\gamma+l\over \alpha+1}<\nu<{N+l\over \alpha+1}.$$ Hence, using \eqref{conditionsParametersHH} and \eqref{intHHNLHdecay}, we have $${l\over \alpha}< \nu<\gamma,\; 0<\nu<\nu(\alpha+1)-l<N,\; 0<\gamma<\nu(\alpha+1)-l<N.$$
Let now $r>q$ be such that
$${1\over q(\alpha+1)}-{\nu(\alpha+1)-l-\gamma\over N(\alpha+1)}< {1\over r}<{N-\nu(\alpha+1)+l\over N(\alpha+1)}.$$
This is possible by \eqref{intHHNLHdecay1}. Hence, we have
$${1\over r}<{\alpha+1\over r}+{\nu(\alpha+1)-l-\nu\over N}<{\alpha+1\over r}+{\nu(\alpha+1)-l\over N}<1,$$
$${1\over q}<{\alpha+1\over r}+{\nu(\alpha+1)-l-\gamma\over N}<{\alpha+1\over r}+{\nu(\alpha+1)-l\over N}<1.$$
That is, by \cite[Lemma 2.1]{CIT}  $e^{t \Delta}: L^{q}_\gamma\rightarrow L^q_\gamma,$  is bounded and we may apply  Proposition \ref{smoothingeffectorl-lebeg}, so that the maps  $e^{t \Delta}: L^{q}_\gamma\rightarrow L^r_\nu,$ $e^{t \Delta}: L^{r/(\alpha+1)}_{\nu(\alpha+1)-l} \rightarrow L^{q}_\gamma$ and $e^{t \Delta}: L^{r/(\alpha+1)}_{\nu(\alpha+1)-l}\rightarrow L^{r}_\nu$ are bounded.

Let us introduce
$$\tilde{\beta_l}={N\over 2}\left({1\over q}-{1\over r}\right)+{\gamma-\nu\over 2}.$$
Hence
$$\tilde{\beta_l}> {\gamma-\nu\over 2}> 0.$$ We have,
\begin{eqnarray*}
\tilde{\beta_l}(\alpha+1)&=&{N\over 2}\left({\alpha+1\over q}-{\alpha+1\over r}\right)+(\alpha+1){\gamma-\nu\over 2}\\
&\leq & {N\over 2}\left({\alpha+1\over q}-{1\over q}+{\nu(\alpha+1)-l-\gamma\over N}\right)+(\alpha+1){\gamma-\nu\over 2}
\\
&= & {N\alpha\over 2q}+{\alpha\gamma\over 2}-{l\over 2}<1.
\end{eqnarray*}
We also have
$$\frac{N\alpha}{2r}+{\nu\alpha\over 2}-{l\over 2}<\frac{N\alpha}{2q}+{\gamma\alpha\over 2}-{l\over 2}<1,$$
$$\frac{N}{2}\left({\alpha+1\over r}-{1\over q}\right)+{\nu(\alpha+1)-l-\gamma\over 2}<\frac{N\alpha}{2r}+{\nu\alpha\over 2}-{l\over 2}<1.$$
These last three  estimates are crucial to the local existence argument below. We note that if $l>0,$ we may take
$\nu(\alpha+1)-l=\gamma,\; r=(\alpha+1)q.$
With the above choice of the parameters, we can show the following local well-posedness result.
\begin{To}
\label{localNLHGCD} Let $N\geq 1$ be an integer, $\alpha>0,\; -\min(2,N)<l$ and
\begin{equation}
\label{e03bc} {l\over N}<\alpha. \end{equation} Let $\gamma$ be satisfying  \eqref{intHHNLHdecay} and $q_c(\gamma,l)$ be given by
\eqref{intHHNLHdecay1}. Then we have the following.
\begin{itemize}
\item[(i)] If $\gamma(\alpha+1)<N+l$ and $q$ is such that
$$q >\frac{N(\alpha+1)}{N+l-\gamma(\alpha+1)}, \quad q> q_c(\gamma,l)\quad \mbox{ and } \quad q\leq \infty,$$
then equation \eqref{intHHNLH} is locally well-posed in $L^q_\gamma(\RR^N).$ More precisely, given $u_0\in  L^q_\gamma(\RR^N)$, then there exist $T>0$ and a unique solution $u\in C\big([0, T]; L^q_\gamma(\RR^N) \big)$ of
\eqref{intHHNLH} (we replace $[0,T]$ by $(0,T]$ if $q=\infty$ {  and  $u$ satisfies $\lim_{t\to 0}\|u(t)-e^{t \Delta}u_0\|_{L^\infty_\gamma(\R^N)}=0$}). Moreover,
 $u$ can be extended to a maximal interval $[0, T_{\max})$ such that
either $T_{\max}= \infty$ or  $T_{\max}< \infty$ and
$\displaystyle\lim_{t\to T_{\max}}\|u(t)\|_{L^q_\gamma} = \infty.$
\item[(ii)] Assume that $q>q_c(\gamma,l)$ with ${N\over N-\gamma} < q\leq\infty$. It follows that equation \eqref{intHHNLH} is locally well-posed in $ L^q_\gamma(\RR^N)$ as in part (i) except that uniqueness is guaranteed only among  functions $u\in C\big([0, T]; L^q_\gamma(\RR^N) \big)$ which also verify
 $t^{\tilde{\beta_l}}\|u(t)\|_{L^r_\nu},$ is bounded on $(0,T]$, where $r$ and $\nu$ are as above (we replace $[0,T]$ by $(0,T]$ if $q=\infty$ {  and  $u$ satisfies $\lim_{t\to 0}\|u(t)-e^{t \Delta}u_0\|_{L^\infty_\gamma(\R^N)}=0$}).  Moreover,
 $u$ can be extended to a maximal interval $[0, T_{\max})$ such that
either $T_{\max}= \infty$ or  $T_{\max}< \infty$ and
$\displaystyle\lim_{t\to T_{\max}}\|u(t)\|_{L^q_\gamma} = \infty.$  Furthermore,
\begin{equation}
\label{VitesseInf2} \|u(t)\|_{L^q_\gamma}\geq C\left(T_{\max}-t\right)^{{N\over
2q}+{\gamma\over 2}-{2+l\over 2\alpha}},\; \forall\; t\in [0, T_{\max}),
\end{equation}
where $C$ is a positive constant.
\end{itemize}
\end{To}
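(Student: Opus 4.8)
The plan is to run verbatim the two-space contraction scheme already used for the weighted nonlinear heat equation in Theorem~\ref{th3} and for \eqref{intHHNLH} in Theorems~\ref{localNLHGC} and \ref{localNLHGCb}; the only new feature here is the factor $|\cdot|^l$ in the nonlinearity, which is absorbed by letting it shift the weight of the auxiliary weighted Lebesgue space. All the exponent arithmetic that is needed has in fact already been recorded in the paragraph preceding the statement (the choices of $\nu$ and $r$, and the three displayed inequalities on $\tilde{\beta_l}$). I expect the only genuinely delicate point, and the one that forces the hypotheses on $\gamma$, $q$, $l$, to be that a \emph{single} choice of auxiliary space $L^r_\nu$ must simultaneously make all four relevant heat-semigroup maps bounded with integrable time singularities while still letting the nonlinearity (carrying the singular or growing weight $|\cdot|^l$) map back into $L^q_\gamma$; once the admissible window for $(\nu,r)$ in the lead-up paragraph is available, the rest is bookkeeping along the established template.

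For part~(i), introduce $K_{t,l}(u)=e^{t\Delta}\bigl(|\cdot|^l|u|^\alpha u\bigr)$. Given $u,v\in L^q_\gamma$, H\"older's inequality in the form \eqref{ineqwithHolder} (legitimate with $p=q$ since $q>N(\alpha+1)/(N+l-\gamma(\alpha+1))>\alpha+1$, because $\gamma(\alpha+1)>l$) gives $\bigl\||\cdot|^{\gamma(\alpha+1)-l}\bigl(|\cdot|^l(|u|^\alpha u-|v|^\alpha v)\bigr)\bigr\|_{q/(\alpha+1)}\le C(\|u\|_{L^q_\gamma}^\alpha+\|v\|_{L^q_\gamma}^\alpha)\|u-v\|_{L^q_\gamma}$. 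Then Proposition~\ref{smoothingeffectorl-lebeg}, applied with $q_1=q/(\alpha+1)$, $q_2=q$, target weight $\gamma$ and source weight $\mu=\gamma(\alpha+1)-l$, yields boundedness of $e^{t\Delta}:L^{q/(\alpha+1)}_{\gamma(\alpha+1)-l}\to L^q_\gamma$: its hypotheses $0\le\gamma\le\mu<N$ are exactly $l/\alpha\le\gamma$ (from \eqref{intHHNLHdecay}) and $\gamma(\alpha+1)<N+l$ (the standing assumption of part~(i)), while the two strict inequalities on $1/q_1,1/q_2$ reduce to $\gamma\alpha>l$ and to $q>N(\alpha+1)/(N+l-\gamma(\alpha+1))$. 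The resulting time factor is $t^{-\frac{N\alpha}{2q}-\frac{\gamma\alpha}{2}+\frac{l}{2}}$, which lies in $L^1_{loc}(0,\infty)$ precisely because $q>q_c(\gamma,l)$. Hence $K_{t,l}:L^q_\gamma\to L^q_\gamma$ is locally Lipschitz with constant $\le 2CM^\alpha t^{-\frac{N\alpha}{2q}-\frac{\gamma\alpha}{2}+\frac{l}{2}}$ on balls of radius $M$, $K_{t,l}(0)=0$, and $e^{s\Delta}K_{t,l}=K_{t+s,l}$, so part~(i) follows from \cite[Theorem 1, p. 279]{W2}; the statement $\lim_{t\to 0}\|u(t)-e^{t\Delta}u_0\|_{L^\infty_\gamma}=0$ for $q=\infty$ comes from the vanishing of the Duhamel term in $L^\infty_\gamma$ as $t\to 0$.

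For part~(ii), fix $\nu$ and $r$ as in the paragraph before the statement, so that by Proposition~\ref{smoothingeffectorl-lebeg} and \cite[Lemma 2.1]{CIT} the four maps $e^{t\Delta}:L^q_\gamma\to L^q_\gamma$, $e^{t\Delta}:L^q_\gamma\to L^r_\nu$, $e^{t\Delta}:L^{r/(\alpha+1)}_{\nu(\alpha+1)-l}\to L^q_\gamma$ and $e^{t\Delta}:L^{r/(\alpha+1)}_{\nu(\alpha+1)-l}\to L^r_\nu$ are bounded, with time weights governed by $\tilde{\beta_l}$. On the complete metric space $Y^{q,\gamma}_{M,T}=\{u\in C([0,T];L^q_\gamma(\R^N))\cap C((0,T];L^r_\nu(\R^N)):\ \|u\|\le M\}$ with $\|u\|=\max\bigl[\sup_{[0,T]}\|u(t)\|_{L^q_\gamma},\ \sup_{(0,T]}t^{\tilde{\beta_l}}\|u(t)\|_{L^r_\nu}\bigr]$, one estimates $\F_{u_0}u$ (given by \eqref{iteratnHH}) in the $L^q_\gamma$ and $L^r_\nu$ norms exactly as in the proof of Theorem~\ref{th3}; the three exponent inequalities displayed just before the statement — $\tilde{\beta_l}(\alpha+1)<1$, $\frac{N\alpha}{2r}+\frac{\nu\alpha}{2}-\frac{l}{2}<1$, and $\frac{N}{2}(\frac{\alpha+1}{r}-\frac{1}{q})+\frac{\nu(\alpha+1)-l-\gamma}{2}<1$ — are exactly what makes the Beta-type integrals converge and produces the common factor $T^{1-\frac{N\alpha}{2q}-\frac{\gamma\alpha}{2}+\frac{l}{2}}$. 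Thus $\F_{u_0}$ is a strict contraction of $Y^{q,\gamma}_{M,T}$ whenever $K+\mathcal{C}M^{\alpha+1}T^{1-\frac{N\alpha}{2q}-\frac{\gamma\alpha}{2}+\frac{l}{2}}\le M$ and $K\ge\|u_0\|_{L^q_\gamma}$; extension to a maximal solution, the blow-up alternative and the stated uniqueness class follow as in \cite{BTW} (cf. \cite{BenSlimene}). Finally, \eqref{VitesseInf2} is read off from the contraction condition applied at time $t<T_{\max}$ with $M=2\|u(t)\|_{L^q_\gamma}$: it forces $T_{\max}-t\ge c\,\|u(t)\|_{L^q_\gamma}^{-(\frac{2+l}{2\alpha}-\frac{N}{2q}-\frac{\gamma}{2})^{-1}}$, and rearranging (the exponent $\frac{2+l}{2\alpha}-\frac{N}{2q}-\frac{\gamma}{2}$ being positive since $q>q_c(\gamma,l)$) gives the claimed lower bound on $\|u(t)\|_{L^q_\gamma}$.
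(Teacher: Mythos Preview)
Your proposal is correct and follows essentially the same approach as the paper: part~(i) via the Lipschitz estimate on $K_{t,l}$ in $L^q_\gamma$ using Proposition~\ref{smoothingeffectorl-lebeg} with source weight $\gamma(\alpha+1)-l$ and then \cite[Theorem 1]{W2}, and part~(ii) via the two-norm contraction in $C([0,T];L^q_\gamma)\cap C((0,T];L^r_\nu)$ with the auxiliary parameters $\nu,r$ fixed in the paragraph preceding the statement. Your explicit check that $q>\alpha+1$ and your identification of which of the three pre-statement inequalities controls each Beta integral are more detailed than the paper's sketch, but the argument is the same.
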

\begin{proof}  (i)   Using the inequality \eqref{ineqwithHolder}, and Proposition \ref{smoothingeffectorl-lebeg} that is $e^{t \Delta}: L^{q\over \alpha+1}_{(\alpha+1)\gamma-l}\rightarrow L^q_\gamma$ is bounded, for each $t>0$ we have that  $K_{t,l}: L^q_\gamma\longrightarrow L^q_\gamma$ is locally Lipschitz with
\begin{eqnarray*}
 \|K_{t,l}(u)-K_{t,l}(v)\|_{L^q_\gamma} &\leq & Ct^{-{N\over 2}({\alpha+1\over q}-{1\over q})-{\alpha\gamma-l\over 2}}\left\||u|^\alpha u-|v|^\alpha v\right\|_{L^{q\over \alpha+1}_{(\alpha+1)\gamma}}\\  &\leq & C t^{-\frac{N \alpha}{2 q}-{\alpha\gamma\over 2}+{l\over 2}} (\|u\|_{L^q_\gamma}^\alpha+\|v\|_{L^q_\gamma}^\alpha)\|u-v\|_{L^q_\gamma} \\
 &\leq & 2 CM^\alpha t^{-\frac{N \alpha}{2 q}-{\alpha\gamma\over 2}+{l\over 2}} \|u-v\|_{L^q_\gamma},
\end{eqnarray*}
for $\|u\|_{L^q_\gamma}\leq M \mbox{ and } \;\|v\|_{L^q_\gamma}\leq M.$ The rest of the proof  is similar to that of Theorems  \ref{th3} and  \ref{localNLHGC}.

(ii) For  $u_0\in L^q_\gamma$ we have
$ \|{\rm e}^{t\Delta}u_0\|_{L^r_\nu} \le Ct^{-{N\over 2}({1\over q}-{1\over r})-{\gamma-\nu\over 2}}\|u_0\|_{L^q_\gamma}=Ct^{-\tilde{\beta_l}}\|u_0\|_{L^q_\gamma}.$ We choose $K>0,\; T>0,\; M>0$ such that
\begin{equation}
\label{equKMTlD}
K+CM^{\alpha+1}T^{1-\frac{N\alpha}{2q}-{\gamma\alpha\over 2}+{l\over 2}}\leq M,
\end{equation}
where $C$ is a positive constant.  We will show that  there exists a
unique solution $u$ of  \eqref{intHHNLH} such that $u\in C\left([0,T]; L^q_\gamma(\R^N)\right)$ and $u \in C\left((0,T]; L^r_\nu(\R^N)\right)$  with $$\|u\|=\max\left[ \sup_{t\in[0, T]}\|u(t)\|_{L^q_\gamma},\;\sup_{t\in(0, T]}t^{{\tilde{\beta_l}}}\|u(t)\|_{L^r_\nu}\right]\leq M.$$

The proof is based on a contraction mapping argument in the set
   $$ Y_{M,T}^{q,\gamma} = \{u \in C\left([0,T];L^q_\gamma(\R^N)\right)\cap C((0,T]; L^r_\nu) :
\|u\|\leq M\}.$$  Endowed with the metric $d(u,v)=\|u-v\|,$  $Y_{M,T}^{q,\gamma}$ is a nonempty complete metric space. We consider $u_0$ such that  $\|u_0\|_{L^q_\gamma}\leq K$ and we estimate as follows:
\begin{eqnarray*}
t^{\tilde{\beta_l}} \|\F_{u_0}u(t)\|_{L^r_\nu} &\le& t^{\tilde{\beta_l}} \|{\rm e}^{t\Delta}u_0\|_{L^r_\nu} + t^{\tilde{\beta_l}} \int_0^t \|{\rm
e}^{(t-\sigma)\Delta} \big[|\cdot|^l|u(\sigma)|^\alpha u(\sigma)\big]\|_{L^r_\nu} d\sigma\\
&\le& K +C
 t^{\tilde{\beta_l}} \int_0^t (t-\sigma)^{-\frac{N\alpha}{2r}-{(\nu(\alpha+1)-l)-\nu\over 2}} \||\cdot|^{\nu(\alpha+1)}|u(\sigma)|^\alpha u(\sigma)\|_{r/(\alpha+1)} d\sigma\\
 &=& K +C
t^{\tilde{\beta_l}} \int_0^t (t-\sigma)^{-\frac{N\alpha}{2r}-{\nu\alpha\over 2}+{l\over 2}} \|u(\sigma)\|_{L^r_\nu}^{\alpha+1} d\sigma\\
  &\le& K +
CM^{\alpha+1}t^{\tilde{\beta_l}} \int_0^t (t-\sigma)^{-\frac{N\alpha}{2r}-{\nu\alpha\over 2}+{l\over 2}} \sigma^{-{\tilde{\beta_l}}(\alpha+1)} d\sigma \\
  &\le& K +
CM^{\alpha+1}t^{1-\frac{N\alpha}{2q}-{\gamma\alpha\over 2}+{l\over 2}} \int_0^1(1-\sigma)^{-\frac{N\alpha}{2r}-{\nu\alpha\over 2}+{l\over 2}} \sigma^{-{\tilde{\beta_l}}(\alpha+1)} d\sigma
\\
  &\le& K +
CM^{\alpha+1}T^{1-\frac{N\alpha}{2q}-{\gamma\alpha\over 2}+{l\over 2}}\int_0^1(1-\sigma)^{-\frac{N\alpha}{2r}-{\nu\alpha\over 2}+{l\over 2}} \sigma^{-{\tilde{\beta_l}}(\alpha+1)} d\sigma,
\end{eqnarray*}
and similarly for the contraction. The other estimates can be handled similarly as above, see also \cite{BTW}. So we omit the details. This completes the proof of Theorem \ref{localNLHGCD}.
\end{proof}
\begin{rem}
{\rm $\,$
\begin{itemize}
\item[1)] We can take $\gamma=\max(0,{l\over \alpha})$ in  Theorem \ref{localNLHGCD} as well as $l=0$,  it is then a generalization of Theorems \ref{th3} and \ref{localNLHGC}.
\item[2)] See \cite[Theorem 1.13]{CIT} for related results.  The range of the values of $q$ in  (ii) are larger than in \cite{CIT},
while (i) is essentially contained in \cite{CIT} which we give for completeness. Also the methods are different. In fact, we work in an auxiliary space $L^r_\nu,$ for some $r$ and $\nu$ while in \cite{CIT} some auxiliary spaces $L^q_{\tilde\nu}$ for some ${\tilde\nu}$ but $q$ is fixed are considered.
\item[3)] If $q=\infty$ we may replace $L^\infty_\gamma$ by the space obtained by the closure, with respect to the
$L^\infty_\gamma$-topology, of $\mathcal{D}(\R^N),$ the space of  compactly supported $C^\infty(\R^N)$ functions.
For initial data in this sub-space  of $L^\infty_\gamma$ the result holds on $[0,T]$ instead of  $(0,T].$
    \item[4)] Using argument of \cite{BenSlimene}, we can show that uniqueness in the part (ii) of Theorem \ref{localNLHGCD}   holds  in
$u\in C\big([0, T]; L^q_\gamma(\RR^N) \big)\cap C\big((0, T]; L^r_\nu(\RR^N) \big).$
    \end{itemize}}
\end{rem}
Theorem  \ref{localNLHGCD} gives the following.
\begin{cor}[H\'enon parabolic equations with decaying initial data]
\label{lowerLql2} Let $N\geq 1$ be an integer, $\alpha>0$ and $-\min(2,N)<l<N\alpha.$ If $\varphi \in  L^q_\gamma(\R^N)$,  where
$$0< \gamma<N,\; {l\over \alpha}<\gamma<{2+l\over \alpha},$$
$${\gamma\over N}\leq {1\over q}+{\gamma\over N}<1,\; {N\alpha\over 2q}+{\gamma\alpha\over 2}-{l\over 2}<1.$$
 Then the life-span of \eqref{intHHNLH} with initial data $\lambda\varphi$ satisfies
\begin{equation}
\label{bulambda32}
T_{\max}(\lambda\varphi)  { \geq}  C\left(\lambda\|\varphi\|_{L^q_{{\gamma}}}\right)^{-\left({2+l\over 2\alpha}-{N\over 2q}-{\gamma\over 2}\right)^{-1}},
\end{equation}
for all $\lambda > 0$, where $C = C(\alpha,q,l,\gamma,N)>0$ is a constant.
\end{cor}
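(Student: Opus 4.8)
The plan is to mirror exactly the argument used in the proof of Theorem~\ref{lowerLqgamma} (and its Hardy--H\'enon analogue Theorem~\ref{ThHH}), now relying on the local well-posedness scheme of Theorem~\ref{localNLHGCD}. First I would recall that the contraction mapping argument behind Theorem~\ref{localNLHGCD}(ii) produces, for $u_0 = \lambda\varphi \in L^q_\gamma(\R^N)$, a solution on $[0,T]$ whenever $K$, $M$, $T$ satisfy the structural inequality \eqref{equKMTlD}, namely
\begin{equation*}
K + C M^{\alpha+1} T^{1 - \frac{N\alpha}{2q} - \frac{\gamma\alpha}{2} + \frac{l}{2}} \le M,
\end{equation*}
with $K$ an admissible bound for $\|u_0\|_{L^q_\gamma}$ (recall that the computation in the proof shows $t^{\tilde\beta_l}\|\mathrm e^{t\Delta}u_0\|_{L^r_\nu}\le K$ once $\|u_0\|_{L^q_\gamma}\le K$). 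The hypotheses of the corollary --- $0<\gamma<N$, $l/\alpha<\gamma<(2+l)/\alpha$, and $\gamma/N\le 1/q+\gamma/N<1$, $N\alpha/(2q)+\gamma\alpha/2-l/2<1$ --- are precisely those ensuring that the exponent $1-\frac{N\alpha}{2q}-\frac{\gamma\alpha}{2}+\frac{l}{2}$ is positive, so the scaling argument makes sense.

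Next I would run the standard "impossibility at $T_{\max}$'' argument. If $T_{\max}(\lambda\varphi)<\infty$, then one cannot carry out the fixed point construction on the closed interval $[0,T_{\max}(\lambda\varphi)]$, so for every $M>K$,
\begin{equation*}
K + C\, T_{\max}(\lambda\varphi)^{1 - \frac{N\alpha}{2q} - \frac{\gamma\alpha}{2} + \frac{l}{2}} M^{\alpha+1} > M .
\end{equation*}
Choosing $K = \lambda\|\varphi\|_{L^q_\gamma}$ and then $M = 2\lambda\|\varphi\|_{L^q_\gamma}$ (which indeed satisfies $M>K$), the left-hand term $K$ gets absorbed and one obtains
\begin{equation*}
C\, T_{\max}(\lambda\varphi)^{1 - \frac{N\alpha}{2q} - \frac{\gamma\alpha}{2} + \frac{l}{2}} \big(\lambda\|\varphi\|_{L^q_\gamma}\big)^{\alpha} > 1,
\end{equation*}
which, since $1-\frac{N\alpha}{2q}-\frac{\gamma\alpha}{2}+\frac{l}{2}>0$, rearranges to $T_{\max}(\lambda\varphi)\ge C\big(\lambda\|\varphi\|_{L^q_\gamma}\big)^{-\left(\frac{2+l}{2\alpha}-\frac{N}{2q}-\frac{\gamma}{2}\right)^{-1}}$, noting $\alpha^{-1}\big(1-\frac{N\alpha}{2q}-\frac{\gamma\alpha}{2}+\frac{l}{2}\big)=\frac{2+l}{2\alpha}-\frac{N}{2q}-\frac{\gamma}{2}$. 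If instead $T_{\max}(\lambda\varphi)=\infty$ the estimate \eqref{bulambda32} is trivially true, so the bound holds for all $\lambda>0$. The constant $C$ depends only on $\alpha,q,l,\gamma,N$ because the constant in \eqref{equKMTlD} does (the auxiliary parameters $r,\nu$ are chosen as explicit functions of these data, e.g.\ $\nu(\alpha+1)-l=\gamma$, $r=(\alpha+1)q$ when $l>0$).

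I do not anticipate a genuine obstacle here: the work has all been front-loaded into Theorem~\ref{localNLHGCD}, whose proof already establishes the crucial sign conditions $\tilde\beta_l(\alpha+1)<1$ and $1-\frac{N\alpha}{2q}-\frac{\gamma\alpha}{2}+\frac{l}{2}>0$. The only point requiring a line of care is verifying that the choice $K=\lambda\|\varphi\|_{L^q_\gamma}$ is legitimate as the initial-data bound in \eqref{equKMTlD} --- this is exactly the estimate $t^{\tilde\beta_l}\|\mathrm e^{t\Delta}u_0\|_{L^r_\nu}\le C t^{-\tilde\beta_l}\cdot t^{\tilde\beta_l}\|u_0\|_{L^q_\gamma}$ combined with the boundedness $\mathrm e^{t\Delta}:L^q_\gamma\to L^r_\nu$ from Proposition~\ref{smoothingeffectorl-lebeg}, already recorded in the proof of Theorem~\ref{localNLHGCD}(ii). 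Hence the proof of Corollary~\ref{lowerLql2} is a one-paragraph deduction, entirely parallel to the proof of Theorem~\ref{lowerLqgamma}, with $1-\frac{N\alpha}{2q}$ replaced throughout by $1-\frac{N\alpha}{2q}-\frac{\gamma\alpha}{2}+\frac{l}{2}$.
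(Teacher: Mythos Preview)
Your proposal is correct and follows exactly the paper's approach: the paper's proof simply states that it ``follows using \eqref{equKMTlD} and is similar to that of Theorem \ref{ThNLH}, so we omit the details.'' You have written out those details --- the impossibility of \eqref{equKMTlD} at $T=T_{\max}(\lambda\varphi)$, the choice $K=\lambda\|\varphi\|_{L^q_\gamma}$, $M=2K$, and the algebraic identity $\alpha^{-1}\bigl(1-\tfrac{N\alpha}{2q}-\tfrac{\gamma\alpha}{2}+\tfrac{l}{2}\bigr)=\tfrac{2+l}{2\alpha}-\tfrac{N}{2q}-\tfrac{\gamma}{2}$ --- which is precisely what the paper leaves implicit.
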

\begin{rem}{\rm $\,$
\begin{itemize}
\item[1)] Corollary \ref{lowerLql2} answerers a problem left open in \cite{P}. In fact, when $l>0$ only exponentially decaying initial data are considered in \cite{P}.
\item[2)] Similar results, using scaling argument,  seems to be proved in \cite{ZW,ZW2} for related equations, but only for small $\lambda,\; q=\infty$ and positive initial data.
\item[3)] If $\varphi\in L^q\cap L^q_\gamma,$ or $\varphi\in L^q_{l_+/\alpha}\cap L^q_\gamma,$ where $l_+=\max(l,0)$ then \eqref{bulambda32} is better than \eqref{lowerHH} and \eqref{bulambda3} for $0<\lambda<1.$

    \end{itemize}}
\end{rem}
\begin{proof}[Proof of Corollary \ref{lowerLql2}]
The proof follows using \eqref{equKMTlD} and is similar to that of Theorem \ref{ThNLH}, so we omit the details.
\end{proof}
We complement Corollary \ref{lowerLql2} by  the following upper bound estimates.
 \begin{prop}[Upper bounds of life-span for Hardy-H\'enon equations]
 \label{upperHH}
 Let $N\geq 1$ be an integer $\alpha>0$ and $-\min(2,N)<l.$ Assume that
\begin{equation}
\label{borblowuphh}
{l\over N}<\alpha<{2+l\over N}.
\end{equation}
Let  $\omega \in L^\infty(\R^N)$
be homogeneous of degree $0$, $\omega \ge 0$ and $\omega \not\equiv 0,$  $\tilde\varphi$ be given by \eqref{phitilde} and $\tilde{\tilde\varphi}$ be given by \eqref{phitildetilde}.
Let $0<\gamma<N$ be such that $${l\over \alpha}< \gamma,$$
 and $\varphi\in L^q_\gamma(\R^N),$ where ${N\over N-\gamma}<q\leq \infty.$
Then we have the following.
\begin{itemize}
\item[(i)] If $\varphi\geq \tilde\varphi$  then $T_{\rm{max}}(\lambda \varphi)\leq C\lambda^{-\left({2+l\over 2 \alpha}-{\gamma\over 2}\right)^{-1}},\; \lambda>1.$
\item[(ii)]If $\varphi\geq \tilde{\tilde\varphi}$  then $T_{\rm{max}}(\lambda \varphi)\leq C\lambda^{-\left({2+l\over 2\alpha}-{\gamma\over 2}\right)^{-1}},\; 0<\lambda<1.$
\end{itemize}
 \end{prop}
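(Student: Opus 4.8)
The plan is to adapt the nonlinear scaling argument of Section~\ref{nlsc} to \eqref{HHNLH} and to combine it with comparison and a Fujita-type blowup result, exactly in the spirit of Corollaries~\ref{resulphi1} and \ref{resulphi2}. Equation \eqref{HHNLH} is invariant under the transformation \eqref{eqinvt} with $\sigma=\frac{2+l}{\alpha}$, so the discussion leading to \eqref{lifespanscale} gives, for any admissible nonnegative datum $\varphi$ and any $\lambda>0$,
\[
\lambda^{(\frac{2+l}{2\alpha}-\frac{\gamma}{2})^{-1}}\,T_{\max}(\lambda\varphi)=T_{\max}(\mu^{\gamma}D_\mu\varphi),\qquad \lambda=\mu^{\gamma-\frac{2+l}{\alpha}},
\]
where we used $\frac{2}{\sigma-\gamma}=(\frac{2+l}{2\alpha}-\frac{\gamma}{2})^{-1}$ and $\gamma<\sigma$ (indeed $\gamma<N<\frac{2+l}{\alpha}$ by \eqref{borblowuphh}). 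Since $\gamma-\sigma<0$, we have $\mu\le1\Leftrightarrow\lambda\ge1$ and $\mu<1\Leftrightarrow\lambda>1$.

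First I would apply this to $\varphi=\tilde\varphi$ of \eqref{phitilde}. As $\omega$ is homogeneous of degree $0$, one computes $\mu^{\gamma}D_\mu\tilde\varphi(x)=\omega(x)|x|^{-\gamma}\mathbf{1}_{\{|x|\le \epsilon/\mu\}}$, which increases pointwise as $\mu$ decreases; in particular $\tilde\varphi\le\mu^{\gamma}D_\mu\tilde\varphi$ for every $\mu\le1$. All functions occurring here are nonnegative and lie in $L^q_\gamma(\R^N)$ with $\frac{N}{N-\gamma}<q\le\infty$, so by Theorem~\ref{localNLHGCD} each generates a nonnegative maximal solution, and the comparison principle for \eqref{HHNLH} (the nonlinearity $v\mapsto|\cdot|^lv^{\alpha+1}$ being nondecreasing on $[0,\infty)$) yields $T_{\max}(\mu^{\gamma}D_\mu\tilde\varphi)\le T_{\max}(\tilde\varphi)$ for $\mu\le1$. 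Because $\frac{l}{N}<\alpha<\frac{2+l}{N}$ places us in the Fujita blowup range for \eqref{HHNLH} (see \cite{Qi}), the nontrivial, nonnegative, compactly supported datum $\tilde\varphi$ produces a solution blowing up in finite time, so $C_1:=T_{\max}(\tilde\varphi)<\infty$. Taking $\mu=\lambda^{1/(\gamma-\sigma)}<1$ for $\lambda>1$ in the displayed identity and then using $0\le\tilde\varphi\le\varphi$ with comparison once more,
\[
T_{\max}(\lambda\varphi)\le T_{\max}(\lambda\tilde\varphi)=\lambda^{-(\frac{2+l}{2\alpha}-\frac{\gamma}{2})^{-1}}\,T_{\max}(\mu^{\gamma}D_\mu\tilde\varphi)\le C_1\,\lambda^{-(\frac{2+l}{2\alpha}-\frac{\gamma}{2})^{-1}}
\]
for $\lambda>1$, which is (i). For (ii) I would run the same argument with $\tilde{\tilde\varphi}$ of \eqref{phitildetilde}: now $\mu^{\gamma}D_\mu\tilde{\tilde\varphi}(x)=\omega(x)|x|^{-\gamma}\mathbf{1}_{\{|x|\ge R/\mu\}}$ increases pointwise as $\mu$ increases, so $\tilde{\tilde\varphi}\le\mu^{\gamma}D_\mu\tilde{\tilde\varphi}$ for $\mu\ge1$, while $\tilde{\tilde\varphi}\in L^\infty_\gamma(\R^N)$ is bounded, nonnegative and nontrivial, so again $C_2:=T_{\max}(\tilde{\tilde\varphi})<\infty$ by \cite{Qi}. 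Since $0<\lambda<1$ corresponds to $\mu=\lambda^{1/(\gamma-\sigma)}>1$, the displayed identity together with comparison gives $T_{\max}(\lambda\varphi)\le T_{\max}(\lambda\tilde{\tilde\varphi})\le C_2\,\lambda^{-(\frac{2+l}{2\alpha}-\frac{\gamma}{2})^{-1}}$ for $0<\lambda<1$, which is (ii).

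The main obstacle is the structural input used at the key step of each case: one must know that these particular data — $\tilde\varphi$ only $L^1_{loc}$ and compactly supported, $\tilde{\tilde\varphi}$ only in $L^\infty_\gamma$ and not integrable, neither assumed smooth — actually yield solutions of \eqref{HHNLH} that blow up in finite time throughout the range $\alpha<\frac{2+l}{N}$, in particular in the H\'enon regime $l>0$ where the coefficient $|x|^l$ is unbounded. This is precisely where hypothesis \eqref{borblowuphh} enters, and it requires invoking (or reproving, by a Kaplan-type argument adapted to the weight) the Fujita blowup theorem for \eqref{HHNLH}. A secondary point to verify carefully is the validity of the comparison principle in the function classes furnished by Theorem~\ref{localNLHGCD}, and that the $T_{\max}$ there dominates, or is dominated by, the maximal time of existence of any nonnegative (possibly continued) solution, so that the blowup conclusion transfers correctly, in the spirit of the remarks preceding Proposition~\ref{lincd}.
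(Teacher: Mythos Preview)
Your proof is correct and follows essentially the same approach as the paper: both use the scaling invariance of \eqref{HHNLH} with $\sigma=\frac{2+l}{\alpha}$ to obtain the identity $\lambda^{(\frac{2+l}{2\alpha}-\frac{\gamma}{2})^{-1}}T_{\max}(\lambda\psi)=T_{\max}(\mu^{\gamma}D_\mu\psi)$, exploit the monotonicity of $\mu\mapsto\mu^{\gamma}D_\mu\tilde\varphi$ (resp.\ $\mu^{\gamma}D_\mu\tilde{\tilde\varphi}$) together with comparison to bound the right-hand side by $T_{\max}(\tilde\varphi)$ (resp.\ $T_{\max}(\tilde{\tilde\varphi})$), and then invoke \cite{Qi} under \eqref{borblowuphh} to ensure these are finite. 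The paper is slightly more terse and cites \cite[Theorem~2.4]{W} for the comparison principle; your additional discussion of the ``obstacles'' (well-posedness in $L^q_\gamma$, validity of comparison, applicability of the Fujita result to these specific data) is a fair description of the structural inputs the paper also takes for granted.
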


 To prove Proposition \ref{upperHH}, we use a scaling argument.  We recall the definition  of the dilation operators $D_\mu \varphi=\varphi(\mu \,\cdot),\; \mu>0.$
 It is clear that if $u$ is a solution of the equation \eqref{HHNLH} then for any $\mu>0,$ $u_\mu$ is also a solution of \eqref{HHNLH}, where  $u_{\mu}(t,x)=\mu^{2+l\over \alpha}u(\mu^2t,\mu x).$ Hence, $\sigma$ in \eqref{eqinvt} is given by
 $$\sigma={2+l\over \alpha}.$$
So that, for $\lambda= \mu^{\gamma-\frac{2+l}{\alpha}}$, \eqref{lifespanscale} reads
\begin{equation}
\label{scalingforHH}
\lambda^{-[({2+l\over 2\alpha}-{\gamma\over 2})^{-1}]}T_{\max}(\lambda \varphi)=T_{\max}(\lambda\mu^{\frac{2+l}{\alpha}}D_{\mu}\varphi)=T_{\max}(\mu^{\gamma}D_{\mu}\varphi).
\end{equation}

 Let  $0<\gamma<(2+l)/\alpha.$   Let $\varphi$ be a nonnegative function, satisfying
$\mu^\gamma D_\mu \varphi\leq \varphi,\;\mbox{for some}\; \mu>0.$ Then, since $\lambda=\mu^{\gamma-{2+l\over \alpha}}$,  we have
 by comparison argument (see \cite[Theorem 2.4, p. 564]{W}) and \eqref{scalingforHH} that
 $$T_{\rm{max}}(\lambda\varphi)\geq \lambda^{-\left({2+l\over 2\alpha}-{\gamma\over 2}\right)^{-1}}T_{\rm{max}}(\varphi).$$

 Similarly, if $ \mu^\gamma D_\mu \varphi\geq \varphi,\;\mbox{for some}\; \mu>0,$
and $T_{\rm{max}}(\varphi)<\infty,$  we have that
$$T_{\rm{max}}(\lambda \varphi)\leq \lambda^{-\left({2+l\over 2\alpha}-{\gamma\over 2}\right)^{-1}}T_{\rm{max}}(\varphi).
$$
\begin{proof}[Proof of Proposition \ref{upperHH}] The condition \eqref{borblowuphh} implies that $T_{\rm{max}}(\lambda\varphi)<\infty$ as well as $T_{\rm{max}}(\lambda\tilde\varphi)<\infty,$ and $T_{\rm{max}}(\lambda\tilde{\tilde\varphi})<\infty,$ for any $\lambda>0.$ See \cite{Qi}.

(i) By comparison argument it suffices to give the proof for $T_{\rm{max}}(\lambda \tilde\varphi).$ We have that
$$ \mu^\gamma D_\mu \tilde\varphi \geq\tilde\varphi,\; \mu<1.$$ Since $\gamma<(2+l)/\alpha$ and $\lambda=\mu^{\gamma-{2+l\over \alpha}}$ then $\mu<1$ is equivalent to $\lambda>1.$ By the above calculations,
$$T_{\rm{max}}(\lambda \varphi_1)\leq C\lambda^{-\left({2+l\over 2\alpha}-{\gamma\over 2}\right)^{-1}},\; \lambda>1.$$

(ii) By comparison argument it suffices to give the proof for $T_{\rm{max}}(\lambda \tilde{\tilde\varphi}).$ We have that
$$ \mu^\gamma D_\mu \tilde{\tilde\varphi} \geq\tilde{\tilde\varphi},\; \mu>1.$$ Since $\gamma<(2+l)/\alpha$ and $\lambda=\mu^{\gamma-{2+l\over \alpha}}$ then $\mu>1$ is equivalent to $\lambda<1.$ Then by the above calculations,
$$T_{\rm{max}}(\lambda \tilde{\tilde\varphi})\leq C\lambda^{-\left({2+l\over 2\alpha}-{\gamma\over 2}\right)^{-1}},\; \lambda<1.$$
This completes the proof of the proposition.
\end{proof}
\begin{rem}{\rm $\,$ We may take $q=\infty$ in Proposition \ref{upperHH}. In particular, combining Corollary \ref{lowerLql2} and   Proposition \ref{borblowuphh}, we have
$T_{\rm{max}}(\lambda \tilde\varphi)\sim\lambda^{-\left({2+l\over 2 \alpha}-{\gamma\over 2}\right)^{-1}},$ as $\lambda\to \infty,$ and $T_{\rm{max}}(\lambda \tilde{\tilde\varphi})\sim\lambda^{-\left({2+l\over 2 \alpha}-{\gamma\over 2}\right)^{-1}},$ as $\lambda\to 0.$ This shows that, for large initial data  the life-span increases as the power $l$  increases, while,  for small initial data the life-span decreases as the power $l$ increases.
}
\end{rem}
\end{appendix}

\end{document}